\documentclass[a4paper,10pt]{amsart}
\usepackage{verbatim}
\usepackage{amsfonts}
\usepackage{amsthm}
\usepackage{amsmath}
\usepackage{mathrsfs}
\usepackage{amssymb}
\usepackage{mathabx}
\usepackage[utf8]{inputenc}
\usepackage{enumerate}
\usepackage[all]{xy}
\usepackage{graphicx}
\usepackage{tikz}
\usepackage{tikz-cd}
\usepackage{hyperref}
\usepackage{wasysym}

\hypersetup{                    
	colorlinks=true,                
	breaklinks=true,                
	urlcolor= blue,                 
	linkcolor= blue,                
	citecolor= blue               
}



\theoremstyle{plain}
\newtheorem{theorem}{Theorem}[section]
\newtheorem{lemma}[theorem]{Lemma}
\newtheorem{corollary}[theorem]{Corollary}
\newtheorem{proposition}[theorem]{Proposition}

\newtheorem{definition-theorem}[theorem]{Definition / Theorem}


\newtheorem*{conjecture*}{Conjecture}
\newtheorem*{theorem*}{Theorem}
\newtheorem*{corollary*}{Corollary}

\theoremstyle{definition}
\newtheorem{definition}[theorem]{Definition}

\newtheorem{examples}[theorem]{Examples}

\newcommand{\N}{\mathbb N}
\newcommand{\Z}{\mathbb Z}
\newcommand{\Q}{\mathbb{Q}}

\newcommand{\C}{\mathbb C}

\newcommand{\U}{\mathcal{U}}

\newcommand{\Nc}{\mathcal{N}}
\newcommand{\M}{\mathcal{M}}

\newcommand{\Manoa}{M\=anoa}
\newcommand{\Hawaii}{Hawai\kern.05em`\kern.05em\relax i}

\newtheorem{thmx}{Theorem}

\newtheorem{corx}[thmx]{Corollary}
\newtheorem{prop}[theorem]{Proposition}

\theoremstyle{definition}
\newtheorem{defi}[theorem]{Definition}
\newtheorem{rem}[theorem]{Remark}
{\begin{proof}[Beweis]}
	{\end{proof}}
\newtheorem{ex}[theorem]{Example}

\newcommand{\norm}[1]{\lVert#1\rVert}   
\newcommand{\abs}[1]{\lvert#1\rvert}

\newcommand{\dad}{\mathrm{dad}}
\newcommand{\asdim}{\mathrm{asdim}}
\newcommand{\K}{\mathrm K}

\newcommand{\RR}{\mathbb R}

\newcommand{\CC}{\mathbb C}
\newcommand{\NN}{\mathbb N}
\newcommand{\ZZ}{\mathbb Z}
\newcommand{\FF}{\mathbb F}

\newcommand{\id}{\text{id}}

\newcommand{\Gmod}{G\textrm{-}\mathbf{mod}}

\setcounter{MaxMatrixCols}{19}
\usepackage{tikz}
\usetikzlibrary{matrix,arrows}
\usepackage{hyperref}

\title{Dynamic asymptotic dimension and Matui's HK conjecture}
\date{\today}

\author[C. B\"{o}nicke]{Christian B\"{o}nicke}
\address{\hskip-\parindent School of Mathematics and Statistics, University of Glasgow, University Gardens, Glasgow, G12 8QQ, UK}
\curraddr{School of Mathematics, Statistics and Physics, Herschel Building, Newcastle University, Newcastle upon Tyne, NE1 7RU, UK}
\email{christian.bonicke@newcastle.ac.uk}
\thanks{CB was supported by the Alexander von Humboldt foundation}

\author[C. Dell'Aiera]{Cl\'{e}ment Dell'Aiera}
\address{\hskip-\parindent ENS Lyon, UMPA, Department of Mathematics, 46 allée d’Italie, 69342 Lyon Cedex 07, France}
\email{clement.dellaiera@ens-lyon.fr}
\thanks{CD was partly supported by the US NSF (DMS 1564281).}

\author[J. Gabe]{James Gabe}
\address{\hskip-\parindent Department of Mathematics and Computer Science, University of Southern Denmark, Campusvej 55, DK-5230 Odense M, Denmark}
\email{gabe@imada.sdu.dk}
\thanks{JG was supported by the Carlsberg Foundation through an Internationalisation Fellowship, and by Australian Research Council grant DP180100595.}

\author[R. Willett]{Rufus Willett}
\address{\hskip-\parindent Department of Mathematics, University of \Hawaii~at \Manoa, 2565 McCarthy Mall, Honolulu, HI 96822, USA}
\email{rufus@math.hawaii.edu}
\thanks{RW was partly supported by the US NSF (DMS 1564281 and DMS 1901522).}

\begin{document}
	
	\begin{abstract} We prove that the homology groups of a principal ample groupoid vanish in dimensions greater than the dynamic asymptotic dimension of the groupoid (as a side-effect of our methods, we also give a new model of groupoid homology in terms of the Tor groups of homological algebra, which might be of independent interest).  As a consequence, the K-theory of the $C^*$-algebras associated with groupoids of finite dynamic asymptotic dimension can be computed from the homology of the underlying groupoid. In particular, principal ample groupoids with dynamic asymptotic dimension at most two and finitely generated second homology satisfy Matui's HK-conjecture.
	
	We also construct explicit maps from the groupoid homology groups to the K-theory groups of their $C^*$-algebras in degrees zero and one, and investigate their properties.
\end{abstract}

\maketitle

\tableofcontents

\section{Introduction}
Dynamic asymptotic dimension is a notion of dimension for actions of discrete groups on locally compact spaces, and more generally, for locally compact \'{e}tale groupoids introduced by the last named author with Guentner and Yu in \cite{Guentner:2014aa}. It is inspired by Gromov's theory of asymptotic dimension \cite[Section 1.3]{Gromov:1993tr}. At the same time it is strongly connected to other existing dimension theories for dynamical systems, for example the conditions introduced by Bartels, L\"{u}ck and Reich \cite{Bartels2018} or Kerr's tower dimension \cite{Kerr2020}.

The original article \cite{Guentner:2014aa} focused on the fine structure of $C^*$-algebras associated with \'{e}tale groupoids of finite dynamic asymptotic dimension, while later work by the same set of authors in \cite{Guentner:2014bh} presented some consequences to K-theory and topology.

In the present work we aim to explore the implications of dynamic asymptotic dimension for groupoid homology and its relation to the K-theory of groupoid $C^*$-algebras. A homology theory for \'{e}tale groupoids was introduced by Crainic and Moerdijk in \cite{Crainic2000}.  More recently, groupoid homology attracted a considerable amount of interest from the topological dynamics and operator algebras communities following the work of Matui \cite{Matui2012}. The main contribution of this article is the following:
\begin{thmx}\label{Thm:A}
	Let $G$ be a locally compact, Hausdorff, étale, principal, $\sigma$-compact, ample groupoid with dynamic asymptotic dimension at most $d$. Then $H_n(G)=0$ for $n>d$ and $H_d(G)$ is torsion-free.
\end{thmx}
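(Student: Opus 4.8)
The plan is to prove the vanishing and torsion-freeness results via a spectral sequence or, more directly, an explicit chain-level resolution coming from the dynamic asymptotic dimension hypothesis. Dynamic asymptotic dimension at most $d$ means that for every compact subset of the groupoid there is an open cover of the unit space into $d+1$ pieces, each of which generates a relatively compact (hence "uniformly bounded") subgroupoid. The central idea is to exploit this covering to build, over each compact piece of the groupoid, an explicit resolution of the constant sheaf (or of the relevant chain complex computing $H_*(G)$) of length $d$. The key technical input I would want is the new model of groupoid homology in terms of $\mathrm{Tor}$ groups advertised in the abstract: once $H_n(G)$ is identified with $\mathrm{Tor}_n$ of suitable modules over the "category" or convolution-type ring associated to $G$, the vanishing statement becomes the assertion that one of these modules has projective (or flat) dimension at most $d$.

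**First I would** set up the homological-algebra framework precisely: identify the abelian category of $G$-modules (equivariant sheaves of abelian groups on the unit space $G^{(0)}$, or equivalently modules over the relevant ring), and reinterpret Crainic--Moerdijk homology $H_*(G)$ as $\mathrm{Tor}_*^{\text{($G$-action)}}(\ZZ, \ZZ)$ or as the derived functor of coinvariants. **Then I would** use the dynamic asymptotic dimension bound to produce, for each element of a nested exhaustion of the $\sigma$-compact groupoid $G$ by compact sets, an explicit flat resolution of length $d$. Concretely, the $d+1$ subgroupoids from the covering should give a Mayer--Vietoris / \v{C}ech-type complex whose total length is $d$; because each covering piece generates a relatively compact groupoid (which behaves homologically like a "bounded" or "finite" object contributing nothing in positive degrees), the associated complex computes $H_*$ but is concentrated in degrees $0$ through $d$. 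Passing to the colimit over the exhaustion (using that homology commutes with the relevant inductive limits for $\sigma$-compact $G$) then yields $H_n(G)=0$ for $n>d$.

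**For the torsion-freeness of $H_d(G)$**, I would argue that the length-$d$ resolution exhibits $H_d(G)$ as the kernel of the top boundary map in a complex of flat (in fact, free or projective) modules, or equivalently as the leftmost nonvanishing homology of a complex of torsion-free groups. Since a subgroup of a torsion-free abelian group is torsion-free, and the relevant chain groups in the explicit model are free abelian groups (compactly supported $\ZZ$-valued functions), the top homology $H_d(G)$, sitting as a subobject rather than a quotient at the end of the resolution, inherits torsion-freeness. Alternatively, I would use a universal-coefficient-type argument: if $H_n(G)=0$ for $n>d$ then $H_d(G \,;\, \QQ/\ZZ)$-style Bockstein considerations force the torsion in $H_d$ to show up in $H_{d+1}$, which vanishes, so $H_d$ is torsion-free.

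**The hardest part will be** establishing the precise homological reformulation and, above all, verifying that the relatively compact (principal, \'etale, ample) subgroupoids arising from the dimension cover genuinely contribute only in degree zero --- i.e., that they are homologically "elementary." This amounts to showing a vanishing/contractibility statement for the homology of such bounded pieces, likely via an induction or a direct computation using that a relatively compact principal ample groupoid is, up to Morita-type equivalence, a disjoint union of transitive pieces (pair groupoids over compact-open sets) whose homology is trivial in positive degrees. Controlling the gluing maps well enough to keep the total complex of length exactly $d$ --- rather than accumulating extra length from the overlaps --- is the real obstacle, and this is presumably where the $\mathrm{Tor}$-based model does the essential work by replacing delicate geometric bookkeeping with a clean algebraic dimension count.
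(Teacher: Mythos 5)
Your proposal contains one genuine structural gap, and one of your two torsion-freeness arguments is incorrect. The gap: you assert that the \v{C}ech/Mayer--Vietoris-type complex built from a \emph{single} cover supplied by the dynamic asymptotic dimension hypothesis ``computes $H_*$ but is concentrated in degrees $0$ through $d$.'' At any fixed scale this is false: the homology of the nerve of one colouring depends strongly on the colouring and is generally not $H_*(G)$ (for instance, a partition of $G^0$ into compact open sets is a colouring whose nerve has vanishing homology in all positive degrees, whatever $H_*(G)$ is). The paper's proof is organized precisely around repairing this: it forms an \emph{anti-\v{C}ech sequence} of coarser and coarser colourings and proves (Theorem \ref{main comparison}) that the direct limit of the colouring homologies recovers $H_*(G)$, via equivariant \'etale maps $\Phi_0\colon G\to \Nc_0$ (from the Lebesgue condition) and $\Psi_0\colon \Nc_0\to G$ --- the latter is where principality and the Giordano--Putnam--Skau decomposition of compact principal ample groupoids into pair groupoids over clopen sets actually enter, namely to choose transversals, not to prove acyclicity of the pieces --- together with chain homotopies between ``close'' morphisms. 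Your colimit over a compact exhaustion of $G$ does not substitute for this comparison, which is the real work. Relatedly, your hope that the $\mathrm{Tor}$ model ``replaces delicate geometric bookkeeping with a clean algebraic dimension count'' is misplaced: the paper explicitly does not use the $\mathrm{Tor}$ description in proving Theorem \ref{Thm:A}, and your route would require showing $\Z[G^0]$ has flat or projective dimension at most $d$ over $\Z[G]$, a stronger statement that the dad covers do not directly yield; even at a fixed colouring, the vanishing above degree $d$ in the paper is a purely combinatorial chain-homotopy argument ordering the $d+1$ colours (Corollaries \ref{semi-ord} and \ref{final cor}), not an acyclicity statement about the relatively compact subgroupoids.

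On torsion-freeness, your primary argument is essentially the paper's: $H_d$ is realized (in the limit of strictly-ordered nerve complexes, which are concentrated in degrees $\leq d$) as a subgroup of a direct limit of groups of the form $\Z[G\setminus \Nc_d^>]$, which are free abelian, so it is torsion-free --- this is fine once the comparison theorem is in place. Your ``alternative'' Bockstein/universal-coefficient argument, however, is circular and false: vanishing of the \emph{integral homology} above degree $d$ does not force $H_d$ to be torsion-free, as the complex $0\to \Z \xrightarrow{\times 2} \Z\to 0$ placed in degrees $d+1$ and $d$ shows ($H_{d+1}=0$ but $H_d=\Z/2$). What is needed, and what the strictly-ordered nerve provides, is that the \emph{chain groups themselves} vanish above degree $d$, so that $H_d$ is a kernel inside a torsion-free group rather than a quotient.
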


Our proof of Theorem \ref{Thm:A} goes via a description of groupoid homology in terms of semi-simplicial spaces equipped with a $G$-action. As a byproduct this leads to a description of these homology groups in terms of the classical Tor groups of homological algebra, quite analogous to the well-known case of the homology of a discrete group.  While this may be known to experts, it seems worthwhile recording it as we are not aware of its appearance in the literature.

\begin{thmx}\label{Thm:B}
	Let $G$ be a locally compact, Hausdorff, étale, ample groupoid with $\sigma$-compact base space.  There is a canonical isomorphism $H_*(G)\cong \text{Tor}_*^{\Z[G]}(\Z[G^0],\Z[G^0])$.
\end{thmx}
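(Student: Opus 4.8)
The plan is to mimic the classical identification of group homology $H_*(\Gamma)$ with $\Tor_*^{\Z[\Gamma]}(\Z,\Z)$, replacing the group ring by the convolution ring $\Z[G]=C_c(G,\Z)$ of compactly supported, locally constant (equivalently, continuous for the discrete topology on $\Z$) integer-valued functions on $G$; ampleness guarantees this ring is well behaved, and the indicator functions of compact open subsets of $G^0$ furnish a set of local units, so that $\Z[G]$ is a ring with local units rather than a unital ring. The trivial-coefficient module $\Z$ is replaced by $\Z[G^0]=C_c(G^0,\Z)$, which is a non-degenerate $\Z[G]$-bimodule (the left and right actions being restriction of convolution). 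Recall also that, by the Crainic--Moerdijk/Matui description, $H_*(G)$ is the homology of the complex $(C_c(G^{(n)},\Z),\partial)$ built from the nerve $G^{(\bullet)}$ of composable strings, with $\partial=\sum_i(-1)^i(d_i)_*$ the alternating sum of the transfer maps along the face maps $d_i$ (these transfers exist precisely because $G$ is étale, so the $d_i$ are local homeomorphisms). My target is therefore to exhibit a projective resolution of $\Z[G^0]$ as a left $\Z[G]$-module whose associated complex, after applying $\Z[G^0]\otimes_{\Z[G]}-$, is exactly this nerve complex.

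First I would write down the bar resolution attached to the nerve. Set $P_n=C_c(G^{(n+1)},\Z)$, where $\Z[G]$ acts by convolution in the leading coordinate, at the level of points $h\cdot(g_0,g_1,\dots,g_n)=(hg_0,g_1,\dots,g_n)$; the face and degeneracy maps of the nerve induce transfer maps making $P_\bullet$ a simplicial $\Z[G]$-module, and I would take $\partial=\sum_{i=0}^{n}(-1)^i(d_i)_*$ together with the augmentation $\varepsilon=s_*\colon P_0=\Z[G]\to\Z[G^0]$ given by pushforward along the source map. The left $G$-action on $G^{(n+1)}$ that moves only the leading coordinate is free and proper, with orbit space naturally homeomorphic to $G^{(n)}$ via the normalisation $g_0\mapsto s(g_0)$; this is the geometric input that makes each $P_n$ a projective (in particular flat) object in the category of non-degenerate $\Z[G]$-modules, and that simultaneously identifies $\Z[G^0]\otimes_{\Z[G]}P_n\cong C_c(G^{(n)},\Z)$ compatibly with the differentials.

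It remains to prove that $P_\bullet\to\Z[G^0]$ is exact. For this I would produce an explicit contracting homotopy $s_n\colon P_n\to P_{n+1}$ coming from the extra degeneracy that inserts a unit in front of a string, $(g_0,\dots,g_n)\mapsto(r(g_0),g_0,\dots,g_n)$; the standard simplicial identity $\partial s+s\partial=\mathrm{id}$ then holds verbatim at the level of the transfer maps, showing the augmented complex is chain contractible and hence acyclic. Granting exactness and projectivity, $P_\bullet$ is a projective resolution of $\Z[G^0]$, so $\Tor_*^{\Z[G]}(\Z[G^0],\Z[G^0])$ is computed by $\Z[G^0]\otimes_{\Z[G]}P_\bullet$, which we have identified with $(C_c(G^{(\bullet)},\Z),\partial)$; tracing through the identifications shows the isomorphism $H_*(G)\cong\Tor_*^{\Z[G]}(\Z[G^0],\Z[G^0])$ is canonical and natural in $G$.

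The main obstacle I anticipate is not the geometry but the homological algebra over the non-unital ring $\Z[G]$: one has to set up $\Tor$ for rings with local units, working in the category of non-degenerate modules, where one must check there are enough projectives, that the tensor product is right exact and balanced, and that the usual comparison theorem for resolutions applies. One then has to verify carefully that the induced modules $C_c(G^{(n+1)},\Z)$ arising from the free and proper actions really are projective in this setting, and that all the transfer and pushforward maps respect compact supports and local constancy. Establishing that induction along a free proper $G$-space produces projective non-degenerate modules, and that $\Z[G^0]\otimes_{\Z[G]}-$ applied to such a module is computed by the naive quotient by the $G$-action, is the technical heart of the argument; once this dictionary between equivariant topology and the module category is in place, the rest follows the classical template.
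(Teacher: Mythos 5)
Your outline is, in skeleton, the paper's own proof (Theorem \ref{tor the}): your inhomogeneous bar resolution $P_n=\Z[G^{(n+1)}]$, with $\Z[G]$ acting by convolution in the leading coordinate, is exactly the paper's $\Z[EG_n]$ rewritten in inhomogeneous coordinates, via the dictionary $(\gamma_0,\dots,\gamma_n)\mapsto(\gamma_0,\gamma_0^{-1}\gamma_1,\dots,\gamma_{n-1}^{-1}\gamma_n)$ that the paper itself uses to identify $BG_*$ with $G\backslash EG_*$; your contracting homotopy inserting $r(g_0)$ in front of a string is the homotopy of Lemma \ref{ex proj} verbatim; and your identification of $\Z[G^0]\otimes_{\Z[G]}P_n$ with Matui's complex $\Z[G^{(n)}]$ is Lemma \ref{coinv tp} combined with Lemma \ref{coinv} (freeness and properness of the action on $G^{(n+1)}$ are used exactly there, and Proposition \ref{hom id} then closes the loop). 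One small slip in your setup: with the standard left action of $\Z[G]$ on $\Z[G^0]$ (where $g\cdot x=r(g)$), the $\Z[G]$-linear augmentation $P_0=\Z[G]\to\Z[G^0]$ is pushforward along the \emph{range} map, as in the paper, not $s_*$; a direct computation of $s_*(f\ast\xi)$ shows $s_*$ is not equivariant for the left regular action (try the pair groupoid on two points). The source map enters only afterwards, as the quotient map $G\to G\backslash G\cong G^0$ implementing the coinvariants identification in degree zero; you have conflated the two roles, but the fix is immediate.

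The genuine gap is the projectivity step, which you correctly flag as the technical heart but misdiagnose and leave unfilled. Freeness and properness of the action are \emph{not} what makes $P_n$ projective --- in the paper they play no role in that part of the argument. The actual mechanism is an induction on $n$ through the $\Z[G^0]$-module structure: the factorization $\Z[G]\otimes_{\Z[G^0]}\Z[EG_n]\cong\Z[EG_{n+1}]$ (Lemma \ref{tp lem}) together with projectivity of $\Z[G]$ as a left $\Z[G^0]$-module (Lemma \ref{zg proj}), and this is precisely where the $\sigma$-compactness hypothesis --- which appears nowhere in your argument, despite being in the statement of the theorem --- is indispensable. It is what allows one to write $G$ as a \emph{disjoint} union of compact open bisections, so that $\Z[G]\cong\bigoplus_i\Z[G^0]\chi_{U_i}$ with each summand projective by Remark \ref{r:freevsproj}; the same remark is what rescues ``free $\Rightarrow$ projective'' over the non-unital ring $\Z[G]$ in the first place. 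Without $\sigma$-compactness the strategy genuinely fails at this point, not merely in generality-bookkeeping: the paper's footnote records that for $X=\R$ with the rational sequence topology, $\Z[X]$ is not projective as a $\Z[X]$-module, so even your rank-one free module $P_0=\Z[G]$ need not be projective. Your closing concerns about homological algebra over rings with local units (enough projectives, balanced right-exact tensor, comparison theorem in the category of non-degenerate modules) are legitimate and are handled in the paper by the same Remark \ref{r:freevsproj} and the restriction to non-degenerate modules, but the claim that induction along a free proper $G$-space automatically produces projectives should be replaced by the concrete two-lemma induction above.
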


Our result allows us to draw some significant consequences to the following conjecture formulated by Matui in \cite{Matui2016}.
\begin{conjecture*}[Matui]
	For a minimal, essentially principal, ample groupoid $G$ there are isomorphisms
	$$K_i(C_r^*(G))\cong \bigoplus_{n\geq 0}H_{2n+i}(G),\quad i=0,1$$
\end{conjecture*}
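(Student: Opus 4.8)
The plan is to deduce the conjecture, in the range that the hypotheses of Theorem~\ref{Thm:A} make accessible, from a spectral sequence relating groupoid homology to the $K$-theory of $C_r^*(G)$. Concretely, I would produce a homological spectral sequence
\[
E^2_{p,q}=\begin{cases}H_p(G)&q\text{ even,}\\0&q\text{ odd,}\end{cases}\qquad\Longrightarrow\qquad K_{p+q}\bigl(C_r^*(G)\bigr),
\]
the $2$-periodicity in $q$ encoding Bott periodicity. The natural route is the semi-simplicial model underlying Theorem~\ref{Thm:B}: the nerve of $G$ is a semi-simplicial $G$-space whose associated chain complex computes $H_*(G)$, and applying the reduced groupoid $C^*$-functor followed by $K$-theory to this semi-simplicial object yields a spectral sequence of exactly this shape, in the same way that a simplicial space gives an Atiyah--Hirzebruch-type spectral sequence for its realization. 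Finite dynamic asymptotic dimension is the hypothesis under which I would establish convergence to $K_*(C_r^*(G))$: the decomposition of $G$ into finitely many pieces of controlled complexity is what lets one identify the abutment with the $K$-theory of the groupoid itself rather than with that of a homotopy-theoretic approximation.

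With the spectral sequence in place, the homological input from Theorem~\ref{Thm:A} does the rest. Dynamic asymptotic dimension at most two forces $H_p(G)=0$ for $p>2$, so only the columns $p\in\{0,1,2\}$ survive, and I would then observe that the sequence degenerates at $E^2$ for purely formal reasons. The differential $d^2\colon E^2_{p,q}\to E^2_{p-2,q+1}$ reverses the parity of $q$ and so lands in a vanishing row, while every $d^r$ with $r\ge 3$ lowers $p$ by at least three and hence leaves the three-column region. Thus $E^2=E^\infty$.

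Reading off the resulting filtration gives $K_1(C_r^*(G))\cong H_1(G)$ on the odd columns, and on the even columns a short exact sequence
\[
0\longrightarrow H_0(G)\longrightarrow K_0\bigl(C_r^*(G)\bigr)\longrightarrow H_2(G)\longrightarrow 0,
\]
with $H_2(G)$ appearing as the top quotient. To obtain the splitting $K_0(C_r^*(G))\cong H_0(G)\oplus H_2(G)$ demanded by the conjecture, both conclusions of Theorem~\ref{Thm:A} are exactly what is needed: $H_2(G)$ is torsion-free, so under the standing finite-generation hypothesis it is free abelian, whence $\mathrm{Ext}^1_{\ZZ}\bigl(H_2(G),H_0(G)\bigr)=0$ and the extension splits.

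I expect the genuine difficulty to reside entirely in the first step---constructing the spectral sequence, identifying its $E^2$-page with groupoid homology, and proving convergence to $K_*(C_r^*(G))$; the degeneration and the splitting are then formal. It is precisely because torsion-freeness together with finite generation forces $H_2(G)$ to be free, killing the one potentially non-trivial extension, that the low-dimensional case comes out cleanly. In higher dimension the same strategy would be obstructed both by surviving higher differentials and by extension problems that the hypotheses no longer control.
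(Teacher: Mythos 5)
Your argument does not prove the statement, because the statement is Matui's conjecture in its full strength --- minimal, essentially principal, ample $G$, with no dimension restriction --- and no such proof can exist: the conjecture as stated is false. The paper records counterexamples of Scarparo \cite{Scarparo2020} (torsion in isotropy, which essential principality permits) and of Deeley \cite{Deeley22} (principal, with torsion phenomena in $K$-theory), and Theorem \ref{Thm:F} of the paper produces a principal ample groupoid with $H_n(G)=0$ for all $n\geq 2$ for which $\mu_0$ fails to be surjective, so even strengthened low-dimensional forms fail without further hypotheses. Everything you use beyond the spectral sequence itself --- vanishing of $H_p(G)$ for $p>2$, torsion-freeness of the top homology group, the ``standing finite-generation hypothesis'' --- comes from dynamic asymptotic dimension at most $2$ together with principality, none of which appears in the statement. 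What you have sketched is a proof of Corollary \ref{intro iso} (Theorem \ref{Cor:HK conjecture up to dimension 2} in the body of the paper), not of the conjecture; for general $G$ your own closing paragraph names the fatal obstructions (surviving higher differentials, uncontrolled extension problems), and the counterexamples show these are genuine rather than technical.

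Within the special case your outline does track the paper's proof, with one substantive discrepancy: the provenance and convergence of the spectral sequence. The paper does not construct it from the semi-simplicial nerve; it invokes the Proietti--Yamashita spectral sequence \cite{Proietti:2021wz} (an instance of Meyer's ABC machinery \cite{Meyer2008}), whose convergence to $K_*(C^*_r(G))$ requires torsion-free isotropy and the strong Baum--Connes conjecture. These are supplied, respectively, by principality and by amenability: finite dynamic asymptotic dimension implies amenability (via the proof of \cite[Theorem A.9]{Guentner:2014bh}), and amenable groupoids satisfy strong Baum--Connes by Tu \cite{Tu:1999bq}. Your proposed alternative --- that the decomposition of $G$ into finitely many pieces of controlled complexity directly identifies the abutment with $K_*(C^*_r(G))$ --- is precisely the step you have not supplied, and it is not formal: without routing through Baum--Connes one has no control over the abutment, as the property (T) examples in Section \ref{Subsection:Property T} illustrate by detaching $K_0(C^*_r(G))$ from its homological approximation. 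Once the spectral sequence is in hand, your degeneration argument at $E^2$ and the splitting of $0\to H_0(G)\to K_0(C^*_r(G))\to H_2(G)\to 0$ via torsion-freeness of $H_2(G)$ (Theorem \ref{main vanishing}) plus finite generation are exactly as in the paper.
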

The conjecture has been confirmed in several interesting cases \cite{Farsi2019,Matui2016,Ortega2020,Yi20} even beyond the originally postulated minimal setting.  However, there are also counterexamples due to Scarparo \cite{Scarparo2020}, and Deeley \cite{Deeley22}.  Scarparo's counterexamples seem to be explained by the presence of torsion in isotropy groups, while Deeley's come from the presence of torsion phenomena in $K$-theory.  It seems very interesting to see where exactly the conjecture holds, and to elucidate the obstructions that exist.

Combining Theorem \ref{Thm:A} with a spectral sequence recently constructed by Proietti and Yamashita \cite{Proietti:2021wz}, we can confirm the HK-conjecture for a large class of low-dimensional groupoids:
\begin{corx}\label{intro iso}
	Let $G$ be a locally compact, Hausdorff, étale, principal, second countable, ample groupoid with dynamic asymptotic dimension at most $2$. If $H_2(G)$ is finitely generated then the HK-conjecture holds for $G$, i.e.
	$$K_0(C_r^*(G))\cong H_0(G)\oplus H_2(G),\quad K_1(C_r^*(G))\cong H_1(G).$$
\end{corx}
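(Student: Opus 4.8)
The plan is to feed the homological input of Theorem \ref{Thm:A} into the spectral sequence of Proietti and Yamashita \cite{Proietti:2021wz} and then resolve the single resulting extension problem using freeness of $H_2(G)$. Since $G$ is second countable it is in particular $\sigma$-compact, so Theorem \ref{Thm:A} applies with $d=2$ and yields $H_n(G)=0$ for $n>2$ together with torsion-freeness of $H_2(G)$. Combining this with the standing hypothesis that $H_2(G)$ is finitely generated, the structure theorem for finitely generated abelian groups gives $H_2(G)\cong\ZZ^k$ for some $k$; in particular $H_2(G)$ is a free, hence projective, $\ZZ$-module.

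Next I would invoke the Atiyah--Hirzebruch-type spectral sequence of \cite{Proietti:2021wz}, which for a second countable principal ample groupoid has the form $E^2_{p,q}=H_p(G)$ for $q$ even and $E^2_{p,q}=0$ for $q$ odd, converging to $K_{p+q}(C^*_r(G))$. The key structural observation is that the vanishing $H_n(G)=0$ for $n>2$ confines all nonzero entries of the $E^2$-page to the three columns $p\in\{0,1,2\}$. The differential $d^2$ raises $q$ by $1$ and so lands in an odd row, which is zero; every higher differential $d^r$ with $r\geq 3$ lowers $p$ by at least $3$ and therefore lands in a column with $p<0$, which is also zero. Hence all differentials vanish, the sequence collapses at the $E^2$-page, and $E^\infty=E^2$.

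It then remains to read off the $K$-groups from the induced filtration. In odd total degree the only surviving term is $E^\infty_{1,q}=H_1(G)$ (for $q$ even), and as this is the unique graded piece there is no extension to resolve, giving $K_1(C^*_r(G))\cong H_1(G)$. In even total degree the surviving terms are $E^\infty_{0,q}=H_0(G)$ and $E^\infty_{2,q}=H_2(G)$, which assemble into a short exact sequence
$$0\longrightarrow H_0(G)\longrightarrow K_0(C^*_r(G))\longrightarrow H_2(G)\longrightarrow 0.$$
The crux of the argument is that this extension splits: since $H_2(G)$ is free by the first step, $\mathrm{Ext}^1_{\ZZ}(H_2(G),H_0(G))=0$, and therefore $K_0(C^*_r(G))\cong H_0(G)\oplus H_2(G)$. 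This is exactly where the two hypotheses interact---the torsion-freeness supplied by Theorem \ref{Thm:A} and the finite generation assumed in the statement together force $H_2(G)$ to be free, which is precisely what kills the $\mathrm{Ext}$-obstruction. I expect the only delicate point to be the bookkeeping of the precise form and convergence of the Proietti--Yamashita spectral sequence---in particular the direction of the filtration, which must be checked to confirm that $H_2(G)$ appears as the quotient rather than as the subobject---since everything else is formal once the sequence collapses.
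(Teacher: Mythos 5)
Your argument is, in substance, exactly the paper's proof (Theorem \ref{Cor:HK conjecture up to dimension 2}): feed the vanishing and torsion-freeness from Theorem \ref{main vanishing} into the Proietti--Yamashita spectral sequence, observe collapse at $E^2$, and split the resulting extension using freeness of $H_2(G)$. There is, however, one genuine gap: you assert that the spectral sequence of \cite{Proietti:2021wz} is available ``for a second countable principal ample groupoid,'' but its actual hypotheses (see line \eqref{Equation:Spectral sequence}) are torsion-free isotropy --- which principality does give you --- \emph{and} the strong Baum--Connes conjecture for $G$; without the latter the sequence does not converge to $K_*(C^*_r(G))$. This hypothesis cannot be waved away: Subsection \ref{Subsection:Property T} of the paper exhibits a principal, second countable, ample groupoid with $H_n(G)=0$ for all $n\geq 2$ for which $\mu_0$ fails to be surjective, precisely because of Baum--Connes-type failures. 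In your setting the hypothesis does hold, but it must be checked: finite dynamic asymptotic dimension forces $G$ to be amenable (this follows from the proof of \cite[Theorem A.9]{Guentner:2014bh}), and amenable groupoids satisfy the strong Baum--Connes conjecture by \cite{Tu:1999bq}. Adding this one verification makes your proof complete and identical to the paper's.

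On the point you flagged as delicate: the filtration in the underlying ABC machinery is increasing in $p$, so in even total degree the bottom graded piece is $E^\infty_{0,q}=H_0(G)$ and the top quotient is $E^\infty_{2,q-2}=H_2(G)$; thus the extension is $0\to H_0(G)\to K_0(C^*_r(G))\to H_2(G)\to 0$ as you wrote. You were right that the direction matters: nothing is assumed about $H_0(G)$, so your $\mathrm{Ext}$-vanishing argument only works with $H_2(G)$ as the quotient. This orientation is confirmed by Theorem \ref{Thm:D}, where $\mu_0$ is identified as the resulting injection $H_0(G)\to K_0(C^*_r(G))$.
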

Note that having finite dynamic asymptotic dimension forces all the isotropy groups to be locally finite. Consequently, it is natural to restrict our attention to the class of principal groupoids to avoid the trouble caused by torsion in the isotropy groups.

The second theme of this work is an attempt to make the HK-conjecture more explicit. It is well-known and easy to see that there is a canonical homomorphism $$\mu_0:H_0(G)\rightarrow K_0(C_r^*(G)).$$ In general, not much seems to be known about this map, and there are only partial results on the existence of maps in higher dimensions. To formulate our progress in this direction, for an ample groupoid $G$ we denote by $[[G]]$ its topological full group. Moreover, Matui constructs in \cite{Matui2012} an \textit{index map} $I:[[G]]\rightarrow H_1(G)$.
\begin{thmx}\label{Thm:D}
Let $G$ be a locally compact, Hausdorff, étale, ample groupoid. Then there exists a homomorphism
$$\mu_1:H_1(G)\rightarrow K_1(C_r^*(G))$$
which factors the canonical map $[[G]]\rightarrow K_1(C_r^*(G))$ via the index map $I:[[G]]\rightarrow H_1(G)$.
	
If moreover $G$ is principal, second countable, and has dynamic asymptotic dimension at most $2$, then $\mu_0$ and $\mu_1$ induce the injection $H_0(G)\to K_0(C^*_r(G))$ and isomorphism $H_1(G)\to K_1(C^*_r(G))$ from Corollary \ref{intro iso}.
\end{thmx}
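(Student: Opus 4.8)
The plan is to construct $\mu_1$ directly on the level of $1$-cycles and then verify the two asserted properties. Recall that $H_1(G)$ is computed from the complex $\dots \to C_c(G^{(2)},\ZZ) \xrightarrow{\partial_2} C_c(G^{(1)},\ZZ) \xrightarrow{\partial_1} C_c(G^{(0)},\ZZ)$, where $\partial_1 = s_* - r_*$ and $\partial_2$ is the alternating sum of the face maps. Given a $1$-cycle $f$, I would first write it as $f = \sum_i n_i 1_{U_i}$ for compact open bisections $U_i$, using the relation $[1_{V^{-1}}] = -[1_V]$ in $H_1(G)$ (the pairs $(g,g^{-1})$ with $g\in V$ span a $2$-chain with boundary $1_V + 1_{V^{-1}} - 1_{r(V)}$, and chains supported on $G^{(0)}$ are null-homologous) to replace negative coefficients by inverse bisections. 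Placing the partial isometries $1_{U_i}\in C_c(G)\subseteq C_r^*(G)$ (and the adjoints $1_{U_i}^* = 1_{U_i^{-1}}$ where needed) on the diagonal produces a partial isometry $V\in M_N((C_r^*(G))^\sim)$ with source and range projections $P,Q$, both diagonal with entries in $C_c(G^{(0)},\ZZ)$. The cycle condition $\partial_1 f = 0$ is precisely the statement that the trace functions of $P$ and $Q$ agree in $C_c(G^{(0)},\ZZ)$.

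Since $G$ is ample, $G^{(0)}$ is totally disconnected, and two diagonal projections over $C_0(G^{(0)})$ with equal (locally constant) trace functions are unitarily equivalent via a permutation unitary $W$ assembled from clopen subsets of $G^{(0)}$; I choose $W$ with $W^*W = Q$ and $WW^* = P$. Then $u := WV$ is a unitary in the corner $P M_N((C_r^*(G))^\sim) P$, and I set $\mu_1([f]) := [u]_1 \in K_1(C_r^*(G))$. This is independent of the choice of $W$: two choices differ by a unitary of the corner with entries in $C_0(G^{(0)})$, and $K_1(C_0(G^{(0)})) = 0$ because $G^{(0)}$ is totally disconnected (locally constant unitaries are null-homotopic, as $U(n)$ is connected). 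Additivity is immediate from the direct-sum construction, and invariance under refining the $U_i$ is routine.

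It remains to check that $[u]_1$ depends only on the homology class, i.e. vanishes on boundaries, and to identify the composite with the index map. For the first point the essential input is the multiplicativity $1_{U_2}1_{U_1} = 1_{U_2 U_1}$ of bisection indicators for composable bisections ($r(U_1) = s(U_2)$): applied to a generator $1_W$ with $W\subseteq G^{(2)}$, one has $\partial_2 1_W = 1_{U_1} - 1_{U_2 U_1} + 1_{U_2}$, and this relation yields a rotation homotopy trivialising the unitary assembled from $\{1_{U_1}, 1_{U_2}, 1_{U_2 U_1}^*\}$; verifying this homotopy is the main technical step. For the factorisation, if $U\in[[G]]$ is a full bisection then $s(U) = r(U) = G^{(0)}$, so $P = Q$ is the unit, one may take $W$ to be the identity, and $u = 1_U$; hence $\mu_1(I(U)) = \mu_1([1_U]) = [1_U]_1$, which is exactly the canonical image of $U$ in $K_1(C_r^*(G))$. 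This establishes the first assertion.

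For the second assertion I would compare $\mu_0$ and $\mu_1$ with the edge homomorphisms of the Proietti--Yamashita spectral sequence \cite{Proietti:2021wz}, which has $E^2_{p,q} = H_p(G)$ for $q$ even and $0$ for $q$ odd, converging to $K_{p+q}(C_r^*(G))$. By Theorem \ref{Thm:A} the hypotheses force $H_p(G) = 0$ for $p>2$ and $H_2(G)$ torsion-free; a parity and support argument then shows every differential $d^r$ ($r\ge 2$) vanishes, so the sequence degenerates at $E^2$, while torsion-freeness of $H_2(G)$ splits the single extension. This produces the filtration pieces giving the injection $H_0(G) = E^\infty_{0,0} \hookrightarrow K_0(C_r^*(G))$ and the isomorphism $K_1(C_r^*(G)) = E^\infty_{1,0} = H_1(G)$ of Corollary \ref{intro iso}. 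Now $\mu_0$ is by construction the canonical edge map $C_c(G^{(0)},\ZZ)\to K_0(C_r^*(G))$, $1_K\mapsto[1_K]_0$, so it induces exactly this injection. The remaining---and hardest---point is to identify our explicit $\mu_1$ with the degree-one edge homomorphism of the spectral sequence; I expect to do this by naturality, checking that both are natural transformations $H_1 \Rightarrow K_1$ agreeing on the classes $[1_U]$ of symmetry bisections (on which both give $[1_U]_1$) and that such classes generate $H_1(G)$ via the semi-simplicial model underlying Theorem \ref{Thm:B}. Matching the hands-on unitary construction with the abstractly defined edge map is the principal obstacle of the whole argument.
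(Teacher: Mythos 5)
Your construction of $\mu_1$ is, in essence, the paper's own first construction written out by hand. The paper packages your triple (range projection, diagonal partial isometry $V$ built from bisection indicators, source projection) as a class in a relative $K$-group $V_\infty(C_0(G^0),C^*_r(G))/\!\!\approx\;\cong K_0(C(\iota))$ (Theorem \ref{p:relKthunital}); your boundary-vanishing via $1_{U_2}1_{U_1}=1_{U_2U_1}$ and a rotation homotopy is exactly Lemma \ref{Lemma:ConstructionH1} (using Remark \ref{vab rem}); and your $W$-correction together with its independence of choices (via $K_1(C_0(G^0))=0$) is replaced by a diagram chase through the exact sequence, where $K_1(C^*_r(G))\hookrightarrow V_\infty/\!\!\approx$ precisely because $K_1(C_0(G^0))=0$ (Theorem \ref{mu1 exist}). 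Working in the relative group $H_1(G,G^0)\cong\ZZ[G]/\mathrm{im}(\partial_2)$ rather than lifting cycle-by-cycle spares you the fiddly simultaneous verifications (independence of the decomposition, of the sign-flipping by inverses, and of $W$) that your version must do at once; otherwise this half, including the factorisation through the index map, is sound.

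The second half has a genuine gap, at exactly the point you flag as hardest. Your plan to identify the explicit $\mu_1$ with the degree-one edge map by checking agreement on bisection classes that ``generate $H_1(G)$'' fails: indicator functions $1_U$ of compact open bisections are not $1$-cycles unless $s(U)=r(U)$, so generic bisections define no class in $H_1(G)$ at all, while the classes coming from full bisections constitute the image of the index map $I$, which is \emph{not} surjective in general---if it were, the whole general construction would be unnecessary (Matui had already treated that case). The paper circumvents this by comparing the two maps at the level of the \emph{relative} group $H_1(G,G^0)\cong \ZZ[G]/\mathrm{im}(\partial_2)$, where every $1_V$ does define a class and such classes do generate, mapping into $K_0(C(\iota))\cong V_\infty/\!\!\approx$; since $K_1(C^*_r(G))$ injects into the latter, agreement there suffices. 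Even so, computing what the spectral-sequence map does on $[1_V]$ is not a soft naturality argument: it requires explicitly descending the phantom tower through $j_G$ (Lemmas \ref{desc mor}, \ref{unitary}, \ref{p tower}, \ref{p tower desc}) to produce the concrete cycle $\Psi$ and then verifying $\eta([1_V]\otimes\Psi)=[1_V^*]$ (Proposition \ref{mu1 final}); this computation is the real content and is absent from your proposal. Two smaller points: applying the Proietti--Yamashita sequence needs the strong Baum--Connes conjecture, which here follows from amenability, itself a consequence of finite dynamic asymptotic dimension---you should say so; and torsion-freeness of $H_2(G)$ does \emph{not} split the extension $0\to H_0(G)\to K_0(C^*_r(G))\to H_2(G)\to 0$ (torsion-free $\neq$ free; the paper assumes $H_2(G)$ free, e.g.\ finitely generated, in Theorem \ref{Cor:HK conjecture up to dimension 2}), though the splitting is not needed for the injection and isomorphism actually asserted in the statement.
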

The construction of $\mu_1$ is straightforward if the index map $I$ is surjective, and under further structural assumptions on $G$, Matui was already able to prove this. Our construction of the map $\mu_1$ is completely general: we in fact give two independent constructions, an elementary one based on ideas of Putnam \cite{Putnam1997}, and a more sophisticated one based on the work of Proietti and Yamashita \cite{Proietti:2021wz}.

It was shown in \cite{Guentner:2014aa} that the dynamic asymptotic dimension yields an upper bound for the nuclear dimension of reduced groupoid $C^*$-algebras. In particular, if $G$ is a second countable, principal, minimal ample groupoid with finite dynamic asymptotic dimension, then $C_r^*(G)$ is classifiable.\footnote{This is due to Kirchberg and Phillips in the purely infinite case \cite{Kirchberg-ICM}, \cite{Phillips-documenta}, and due to many hands in the finite case, including Elliott, Gong, Lin, and Niu \cite{GongLinNiu-1}, \cite{GongLinNiu-2}, \cite{ElliottGongLinNiu}, and Tikuisis, White, and Winter \cite{TikuisisWhiteWinter} (see \cite{CGSTW} for an alternative proof of classification in the finite case).} Our result allows us to completely determine the classifying invariant (usually called the Elliott invariant and denoted $\mathrm{Ell}(\cdot)$) in the 1-dimensional case:
\begin{corx}\label{Cor:E}
	Let $G$ be a locally compact, Hausdorff, étale, $\sigma$-compact, principal, ample groupoid with compact base space and with dynamic asymptotic dimension at most $1$. Then 
	$$\mathrm{Ell}(C_r^*(G))=(H_0(G),H_0(G)^+,[1_{G^{(0)}}],H_1(G),M(G),p).$$
\end{corx}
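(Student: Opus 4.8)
The six-tuple on the right is the Elliott invariant $(K_0(A),K_0(A)^+,[1_A],K_1(A),T(A),\rho_A)$ of $A:=C_r^*(G)$, and the plan is to match it component by component. Since $\dad(G)\leq 1$, Theorem~\ref{Thm:A} gives $H_n(G)=0$ for all $n\geq 2$; in particular $H_2(G)=0$ is (trivially) finitely generated, so Corollary~\ref{intro iso} applies and yields group isomorphisms $K_0(A)\cong H_0(G)$ and $K_1(A)\cong H_1(G)$, which by Theorem~\ref{Thm:D} are implemented by $\mu_0$ and $\mu_1$. The class of the unit is then immediate: as $G^{(0)}$ is compact, $1_{G^{(0)}}$ is the unit of $A$, and by construction $\mu_0([1_{G^{(0)}}])=[1_A]$.

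For the tracial part I would recall that $M(G)$ is the weak-$*$ compact convex set of $G$-invariant Borel probability measures on $G^{(0)}$, and set up the affine map $M(G)\to T(A)$, $\mu\mapsto\tau_\mu$, where $\tau_\mu(a)=\int_{G^{(0)}}a|_{G^{(0)}}\,d\mu$. That $\tau_\mu$ is a trace, and that this map is an affine homeomorphism onto $T(A)$, is standard for principal étale groupoids: the canonical faithful conditional expectation $E\colon A\to C(G^{(0)})$ satisfies $\tau=\tau\circ E$ for every trace $\tau$ (this is exactly where principality is used), so $\tau$ is determined by its restriction to the diagonal, which the trace property forces to be $G$-invariant; conversely every invariant measure integrates to a trace. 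I would cite the relevant result rather than reprove it.

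For the pairing, note that under the two identifications above $\rho_A$ becomes the map $p\colon H_0(G)\to\mathrm{Aff}(M(G))$, $p([f])(\mu)=\int_{G^{(0)}}f\,d\mu$. It suffices to check this on the generators $[1_U]$ of $H_0(G)$ indexed by compact open $U\subseteq G^{(0)}$: here $\mu_0([1_U])=[1_U]$ and $\rho_A([1_U])(\tau_\mu)=\tau_\mu(1_U)=\mu(U)=p([1_U])(\mu)$, and both sides are additive in the generator, so they agree everywhere.

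The remaining, and genuinely substantial, point is the order isomorphism $(H_0(G),H_0(G)^+)\cong(K_0(A),K_0(A)^+)$, where $H_0(G)^+$ is the image of the nonnegative functions in $C_c(G^{(0)},\Z)$. One inclusion is easy: $H_0(G)^+$ is generated as a monoid by the classes $[1_U]$, each of which $\mu_0$ sends to the class of an honest projection $1_U\in A$, so $\mu_0(H_0(G)^+)\subseteq K_0(A)^+$. The hard part is the reverse inclusion $K_0(A)^+\subseteq\mu_0(H_0(G)^+)$, which amounts to showing that every class of a projection in $M_\infty(A)$ represents the same $K_0$-class as a projection in $M_\infty(C(G^{(0)}))$ supported on the diagonal, since such a diagonal projection is given by a nonnegative function and hence lies in $\mu_0(H_0(G)^+)$. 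I expect this to be the main obstacle, and I would attack it using the low dimensionality: $\dad(G)\leq 1$ forces strong structural properties of $A$ (such as real rank zero and comparison of projections being controlled by the diagonal), which I would extract from the structural results for groupoids of finite dynamic asymptotic dimension in \cite{Guentner:2014aa}. Assembling these six identifications yields the stated description of $\mathrm{Ell}(C_r^*(G))$.
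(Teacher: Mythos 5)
Your skeleton matches the paper's proof on four of the five nontrivial components. The paper likewise argues: $\dad(G)\leq 1$ plus Theorem \ref{main vanishing} gives $H_n(G)=0$ for $n\geq 2$, so the short exact sequence of Theorem \ref{Cor:HK conjecture up to dimension 2} collapses and $\mu_0,\mu_1$ are isomorphisms; compactness of $G^0$ makes $1_{G^0}$ the unit, whose class $\mu_0$ preserves; for the tracial part the paper does exactly what you propose, namely cite the standard affine homeomorphism $M(G)\cong T(C_r^*(G))$ for principal \'etale groupoids (\cite[Section 4.1]{LiRenault}) rather than reprove it; and the compatibility of the pairings is read off from the description of $\mu_0$ as induced by the inclusion $\iota\colon C(G^0)\hookrightarrow C_r^*(G)$ (Lemma \ref{mu0 descr}), which is your generator computation.

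The genuine gap is precisely the step you flag as ``genuinely substantial'' and then do not prove: the inclusion $K_0(C_r^*(G))^+\subseteq\mu_0(H_0(G)^+)$. Your proposed repair is not viable as stated. You want to extract ``real rank zero and comparison of projections being controlled by the diagonal'' from \cite{Guentner:2014aa}, but that paper proves a nuclear dimension bound and nothing about real rank or comparison; and since the corollary assumes no minimality, none of the classification-type structure theory is available. Indeed, finite dynamic asymptotic dimension does not force real rank zero: for example, every free action of $\Z$ on a totally disconnected compact space has $\dad\leq 1$, while real rank zero of the crossed product is a genuine additional hypothesis that fails in general for such (non-minimal) systems. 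A trace-based fallback will not work either, since $H_0(G)$ can contain infinitesimal classes pairing trivially with every element of $M(G)$, so positivity in $H_0(G)$ is not detected by traces. So as written, your argument establishes everything in $\mathrm{Ell}(C_r^*(G))$ except the ordered-group identification $(H_0(G),H_0(G)^+)\cong(K_0(C_r^*(G)),K_0(C_r^*(G))^+)$, which remains open in your write-up. To be fair, you have isolated a point the paper itself dispatches in a single sentence (asserting that $\mu_0$ ``clearly'' extends to an order isomorphism respecting the unit, on the strength of its explicit form); but an honest completion should proceed from that explicit description of $\mu_0$ --- it sends $[1_U]$ to the class of the honest projection $1_U$, and by the collapsed spectral sequence $\iota_*$ is surjective on $K_0$ --- rather than from structural properties of $C_r^*(G)$, such as real rank zero, that simply do not hold at this level of generality.
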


In light of these results one is tempted to formulate a stronger version of the HK-conjecture in low dimensions, by asking that for ample principal groupoids with $H_n(G)=0$ for $n\geq 2$ the canonical maps $\mu_i$ are isomorphisms. To see that this cannot be the case, we construct a counterexample using groupoids with topological property (T) introduced in \cite{DellAiera2018}. The example is based on the construction of counterexamples to the Baum-Connes conjecture by Higson, Lafforgue and Skandalis \cite{Higson2002} and work of Alekseev and Finn-Sell \cite{Alekseev2018}.

\begin{thmx}\label{Thm:F}
	There exists a locally compact, Hausdorff, étale, second countable, principal, ample groupoid $G$ with $H_n(G)=0$ for all $n\geq 2$
	such that $\mu_0:H_0(G)\rightarrow K_0(C_r^*(G))$ is not surjective.
\end{thmx}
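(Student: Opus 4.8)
The plan is to build $G$ from the finite quotients of a residually finite group with property (T), so that a \emph{ghost projection} produced by the spectral gap supplies a class in $K_0(C_r^*(G))$ that $\mu_0$ cannot reach. Concretely, fix an infinite, residually finite, property (T) group $\Gamma$ (for instance $SL_3(\mathbb Z)$) together with a decreasing sequence $\Gamma=N_0\supseteq N_1\supseteq\cdots$ of finite-index normal subgroups with $\bigcap_k N_k=\{e\}$, and set $Q_k=\Gamma/N_k$. Viewing the box space $X=\bigsqcup_k Q_k$ as a coarse disjoint union of the finite Cayley graphs (with $d(Q_k,Q_j)\to\infty$), I would take $G$ to be a second-countable reduction of the associated coarse groupoid, as in the counterexamples of Higson--Lafforgue--Skandalis and Alekseev--Finn-Sell \cite{Higson2002,Alekseev2018}. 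Since coarse groupoids are automatically principal, étale and ample, this immediately yields all the structural hypotheses except second countability, which I would arrange by restricting to a suitable closed invariant subset of $\beta X$ containing the orbit closure of a single boundary ultrafilter; the key feature to preserve is the decomposition of the unit space into the open invariant piece $X=\bigsqcup_k Q_k$ and its closed complement.

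For the homology vanishing I would exploit this open/closed decomposition. Over the interior the coarse groupoid restricts to the disjoint union $\bigsqcup_k (Q_k\times Q_k)$ of \emph{pair groupoids}: within each finite component all pairs have bounded distance, while distinct components do not interact. Each pair groupoid is Morita trivial, so $H_0(G|_X)=\bigoplus_k\mathbb Z$ and $H_n(G|_X)=0$ for $n\geq 1$. Feeding this into the long exact sequence in groupoid homology associated with the open invariant subset $X$, the vanishing $H_n(G)=0$ for $n\geq 2$ reduces to the corresponding vanishing for the boundary groupoid $G|_{\partial}$, which I would establish from the explicit description of the boundary coarse groupoid; the choice of $\Gamma$ and of the reduction is made precisely so that $G|_{\partial}$ contributes nothing in degrees $\geq 2$.

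The heart of the argument is the failure of surjectivity. Property (T) furnishes, for each $k$, the Kazhdan projection $p_k\in\mathbb C[Q_k]$ onto the trivial representation; realized in the regular representation it is the rank-one projection on $\ell^2(Q_k)$ all of whose matrix entries equal $1/|Q_k|$. The uniform spectral gap coming from property (T) guarantees that $p=(p_k)_k$ is a norm-limit of finite-propagation operators, hence a genuine projection in $C_r^*(G)$, and since $|Q_k|\to\infty$ its matrix coefficients vanish at infinity, i.e. $p$ is a ghost. I would first show $[p]\neq 0\in K_0(C_r^*(G))$ using the max-versus-reduced comparison for groupoids with topological property (T) from \cite{DellAiera2018}. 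I would then show $[p]\notin\operatorname{im}(\mu_0)$ by a ``local rank'' argument: the image of $\mu_0$ is generated by classes $[1_U]$ of compactly supported clopen projections on $G^{(0)}$, and restriction to the $k$-th component sends such a class to an integer multiple of $|Q_k|$ that is nonzero for only finitely many $k$, whereas $p$ has rank exactly $1$ in every component. Thus the block-rank pattern $(1,1,1,\dots)$ of the ghost cannot be matched by any element of $\operatorname{im}(\mu_0)$, giving the desired non-surjectivity.

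The \emph{main obstacle} is to reconcile the two competing demands in a single example: property (T) is exactly what manufactures the ghost projection and its nonzero $K$-theory class, but it also tends to inflate homological complexity, whereas the statement requires $H_n(G)=0$ for all $n\geq 2$. Making the boundary groupoid homologically at most one-dimensional while keeping the spectral gap (and second countability) intact is the delicate point, as is the rigorous separation of $[p]$ from $\operatorname{im}(\mu_0)$: one must promote the informal ``vanishing at infinity versus compact support'' dichotomy into an honest $K$-theoretic invariant that annihilates $\operatorname{im}(\mu_0)$ but not $[p]$, which is where the ghost-ideal machinery and the property (T) estimates of \cite{DellAiera2018,Alekseev2018} do the real work.
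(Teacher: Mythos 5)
Your proposal lives in the same circle of ideas as the paper's proof (HLS-type groupoids over a box space, the Alekseev--Finn-Sell principalization, a ghost/Kazhdan projection from a spectral gap, and the interior/boundary long exact sequence), but it has a genuine gap, and it is exactly the one you flag as the ``main obstacle'' without resolving it. If $\Gamma$ is an infinite group with property (T), such as $SL_3(\Z)$, the required vanishing $H_n(G)=0$ for $n\geq 2$ is out of reach: your own long exact sequence reduces it to vanishing of $H_n(G|_{\partial})\cong H_n(\Gamma, C(\partial,\Z))$, and since $C(\widehat{\Gamma}_{\mathcal L},\Z)=\varinjlim \Z[\Gamma/N_i]$, Shapiro's lemma identifies this with $\varinjlim H_n(N_i,\Z)$, which has no reason to vanish in degrees $\geq 2$ for higher-rank lattices. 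More structurally, an infinite property (T) group can never have cohomological dimension one (that would force $\Gamma$ free by Stallings--Swan, and infinite free groups do not have (T)), so no choice of ``reduction'' can make the boundary homologically one-dimensional while $\Gamma$ itself has (T). The paper's resolution is to decouple the spectral gap from group-level property (T): it takes $\Gamma=\mathbb F_2$ (pulled back into $SL_2(\Z)$, with the congruence chain $N_i=\ker(SL_2(\Z)\to SL_2(\Z/5^i))\cap \mathbb F_2$), so that property $(\tau)$ of the chain \cite{Lubotzky94} supplies topological property (T) of the transformation groupoid (via Lemma \ref{Lemma:(T) TransGrpd} and \cite{DellAiera2018}), while freeness of $\mathbb F_2$ forces $H_n=0$ for $n\geq 2$ on the boundary via the Shapiro-type identification $H_n(\mathbb F_2\ltimes \widehat{\mathbb F_2}_{\mathcal L})\cong H_n(\mathbb F_2, C(\widehat{\mathbb F_2}_{\mathcal L},\Z))$.

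Your ``local rank'' separation of $[p]$ from $\mathrm{im}(\mu_0)$ is also incorrect as stated. The image of $\mu_0$ is generated by classes $[1_U]$ for \emph{all} compact open $U\subseteq G^{(0)}$, and such $U$ need not be supported in finitely many components: a shadow $Sh(\gamma N_i)$ meets the $j$-th component in $|Q_j|/|Q_i|$ points (not a multiple of $|Q_j|$), and in the full coarse groupoid the closure of a transversal meeting each $Q_k$ in exactly one point is compact open with rank pattern $(1,1,1,\dots)$ --- identical to the ghost's --- so the block-rank pattern neither annihilates $\mathrm{im}(\mu_0)$ nor is it evidently a well-defined invariant of $K_0$ classes (formal differences up to stabilization). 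The paper's substitute for this step is a diagram chase: it computes, via the coinvariants description of $H_0$ against the Pimsner--Voiculescu sequence \cite{Pimsner1982}, that $\mu_0$ is an isomorphism for the boundary groupoid $\mathbb F_2\ltimes\widehat{\mathbb F_2}_{\mathcal L}$; topological property (T) then gives failure of exactness in the middle of $K_0(C_r^*(\mathcal G|_X))\to K_0(C_r^*(\mathcal G))\to K_0(C_r^*(\mathcal G|_{\widehat X\setminus X}))$ (\cite[Proposition 7.14]{DellAiera2018}), whereas the corresponding homology row \emph{is} exact; injectivity of the boundary $\mu_0$ plus commutativity of the two squares then forces any preimage of the offending class into $\mathrm{im}(i_0)$, a contradiction. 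Some argument of this shape (or another honest $K_0$-invariant separating ghosts from $\mathrm{im}(\mu_0)$) is indispensable; the heuristic dichotomy ``vanishing at infinity versus compact support'' does not survive passage to $K_0$ on its own.
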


As an application of our results, we study a geometric class of examples. Given a metric space $X$ with bounded geometry, Skandalis, Tu and Yu construct an ample groupoid $G(X)$ which encodes many coarse geometric properties of the underlying space $X$ \cite{Skandalis2002}. The following result adds to this list of connections. It might be known to experts but does not seem to appear in the literature so far (except for degree zero, which has been treated in \cite{Ara2020}).
\begin{thmx}\label{Thm:G}
	Let $X$ be a bounded geometry metric space and $G(X)$ be the associated coarse groupoid. Then there is a canonical isomorphism
	$$H_*(G(X))\cong H_*^{\mathrm{uf}}(X)$$
	between the groupoid homology of the coarse groupoid and the uniformly finite homology of $X$ in the sense of Block and Weinberger \cite{Block1992}.
\end{thmx}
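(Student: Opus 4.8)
The plan is to prove Theorem~\ref{Thm:G} by matching, degree by degree, the two complexes computing the respective homologies. Recall that $H_*(G)$ is the homology of the complex $(C_c(G^{(n)},\Z),\partial_\bullet)$, where $G^{(n)}$ is the space of composable $n$-strings, $C_c(-,\Z)$ denotes compactly supported locally constant $\Z$-valued functions, and $\partial_n=\sum_{i=0}^n(-1)^i(d_i)_*$ is the alternating sum of the pushforwards along the face maps $d_i\colon G^{(n)}\to G^{(n-1)}$. On the coarse side, for bounded geometry $X$ the group $C_n^{\mathrm{uf}}(X)$ consists of formal sums $\sum a_{\bar x}\bar x$ over $(n+1)$-tuples $\bar x=(x_0,\dots,x_n)$ with uniformly bounded coefficients $a_{\bar x}\in\Z$ and controlled support (there is $R$ with $a_{\bar x}=0$ whenever $d(x_i,x_j)>R$ for some $i,j$), while the boundary is the simplicial map $\sum_i(-1)^i(x_0,\dots,\hat x_i,\dots,x_n)$; the bounded geometry hypothesis guarantees these sums converge and stay bounded with controlled support.

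First I would identify the nerve of the coarse groupoid. Writing $E_R=\{(x,y):d(x,y)\le R\}$ for the $R$-controlled entourage, the coarse groupoid is $G(X)=\bigcup_R\overline{E_R}\subseteq\beta(X\times X)$, where closures are taken in the Stone--\v Cech compactification and each $\overline{E_R}$ is a compact-open subset canonically homeomorphic to $\beta E_R$ by the Skandalis--Tu--Yu lemma \cite{Skandalis2002}. Since composition is $(x,y)(y,z)=(x,z)$, an $n$-string $((x_0,x_1),\dots,(x_{n-1},x_n))$ is encoded by the tuple $(x_0,\dots,x_n)$. Setting $E_R^{(n)}=\{(x_0,\dots,x_n):d(x_i,x_j)\le R\text{ for all }i,j\}$, I expect $G(X)^{(n)}$ to be canonically homeomorphic to $\bigcup_R\overline{E_R^{(n)}}\subseteq\beta(X^{n+1})$, with the groupoid face map $d_i$ corresponding, under this identification, to deletion of the coordinate $x_i$: indeed $d_0$ and $d_n$ drop the first and last coordinate, while $d_i$ composes $(x_{i-1},x_i)(x_i,x_{i+1})=(x_{i-1},x_{i+1})$, which deletes $x_i$. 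Establishing this homeomorphism, via the description of closures of entourages as Stone--\v Cech compactifications of the underlying discrete sets, is the geometric heart of the argument.

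Next I would construct the degreewise isomorphism $\Phi_n\colon C_c(G(X)^{(n)},\Z)\xrightarrow{\ \cong\ }C_n^{\mathrm{uf}}(X)$ by restriction to the dense subset of controlled tuples. A compactly supported locally constant function has finite range (continuous image of a compact set in a discrete space), hence bounded coefficients, and its support lies in some compact-open slice $\overline{E_R^{(n)}}$, which translates to controlled support; conversely a uf-chain is a bounded finite-range $\Z$-valued function on $X^{n+1}$, so by the universal property of $\beta(X^{n+1})$ it extends to a locally constant function whose restriction to the closed set $G(X)^{(n)}$ is compactly supported, because controlled support corresponds to support in a single $\overline{E_R^{(n)}}$. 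This yields a $\Z$-linear bijection in each degree. I would then verify that $\Phi_\bullet$ is a chain map: the pushforward $(d_i)_*$ along the étale deletion map sends a function to its fibrewise sum, and the fibre of "delete the $i$-th coordinate" over $\bar y=(y_0,\dots,y_{n-1})$ is exactly the set of tuples $(y_0,\dots,y_{i-1},x,y_i,\dots,y_{n-1})$ obtained by inserting a point $x$ in position $i$. Hence $(d_i)_*$ computes $\bar y\mapsto\sum_x a_{(y_0,\dots,y_{i-1},x,y_i,\dots,y_{n-1})}$, which is precisely the $i$-th term of the Block--Weinberger boundary. Passing to homology produces the claimed isomorphism, and its canonicity together with compatibility with the degree-zero identification of \cite{Ara2020} follows from the naturality of each $\Phi_n$.

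The main obstacle I anticipate is the chain-level identification in the middle step: one must pin down the topology of the higher nerve spaces $G(X)^{(n)}$ and prove rigorously that their compactly supported locally constant functions are exactly the bounded, controlled $\Z$-valued functions on controlled $(n+1)$-tuples, i.e.\ that the compact-open subsets of $G(X)^{(n)}$ correspond to controlled entourages in $X^{n+1}$. Once this correspondence is in place, matching the boundary maps is a formal bookkeeping of groupoid face maps against simplicial coordinate deletions, and no further analytic input is required.
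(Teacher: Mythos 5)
Your proposal is correct and takes essentially the same route as the paper: the paper also constructs degreewise mutually inverse chain maps --- evaluation on the dense set of controlled tuples in one direction, and Stone--\v{C}ech extension of bounded (hence finite-range) controlled chains in the other, using bounded geometry for exactly the lemma you flag as the technical crux --- with the boundaries matching because all face maps become coordinate deletions. The only cosmetic difference is that the paper works in the $EG_*$-coinvariants model, normalizing each class $[a]$ to its unique representative $\overline{a}$ supported on tuples whose first coordinate is a unit, which is precisely your $BG_*$/composable-strings picture under the identification $G\backslash EG_*\cong BG_*$ of Proposition \ref{hom id}.
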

As the dynamic asymptotic dimension of the coarse groupoid equals the asymptotic dimension of the underlying metric space (in the sense of Gromov), a combination of Theorems \ref{Thm:A} (adapted to the non second countable case) and \ref{Thm:D} yields the following purely geometric corollary.
\begin{corx}\label{Cor:H}
	Let $X$ be a bounded geometry metric space. 
	\begin{enumerate}
		\item If $\mathrm{asdim}(X)\leq 2$ and $H_2^{\mathrm{uf}}(X)$ is free, or finitely generated, then
		$$K_0(C_u^*(X))\cong H_0^{\mathrm{uf}}(X)\oplus H_2^{\mathrm{uf}}(X),\quad K_1(C_u^*(X))\cong H_1^{\mathrm{uf}}(X).$$
		\item If $\mathrm{asdim}(X)\leq 3$, $H_3^\mathrm{uf}(X)$ is free, or finitely generated, and $X$ is non-amenable, then
		$$K_0(C_u^*(X))\cong H_2^{\mathrm{uf}}(X),\quad K_1(C_u^*(X))\cong H_1^{\mathrm{uf}}(X)\oplus H_3^{\mathrm{uf}}(X).$$
	\end{enumerate}
\end{corx}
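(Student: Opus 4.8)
The plan is to transport the homological input of Theorems~\ref{Thm:A} and~\ref{Thm:G} through the Proietti--Yamashita spectral sequence \cite{Proietti:2021wz} and to read off the two cases from its degeneration. First I would record the standard dictionary for the coarse groupoid: $G(X)$ is a principal, ample, locally compact, Hausdorff, \'{e}tale groupoid with compact (hence $\sigma$-compact) unit space $\beta X$, its reduced $C^*$-algebra is the uniform Roe algebra $C_u^*(X)$ \cite{Skandalis2002}, and $\mathrm{asdim}(G(X))=\mathrm{asdim}(X)$ as recalled above. By Theorem~\ref{Thm:G} I may identify $H_n(G(X))\cong H_n^{\mathrm{uf}}(X)$ throughout, and by the non-second-countable form of Theorem~\ref{Thm:A} I have $H_n^{\mathrm{uf}}(X)=0$ for $n>\mathrm{asdim}(X)$, with $H_{\mathrm{asdim}(X)}^{\mathrm{uf}}(X)$ torsion-free.

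The main tool is the spectral sequence of \cite{Proietti:2021wz}, which for a principal ample groupoid of finite dynamic asymptotic dimension takes the form $E^2_{p,q}\cong H_p(G(X))$ for $q$ even and $E^2_{p,q}=0$ for $q$ odd, converging to $K_{p+q}(C_u^*(X))$. Two structural features drive the computation. First, every entry lies on an even row, so a differential $d_r\colon E^r_{p,q}\to E^r_{p-r,q+r-1}$ can join two nonzero groups only when $r$ is odd. Second, the nonzero columns are confined to $0\le p\le\mathrm{asdim}(X)\le 3$, so the only higher differential not forced to vanish for range reasons is $d_3\colon E^3_{3,q}\to E^3_{0,q+2}$, running from the top column to the $p=0$ column. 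Once I show this $d_3$ vanishes in each case, the sequence degenerates at $E^2$ and $K_n(C_u^*(X))$ acquires a finite filtration whose subquotients are exactly the groups $H_p^{\mathrm{uf}}(X)$ with $0\le p\le\mathrm{asdim}(X)$ and $p\equiv n\pmod 2$, the lower values of $p$ giving subgroups.

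For part~(1), where $\mathrm{asdim}(X)\le 2$, the column $p=3$ is absent, so $d_3$ is vacuously zero and the sequence degenerates. The nonzero columns $p\in\{0,1,2\}$ then yield $K_1(C_u^*(X))\cong H_1^{\mathrm{uf}}(X)$ together with a short exact sequence $0\to H_0^{\mathrm{uf}}(X)\to K_0(C_u^*(X))\to H_2^{\mathrm{uf}}(X)\to 0$. Since $H_2^{\mathrm{uf}}(X)$ is torsion-free by Theorem~\ref{Thm:A}, the hypothesis that it is free \emph{or} finitely generated forces it to be free, hence projective over $\Z$, so the extension splits and gives $K_0\cong H_0^{\mathrm{uf}}(X)\oplus H_2^{\mathrm{uf}}(X)$. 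For part~(2), where $\mathrm{asdim}(X)\le 3$, I would invoke the Block--Weinberger characterisation of amenability \cite{Block1992} to deduce from non-amenability of $X$ that $H_0^{\mathrm{uf}}(X)=0$; this makes the target of $d_3$ a zero group, so the sequence again degenerates. The nonzero columns $p\in\{1,2,3\}$ now produce $K_0(C_u^*(X))\cong H_2^{\mathrm{uf}}(X)$ and an extension $0\to H_1^{\mathrm{uf}}(X)\to K_1(C_u^*(X))\to H_3^{\mathrm{uf}}(X)\to 0$, which splits by the same freeness argument applied to the top group $H_3^{\mathrm{uf}}(X)$.

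The hard part will be ensuring that the whole HK-machinery---the Proietti--Yamashita spectral sequence and the torsion-freeness clause of Theorem~\ref{Thm:A}---is genuinely available for $G(X)$, which is \emph{not} second countable; this is precisely where the non-second-countable adaptations flagged in the statement must be used. Granting that, the remainder is bookkeeping: observing that in both parts it is the \emph{top} nonvanishing homology group that appears as the quotient of the relevant extension, so that the torsion-freeness supplied by Theorem~\ref{Thm:A} is exactly what is needed to split it, and noting that the sole role of non-amenability in part~(2) is to annihilate the $p=0$ column, thereby both simplifying $K_0$ and forcing the $d_3$-degeneration that identifies $K_1$.
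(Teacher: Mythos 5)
Your endgame coincides with the paper's: identify $H_*(G(X))$ with $H_*^{\mathrm{uf}}(X)$ (Theorem \ref{Theorem:hmlgy of coarse grpd is uf hmlgy}), feed the vanishing and torsion-freeness of Theorem \ref{main vanishing} into the Proietti--Yamashita spectral sequence, observe that the only potentially surviving differentials are $d^3_{3,2l}\colon H_3^{\mathrm{uf}}(X)\to H_0^{\mathrm{uf}}(X)$, kill them in case (1) for range reasons and in case (2) via Block--Weinberger vanishing of $H_0^{\mathrm{uf}}(X)$ for non-amenable $X$, and split the resulting extensions because the top homology group is torsion-free, hence free when finitely generated. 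All of that matches the paper's proof of Corollary \ref{huf cor} essentially verbatim. But the step you explicitly ``grant'' --- availability of the spectral sequence for $G(X)$ --- is not a formality you can defer: it is the main content of the paper's argument, and as stated your proof has a genuine hole there, since the spectral sequence of \cite{Proietti:2021wz} is only constructed for second countable groupoids and $G(X)$, with base space $\beta X$, is badly non-second-countable. (Note also that the torsion-freeness and vanishing themselves need no adaptation: Theorem \ref{main vanishing} is stated for $\sigma$-compact ample groupoids, and $G(X)=\bigcup_r E_r$ is $\sigma$-compact with compact base space; the only missing ingredient is the spectral sequence.)

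The paper closes this gap in two stages. First, using the pseudogroup description of $G(X)$ from \cite{Skandalis2002}, it builds a \emph{countable} admissible sub-pseudogroup $\mathscr{B}$ so that $G_{\mathscr{B}}$ is second countable, \emph{principal} (this uses the clopen fixed-point decomposition of \cite{Pitts17}, Lemma \ref{Lemma: principal pseudogroup}), has dynamic asymptotic dimension at most $\mathrm{asdim}(X)$ (by adjoining the indicator functions of the covers witnessing the dimension bound, Lemma \ref{sc dad}), and satisfies $G(X)\cong \beta X\rtimes G_{\mathscr{B}}$ (Lemma \ref{adm}). Second, it writes $\beta X$ as an inverse limit $\varprojlim Y_i$ of second countable $G_{\mathscr{B}}$-invariant quotients, applies the Proietti--Yamashita sequence with coefficients $C(Y_i)$ for each $i$, and passes to the direct limit of spectral sequences using the functoriality of Meyer's ABC machinery \cite{Meyer2008} in the coefficient variable; crucially, the uniform vanishing $E^2_{p,q}(i)=0$ for $p>\mathrm{asdim}(X)$ supplied by Theorem \ref{main vanishing} makes all the limit filtrations finite, which is what guarantees that convergence survives the limit and yields $E^2_{p,q}=H_p^{\mathrm{uf}}(X)\otimes K_q(\C)\Rightarrow K_{p+q}(C_u^*(X))$. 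Without an argument of this type (or some substitute), your proposal establishes the corollary only conditionally.
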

The first part of this corollary applies for example if $X$ is the fundamental group of a closed, orientable surface, and the second applies for example if $X$ is the fundamental group of a closed, orientable, hyperbolic $3$-manifold.  The reader might compare this to \cite[Theorem B, Theorem F, and Corollary H]{Engel:2019ux}: combining these implies related results after taking a ``completed tensor product with $\C$'' (in an appropriate sense) for certain spaces.

\subsection*{Outline of the paper}

In Section \ref{Gmod sec} we give a new picture of Crainic-Moerdijk homology by defining a $G$-equivariant homology theory for an appropriate notion of semi-simplicial $G$-spaces. This is done in section \ref{Gsphom}, by describing the homology groups as the left derived functor of the coinvariants in the sense of classical homological algebra. The necessary background is given in section \ref{Gmod}. It turns out that the groups $H_*(G)$ will be naturally isomorphic to the equivariant homology of a semi-simplicial $G$-space $EG_*$.  We deduce Theorem \ref{Thm:B} from this material.

The central piece of the vanishing result in Theorem \ref{Thm:A} is tackled in Section \ref{colour sec}. There we define a \textit{colouring} of $G$, which will induce an appropriate cover of $G$. The \textit{nerve} of this cover in the sense of section \ref{nerve} is a semi-simplicial $G$-space, and that defines the homology of the colouring. The central idea is to define an anti-\v{C}ech sequence of $G$ as a sequence of colourings with induced covers that are bigger and bigger, in the spirit of anti-\v{C}ech covers in coarse geometry, introduced by Roe (see \cite[chapter 5]{Roe2003}). We show that the inductive limit of an anti-\v{C}ech cover is well defined and that it converges to the Crainic-Moerdijk homology groups for principal $\sigma$-compact ample groupoids in theorem \ref{main comparison}.

Section \ref{hk sec} is dedicated to the two constructions of the map 
\[\mu_1:H_1(G) \rightarrow K_1(C^*_r(G)), \]
from Theorem \ref{Thm:D}, and a proof that these give the same result.  We also deduce Corollary \ref{intro iso} (in a stronger form) and Corollary \ref{Cor:E}.

Finally, in section \ref{Section:Examples&Applications} we interpret our results in various different settings, including coarse geometry (in particular, we deduce Theorem \ref{Thm:G} and Corollary \ref{Cor:H}) and Smale spaces, and present the negative results regarding a stronger form of the HK conjecture announced in Theorem \ref{Thm:F}.

\section{Models for groupoid homology\label{Gmod sec}}

\subsection{The category of $G$-modules and the coinvariant functor\label{Gmod}}

Let us first fix our notations. A groupoid is a (small) category in which all arrows are invertible. We will write $G$ and $G^{0}$ for the set of arrows and objects respectively; we usually also write $G$ for the groupoid and assume the other data is given.  We will call $G^0$ the base space. The range and source maps are denoted by $r,s: G\rightarrow G^{0}$, and the corresponding fibres by $G^x:=r^{-1}(x)$ and $G_x:=s^{-1}(x)$.  A pair $(g,h)\in G\times G$ is composable if $s(g)=r(h)$, in which case the product is written $gh$.  The identity at $x\in G^0$ is written $e_x\in G$.  For subsets $A,B$ of $G$ we write $AB=\{gh\mid g\in A, h\in B,r(h)=s(g)\}$, and we write $gA$ for $\{g\}A$.

We work exclusively with topological groupoids: $G$ and $G^{0}$ carry locally compact Hausdorff topologies that are compatible, and all structure maps are continuous.  A bisection is a subset $B$ of $G$ such that the restrictions $r|_B$ and $s|_B$ are homeomorphisms onto their images; a groupoid $G$ is \'etale if the open bisections from a basis for its topology, and is ample if the compact open bisections form a basis  for the topology. 

We will be focusing throughout on (\'etale) ample groupoids, or equivalently \'etale groupoids with a totally disconnected base space. Examples include discrete groups and totally disconnected spaces, but also action groupoids of discrete groups acting on Cantor sets by homeomorphisms, coarse groupoids, and more examples associated to $k$-graphs and related combinatorial objects.

If $G$ is an ample groupoid, let $\Z[G]$ denote the set of compactly supported continuous functions with integer values,
\[\Z[G]:= C_c(G,\Z),\]
with the ring structure given by pointwise addition and convolution. This is a ring (in general without identity) with local units, i.e.~for any $f_1,\dots, f_n \in \Z[G]$ there is an idempotent $e\in \Z[G]$ such that $ef_j = f_j e = f_j$ for all $j=1,\dots, n$. The element $e$ can always be picked in $\Z[G^0] \subseteq \Z[G]$ as the characteristic function on a compact open subset of $G^0$.
Define the augmentation map $\varepsilon : \Z [G] \rightarrow \Z[G^0]$ by
\[\varepsilon(f)(x) =\sum_{g\in G_x} f(g).\]

We will denote the category of (left, non-degenerate\footnote{If $R$ is a ring with local units, a left $R$-module $M$ is called \emph{non-degenerate} whenever $RM = M$. All of our modules will be non-degenerate by assumption.}) $\Z[G]$-modules with $\Z[G]$-linear maps as morphisms by $\Gmod$. We will often use the term $G$-module as shorthand for $\Z[G]$-module.

\begin{definition}
	Let $M$ be a $\Z[G]$-module. Define $M_0$ to be the submodule generated by the elements of the form
	\[fm-\varepsilon(f)m,\quad f\in\Z[G],\ m\in M.\]
	The group of \textit{coinvariants} of $M$ is the abelian group 
	\[M_G = M/M_0.\]
	This naturally defines the coinvariant functor 
	\[ \mathrm{Coinv} : \Gmod \rightarrow \mathbf{Ab}.\]
\end{definition}

As it may make the above more conceptual, and as we will need it later, we give a different description of the coinvaraints functor.  Define a right action of $\Z[G]$ on $\Z[G^0]$ by $a\cdot f := \varepsilon (af)$ for $a\in \Z[G^0]$ and $f\in \Z[G]$, i.e.
$$
(a\cdot f)(x) =\sum_{g\in G_x} a(r(g))f(g) \quad \forall f\in \Z[G], a\in \Z [G^0], x\in G^0.
$$
\begin{lemma}\label{coinv tp}
	For any (left, non-degenerate) $G$-module $M$, there is a canonical isomorphism $ M_G\cong \Z[G^0]\underset{\Z[G]}{\otimes} M$.
\end{lemma}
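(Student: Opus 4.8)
The plan is to exhibit the isomorphism explicitly, as a pair of mutually inverse homomorphisms of abelian groups
\[ \bar\Phi : \Z[G^0]\underset{\Z[G]}{\otimes} M \longrightarrow M_G, \qquad \bar\Psi : M_G \longrightarrow \Z[G^0]\underset{\Z[G]}{\otimes} M. \]
Before constructing them I would record two elementary facts about the subring $\Z[G^0]\subseteq \Z[G]$ that drive the whole argument. First, the convolution of two functions supported on units reduces to their pointwise product, and second, the augmentation $\varepsilon$ restricts to the identity on $\Z[G^0]$. Together these say that the right action of $\Z[G]$ on $\Z[G^0]$, when restricted to $\Z[G^0]$, is just pointwise multiplication, so that $a\cdot b=\varepsilon(ab)=ab$ for $a,b\in\Z[G^0]$.

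To build $\bar\Phi$ I would start from the biadditive map $\Z[G^0]\times M\to M_G$ sending $(a,m)$ to the class $[am]$ (here $a\in\Z[G^0]\subseteq\Z[G]$, so $am\in M$ makes sense), which induces $\Phi:\Z[G^0]\otimes_\Z M\to M_G$. To see that $\Phi$ factors through the balanced tensor product it suffices to test it on the defining relations: for $a\in\Z[G^0]$, $f\in\Z[G]$, $m\in M$ one has $\Phi((a\cdot f)\otimes m)=[\varepsilon(af)m]$ and $\Phi(a\otimes fm)=[afm]$, and these agree precisely because $afm-\varepsilon(af)m$ is a generator of $M_0$ (apply the definition of $M_0$ to the element $af\in\Z[G]$). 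This yields $\bar\Phi$.

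For the inverse I would use non-degeneracy crucially. Given $m\in M$, choose an idempotent $e\in\Z[G^0]$ with $em=m$ and set $\Psi(m)=e\otimes m$. The main work — and the step I expect to be the real obstacle — is showing that $\Psi$ is well defined and descends to $M_G$, since this is where all the local-unit bookkeeping lives. Independence of the choice of $e$ follows from the pointwise-multiplication observation: if $em=e'm=m$ then $e\otimes m=e\otimes e'm=(e\cdot e')\otimes m=(ee')\otimes m$, and the same computation with the roles of $e,e'$ reversed gives $e'\otimes m=(ee')\otimes m$, whence $e\otimes m=e'\otimes m$. Additivity then follows by choosing a common local unit (the characteristic function of the union of the two supports). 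Finally, to see that $\Psi$ kills $M_0$, I would pick for a generator $fm-\varepsilon(f)m$ an idempotent $e\in\Z[G^0]$ with $ef=f$ and compute $\Psi(fm)=e\otimes fm=(e\cdot f)\otimes m=\varepsilon(ef)\otimes m=\varepsilon(f)\otimes m$, while $\Psi(\varepsilon(f)m)=\varepsilon(f)\otimes m$ by an analogous manipulation; the two coincide, so $\bar\Psi$ is well defined.

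It then remains to verify that $\bar\Phi$ and $\bar\Psi$ are mutually inverse, which is a short computation on classes and elementary tensors. On the one hand $\bar\Phi\bar\Psi([m])=[em]=[m]$, since $em=m$. On the other hand, choosing $e\in\Z[G^0]$ with $ea=a$ (so also $e(am)=am$), one gets $\bar\Psi\bar\Phi(a\otimes m)=\Psi(am)=e\otimes am=(e\cdot a)\otimes m=\varepsilon(ea)\otimes m=a\otimes m$, using $\varepsilon(a)=a$ for $a\in\Z[G^0]$. Both directions rely only on the two recorded facts about $\Z[G^0]$ together with non-degeneracy, so the genuinely delicate point is confined to the construction of $\bar\Psi$.
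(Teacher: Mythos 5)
Your proof is correct and takes essentially the same route as the paper: the paper's (mostly reader-supplied) proof uses exactly the map $a\otimes m\mapsto am+M_0$ with inverse $m+M_0\mapsto e\otimes m$ for a local unit $e\in\Z[G^0]$, and you have filled in precisely the verifications it omits, with the local-unit bookkeeping (independence of $e$, additivity via a common idempotent, balancedness via $afm-\varepsilon(af)m\in M_0$) handled correctly. Note that your check that $\bar\Psi$ kills $M_0$ only tests the generators $fm-\varepsilon(f)m$, i.e.\ it reads $M_0$ as the subgroup of $M$ they generate rather than the $\Z[G]$-submodule; this is in fact the reading the paper's own inverse map tacitly requires, so your argument matches the intended statement.
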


\begin{proof}
	We leave it to the reader to check that the map
	$$
	\Z[G^0]\underset{\Z[G]}{\otimes} M\to M_G,\quad a\otimes m \mapsto am + M_0
	$$
	is an isomorphism of abelian groups. The inverse is given as follows: for $m + M_0 \in M_G$ pick $a\in \Z[G^0] \subseteq \Z[G]$ such that $am = m$. The inverse map is given by $m+ M_0 \mapsto a \otimes m$.
\end{proof}

We are now going to present a natural source of $G$-modules: spaces with a topological action of $G$.  

Recall first that if $p: Y\rightarrow X$ and $q: Z\rightarrow X$ are continuous maps, the fibered product is the space 
\begin{equation}\label{fp}
Y_{p\!\!\!}\times_{\!q} Z:=\{(y,z) : p(y)=q(z) \}.
\end{equation}
equipped with the subspace topology it inherits from $Y\times Z$.

Recall that a left $G$-space is the data of a topological space $X$ together with:
\begin{itemize}
	\item[$\bullet$] a continuous map $p: X \rightarrow G^0$, called the \textit{anchor map}, and
	\item[$\bullet$] a continuous map $\alpha : G_{s\!\!\!}\times_{\!p}X \rightarrow X$, satisfying $\alpha(g,\alpha(h,x)) = \alpha(gh, x)$ for every $(g,h,x)\in G_{s\!\!\!}\times_{\!r}G_{s\!\!\!}\times_{\!p}X $ and $\alpha(e_{p(x)}, x) = x$ for every $x\in X$.  
\end{itemize}
A left $G$-space is \'etale if the anchor map is \'etale, i.e.\ a local homeomorphism.  We will typically suppress $\alpha$ from the notation, for example writing `$gx$' instead of `$\alpha(g,x)$'.  A left $G$-space is free if $gx=x$ forces $g=e_{p(x)}$, and is proper if the map $\alpha\times pr_2:G_{s\!\!\!}\times_{\!p}X \rightarrow X\times X$ is proper.  The quotient of a $G$-space $X$ is written $G\setminus X$, and is the quotient of $X$ by the equivalence relation $x\sim gx$, equipped with the quotient topology.  If the action of $G$ on $X$ is free and proper, then $G\setminus X$ is locally compact and Hausdorff, and the quotient map $q:X\to G\setminus X$ is \'etale.

\begin{definition}\label{topg def}
	Let $\mathbf{Top}_G$ be the category of left $G$-spaces which are locally compact, Hausdorff and \'etale, with morphisms being the $G$-equivariant \'etale maps.  
\end{definition}

We denote by $\Z[X]$ the abelian group of continuous compactly supported integer valued functions on $X$. It is a non-degenerate $G$-module with respect to the action defined by
\[(fa)(x) =\sum_{g\in G^{p(x)}} f(g)a(g^{-1}x) \quad \forall f\in \Z[G], a\in \Z [X], x\in X. \]

If $f: X\rightarrow Y $ is $G$-equivariant and \'etale, it induces a $\Z[G]$-linear map $f_* : \Z[X]\rightarrow \Z[Y]$ by the formula
\[f_*(a)(y) := \sum_{x\in f^{-1}(y)} a(x) \quad \forall y\in Y.\]
The sum is finite since $a\in \Z[X]$ is compactly supported, and $f_*(a)$ is continuous as $f$ is \'etale.

\begin{lemma}
	The assignments $X\mapsto \Z[X]$ and $f\mapsto f_*$ define a functor, called the functor of global sections, 
	\[\Z[?] : \mathbf{Top}_G \rightarrow \Gmod.\]
\end{lemma}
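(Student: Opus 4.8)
The plan is to verify the three things required for $\Z[?]$ to be a functor: that $\Z[X]$ really is an object of $\Gmod$ (which has essentially been recorded in the paragraph above), that $f_*$ is a well-defined morphism in $\Gmod$ whenever $f$ is a morphism in $\mathbf{Top}_G$, and that $\Z[?]$ respects identities and composition. Since $f_*$ is visibly additive, the content is to show it lands in $\Z[Y]$, is $\Z[G]$-linear, and behaves functorially.

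First I would settle well-definedness of $f_*$, i.e.\ that $f_*(a)\in\Z[Y]=C_c(Y,\Z)$ for $a\in\Z[X]$. Compact support is automatic, since $\operatorname{supp}(f_*(a))\subseteq f(\operatorname{supp}(a))$ and $f$ is continuous. For continuity I would exploit that $Y$ (like every object of $\mathbf{Top}_G$) is totally disconnected, so $\Z[X]$ is spanned by indicator functions $1_V$ of compact open sets $V$; refining such $V$ and using that $f$ is étale, I may assume $f|_V$ is a homeomorphism onto a compact open subset of $Y$. For such $V$ one has $f_*(1_V)=1_{f(V)}\in\Z[Y]$, and additivity of $f_*$ then gives $f_*(\Z[X])\subseteq\Z[Y]$.

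The main step is $\Z[G]$-linearity of $f_*$, which I expect to be the only genuine computation. Writing $p_X,p_Y$ for the anchor maps of $X,Y$ and using $G$-equivariance in the forms $p_X=p_Y\circ f$ and $f(gx)=gf(x)$, for $h\in\Z[G]$, $a\in\Z[X]$, and $y\in Y$ I would compute
\[
f_*(ha)(y)=\sum_{x\in f^{-1}(y)}\sum_{g\in G^{p_X(x)}}h(g)\,a(g^{-1}x)=\sum_{g\in G^{p_Y(y)}}h(g)\sum_{x\in f^{-1}(y)}a(g^{-1}x),
\]
where swapping the (finite) sums is legitimate because the inner index set is constant, $G^{p_X(x)}=G^{p_Y(y)}$ for every $x\in f^{-1}(y)$ by equivariance. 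The key point is that $x\mapsto g^{-1}x$ is a bijection $f^{-1}(y)\to f^{-1}(g^{-1}y)$, with inverse $x'\mapsto gx'$, so the inner sum equals $f_*(a)(g^{-1}y)$; substituting yields $f_*(ha)(y)=(h\,f_*(a))(y)$, as required.

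Finally I would verify the functor axioms. The identity $(\id_X)_*=\id_{\Z[X]}$ is immediate from the defining formula. For composition, given morphisms $f\colon X\to Y$ and $g\colon Y\to Z$, the fibre of $g\circ f$ decomposes as a disjoint union $(g\circ f)^{-1}(z)=\bigsqcup_{y\in g^{-1}(z)}f^{-1}(y)$, and splitting the sum $\sum_{x\in(g\circ f)^{-1}(z)}a(x)$ accordingly gives $(g\circ f)_*=g_*\circ f_*$. (Finiteness of every fibre sum over $\operatorname{supp}(a)$ again uses that étale maps are locally injective and $\operatorname{supp}(a)$ is compact.) Combined with the module structure recorded above, these checks establish that $\Z[?]$ is a functor $\mathbf{Top}_G\to\Gmod$.
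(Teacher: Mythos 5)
Your proof is correct and fills in exactly the routine verification that the paper leaves implicit: the lemma is stated without proof, with the paper only remarking beforehand that the sums defining $f_*$ are finite by compact support and that $f_*(a)$ is continuous because $f$ is \'etale, and your checks of well-definedness, $\Z[G]$-linearity via the fibre bijection $x\mapsto g^{-1}x$, and the functor axioms are the intended argument. One trivial slip: in the continuity step you invoke total disconnectedness of $Y$ but then use that $\Z[X]$ is spanned by indicators of compact open sets --- of course both $X$ and $Y$ are totally disconnected (their anchor maps are \'etale over the totally disconnected $G^0$), so nothing is affected.
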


The composition of the global section functor and the coinvariant functor is denoted by $\Z[?]_G$.

\begin{lemma}\label{coinv}
	If $X$ is a free and proper space in $\mathbf{Top}_G$ then there is an isomorphism of abelian groups
	\[\Z[X]_G\cong \Z[G\backslash X].\]
\end{lemma}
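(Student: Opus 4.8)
The plan is to show that the pushforward $q_*\colon \Z[X]\to\Z[G\backslash X]$ along the quotient map $q\colon X\to G\backslash X$ (i.e. the image of $q$ under the global sections functor) descends to the desired isomorphism. Recall from the discussion preceding Definition \ref{topg def} that freeness and properness ensure $G\backslash X$ is locally compact Hausdorff and that $q$ is étale; since the anchor map of $X$ is a local homeomorphism onto the totally disconnected space $G^0$, both $X$ and $G\backslash X$ are totally disconnected, so $\Z[G\backslash X]$ is generated by characteristic functions $1_U$ of compact open sets $U$. As $q$ is a local homeomorphism, every such $U$ is the homeomorphic image of some compact open $V\subseteq X$, and then $q_*(1_V)=1_U$; this already yields surjectivity of $q_*$.

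Next I would check that $q_*$ kills $\Z[X]_0$, so that it factors through a surjection $\bar q_*\colon \Z[X]_G\to\Z[G\backslash X]$. Unwinding the definition gives $q_*(a)([x])=\sum_{g\in G_{p(x)}}a(gx)$, the sum over the free orbit. Substituting $g=hg'$ in the defining formula for the module action and collecting, for each orbit representative $g'$, the total mass of $f$ over the source fibre $G_{r(g')}=s^{-1}(r(g'))$, one obtains for $f\in\Z[G]$ and $a\in\Z[X]$
\[ q_*(fa)([x])=\sum_{g\in G_{p(x)}}\Big(\sum_{h\in G_{r(g)}}f(h)\Big)a(gx)=\sum_{g\in G_{p(x)}}\varepsilon(f)(r(g))\,a(gx). \]
Since $(\varepsilon(f)a)(x)=\varepsilon(f)(p(x))a(x)$ and $p(gx)=r(g)$, the right-hand side equals $q_*(\varepsilon(f)a)([x])$, so $q_*(fa-\varepsilon(f)a)=0$, as required.

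The heart of the argument, and the main obstacle, is injectivity of $\bar q_*$, which I would obtain by building an explicit inverse. The key computation is that for a compact open bisection $B$ and a compact open $V\subseteq X$ with $p(V)\subseteq s(B)$ one has $1_B\cdot 1_V=1_{BV}$ and $\varepsilon(1_B)\cdot 1_V=1_V$, whence
\[ [1_{BV}]=[1_V]\quad\text{in }\Z[X]_G. \]
I would then define $\Phi\colon\Z[G\backslash X]\to\Z[X]_G$ by $\Phi(1_U)=[1_V]$ for any compact open lift $V$ of $U$ with $q|_V$ a homeomorphism. Well-definedness is the crux: given two such lifts $V,V'$, the homeomorphism $\tau=(q|_{V'})^{-1}\circ q|_V\colon V\to V'$ has graph inside the orbit equivalence relation, and freeness together with properness identify that relation with $G\,{}_s\!\times_p\! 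X$, producing a continuous cocycle $\gamma\colon V\to G$ with $\gamma(v)v=\tau(v)$. As $\gamma(V)$ is compact, I can cover it by finitely many compact open bisections and correspondingly partition $V=\bigsqcup_j V_j$ into compact open pieces on which $\gamma$ is valued in a single bisection $B_j$; applying the key relation to each piece gives $[1_{V_j}]=[1_{\tau(V_j)}]$, and summing over $j$ yields $[1_V]=[1_{V'}]$.

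Finally I would verify that $\Phi$ and $\bar q_*$ are mutually inverse. The identity $\bar q_*\circ\Phi=\mathrm{id}$ is immediate from $q_*(1_V)=1_U$, while $\Phi\circ\bar q_*=\mathrm{id}$ follows by writing an arbitrary generator $1_V$ of $\Z[X]$ as a finite sum of characteristic functions of compact open sets on which $q$ is injective (possible since $q$ is étale) and applying the definition of $\Phi$ piecewise. The hypotheses on $X$ enter only in the well-definedness of $\Phi$, where freeness and properness are precisely what turn the transition map $\tau$ into a continuous $G$-valued cocycle with compact image; this is the step I expect to demand the most care.
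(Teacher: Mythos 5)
Your proof is correct and takes essentially the same route as the paper: your $q_*$ is the paper's map $\tilde\varepsilon$, your key relation $[1_{BV}]=[1_V]$ is the paper's identity $f_V=\chi_{S^{-1}}\cdot f_{V'}$, and the inverse is built identically by lifting compact open sets and proving independence of the lift via a finite partition of $V$ into clopen pieces each moved into $V'$ by a single compact open bisection. The only (cosmetic) variation is that you obtain continuity of the transition cocycle from the closed embedding $G\,{}_{s}{\times}_{p}\,X\hookrightarrow X\times X$ furnished by freeness and properness, whereas the paper argues directly from \'etaleness of $G$ and compactness of $V$, reserving freeness for the mutual-inverse verifications; both are valid.
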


\begin{proof} Denote by $[x]\in G\backslash X$ the class of $x\in X$, and define a $\Z[G]$-linear map $\tilde \varepsilon: \Z[X] \rightarrow \Z[G\backslash X]$ by 
	\[\tilde \varepsilon (f) ([x]) = \sum_{g\in G_{p(x)}} f(g\cdot x)\quad \forall x\in X.\]
	This obviously factors through $\Z[X]_G$, giving a map which we still denote by 
	\[\tilde \varepsilon: \Z[X]_G \rightarrow \Z[G\backslash X].\]
	Let us build an inverse to $\tilde \varepsilon $. Since the action of $G$ on $X$ is proper, the quotient is locally compact Hausdorff, and hence a partition of unity argument shows that the family of functions $f\in \Z[G\backslash X]$ such that there exists a compact open subset $V\subseteq X$ with $supp(f)=q(V)$ and such that $q_{\mid V}:V\rightarrow q(V)$ is a homeomorphism, generates $\Z[G\backslash X]$ as an abelian group.
	Now given such a function $f$, we define $f_V:=f\circ q_{\mid V}\in\Z[X]$ and our first goal is to show that the class of $f_V$ in $\Z[X]_G$ does not depend on the choice of $V$. So let $V'\subseteq X$ be another compact open set with $supp(f)=q(V')$ and such that $q_{\mid V'}:V'\rightarrow q(V')$ is a homeomorphism. Then $q(V)=q(V')$ and hence every element $x\in V$ can be written as $x=gy$ for some $y\in V'$, $g\in G$. Using that $G$ is \'{e}tale and compactness of $V$ we can decompose $V=\bigcup_i V_i$ and $V'=\bigcup_i V_i'$ such that there exist bisections $S_i\subseteq G$ implementing a homeomorphism $\alpha_i:V_i\rightarrow V_i'$.
	Hence doing another partition of unity argument, we may assume that the sets $V$ and $V'$ themselves are related in this way, i.e. there exists a bisection $S\subseteq G$ which induces a homeomorphism $\alpha_S:V\rightarrow V'$ by $\alpha_S(x)=hx$ where $h\in S$ is the unique element in $S\cap G^{p(x)}$. But then
	\[f_V(x) =f(q_V(x))=f(q_{V'}(\alpha_S(x)))=f_{V'}(\alpha_S(x)) \quad \forall x\in V.\]
	This relation can be rewritten as $f_V = \chi_{S^{-1}}\cdot f_{V'}$, and hence we have $[f_V]=[f_{V'}]$ in $\Z[X]_G$ as desired.
	Let $\psi:\Z[G\backslash X]\rightarrow \Z[X]_G$ be the map given by $\psi(f)=[f_V]$. If $V\subseteq X$ is open such that $q_{\mid V}:V\rightarrow q(V)$ is a homeomorphism, then for a given $x\in X$ there is at most one $g\in G_{p(x)}$ such that $gx\in V$ since the action is free. It follows that for any function $f\in \Z[G\backslash X]$ supported in such a $V$ we have
	$$\tilde\varepsilon[f_V]([x])=\sum\limits_{g\in G_{p(x)}} f(q_{V}(gx))=f([x])$$
	and hence $\tilde{\varepsilon}\circ \psi=\id$.
	
	Conversely, if $f\in \Z[X]$ is a function supported in a set of the form $S\cdot V$ for a bisection $S\subseteq G$ and $V\subseteq X$ is open such that $q_{\mid V}$ is a homeomorphism onto its image, then one easily checks that $\tilde{\varepsilon}[f]$ is supported in $q(V)$. Hence $[\psi(\tilde{\varepsilon}[f])]=[\tilde{\varepsilon}[f]\circ q_{\mid _V}]$. Using freeness again, one checks that the latter class equals $[\chi_S\cdot f]=[\varepsilon(\chi_{S^{-1}})\cdot f]=[f].$ Since functions $f$ as above generate $\Z[X]$ we are done.
\end{proof}

\subsection{Semi-simplicial $G$-spaces and homology\label{Gsphom}}

For $n$ a nonnegative integer, denote by $[n]$ the interval $\{0,...,n\}$. Recall (see for example \cite[Chapter 8]{Weibel1994}) the definition of  the semi-simplicial category $\Delta_s$: its objects are the nonnegative integers, and $\mathrm{Hom}_{\Delta_s}(m,n)$ consists of the (injective) increasing maps $f: [m]\rightarrow [n]$. A semi-simplicial object in a category $\mathcal C$ is a contravariant functor from $\Delta_s$ to $\mathcal C$.  The collection of all semi-simplicial objects in a category is itself a category, with morphisms given by natural transformations.

Let $\varepsilon^n_i: [n-1]\rightarrow [n]$ be the only increasing map whose image misses $i$. We will omit the superscript $n$ if it does not cause confusion. Any increasing map $f: [m]\rightarrow [n]$ has a unique factorization $f=\varepsilon_{i_1}\varepsilon_{i_2}...\varepsilon_{i_k}$ with $0\leq i_k \leq ...\leq i_1 \leq n$ (see Lemma 8.1.2 in \cite{Weibel1994}). Thus, any semi-simplicial object is the data, for all $n$, of an object $C_n$ of $\mathcal C$, together with arrows $\varepsilon^n_i: C_{n}\rightarrow C_{n-1}$ in $\mathcal C$, called face maps, satisfying the (semi-)simplicial identities $\varepsilon^{n-1}_j \varepsilon^n_i = \varepsilon^{n-1}_i \varepsilon^n_{j-1}$ if $i<j$.  Similarly, if $C_*$ and $D_*$ are semi-simplicial objects in $\mathcal{C}$, a morphism between them is a collection of morphisms $f_n:C_n\to D_n$ in $\mathcal{C}$ that are compatible with the face maps.

A semi-simplicial object in the category of locally compact Hausdorff topological spaces will be called a \textit{semi-simplicial topological space}.

\begin{definition}
	A \textit{semi-simplicial $G$-space} is a semi-simplicial object in the category $\mathbf{Top}_G$.
\end{definition}

As an example, define $EG_*$ to be the semi-simplicial $G$-space
\[EG_n = G_{r\!\!\!}\times_{\!r} G _{r\!\!\!}\times_{\!r} ... _{r\!\!\!}\times_{\!r}G \quad \text{(n+1 times)}\]
with
\begin{itemize} 
	\item[$\bullet$] anchor map $p:EG_n\rightarrow G^0$ given by the common range of the tuple, $p(\gamma_0,...,\gamma_n)= r(\gamma_0)=...=r(\gamma_n)$ for $\gamma\in EG_n$,
	\item[$\bullet$] left action given by left multiplication by $G$ on all factors,
	\item[$\bullet$] if $f : [m]\rightarrow [n]$, then $EG(f) :G_n\rightarrow G_m $ is defined by 
	\[(\gamma_0,...,\gamma_n)\mapsto (\gamma_{f(0)}, ..., \gamma_{f(m)})\]
	(note that the face maps are given by 
	\[\partial^{n+1}_i: (\gamma_0,...,\gamma_n)\mapsto (\gamma_{0}, ...,\hat \gamma_i, ..., \gamma_n)\]
	where the hat means that the entry is omitted).
\end{itemize}
One checks that the moment and the face maps are $G$-equivariant and \'etale.\\

On the other hand, define the \textit{classifying space} of $G$, denoted by $BG_*$, to be the semi-simplicial topological space defined by 
\[BG_n = \{(g_1,...,g_n)\in G^n \ | \ r(g_i)=s(g_{i-1}) \text{ for all }i \}\]
with face maps
\[\epsilon^n_i : (g_1,...,g_n)\mapsto 
\left\{\begin{array}{cl}
(g_2,...,g_n) & \text{ if } i=0, \\
(g_1,...,g_{i-1}, g_ig_{i+1}, ..., g_n) & \text{ if } 1\leq i <n, \\
(g_1,...,g_{n-1} ) & \text{ if }i =n. \\
\end{array}\right.\]

The $G$-action on $EG$ is free and proper and hence $G\backslash EG_*$ is a semi-simplicial topological space. The maps $(g_1,...,g_n)\mapsto [r(g_1), g_1 ,g_1g_2,\ldots, g_1\cdots g_n ]$ and $[\gamma_0,\ldots,\gamma_n]\mapsto (\gamma_0^{-1}\gamma_1,\ldots, \gamma^{-1}_{n-1}\gamma_n)$ define maps of semi-simplicial topological spaces between $BG_*$ and $G\backslash EG_*$ that are mutually inverse.

A semi-simplicial $G$-space $(X_*, \{\epsilon^*_i\}_i)$ naturally induces a chain complex of $\Z[G]$-modules $(\Z[X_*], \tilde\partial_*)$ by composing by the $\Z[?]$ functor, where the boundary maps are
\[ \tilde\partial_n = \sum_{i=0}^n (-1)^i (\partial^n_i)_*\]
and thus a chain complex of abelian groups $(\Z[X_*]_G,\tilde\partial_*)$ by composing by the coinvariant functor.

\begin{definition}\label{hom sgs}
	Let $(X_*,\{\partial^*_i\}_i)$ be a semi-simplicial $G$-space. We define the equivariant homology  group $H^G_*(X)$ to be the homology of the chain complex of abelian groups $(\Z[X_*]_G, \tilde\partial_*)$. 
\end{definition}

The relationship with the homology of Matui and Crainic-Moerdijk is now a consequence of Lemma \ref{coinv}. Namely, Matui introduced a chain complex of abelian groups to compute Crainic-Moerdijk homology groups in the special case of an ample groupoid. This chain complex is none other than $(\Z[BG_*],\tilde\epsilon_* )$. As $EG_*$ is a free and proper semi-simplicial $G$-space with $BG_* \cong G\backslash EG_*$, this homology is isomorphic to $H_*^G(EG)\cong H_*(BG)$.  In other words, we have proved the following.

\begin{proposition}\label{hom id}
	Let $G$ be an ample groupoid.  The complex introduced by Matui to compute $H_*(G)$ identifies canonically with the homology of the complex $(\Z[EG_*]_G, \tilde\partial_*)$. \qed
\end{proposition}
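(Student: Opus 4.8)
The plan is to verify that the two explicit maps between $BG_*$ and $G\backslash EG_*$ given just before the statement are honest morphisms of semi-simplicial topological spaces, that they are mutually inverse, and then to invoke Lemma \ref{coinv} to pass from the free-and-proper semi-simplicial $G$-space $EG_*$ to its quotient. Since Definition \ref{hom sgs} computes $H^G_*(EG)$ as the homology of $(\Z[EG_*]_G,\tilde\partial_*)$, and Matui's complex is by definition $(\Z[BG_*],\tilde\epsilon_*)$, everything reduces to identifying these two chain complexes of abelian groups.

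\textbf{Step 1: the quotient complex.} First I would apply Lemma \ref{coinv} levelwise. The action of $G$ on each $EG_n$ by simultaneous left multiplication is free and proper (this is asserted in the excerpt and is the hypothesis Lemma \ref{coinv} needs), so there are isomorphisms of abelian groups $\Z[EG_n]_G\cong \Z[G\backslash EG_n]$ for every $n$. I would then check that these isomorphisms are natural in the simplicial direction, i.e. that they intertwine the maps $(\partial^n_i)_*$ induced on coinvariants with the maps induced by the face maps of the quotient semi-simplicial space $G\backslash EG_*$. Naturality is really a formal consequence of the functoriality of $\Z[?]$ and of the coinvariant functor, together with the fact that the isomorphism in Lemma \ref{coinv} is constructed canonically via the augmentation-type map $\tilde\varepsilon$; since the face maps $\partial^n_i$ are $G$-equivariant \'etale maps, they descend to \'etale maps on the quotients and the square commutes. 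Hence $(\Z[EG_*]_G,\tilde\partial_*)\cong (\Z[G\backslash EG_*],\tilde\partial_*)$ as chain complexes, and so $H^G_*(EG)\cong H_*(G\backslash EG)$.

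\textbf{Step 2: identifying $G\backslash EG_*$ with $BG_*$.} Next I would confirm that the two displayed maps are mutually inverse morphisms of semi-simplicial topological spaces. One direction sends $(g_1,\dots,g_n)\mapsto [r(g_1),g_1,g_1g_2,\dots,g_1\cdots g_n]$ and the other sends $[\gamma_0,\dots,\gamma_n]\mapsto(\gamma_0^{-1}\gamma_1,\dots,\gamma_{n-1}^{-1}\gamma_n)$; composing in either order returns the identity after remembering that we have quotiented $EG_*$ by the diagonal $G$-action (so one may normalise $\gamma_0=r(\gamma_0)$). The content here is to check compatibility with the face maps: the $i=0$ and $i=n$ face maps of $BG_*$ correspond to deleting the first and last coordinates of the partial-product tuple, and the middle faces $(g_1,\dots,g_ig_{i+1},\dots)$ correspond exactly to omitting the $i$-th partial product $\gamma_i=g_1\cdots g_i$, which is the face map $\partial_i$ on $G\backslash EG_*$. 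This is a direct unravelling of the definitions. Applying $\Z[?]$ then yields an isomorphism of chain complexes $(\Z[BG_*],\tilde\epsilon_*)\cong(\Z[G\backslash EG_*],\tilde\partial_*)$, and composing with Step 1 gives the desired identification with $(\Z[EG_*]_G,\tilde\partial_*)$.

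I expect the only genuine point requiring care is the naturality claim in Step 1: one must be sure that the level-wise isomorphisms of Lemma \ref{coinv}, which were constructed somewhat asymmetrically (choosing local sections $V$ and using a partition of unity), assemble into a morphism of chain complexes rather than merely isomorphisms in each degree. Everything else is a matter of checking that the explicit formulas define continuous, mutually inverse, face-compatible maps, which is routine book-keeping of indices. Given that Matui's complex \emph{is} $(\Z[BG_*],\tilde\epsilon_*)$ by construction, the proposition follows by chaining the two isomorphisms, and I would simply record this as done.
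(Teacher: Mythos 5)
Your proposal is correct and follows essentially the same route as the paper: the paper's (very brief) proof likewise combines Lemma \ref{coinv} applied to the free and proper semi-simplicial $G$-space $EG_*$ with the explicit mutually inverse identifications $BG_*\cong G\backslash EG_*$, together with the observation that Matui's complex is by definition $(\Z[BG_*],\tilde\epsilon_*)$. Your added care about the naturality of the coinvariants isomorphism with respect to the face maps is a point the paper leaves implicit, but it is exactly the check one needs and follows, as you say, from functoriality and the equivariance of the \'etale face maps.
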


\subsection{Projective resolutions and Tor\label{tor sec}}

The aim of this section is to identify $H_*(G)$ with one of the standard objects studied in homological algebra.  These results will not be used in the rest of the paper; we include them as they can be derived without too much difficulty from our other methods, and seem interesting.

\begin{rem}\label{r:freevsproj}
	If $R$ is a non-unital ring, then free $R$-modules are not necessarily projective. However, for any idempotent $e\in R$ it is easily seen that $\mathrm{Hom}_R(Re, M) \cong eM$ naturally for any $R$-module $M$. If $N\twoheadrightarrow M$ is an epimorphism then elements in $eM$ lift to $eN$, so it follows that $Re$ is projective. Consequently, if $R$ is a ring with enough idempotents, i.e.~it contains a family $(e_i)_{i\in I}$ of mutually orthogonal idempotents such that $R \cong \bigoplus_{i\in I} R e_i \cong \bigoplus_{i\in I} e_i R$, then free $R$-modules are projective. More generally, a (non-degenerate) $R$-module is projective exactly when it is a direct summand of a free $R$-module.
\end{rem}

We will restrict to ample groupoids $G$ with $\sigma$-compact base space $G^0$. The main reason is that it implies that $G^0$ (resp.~$G$) can be written as a disjoint union of compact open sets (resp.~compact open bisections).\footnote{In general, this fails for locally compact, Hausdorff, totally disconnected spaces, such as Counterexample 65 from \cite{Steen78} ($\mathbb R$ equipped with a rational sequence topology). One can also show that if $X$ is this particular example, then $\Z[X]$ is not projective as a $\Z[X]$-module, so free $\Z[X]$-modules are not projective.} If $G^0 = \bigsqcup_{i\in I} U_i$ with $U_i$ compact and open, then $\Z[G] \cong \bigoplus_i \Z[G]\chi_{U_i} \cong \bigoplus_i \chi_{U_i}\Z[G]$. By the above remark it follows that free $G$-modules are projective, and similarly for $\Z[G^0]$.

To state the main result of this subsection, note that $G^0$ admits left and right actions of $G$ (with the anchor map being the identity) defined by 
$$
gx:=r(g) \quad \text{and}\quad xg:=s(g).
$$
These actions make $\Z[G^0]$ into both a left and a right $G$-module.  Thus the Tor groups $\text{Tor}_*^{\Z[G]}(\Z[G^0],\Z[G^0])$ (see for example \cite[Definition 2.6.4]{Weibel1994}) of homological algebra make sense.  

\begin{theorem}\label{tor the}
	Let $G$ be an ample groupoid with $\sigma$-compact base space.  There is a canonical isomorphism $H_*(G)\cong \mathrm{Tor}_*^{\Z[G]}(\Z[G^0],\Z[G^0])$.
\end{theorem}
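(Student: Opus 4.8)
The plan is to exhibit the semi-simplicial chain complex $\Z[EG_*]$, augmented by a suitable map to $\Z[G^0]$, as a projective resolution of $\Z[G^0]$ in the category of left $\Z[G]$-modules, and then to recognise the complex computing $H_*(G)$ as the result of applying $\Z[G^0]\otimes_{\Z[G]}(-)$ to this resolution. Concretely, Proposition \ref{hom id} identifies $H_*(G)$ with the homology of $(\Z[EG_*]_G,\tilde\partial_*)$, and Lemma \ref{coinv tp} provides a natural isomorphism $\Z[EG_n]_G\cong \Z[G^0]\otimes_{\Z[G]}\Z[EG_n]$ which, being natural in the module variable, is compatible with the boundary maps; hence the complex computing $H_*(G)$ is exactly $\Z[G^0]\otimes_{\Z[G]}\Z[EG_*]$. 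Once I know that $\Z[EG_*]\to\Z[G^0]$ is a projective resolution of the left module $\Z[G^0]$ (carrying the action $gx=r(g)$), the definition of Tor obtained by resolving the second variable immediately gives $H_*(G)\cong\Tor_*^{\Z[G]}(\Z[G^0],\Z[G^0])$, the first variable being $\Z[G^0]$ with the right action $a\cdot f=\varepsilon(af)$ from Lemma \ref{coinv tp}. Two points then remain: projectivity of each $\Z[EG_n]$, and exactness of the augmented complex.

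For projectivity I would use that each $EG_n$ is a free and proper $G$-space whose quotient $G\backslash EG_n\cong BG_n$ is a $\sigma$-compact, totally disconnected, locally compact Hausdorff space, hence a disjoint union of compact open sets $W_j$. Since the quotient map $q\colon EG_n\to G\backslash EG_n$ is étale, a partition-of-unity manoeuvre as in the proof of Lemma \ref{coinv} lets me assume each $W_j$ lifts through a compact open section $V_j\subseteq EG_n$ on which the anchor map is a homeomorphism onto a compact open set $U_j\subseteq G^0$. Freeness and properness make $(g,v)\mapsto gv$ a homeomorphism $G_{s}\times_{p}V_j\to q^{-1}(W_j)$, and composing with $V_j\cong U_j$ identifies $q^{-1}(W_j)$ $G$-equivariantly with $\{g\in G: s(g)\in U_j\}$. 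Since the $W_j$ are clopen and partition the quotient, the $q^{-1}(W_j)$ are clopen and partition $EG_n$, so this yields an isomorphism of left $\Z[G]$-modules $\Z[EG_n]\cong\bigoplus_j \Z[G]\chi_{U_j}$, and each summand $\Z[G]\chi_{U_j}$ is projective by Remark \ref{r:freevsproj}. A direct sum of projectives is projective, so $\Z[EG_n]$ is projective.

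For exactness I would take the augmentation to be $r_*\colon \Z[EG_0]=\Z[G]\to\Z[G^0]$ induced by the equivariant étale map $r\colon G\to G^0$; since $r\circ\partial_0=r\circ\partial_1$ on $EG_1$ one gets $r_*\tilde\partial_1=0$, and $r_*$ realises precisely the left module structure on $\Z[G^0]$. To prove the augmented complex acyclic I construct an explicit contracting homotopy from the extra degeneracy $s_{-1}\colon EG_n\to EG_{n+1}$, $(\gamma_0,\dots,\gamma_n)\mapsto(e_{r(\gamma_0)},\gamma_0,\dots,\gamma_n)$, together with the unit inclusion $\iota\colon G^0\to G$, $x\mapsto e_x$. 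Because $G$ is Hausdorff and étale, $G^0$ is clopen in $G$, so both $s_{-1}$ and $\iota$ are open embeddings, hence étale, and therefore induce module maps $h_n:=(s_{-1})_*$ and $h_{-1}:=\iota_*$. The identities $\partial_0 s_{-1}=\id$ and $\partial_{i+1}s_{-1}=s_{-1}\partial_i$ give the standard relations $\tilde\partial_{n+1}h_n+h_{n-1}\tilde\partial_n=\id$ for $n\geq1$, while at the bottom $\partial_0 s_{-1}=\id$ and $\partial_1 s_{-1}=\iota\circ r$ give $\tilde\partial_1 h_0+h_{-1}r_*=\id$ and $r_*h_{-1}=\id$. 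Hence the augmented complex is chain contractible, in particular exact.

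Combining the three steps finishes the proof. I expect the main obstacle to be the projectivity step: one has to pass from the abstract freeness and properness of the $G$-action to a concrete decomposition of $\Z[EG_n]$ into summands of the form $\Z[G]\chi_{U}$, and this is exactly where $\sigma$-compactness of $G^0$ cannot be dispensed with, as the footnote preceding the statement indicates. The remaining bookkeeping—checking that the isomorphism of Lemma \ref{coinv tp} is natural in the module variable (so that it constitutes a chain map) and that $r_*$ induces the correct left module structure matching the right structure used in Lemma \ref{coinv tp}—is routine but must be done carefully to ensure that the two copies of $\Z[G^0]$ appearing in the Tor groups are the intended ones.
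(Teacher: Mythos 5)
Your overall architecture coincides with the paper's: the paper also proves Theorem \ref{tor the} by showing $\cdots\to\Z[EG_1]\to\Z[EG_0]\to\Z[G^0]\to 0$ is a projective resolution and then applying Lemma \ref{coinv tp} and Proposition \ref{hom id}. Your exactness argument is literally the paper's Lemma \ref{ex proj}: the contracting homotopy $(g_0,\dots,g_n)\mapsto(r(g_0),g_0,\dots,g_n)$ together with $x\mapsto e_x$ at the bottom is exactly their $h$, with the same face-map identities. The Tor bookkeeping at the end is also the same.

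Where you genuinely diverge is the projectivity step, and there you have one incorrect justification. The paper proves projectivity of $\Z[EG_n]$ by induction: Lemma \ref{tp lem} gives $\Z[G]\otimes_{\Z[G^0]}\Z[EG_n]\cong\Z[EG_{n+1}]$, and Lemma \ref{zg proj} shows $\Z[G]$ is projective over $\Z[G^0]$ (the paper explicitly remarks that freeness over $\Z[G^0]$ can fail, which is why they argue by induction through direct summands). You instead trivialize the $G$-space $EG_n$ directly: partition $BG_n$ into compact opens $W_j$, choose compact open sections $V_j$ with injective anchor, and use freeness and properness to identify $q^{-1}(W_j)$ equivariantly with $s^{-1}(U_j)$, giving $\Z[EG_n]\cong\bigoplus_j\Z[G]\chi_{U_j}$ outright. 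This argument is sound and arguably more transparent --- it exhibits $\Z[EG_n]$ directly as a direct sum of modules $Re$ as in Remark \ref{r:freevsproj} --- \emph{except} for your launching claim that $BG_n$ is $\sigma$-compact. That is false in general: $\sigma$-compactness of $G^0$ does not make $G$ (hence $BG_n$) $\sigma$-compact; take $G$ an uncountable discrete group, where $G^0$ is a point but $BG_1=G$ is an uncountable discrete space. The needed conclusion, that $BG_n$ is a disjoint union of compact opens, is nevertheless true, but for the reason the paper records in Section \ref{tor sec}: $\sigma$-compactness of $G^0$ implies $G$ itself is a disjoint union of compact open \emph{bisections} $B_i$, and then $BG_n\cap(B_{i_1}\times\cdots\times B_{i_n})$ is compact open in $BG_n$ (a composable string in a product of bisections is determined by a single unit, so this intersection is homeomorphic to a compact open subset of $G^0$). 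With that substitution your refinement manoeuvre (finitely many section pieces per $W_j$, then finitely many anchor-injective pieces per section, disjointified) goes through, and your proof is complete. In exchange for avoiding Lemmas \ref{tp lem} and \ref{zg proj}, you take on the fundamental-domain bookkeeping --- in particular the continuity of the division map $x\mapsto g$ implicit in your homeomorphism $G_{s\!\!\!}\times_{\!p}V_j\to q^{-1}(W_j)$, which is where properness and freeness must actually be used --- whereas the paper's induction is purely algebraic and confines the use of $\sigma$-compactness to the single Lemma \ref{zg proj}.
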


The rest of this section will be spent proving this theorem, which will proceed by a sequence of lemmas.  To give the idea of the proof, recall (see for example \cite[Definition 2.6.4 and Theorem 2.7.2]{Weibel1994}) that $\text{Tor}_*^{\Z[G]}(\Z[G^0],\Z[G^0])$ can be defined by starting with an exact sequence 
$$
\xymatrix{\cdots \ar[r]^-{\partial} & P_2 \ar[r]^-{\partial} & P_1 \ar[r]^-{\partial} & P_0 \ar[r]^-{\partial} & \Z[G^0] \ar[r] & 0}
$$
of $\Z[G]$-modules, where each $P_n$ is projective. The group $\text{Tor}_n^{\Z[G]}(\Z[G^0],\Z[G^0])$ is then by definition the $n^\text{th}$ homology group of the complex
$$
\xymatrix{\cdots \ar[r]^-{\text{id}\otimes \partial} & \Z[G^0]\underset{\Z[G]}{\otimes}P_2 \ar[r]^-{\text{id}\otimes \partial} & \Z[G^0]\underset{\Z[G]}{\otimes}P_1 \ar[r]^-{\text{id}\otimes \partial} & \Z[G^0]\underset{\Z[G]}{\otimes}P_0 }
$$
of abelian groups.  We will prove Theorem \ref{tor the} by showing that each $\Z[G]$-module $\Z[EG_n]$ is projective, and that we have an exact sequence 
$$
\xymatrix{ \cdots \ar[r]^-\partial & \Z[EG_1] \ar[r]^-\partial  & \Z[EG_0] \ar[r]^-\partial & \Z[G^0] \ar[r] & 0 }
$$
where the boundary maps are the alternating sums of the face maps. As 
$$\Z[G^0]\underset{\Z[G]}\otimes M=M_G$$ 
for any left non-degenerate $G$-module $M$ by Lemma \ref{coinv tp}, Proposition \ref{hom id} completes the proof.

We now embark on the details of the proof.  Recall that $G^0$ is equipped with a left $G$-action defined by stipulating that the anchor map is the identity and defining the moment map by $gx:=r(g)$, and that $\Z[G^0]$ is a left $G$-module with the induced structure.  Define $\partial:\Z[EG_0]\to \Z[G^0]$ to be the map induced on functions by the \'{e}tale map $r:G\to G^0$.  

\begin{lemma}\label{ex proj}
	The sequence 
	$$
	\xymatrix{ \cdots \ar[r]^-\partial & \Z[EG_1] \ar[r]^-\partial  & \Z[EG_0] \ar[r]^-\partial & \Z[G^0] \ar[r] & 0 }.
	$$
	is exact.
\end{lemma}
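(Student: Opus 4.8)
The plan is to exhibit an explicit contracting homotopy for the augmented complex, so that exactness follows formally. Write $C_{-1}=\Z[G^0]$ and $C_n=\Z[EG_n]$ for $n\geq 0$, with $\partial_0=r_*$ the augmentation and $\partial_n=\sum_{i=0}^n(-1)^i(\partial^n_i)_*$ for $n\geq 1$. The key device is the \emph{extra degeneracy}: for $n\geq 0$ define
$$h_n:EG_n\to EG_{n+1},\qquad (\gamma_0,\ldots,\gamma_n)\mapsto (e_{r(\gamma_0)},\gamma_0,\ldots,\gamma_n),$$
together with the unit inclusion $h_{-1}:G^0\to G=EG_0$, $x\mapsto e_x$, at the bottom. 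Note that $h_n$ is \emph{not} $G$-equivariant, but this is irrelevant: exactness is a statement about the underlying abelian groups, so a $\Z$-linear contraction suffices.

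The first step is to check that each $h_n$ is étale, so that the pushforward $k_n:=(h_n)_*$ is a well-defined homomorphism $C_n\to C_{n+1}$. Indeed $h_n$ is a continuous injection whose image is $\{(\delta_0,\ldots,\delta_{n+1})\in EG_{n+1}\mid \delta_0\in G^0\}$; since $G^0$ is open in $G$ for an étale groupoid, this image is open, and forgetting the first coordinate provides a continuous inverse on it. Thus $h_n$ is a homeomorphism onto an open set, hence étale, and likewise $h_{-1}$, so each $k_n$ is simply an extension-by-zero map.

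The second step is to verify the simplicial relations between $h_n$ and the face maps $\partial^{n+1}_i$ by direct computation:
$$\partial^{n+1}_0 h_n=\mathrm{id},\qquad \partial^{n+1}_i h_n=h_{n-1}\,\partial^n_{i-1}\quad(1\leq i\leq n+1),$$
together with $\partial^1_1 h_0=h_{-1}\circ r$ at the bottom and $r\circ h_{-1}=\mathrm{id}_{G^0}$. Applying the pushforward, which is functorial for étale maps so that $(f\circ g)_*=f_*\,g_*$, and summing with signs yields the chain-homotopy identities $\partial_{n+1}k_n+k_{n-1}\partial_n=\mathrm{id}_{C_n}$ for $n\geq 0$ and $\partial_0 k_{-1}=\mathrm{id}_{C_{-1}}$: the $i=0$ term produces the identity, while the remaining terms reassemble, after reindexing $j=i-1$, into $-k_{n-1}\partial_n$.

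Exactness is then formal. For a cycle $z\in C_n$ with $\partial_n z=0$ we get $z=\partial_{n+1}k_n z+k_{n-1}\partial_n z=\partial_{n+1}(k_n z)$, so every cycle is a boundary, while $\partial_0 k_{-1}=\mathrm{id}$ gives surjectivity of the augmentation; combined with $\partial^2=0$ (the usual consequence of the semi-simplicial identities) this gives exactness at every spot. The one point requiring genuine care — and the main obstacle — is precisely the well-definedness of the $k_n$: the contraction is assembled from the non-equivariant maps $h_n$, and the verification that these are étale, so that extension-by-zero along an open image makes sense, is exactly where étaleness of $G$ (via openness of $G^0$) enters.
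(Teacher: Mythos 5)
Your proof is correct and takes essentially the same route as the paper: the paper also contracts the augmented complex via the extra degeneracy $h:(g_0,\ldots,g_n)\mapsto (r(g_0),g_0,\ldots,g_n)$ together with the unit inclusion $G^0\to EG_0$, using the same relations $\partial^0h=\mathrm{id}$, $\partial^ih=h\partial^{i-1}$ and the formal chain-homotopy argument. Your explicit verification that $h$ is \'{e}tale --- an open embedding onto $\{(\delta_0,\ldots,\delta_{n+1})\in EG_{n+1}\mid \delta_0\in G^0\}$, using openness of $G^0$ in $G$ --- so that the non-equivariant pushforward is well defined is a point the paper leaves implicit, and you handle it correctly.
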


\begin{proof}
	For $n\geq 0$, define 
	$$
	h:EG_n\to EG_{n+1} ,\quad (g_0,\ldots,g_n)\mapsto (r(g_0),g_0,\ldots,g_n).
	$$
	Then one computes on the spatial level that $\partial^ih=h\partial^{i-1}$ for $1\leq i\leq n$, and $\partial^0h$ is the identity map.  Hence by functoriality 
	\begin{align*}
	\partial h_*+h_*\partial & =\sum_{i=0}^n (-1)^i (\partial^ih)_*+\sum_{i=0}^{n-1} (-1)^i(h\partial^i)_* \\ & =(\partial^0h)_*+\sum_{i=1}^n  (-1)^i \big((\partial^i h)_* - (h\partial^{i-1})_*\big),
	\end{align*}
	and this is the identity.  Define also $h:G^0\to EG_0$ by $h(x)=x$.  Then the map $r h:G^0\to G^0$ is the identity, and so $\partial h_*$ is the identity map on $\Z[G^0]$.  
	
	To summarise, $h_*$ is a chain homotopy (as in for example \cite[Definition 1.4.4]{Weibel1994}) between the identity map and the zero map. This implies that the complex has trivial homology (see for example \cite[Lemma 1.4.5]{Weibel1994}), or, equivalently, is exact.
\end{proof} 

For the next lemma, let us define a right action of $\Z[G^0]$ on $\Z[G]$ via 
$$(f\phi)(g):=f(g)\phi(s(g)) \quad \forall f\in \Z[G], \phi\in \Z [G^0] $$ 
and similarly a left action of $\Z[G^0]$ on $\Z[EG_n]$ via $(\phi a)(g_0,...,g_n):=\phi(r(g_0))a(g_0,...,g_n)$.  Note that this right action of $\Z[G^0]$ on $\Z[G]$ commutes with the canonical left action of $\Z[G]$ on itself: indeed, the action of $\Z[G^0]$ is just the action by right-multiplication of the submodule $\Z[G^0]$ of $\Z[G]$, so this commutativity statement is associativity of multiplication. For notational convenience, we define $EG_{-1}:=G^0$.

\begin{lemma}\label{tp lem}
	With notation as above, for any $n\geq -1$ there is a canonical isomorphism of $\Z[G]$-modules.
	$$
	\Z[G]\underset{\Z[G^0]}{\otimes}\Z[EG_n]\cong \Z[EG_{n+1}].
	$$
\end{lemma}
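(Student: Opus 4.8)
The plan is to obtain the isomorphism from a $G$-equivariant homeomorphism on the spatial level, together with the general principle that the functor $\Z[?]$ turns fibered products over $G^0$ into tensor products over $\Z[G^0]$. First I would record the spatial homeomorphism. Writing a point of $EG_n$ as $\gamma=(g_0,\dots,g_n)$ with common range $r(\gamma):=r(g_0)=\dots=r(g_n)$, consider the fibered product $G\str EG_n=\{(g,\gamma): s(g)=r(\gamma)\}$ and the map
\[
\theta\colon G\str EG_n\to EG_{n+1},\qquad (g,(g_0,\dots,g_n))\mapsto (g,gg_0,\dots,gg_n).
\]
This is well defined because $s(g)=r(g_i)$ forces $r(gg_i)=r(g)$, and it is a homeomorphism with inverse $(h_0,\dots,h_{n+1})\mapsto (h_0,(h_0^{-1}h_1,\dots,h_0^{-1}h_{n+1}))$. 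One checks that $\theta$ intertwines the diagonal left $G$-action on $EG_{n+1}$ with the action $g'\cdot(g,\gamma)=(g'g,\gamma)$ on the first leg of $G\str EG_n$ (with anchor map $(g,\gamma)\mapsto r(g)$); hence $\theta$ is an isomorphism in $\mathbf{Top}_G$ and induces a $\Z[G]$-linear isomorphism $\theta_*\colon \Z[G\str EG_n]\xrightarrow{\ \cong\ }\Z[EG_{n+1}]$. It therefore suffices to construct a canonical $\Z[G]$-linear isomorphism $\Z[G]\underset{\Z[G^0]}{\otimes}\Z[EG_n]\cong \Z[G\str EG_n]$.

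Next I would write down the canonical map. The assignment $(f,a)\mapsto f\boxtimes a$, where $(f\boxtimes a)(g,\gamma)=f(g)a(\gamma)$ on $G\str EG_n$, is $\Z[G^0]$-balanced: for $\phi\in\Z[G^0]$ one has $(f\phi)(g)=f(g)\phi(s(g))$ and $(\phi a)(\gamma)=\phi(r(\gamma))a(\gamma)$, and these agree on the fibered product since there $s(g)=r(\gamma)$. It is also left $\Z[G]$-linear in $f$, because the left $\Z[G]$-action on $\Z[G]$ corresponds to left multiplication on the first leg of $G\str EG_n$, which is exactly the $\Z[G]$-module structure on $\Z[G\str EG_n]$ coming from $\Z[?]$. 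Thus $\boxtimes$ induces a canonical $\Z[G]$-linear map $\Phi\colon \Z[G]\underset{\Z[G^0]}{\otimes}\Z[EG_n]\to \Z[G\str EG_n]$, and the isomorphism of the lemma will be $\theta_*\circ\Phi$.

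The heart of the proof is showing $\Phi$ is bijective, and here I would use the standing $\sigma$-compactness to decompose $G=\bigsqcup_i B_i$ into compact open bisections. This gives a direct-sum decomposition $\Z[G]=\bigoplus_i\Z[B_i]$ of right $\Z[G^0]$-modules, and via the homeomorphism $s|_{B_i}$ an isomorphism $\Z[B_i]\cong \chi_{s(B_i)}\Z[G^0]$ of right $\Z[G^0]$-modules. Since tensor products commute with direct sums and $eR\underset{R}{\otimes}M\cong eM$ for any idempotent $e$, the left-hand side becomes $\bigoplus_i \chi_{s(B_i)}\Z[EG_n]$, where $\chi_{s(B_i)}\Z[EG_n]$ consists of those $a$ with $r(\gamma_0)\in s(B_i)$ on $\mathrm{supp}(a)$. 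On the other side, $G\str EG_n=\bigsqcup_i(B_i\str EG_n)$, and the map $(g,\gamma)\mapsto(s(g),\gamma)$ identifies $B_i\str EG_n$ with $\{\gamma: r(\gamma)\in s(B_i)\}$, giving $\Z[G\str EG_n]=\bigoplus_i\chi_{s(B_i)}\Z[EG_n]$. A direct check shows $\Phi$ respects these decompositions and, on each summand, is exactly the elementary isomorphism $eR\otimes_R M\cong eM$; hence $\Phi$ is an isomorphism.

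The main obstacle is precisely this bijectivity of $\Phi$, i.e.\ the product formula $\Z[Y\times_{G^0}Z]\cong\Z[Y]\otimes_{\Z[G^0]}\Z[Z]$ specialized to $Y=G$, $Z=EG_n$. Surjectivity is easy, since compact open sets of the form $U\times_{G^0}V$ with $s|_U$, $r|_V$ homeomorphisms generate $\Z[G\str EG_n]$; injectivity is the subtle point, and the device above---splitting off a bisection and reducing to $eR\otimes_R M\cong eM$---is what controls it. I would stress that the isomorphism produced is canonical: the bisection decomposition is used only to verify bijectivity of the manifestly natural map $\theta_*\circ\Phi$, not in its definition. Finally, the base case $n=-1$ (where $EG_{-1}=G^0$, so $\theta(g,x)=g$ and $\Z[G]\otimes_{\Z[G^0]}\Z[G^0]\cong\Z[G]=\Z[EG_0]$) is subsumed by the same argument.
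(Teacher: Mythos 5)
Your proof is correct, and in fact your canonical map is literally the paper's: composing $\Phi$ with $\theta_*$ sends $f\otimes a$ to the function $(g_0,\dots,g_{n+1})\mapsto f(g_0)a(g_0^{-1}g_1,\dots,g_0^{-1}g_{n+1})$, which is exactly the map the paper writes down directly on $EG_{n+1}$ without ever introducing the fibered product $G\str EG_n$. The genuine difference is in how bijectivity is verified. The paper argues elementwise: for injectivity it refines a finite sum $\sum_i f_i\otimes a_i$ so that the $f_i$ become characteristic functions of pairwise \emph{disjoint} compact open bisections $B_i$, pushes the idempotents $\chi_{s(B_i)}$ across the tensor sign so each $a_i$ is supported over $s(B_i)$, notes that the images $(\chi_{B_i},a_i)$ then have disjoint supports, and evaluates at a suitable point of $EG_{n+1}$ to force each $a_i=0$; surjectivity is proved by exhibiting an explicit preimage $\chi_{B_0}\otimes \chi_{B_0^{-1}B_1\times\cdots\times B_0^{-1}B_{n+1}\cap EG_n}$ for each generating characteristic function. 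You instead prove the underlying product formula $\Z[G\str EG_n]\cong \Z[G]\otimes_{\Z[G^0]}\Z[EG_n]$ structurally: a global partition $G=\bigsqcup_i B_i$ into compact open bisections yields $\Z[G]\cong\bigoplus_i \chi_{s(B_i)}\Z[G^0]$ as right $\Z[G^0]$-modules, and the problem reduces summandwise to $eR\otimes_R M\cong eM$. Both are sound, with a clear trade-off. Your global partition is precisely where $\sigma$-compactness of $G^0$ enters --- legitimate here, since it is the blanket hypothesis of the subsection, and it is the same decomposition the paper deploys in Lemma \ref{zg proj}, so your argument in effect re-proves along the way that $\Z[G]$ is a direct sum of corners (hence projective) over $\Z[G^0]$; by contrast, the paper's pointwise proof of this particular lemma uses only ampleness, because each individual compactly supported $f_i$ already decomposes into finitely many bisections, so the lemma itself does not need $\sigma$-compactness. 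What your route buys is conceptual economy: once the corner decomposition is in place, injectivity (the step the paper handles with the somewhat fiddly disjoint-support and evaluation argument) is automatic, and your formulation isolates the reusable general principle that $\Z[?]$ converts fibered products over $G^0$ into balanced tensor products over $\Z[G^0]$.
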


\begin{proof}
	For $f\in \Z[G]$ and $a\in \Z[EG_n]$, we define 
	$$
	(f,a):EG_{n+1}\to \Z,\quad (g_0,...,g_{n+1})\mapsto f(g_0)a(g_0^{-1}g_1,...,g_0^{-1}g_{n+1}).
	$$
	Note that $(f,b)$ is in $\Z[EG_{n+1}]$ by the support conditions defining $\Z[G]$ and $\Z[EG_n]$.  Note also that if $\phi\in \Z[G^{0}]$, then 
	$$
	(f\phi,a):(g_0,...,g_{n+1})\mapsto f(g_0)\phi(s(g_0))a(g_0^{-1}g_1,...,g_0^{-1}g_n)
	$$
	and 
	$$
	(f,\phi a):(g_0,...,g_{n+1})\mapsto f(g_0)\phi(r(g_0^{-1}g_1))a(g_0^{-1}g_1,...,g_0^{-1}g_n)  ;
	$$
	these are the same however, as for any $(g_0,...,g_{n+1})\in EG_{n+1}$, $r(g_0^{-1}g_1)=s(g_0)$.  Hence by the universal property of the balanced tensor product, we have a well-defined map
	$$
	\Z[G]\underset{\Z[G^{0}]}{\otimes}\Z[EG_n]\to \Z[EG_{n+1}].
	$$
	We claim that this map is an isomorphism.  
	
	For injectivity, say we have an element $\sum_{i=1}^n f_i\otimes a_i$ that goes to zero.  Splitting up the sum further, we may assume that each $f_i$ is the characteristic function $\chi_{B_i}$ of a compact open bisection $B_i$ such that $B_i\cap B_j=\varnothing$ for $i\neq j$.  For each $i$, let $\phi_i$ be the characteristic function of $s(B_i)$.  As $\chi_{B_i}=\chi_{B_i}\phi_i$, we have that $\chi_{B_i}\otimes a_i=\chi_{B_i}\otimes \phi_ia_i$, we may further assume that each $a_i$ is supported in $\{(g_0,...,g_{n})\in EG_n \mid r(g_0)\in s(B_i)\}$.  Now, we are assuming that $\sum_{i=1}^n (\chi_{B_i},a_i)=0$.  As $B_i\cap B_j=\varnothing$ for $i\neq j$, the functions $(\chi_{B_i},a_i)$ have disjoint supports, and therefore we have that $(\chi_{B_i},a_i)=0$ for each $i$.  Assume for contradiction that $a_i\neq 0$, so there exists $(h_0,...,h_n)\in EG_n$ with $a(h_0,...,h_n)\neq 0$; our assumptions force $r(h_0)\in s(B_i)$.  Let $g\in B_i$ be such that $s(g)=r(h_0)$.  Then $(g,gh_0,...,gh_n)\in EG_{n+1}$ and $(\chi_{B_i},a)$ evaluates to $a(h_0,...,h_n)\neq 0$ at this point, giving the contradiction and completing the proof of injectivity.
	
	For surjectivity, as any element of $\Z[EG_{n+1}]$ is a finite $\Z$-linear combination of characteristic functions of subsets of the form $B_0\times \cdots \times B_{n+1}\cap EG_{n+1}$ with each $B_i$ a compact open bisection in $G$, it suffices to show that any such characteristic function is in $B$.  Set $f\in \Z[G]$ to be the characteristic function of $B_0$, and $a\in \Z[EG_n]$ to be the characteristic function of $B_0^{-1}B_1\times \cdots B_0^{-1}B_n\cap EG_n$.  We leave it to the reader to check that $f\otimes a$ maps to the function we want.
\end{proof}

For the next lemma, we consider $\Z[G]$ as a left $\Z[G^0]$-module via the left-multiplication action induced by the inclusion $\Z[G^0]\subseteq \Z[G]$.

\begin{lemma}\label{zg proj}
	If $G^0$ is $\sigma$-compact, then, considered as a (left) module over $\Z[G^0]$, $\Z[G]$ is projective.
\end{lemma}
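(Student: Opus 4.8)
The key observation I would start from is that the left $\Z[G^0]$-module structure on $\Z[G]$ is entirely \emph{pointwise through the range map}: unwinding the convolution, for $\phi\in\Z[G^0]$ (supported on units) and $f\in\Z[G]$ one gets $(\phi f)(g)=\phi(r(g))f(g)$. Thus this module structure refers only to the \'etale map $r:G\to G^0$, and the plan is to use $\sigma$-compactness to split $G$ along $r$ into pieces on which $r$ is a homeomorphism. Concretely, I would exhibit $\Z[G]$ as a direct sum of $\Z[G^0]$-submodules, each isomorphic to a module of the form $\Z[G^0]\chi_U=\Z[G^0]e$ for an idempotent $e=\chi_U$; such modules are projective by Remark \ref{r:freevsproj}, and an arbitrary direct sum of projective modules is again projective.

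For the decomposition, since $G^0$ is $\sigma$-compact and $G$ is ample, $G$ is itself a disjoint union $G=\bigsqcup_{n} B_n$ of compact open bisections, exactly the decomposition recorded at the start of this subsection. Setting $\Z[B_n]:=\{f\in\Z[G]\mid \mathrm{supp}(f)\subseteq B_n\}$, the fact that the $\Z[G^0]$-action is pointwise means it preserves supports, so each $\Z[B_n]$ is a $\Z[G^0]$-submodule. As every $f\in\Z[G]$ has compact support meeting only finitely many of the disjoint open sets $B_n$, writing $f=\sum_n f\cdot\chi_{B_n}$ identifies $\Z[G]$ with the (restricted) direct sum $\bigoplus_n\Z[B_n]$ of $\Z[G^0]$-modules. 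For a fixed $n$, put $U:=r(B_n)$, a compact open subset of $G^0$; since $B_n$ is a bisection, $r|_{B_n}:B_n\to U$ is a homeomorphism, and pushforward along it gives a bijection $(r|_{B_n})_*:\Z[B_n]\to\Z[U]$. Comparing the pointwise actions shows this is $\Z[G^0]$-linear (multiplication by $\phi\circ r$ on $\Z[B_n]$ corresponds to pointwise multiplication by $\phi$ on $\Z[U]=\Z[G^0]\chi_U$), so $\Z[B_n]\cong\Z[G^0]\chi_U$ is projective. Hence $\Z[G]\cong\bigoplus_n\Z[G^0]\chi_{r(B_n)}$ is a direct sum of projectives, and is therefore projective.

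The only genuinely delicate point is the bookkeeping of the module structures: one must confirm that the relevant $\Z[G^0]$-action is really the pointwise one $(\phi f)(g)=\phi(r(g))f(g)$, and that the range homeomorphism $r|_{B_n}$ intertwines it with the action on $\Z[U]\subseteq\Z[G^0]$, so that the pieces are honest \emph{projective $\Z[G^0]$-modules} and not merely direct summands as abelian groups. Everything else is formal once the disjoint compact-open-bisection decomposition supplied by $\sigma$-compactness is in hand; indeed this is precisely the place where $\sigma$-compactness is used, matching the cautionary footnote about the general totally disconnected case.
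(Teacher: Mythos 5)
Your proof is correct and follows essentially the same route as the paper: decompose $G=\bigsqcup_i U_i$ into disjoint compact open bisections using $\sigma$-compactness, identify $\Z[G]$ with the direct sum of the submodules of functions supported on each $U_i$ (the paper writes these as $\Z[G^0]\chi_{U_i}$), and use the range homeomorphism $r|_{U_i}$ to identify each summand $\Z[G^0]$-linearly with $\Z[G^0]\chi_{r(U_i)}$, which is projective by Remark \ref{r:freevsproj}. Your explicit verification that the convolution action of $\Z[G^0]$ is the pointwise action $(\phi f)(g)=\phi(r(g))f(g)$ is exactly the detail the paper leaves as ``one checks,'' so the write-up is a slightly more detailed version of the same argument.
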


In many interesting cases $\Z[G]$ is actually free over $\Z[G^0]$: for example, this happens for transformation groupoids associated to actions of discrete groups.  However, freeness does not seem to be true in general.

\begin{proof}
	For a compact open bisection $U$, let $\chi_U\in \Z[G]$ denote the characteristic function of that bisection.   As $G^0$ is $\sigma$-compact we may choose a covering $G=\bigsqcup_{i\in I}U_i$ of $G$ by disjoint compact open bisections, and for each $i$, let $\Z[G^0]\chi_{U_i}$ denote the $\Z[G^0]$-submodule of $\Z[G]$ generated by $\chi_{U_i}$.  Then as $\Z[G^0]$-modules, we have 
	$$
	\Z[G]\cong \bigoplus_{i\in I} \Z[G^0]\chi_{U_i}.
	$$
	It thus suffices to prove that each $\Z[G^0]\chi_{U_i}$ is projective.  For this, one checks that $\Z[G^0]\chi_{U_i}$ is isomorphic as a $\Z[G^0]$-module to $\Z[G^0]\chi_{r(U_i)}$, so projective by Remark \ref{r:freevsproj}, and we are done.
\end{proof}

\begin{corollary}\label{bn proj}
	If $G^0$ is $\sigma$-compact, the $G$-module $\Z[EG_n]$ is projective for $n\geq 0$.
\end{corollary}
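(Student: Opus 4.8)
The plan is to run an induction on $n$ driven by the recursive description of $\Z[EG_n]$ furnished by Lemma \ref{tp lem}, namely $\Z[EG_n]\cong \Z[G]\otimes_{\Z[G^0]}\Z[EG_{n-1}]$ for $n\geq 0$ (with the convention $EG_{-1}=G^0$). The conceptual engine is the standard principle that the induction functor $\Z[G]\otimes_{\Z[G^0]}(-)$ carries projectives to projectives: if a $\Z[G^0]$-module $P$ is a direct summand of a free module $\bigoplus_J\Z[G^0]$, then applying the additive functor $\Z[G]\otimes_{\Z[G^0]}(-)$ exhibits $\Z[G]\otimes_{\Z[G^0]}P$ as a direct summand of $\bigoplus_J\big(\Z[G]\otimes_{\Z[G^0]}\Z[G^0]\big)\cong\bigoplus_J\Z[G]$ (the last isomorphism being Lemma \ref{tp lem} at $n=-1$). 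Since $G^0$ is $\sigma$-compact, the free $\Z[G]$-module $\bigoplus_J\Z[G]$ is projective by the discussion following Remark \ref{r:freevsproj}, and hence so is any direct summand of it.

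The subtlety — and the reason the statement cannot be proved by a naive induction over $\Z[G]$ alone — is that Lemma \ref{tp lem} requires a \emph{$\Z[G^0]$}-projective module as input, whereas the previous paragraph only delivers projectivity over $\Z[G]$. To keep the induction running I would therefore strengthen the inductive hypothesis and prove that $\Z[EG_n]$ is projective as a left $\Z[G^0]$-module for every $n\geq -1$ (here the $\Z[G^0]$-action is the restriction of the $\Z[G]$-action, which one checks coincides with the pointwise action $(\phi a)(\gamma)=\phi(p(\gamma))a(\gamma)$). The base case $n=-1$ is immediate, since $\Z[EG_{-1}]=\Z[G^0]$ is projective over itself: $\sigma$-compactness writes it as a direct sum of the projective modules $\Z[G^0]\chi_{U_i}$ from Remark \ref{r:freevsproj}. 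For the inductive step, writing $\Z[EG_{n-1}]$ as a summand of a free $\Z[G^0]$-module and inducing up as above exhibits $\Z[EG_n]$ as a summand of $\bigoplus_J\Z[G]$. This module is simultaneously projective over $\Z[G]$, which yields the statement of the corollary for this $n$, and projective over $\Z[G^0]$, because $\Z[G]$ is $\Z[G^0]$-projective by Lemma \ref{zg proj} and direct sums and summands of projectives are projective; the latter propagates the strengthened hypothesis to $n$.

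The remaining points require care but are bookkeeping rather than genuine obstacles: one must keep track of the left-versus-right $\Z[G^0]$-actions entering the balanced tensor product of Lemma \ref{tp lem}, and one must invoke the ``direct summand of a free module'' characterisation of projectivity for non-degenerate modules over rings with local units from Remark \ref{r:freevsproj}, rather than appeal to freeness, since $\Z[G]$ need not be free over $\Z[G^0]$. With the strengthened inductive hypothesis in place, the corollary for all $n\geq 0$ is read off directly from the inductive step.
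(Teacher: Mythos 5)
Your proof is correct and follows essentially the same route as the paper's: an induction driven by Lemma \ref{tp lem}, using Lemma \ref{zg proj} to pass between $\Z[G]$- and $\Z[G^0]$-projectivity, and the summand-of-free characterisation of projectivity from Remark \ref{r:freevsproj}. The only (cosmetic) difference is that you make $\Z[G^0]$-projectivity of $\Z[EG_n]$ the explicit strengthened induction hypothesis starting at $n=-1$, whereas the paper inducts on $\Z[G]$-projectivity from the base case $\Z[EG_0]\cong\Z[G]$ and derives the $\Z[G^0]$-statement inside each inductive step.
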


\begin{proof}
	We proceed by induction on $n$.  For $n=0$, $\Z[EG_0]$ identifies with $\Z[G]$ as a left $\Z[G]$-module, so free, and thus projective by Remark \ref{r:freevsproj}.  Now assume we have the result for $\Z[EG_n]$.  By Remark \ref{r:freevsproj}, $\Z[EG_n]$ is a direct summand in a free $\Z[G]$-module, say $\bigoplus_J \Z[G]$. Hence, as $\Z[G^0]$-modules, $\Z[EG_n]$ is a direct summand in $\bigoplus_J \Z[G]$ which is projective by Lemma \ref{zg proj}. Therefore, $\Z[EG_n]$ is projective as a $\Z[G^0]$-module.
	Let then $N$ be a $\Z[G^0]$-module such that $\Z[EG_n]\oplus N$ is isomorphic as a $\Z[G^0]$-module to $\bigoplus_{i\in I} \Z[G^{0}]$ for some index set $I$.  It follows that as $\Z[G]$-modules 
	\begin{align*}
	\Big(\Z[G]\underset{\Z[G^0]}{\otimes}\Z[EG_n]\Big)\oplus \Big(\Z[G]\underset{\Z[G^0]}{\otimes}N\Big) & \cong \Z[G]\underset{\Z[G^0]}{\otimes}(M\oplus N)  \\ & \cong \Z[G]\underset{\Z[G^0]}{\otimes}\Big(\bigoplus_{i\in I} \Z[G^{0}]\Big)\\ & \cong \bigoplus_{i\in I} \Z[G].
	\end{align*}
	Hence $\Z[G]\underset{\Z[G^0]}{\otimes}\Z[EG_n]$ is isomorphic to a direct summand of a free $\Z[G]$-module, so projective as a $\Z[G]$-module.  Using Lemma \ref{tp lem}, we are done.
\end{proof}

\begin{proof}[Proof of Theorem \ref{tor the}]
	Lemma \ref{ex proj} and Corollary \ref{bn proj} together imply that 
	$$
	\xymatrix{ \cdots \ar[r]^-\partial & \Z[EG_1] \ar[r]^-\partial  & \Z[EG_0] \ar[r]^-\partial & \Z[G^0] \ar[r] & 0 }.
	$$
	is a resolution of $\Z[G^0]$ by projective modules.  The groups $\text{Tor}_*^{\Z[G]}(\Z[G^0],\Z[G^0])$ are therefore by definition the homology groups of the complex
	$$
	\xymatrix{ \cdots \ar[r]^-{\text{id}\otimes \partial} & \Z[G^0]\underset{\Z[G]}{\otimes}\Z[EG_1] \ar[r]^-{\text{id}\otimes \partial}   & \Z[G^0] \underset{\Z[G]}{\otimes}\Z[EG_0] \ar[r] & 0  }.
	$$
	However, using Lemma \ref{coinv tp}, this is the same as the complex 
	$$
	\xymatrix{ \cdots \ar[r]^-\partial & \Z[EG_1]_G \ar[r]^-\partial  & \Z[EG_0]_G \ar[r] & 0 },
	$$
	and we have already seen that the homology of this is the same as the homology $H_*(G)$.
\end{proof}

\section{Colourings and homology\label{colour sec}}

The goal of this section is to use what we shall call a \emph{colouring} of a groupoid $G$ to produce an associated nerve space, and prove that the nerve is a semi-simplicial $G$-space.  This lets us define the homology of a colouring in terms of the homology of the associated semi-simplicial $G$-space. Throughout most of the section we assume that $G$ is an ample groupoid with compact base space $G^0$: the most important exception is the main result---Theorem \ref{main vanishing}---at the end, where we drop the compact base space assumption.

\subsection{Colourings and nerves\label{nerve}}

In this subsection we introduce colourings of a groupoid and the associated nerve spaces and homology.

\begin{definition}\label{colouring}
	A \emph{colouring} for $G$ is a finite ordered collection $\mathcal{C}=(G_0,...,G_d)$ of compact open subgroupoids of $G$ such that the collection $\{G_0^{0},...,G_d^{0}\}$ of unit spaces covers $G^{0}$.  
	
	Elements of the set $\{0,...,d\}$ are called the \emph{colours} of the colouring, and the colour of $G_i$ is $i$.
\end{definition}

We will associate a cover of $G$ to a colouring.  Let $\mathcal{P}(G)$ denote the collection of subsets of $G$.

\begin{definition}\label{cover}
	Let $G_0,...,G_d$ be a colouring of $G$.  The \emph{cover} associated to the colouring is 
	$$
	\mathcal{U}:=\{(gG_i^{s(g)},i)\in \mathcal{P}(G)\times\{0,...,d\}\mid g\in G, G_i^{s(g)}\neq \varnothing\}.
	$$
\end{definition}

Typically, we will just write $U$ for an element of $\mathcal{U}$, and treat $U$ as a (non-empty) subset of $G$. In particular, when considering intersections $U_0\cap U_1$ for $U_0,U_1\in \mathcal{U}$ we just mean the intersection of the corresponding subsets of $G$ (ignoring the colour!). The precise definition however calls for pairs as we want each element of $\mathcal{U}$ to have a well-defined colour in $\{0,...,d\}$.  Note that an element $U\in \mathcal{U}$ could be equal to $(gG_i^{s(g)},i)$ and $(hG_i^{s(h)},i)$ for $g\neq h$ in $G$, i.e.\ representations of elements of $\mathcal{U}$ of the form $gG_i^{s(g)}$ need not be unique.  Note also that the fact that $G_0^{0},...,G_d^{0}$ covers $G^{0}$ implies that $\mathcal{U}$ is a cover of $G$, but typically not by open sets: indeed, an element $gG_i^{s(g)}$ of $\mathcal{U}$ is contained in the single range fibre $G^{r(g)}$.  

\begin{definition}\label{nerve}
	Let $G_0,...,G_d$ be a colouring of $G$ with associated cover $\U$.  For $n\geq 0$, set 
	$$
	\Nc_n:=\Big\{(U_0,...,U_n)\in \mathcal{U}^{n+1} ~\Big| ~\bigcap_{i=0}^n U_i\neq \varnothing\Big\}.
	$$
	The sequence $(\Nc_n)_{n=0}^\infty$ is denoted $\Nc_*$, and called the \emph{nerve} of the colouring.  
\end{definition}
As noted before, the intersection in the definition above is to be interpreted as the intersection of the corresponding subsets of $G$ (ignoring the colour). In particular we allow distinct $U_i$ appearing in a tuple as above to have different colours.

Our next goal is to give $\Nc_*$ the structure of a semi-simplicial $G$-space.

\begin{definition}\label{nerve act}
	For each $n$, the anchor map $r:\Nc_n\to G^{0}$ takes $(U_0,...,U_n)$ to the unique $x\in G^0$ such that $U_0$ is a subset of\footnote{and therefore all the $U_j$ are subsets of} $G^x$.  Let $G_{s\!\!\!}\times_{\!r}\Nc_n$ be the fibered product as in line \eqref{fp} above, and define an action by 
	$$
	G_{s\!\!\!}\times_{\!r}\Nc_n\to \Nc_n,\quad (g,(U_0,...,U_n))\mapsto (gU_0,...,gU_n).
	$$
\end{definition}

Direct checks show this is a well-defined groupoid action of $G$ on the set $\Nc_n$.  Our next goal is to introduce a topology on $\Nc_n$, and prove that the action is continuous.

\begin{definition}\label{nerve top}
	Let $i\in \{0,...,d\}$ and let $V$ be an open subset of $G$ such that $G_i^{s(g)}$ is non-empty for all $g\in V$.  Define
	$$
	U_{V,i}:=\{(gG_i^{s(g)},i)\mid g\in V\},
	$$
	and equip $\Nc_0$ with the topology generated by these sets. For each $n\geq 0$, equip $\Nc_0^{n+1}$ with the product topology, and give $\Nc_n\subseteq \Nc_0^{n+1}$ the subspace topology.
\end{definition}

\begin{lemma}\label{nerve top 0}
	For each $n$, the topology on $\Nc_n$ is locally compact, Hausdorff, and totally disconnected.  Moreover, the action defined in Definition \ref{nerve act} above is continuous.
\end{lemma}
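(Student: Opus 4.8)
The plan is to first analyse $\Nc_0$ and then bootstrap to $\Nc_n$ using that it sits inside the product $\Nc_0^{n+1}$. For a colour $i$, set $\Omega_i:=s^{-1}(G_i^0)$, a clopen subset of $G$ (as $G_i^0$ is compact open in $G^0$), and consider the map $q_i\colon\Omega_i\to\Nc_0$, $g\mapsto(gG_i^{s(g)},i)$. I would check that $q_i$ is surjective onto the colour-$i$ part of $\Nc_0$, that its fibres are exactly the orbits of the right action $g\cdot k:=gk$ of $G_i$ (so $q_i(g)=q_i(g')$ iff $r(g)=r(g')$ and $g^{-1}g'\in G_i$), and that $q_i(V)=U_{V,i}$ for every open $V\subseteq\Omega_i$. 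Since the defining sets $U_{V,i}$ are precisely the $q_i$-images of opens, are disjoint across distinct colours, and are closed under the finite intersections occurring within a fixed colour, they form a basis; consequently each colour-$i$ part of $\Nc_0$ carries exactly the quotient topology of $q_i$, and $\Nc_0=\bigsqcup_{i=0}^d \Omega_i/G_i$ as a topological space. The right $G_i$-action is free (by cancellation) and proper (because $G_i$ is compact: the preimage of a compact set under $(g,k)\mapsto(gk,g)$ is closed and contained in a set of the form $(\text{compact})\times G_i$). Hence, by the right-handed analogue of the fact recalled before Definition \ref{topg def} (which follows by applying the inversion homeomorphism of $G$), each $\Omega_i/G_i$ is locally compact and Hausdorff and $q_i$ is \'etale. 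Total disconnectedness then transfers from $\Omega_i$ through the local homeomorphism $q_i$, so $\Nc_0$ is locally compact, Hausdorff and totally disconnected.

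Next I would pass to $\Nc_n$. The finite product $\Nc_0^{n+1}$ is again locally compact, Hausdorff and totally disconnected, and since both the latter properties pass to arbitrary subspaces, $\Nc_n$ is Hausdorff and totally disconnected immediately. Local compactness is the one property that does not pass to subspaces, and this is where the argument has to do some work: I would prove that $\Nc_n$ is locally closed in $\Nc_0^{n+1}$ (the intersection of an open and a closed set), which suffices since locally closed subspaces of locally compact Hausdorff spaces are locally compact. The range map $r\colon\Nc_0\to G^0$ is well defined and continuous (it descends $r|_{\Omega_i}$ along $q_i$), so the ``common range'' set $R:=\{(U_0,\dots,U_n):r(U_0)=\dots=r(U_n)\}$ is closed in $\Nc_0^{n+1}$ and contains $\Nc_n$. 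It remains to show $\Nc_n$ is open in $R$.

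This is the crux of the proof, and it is handled by a bisection trick. Given $(U_0,\dots,U_n)\in\Nc_n$ with colours $(i_0,\dots,i_n)$, pick a common point $\gamma\in\bigcap_j U_j$; one checks that $U_j=\gamma G_{i_j}^{s(\gamma)}$ for each $j$. Choose a compact open bisection $B\ni\gamma$ small enough that $s(B)\subseteq\bigcap_j G_{i_j}^0$, and consider the basic open box $\prod_{j} U_{B,i_j}$ around $(U_0,\dots,U_n)$ in $\Nc_0^{n+1}$. I claim its intersection with $R$ lies in $\Nc_n$: any tuple in the box has the form $(g_j'G_{i_j}^{s(g_j')})_j$ with representatives $g_j'\in B$, and lying in $R$ forces $r(g_0')=\dots=r(g_n')$; since $r|_B$ is injective, all the $g_j'$ coincide with a single $\gamma'\in B$, whence $\gamma'\in\bigcap_j \gamma'G_{i_j}^{s(\gamma')}$ and the intersection is non-empty. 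Taking the union of these boxes over all points of $\Nc_n$ exhibits $\Nc_n$ as $R\cap O$ with $O$ open, giving local closedness and hence local compactness.

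Finally, for continuity of the action I would reduce to level $0$. Because the action on $\Nc_n$ is diagonal on the coordinates of $\Nc_0^{n+1}$, it is continuous as soon as $G\str\Nc_0\to\Nc_0$, $(g,U)\mapsto gU$, is continuous. On the colour-$i$ part this action sends $(g,hG_i^{s(h)})\mapsto(gh)G_i^{s(gh)}$, i.e.\ it is induced by the multiplication $m$ of $G$ through the quotients $q_i$. Given a basic open $U_{V,i}$ containing $g_0U_0=(g_0h_0)G_i^{s(g_0h_0)}$, continuity of $m$ at $(g_0,h_0)$ provides opens $A\ni g_0$ and $C\ni h_0$ with $AC\subseteq V$ on composable pairs; then $(A\times U_{C,i})\cap(G\str\Nc_0)$ is an open neighbourhood of $(g_0,U_0)$ mapping into $U_{V,i}$. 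This proves continuity and completes the lemma. The main obstacle throughout is the local compactness claim: the fibres $G^x$ being discrete, ``nearby'' elements of the nerve need not have intersecting members unless one controls representatives inside a single bisection, and the bisection trick is exactly what makes the intersection condition locally automatic on the closed set $R$.
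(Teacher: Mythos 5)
Your proof is correct, but it takes a genuinely different route from the paper's in two places. For $\Nc_0$, the paper verifies Hausdorffness directly by a net argument (lifting a net converging to two putative limits to representatives $g_j\to g$, $h_j\to h$, and using compactness of $G_i$ to extract a convergent subnet of the witnesses $l_j\in G_i$), whereas you identify the colour-$i$ part structurally as the quotient $\Omega_i/G_i$ of a free and proper right action of the compact open subgroupoid $G_i$ and then invoke the general free-and-proper quotient fact recalled before Definition \ref{topg def}; this is cleaner and makes the \'etaleness of $q_i$ (used implicitly later, e.g.\ in Lemma \ref{coinv}-type arguments) explicit, at the cost of having to verify that the generated topology agrees with the quotient topology, which you correctly reduce to openness of saturations $VG_i$ (Lemma \ref{gp prod}). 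For $\Nc_n$, the paper proves the stronger statement that $\Nc_n$ is \emph{closed} in $\Nc_0^{n+1}$, again by a net-and-compactness argument with witnesses $h_j^k\in G_i$; you instead prove only local closedness, writing $\Nc_n=R\cap O$ with $R$ the closed ``common range'' set and openness inside $R$ obtained by your bisection trick, which is enough for local compactness and interestingly avoids using compactness of the colouring subgroupoids at level $n$ altogether (compactness enters your proof only through properness at level $0$). Your proof of openness in $R$ is sound: the injectivity of $r|_B$ forces all representatives to collapse to a single $\gamma'\in B$, and $s(B)\subseteq\bigcap_j G_{i_j}^0$ guarantees $\gamma'\in\bigcap_j\gamma' G_{i_j}^{s(\gamma')}$. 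One small imprecision in the continuity-of-action step: $g_0h_0$ need only lie in the saturation $VG_i=q_i^{-1}(U_{V,i})$, not in $V$ itself, so before invoking continuity of multiplication you should replace $V$ by $VG_i$ (harmless, since $U_{V,i}=U_{VG_i,i}$); with that one-line fix the argument closes, and it matches the paper's treatment of this point, which likewise rests on continuity of multiplication in $G$.
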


\begin{proof}
	We first look at the case $n=0$. Given a compact open bisection $V\subseteq G$ with $s(V)\subseteq G_i^0$ one readily verifies that the mapping $g\mapsto (gG_i^{s(g)},i)$ defines a homeomorphism $V\rightarrow U_{V,i}$. In particular, the sets $U_{V,i}$ for such compact open bisections $V$ are themselves compact open Hausdorff subsets of $\Nc_0$.  Consequently, $\Nc_0$ is locally compact and locally Hausdorff.
	To check that it is Hausdorff, it suffices to check that if $(U_j)_j$ is a net in $\Nc_0$ that converges to both $U$ and $V$, then $U=V$.  Equivalently, say we have a net $(g_jG_{i_j}^{s(g_j)},i_j)_j$ in $\Nc_0$ such that there are $h_j$ in $G$ with $g_jG_{i_j}^{s(g_j)}=h_jG_{i_j}^{s(h_j)}$ for all $j$, and so that $g_j\to g$ and $h_j\to h$. First note that since the net converges, $i_j$ will be eventually constant so we may as well assume that $i_j=i$ for all $j$; we need to check that $gG_i^{s(g)}=hG_i^{s(h)}$. 
	
	Say then that $gk$ is in $gG_i^{s(g)}$ with $k\in G_i^{s(g)}$.  By symmetry, it suffices to check that $gk$ is in $hG_i^{s(h)}$.  As $G_i$ is open in $G$, it is \'{e}tale, whence for all suitably large $j$, we can find $k_j\in G_i^{s(g_j)}$ such that $k_j\to k$.  As $g_jG_i^{s(g_j)}=h_jG_i^{s(h_j)}$ for all $j$, we can also find $l_j\in G_i^{s(h_j)}$ for all $j$ so that $g_jk_j=h_jl_j$.  Using that $G_i$ is compact, we may pass to a subnet and so assume that $(l_j)$ converges to some $l\in G_i$, which is necessarily in $G_i^{s(h)}$.  Hence $gk=\lim g_jk_j=\lim h_jl_j=hl$, and so $gk$ is in $hG_i^{s(h)}$ as required.
	
	The first paragraph of the proof implies that $\Nc_0$ admits a basis of compact open subsets and hence it is totally disconnected.
	
	Continuity of the action follows on observing that if $(g_j,h_jG_i^{s(h)})$ is a convergent net in $G_{s\!\!\!}\times_{\!r}\Nc_0$, then $(g_jh_jG_i^{s(g_jh)})$ is a convergent net in $\Nc_0$ by continuity of the multiplication in $G$. 
	
	We now look at the case of general $n$.  The facts that $\Nc_n$ is Hausdorff and totally disconnected, as well as the continuity of the $G$-action, all follow directly from the corresponding properties for $\Nc_0$.  
	
	We claim that $\Nc_n$ is closed in $\Nc_0^{n+1}$; as closed subsets of locally compact spaces are locally compact, this will suffice to complete the proof.  To check closedness, for each $j\in \{0,...,n\}$, say $(g_j^k)_{k\in K}$ is a net such that $g_j^k\to g_j$ as $k\to\infty$, and such that $(g_0^kG_i^{s(g_0^k)},...,g_n^kG_i^{s(g_n^k)})$ is in $\Nc_n$ for all $k$.  We need to show that $(g_0G_i^{s(g_0)},...,g_nG_i^{s(g_n)})$ is also in $\Nc_n$.  Indeed, as $(g_0^kG_i^{s(g_0^k)},...,g_n^kG_i^{s(g_n^k)})$ is in $\Nc_n$, there exist $h_0^k,...,h_n^k\in G_i$ such that 
	$$
	g_0^kh_0^k=g_1^kh_1^k=\cdots=g_n^kh_n^k.
	$$
	As $G_i$ is compact, we may assume that each net $(h_j^k)_{k\in K}$ converges to some $h_j$ in $G_i$.  Hence 
	$$
	g_0h_0=g_1h_1=\cdots=g_nh_n
	$$
	is a point in $g_0G_i^{s(g_0)}\cap \cdots \cap g_nG_i^{s(g_n)}$, which is thus non-empty.  Hence $$(g_0G_i^{s(g_0)},...,g_nG_i^{s(g_n)})$$ is in $\Nc_n$ as required.
\end{proof}

We thus have shown that each of the spaces $\Nc_n$ is in the category $\mathbf{Top}_G$ of Definition \ref{topg def}.  To show that $\Nc_*$ is a semi-simplicial $G$-space, it remains to build the face maps and show that they are equivariant and \'{e}tale.

\begin{definition}\label{face map}
	For each $n\geq 1$ and each $j\in \{0,...,n\}$, define the \emph{$j^\text{th}$ face map} to be the function
	$$
	\partial^{n}_j :\Nc_n\to \Nc_{n-1},\quad (U_0,...,U_n)\mapsto (U_0,...,\widehat{U_j},...,U_n),
	$$
	where the hat ``\,$\widehat{\cdot}$\,'' means to omit the corresponding element.
\end{definition}

\begin{lemma}\label{face lh}
	For each $j$, $\partial_j$ as in Definition \ref{face map} is equivariant and \'{e}tale, and moreover 
	\[\partial^{n-1}_j \partial^n_i = \partial^{n-1}_i \partial^n_{j-1} \text{ if } i<j. \]
\end{lemma}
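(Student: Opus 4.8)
The plan is to treat the three assertions separately: the $G$-equivariance of each $\partial^n_j$, the fact that each $\partial^n_j$ is étale, and the semi-simplicial relations. I expect equivariance and the relations to be essentially formal, with the étale property being the only point that needs genuine (but still short) topological input.

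First I would dispose of equivariance. Since the $G$-action on $\Nc_n$ from Definition \ref{nerve act} is coordinatewise, $g\cdot(U_0,\dots,U_n)=(gU_0,\dots,gU_n)$, deleting the $j$-th entry plainly commutes with applying $g$ to every entry: $\partial^n_j(g\cdot(U_0,\dots,U_n))=(gU_0,\dots,\widehat{gU_j},\dots,gU_n)=g\cdot\partial^n_j(U_0,\dots,U_n)$. In the same breath I would record the observation needed below: if $(U_0,\dots,U_n)$ lies over $x\in G^0$ (i.e.\ all $U_i\subseteq G^x$), then the shortened tuple still has all its entries in $G^x$, so $\partial^n_j$ preserves the anchor, $r\circ\partial^n_j=r$.

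For the étale property I would invoke the general principle that \emph{a continuous, anchor-preserving map between objects of $\mathbf{Top}_G$ is automatically étale}. Indeed, if $f\colon X\to Y$ satisfies $p_Y\circ f=p_X$ with both anchors $p_X,p_Y$ local homeomorphisms, then near any $x\in X$ one chooses neighbourhoods $W\ni x$ and $W'\ni f(x)$ on which $p_X$ and $p_Y$ restrict to homeomorphisms onto open sets, shrinks $W$ so that $f(W)\subseteq W'$, and observes that on $W$ one has $f=(p_Y|_{W'})^{-1}\circ p_X$; this exhibits $f$ locally as a composite of homeomorphisms onto open sets, hence as a local homeomorphism. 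Applying this with $X=\Nc_n$, $Y=\Nc_{n-1}$, $f=\partial^n_j$, and using that each $\Nc_m$ already belongs to $\mathbf{Top}_G$ (so its anchor $r$ is étale) together with $r\circ\partial^n_j=r$ from the previous paragraph, I conclude that $\partial^n_j$ is étale. As an alternative I would note that one can argue directly in the charts $U_{V,i}\cong V$ of Lemma \ref{nerve top 0}: the common-range constraint forces every coordinate of a tuple in $\Nc_n$ to be determined by its image in $G^0$, so locally the deletion map is identified with the identity on an open subset of a bisection; but the anchor-preservation argument is cleaner and uniform in $n$.

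Finally, the semi-simplicial relations are purely combinatorial and carry no topological content: since each face map is a coordinate-deletion, for $i<j$ the two composites $\Nc_n\to\Nc_{n-2}$ appearing in the identity delete the same pair of coordinates from the tuple $(U_0,\dots,U_n)$, and therefore agree. This is verified by the usual index-bookkeeping (deleting an earlier coordinate shifts the positions of the later ones), exactly as for the face maps of $EG_*$ introduced earlier, and requires nothing more than a direct check. The only step with any substance is thus the étale claim, which I would settle via the anchor-preservation principle above.
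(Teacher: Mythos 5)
Your proof is correct, but the \'etale step takes a genuinely different route from the paper's. The paper argues concretely: around a point with $U_k=g_kG_{i_k}^{s(g_k)}$ it picks open bisections $B_k\ni g_k$, forms the basic neighbourhood $W\subseteq \Nc_n$, and writes down an explicit local inverse for $\partial^n_j$ by re-inserting $hG_{i_j}^{s(h)}$ in the $j$-th slot, $h$ being the unique element of $B_j$ with $r(h)=p(V_0,\dots,V_{n-1})$ --- precisely the ``coordinates are determined by the anchor via bisections'' observation you mention only as an alternative. You instead use the two-out-of-three principle for local homeomorphisms: $r\circ\partial^n_j=r$ with both anchors \'etale forces $\partial^n_j$ \'etale. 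This is valid, uniform in $n$ and $j$, and it even yields openness of the image automatically (via $f(W)=(p_Y|_{W'})^{-1}(p_X(W))$), a point the paper's proof passes over when asserting $W\to W_j$ is a homeomorphism. Two caveats. Your principle assumes continuity of $\partial^n_j$, which you never justify; it is immediate (restriction of a coordinate-deletion projection $\Nc_0^{n+1}\to\Nc_0^n$, landing in $\Nc_{n-1}$ since a subfamily of a family with nonempty intersection still intersects), but should be said. More substantively, you lean on the anchors of $\Nc_n$, $n\geq 1$, being \'etale; the paper does assert $\Nc_n\in\mathbf{Top}_G$ just before the lemma, so this is formally available, but Lemma \ref{nerve top 0} as stated only records local compactness, Hausdorffness, total disconnectedness and continuity of the action, and checking the anchor is \'etale for $n\geq 1$ requires essentially the same bisection work (plus openness of the nonempty-intersection locus) that the paper's explicit inverse performs --- so your cleanness comes from front-loading that geometric content. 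Finally, on the identities: your ``same pair of coordinates'' bookkeeping verifies $\partial^{n-1}_i\partial^n_j=\partial^{n-1}_{j-1}\partial^n_i$ for $i<j$ in function-composition order, which is the intended content of the displayed relation (the paper transcribes the cosimplicial identity $\varepsilon_j\varepsilon_i=\varepsilon_i\varepsilon_{j-1}$ verbatim); read literally with $\circ$-composition the displayed form deletes different pairs, so it is worth making your convention explicit.
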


\begin{proof}
	Let $(U_0,...,U_n)$ be a point in $\Nc_n$, and write $U_k=g_kG_{i_k}^{s(g_k)}$ for each $k\in \{0,...,n\}$.  For each $k$, let $B_k$ be an open bisection containing $g_k$.  Then the set 
	$$
	W:=\Bigg\{(h_0G_{i_0}^{s(h_0)},...,h_nG_{i_n}^{s(h_n)})~\Big|~ h_k\in B_k \text{ for each } k \text{ and } \bigcap_{k=0}^n h_kG_{i_k}^{s(h_k)}\neq \varnothing\Bigg\}
	$$ 
	is an open neighbourhood of $(U_0,...,U_n)$ in $\Nc_n$.  We claim that $\partial^j$ restricts to a homeomorphism on $W$.  Indeed, let $W_j$ be the image of $W$ under $\partial^j$.  Then an inverse is defined by sending a point $(V_0,...,V_{n-1})$ in $W_j$ to the point $(V_0,...,hG_{i_j}^{s(h)},...,V_{n-1})$ in $W$, where $hG_{i_j}^{s(h)}$ occurs in the $j^{\text{th}}$ entry, and where $h$ is the unique point in $B_j$ so that $r(h)=p(V_0,...,V_{n-1})$.  
	
	The $G$-equivariance and the claimed relations between the face maps are straightforward.
\end{proof}

\begin{definition}\label{bounded and lebesgue}
	Let $\mathcal{C}$ be a colouring of $G$.  The \emph{homology of the colouring}, denoted $H_*(\mathcal{C})$, is the homology $H_*^G(\Nc_*)$ of the semi-simplicial $G$-space $\Nc_*$ as in Definition \ref{hom sgs}.
\end{definition}

The homology groups $H_*(\mathcal{C})$ depend strongly on the colouring.  For example, $\mathcal{C}$ could just consist of a partition of $G^{0}$ by compact open subsets, in which case one can check that the groups $H_n(\mathcal{C})$ are zero for $n>0$.  We will, however, eventually show that an appropriate limit of the homologies $H_*(\mathcal{C})$ as the colourings vary recovers the Cranic-Moerdijk-Matui homology $H_*(G)$ for principal and $\sigma$-compact $G$.

\subsection{Homology vanishing}

Our goal in this subsection is to show that if $$\mathcal{C}=\{G_0,...,G_d\}$$ is a colouring of $G$, then $H_n(\mathcal{C})=0$ for $n>d$.  We will actually establish something a little more precise than this, as it will be useful later.  The computations in this section are inspired by classical results in sheaf cohomology: see for example \cite[Section 3.8]{Godement:1958aa}.  More specifically, the precise formulas we use are adapted from \cite[\href{https://stacks.math.columbia.edu/tag/01FG}{Tag 01FG}]{stacks-project}.

Throughout this subsection, we fix an ample groupoid $G$ with compact unit space $G^0$, a colouring $\mathcal{C}=\{G_0,...,G_d\}$ as in Definition \ref{colouring}, and associated nerve space $\Nc_*$ as in Definition \ref{nerve}.

\begin{lemma}\label{overlap}
Let $\U$ be the cover associated to the colouring $\mathcal{C}$ as in Definition \ref{cover}.  Then any two elements of $\U$ that are the same colour and intersect non-trivially are the same.  In particular, if $(U_0,...,U_n)$ is a point of some $\Nc_n$, then any two elements of the same colour are actually the same. 
\end{lemma}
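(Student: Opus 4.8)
The plan is to prove the first assertion — that two elements of $\U$ of the same colour with non-empty intersection coincide — by a direct coset computation inside the subgroupoid $G_i$, after which the ``in particular'' clause is immediate. Since no topology is involved, the whole argument is purely algebraic, using only that each $G_i$ is a subgroupoid (closed under products and inverses); the compactness and openness hypotheses play no role.

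So suppose $(gG_i^{s(g)},i)$ and $(hG_i^{s(h)},i)$ are two elements of $\U$ sharing the colour $i$, and pick a common point, say $gk=hl$ with $k\in G_i^{s(g)}$ and $l\in G_i^{s(h)}$. First I would extract the composability data forced by the equation $gk=hl$: comparing ranges and sources of the two products gives $r(g)=r(h)$ and $s(k)=s(l)$. Setting $m:=lk^{-1}$, the condition $s(k)=s(l)$ makes this product composable, and since $G_i$ is a subgroupoid we get $m\in G_i$ with $r(m)=r(l)=s(h)$ and $s(m)=r(k)=s(g)$. Right-multiplying $gk=hl$ by $k^{-1}$ then yields $g=hm$.

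The key step is the coset identity $mG_i^{s(m)}=G_i^{r(m)}$, valid for any $m\in G_i$: the inclusion $\subseteq$ is closure of $G_i$ under products, and $\supseteq$ follows by writing $n=m(m^{-1}n)$ for $n\in G_i^{r(m)}$, noting $m^{-1}n\in G_i^{s(m)}$. Applying this with $g=hm$, and using $s(g)=s(m)$ together with $r(m)=s(h)$, gives
$$gG_i^{s(g)}=h\big(mG_i^{s(m)}\big)=hG_i^{r(m)}=hG_i^{s(h)},$$
so the two subsets of $G$ agree; as they also share the colour $i$, they are literally the same element of $\U$.

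Finally, for the ``in particular'' statement, if $(U_0,\dots,U_n)\in\Nc_n$ then by definition $\bigcap_{k=0}^n U_k\neq\varnothing$, so any point of this total intersection witnesses that each pair $U_j,U_{j'}$ meets non-trivially; applying the first part to any two entries of the same colour shows they coincide. The only thing requiring genuine care throughout is the bookkeeping of sources and ranges needed to guarantee that every product written down is composable and lands in $G_i$ — this is the main (though entirely routine) point, and there is no real obstacle beyond it.
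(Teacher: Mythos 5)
Your proof is correct and takes essentially the same route as the paper's: both arguments extract $h^{-1}g\in G_i$ from a common point $gk=hl$ using closure of the subgroupoid under products and inverses, and then deduce equality of the cosets. The only (cosmetic) difference is that the paper proves one inclusion $gG_i^{s(g)}\subseteq hG_i^{s(h)}$ and invokes symmetry, whereas you verify the equality directly via the identity $mG_i^{s(m)}=G_i^{r(m)}$.
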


\begin{proof}
	Say $gG_i^{s(g)}\cap hG_i^{s(h)}\neq \varnothing$; we need to show that $gG_i^{s(g)}=  hG_i^{s(h)}$.  Indeed, there are $k_g\in G_i^{s(g)}$ and $k_h\in G_i^{s(h)}$ with $gk_g=hk_h$.  It follows that $h^{-1}g=k_hk_g^{-1}$, so $h^{-1}g$ is in $G_i$, as $G_i$ is a subgroupoid.  Hence whenever $gk$ is in $gG_{i}^{s(g)}$, we have that $gk=h(h^{-1}gk)$ is also in $hG_i^{s(h)}$, and so $gG_i^{s(g)}\subseteq  hG_i^{s(h)}$.  Hence by symmetry, $gG_i^{s(g)}=  hG_i^{s(h)}$.
\end{proof}

\begin{definition}\label{colour function}
	For each $n$, define the \emph{colour map}
	$$
	c:\mathcal{N}_n\to \{0,...,d\}^{n+1}, \quad U \mapsto (\text{colour of }U_0,...,\text{colour of }U_n).
	$$
\end{definition}
We leave it to the reader to check that $c$ is continuous and invariant under the action of $G$.  

Throughout, we identify the symmetric group $S_{n+1}$ with the permutations of $\{0,...,n\}$.  Fix now $\sigma\in S_{n+1}$ and for each $a\in \{0,...,n\}$ define a new permutation $\sigma_a$ as follows.  Let $\sigma_0$ be the identity.  For each $a\in \{1,...,n\}$, let $\sigma_a$ be the unique permutation of $\{0,...,n\}$ such that:
\begin{enumerate}[(i)]
	\item the restriction of $\sigma_a$ to $\{0,...,a-1\}$ agrees with the restriction of $\sigma$ to this set;
	\item the restriction of $\sigma_a$ to $\{a,...,n\}$ is the unique order-preserving bijection 
	$$
	\{a,....,n\}\to \{0,...,n\}\setminus \{\sigma(0),...,\sigma(a-1)\}.
	$$  
\end{enumerate}

For a point $x=(i_0,...,i_n)\in \{0,...,d\}^{n+1}$, let $\sigma^x\in S_{n+1}$ be the unique permutation determined by the conditions below:
\begin{enumerate}[(i)]
	\item $i_{\sigma^x(0)}\leq i_{\sigma^x(1)}\leq \cdots \leq i_{\sigma^x(n)}$;
	\item $\sigma^x$ is order-preserving when restricted to each subset $S$ of $\{0,...,n\}$ such that the elements $\{i_j\mid j\in S\}$ all have the same colour (i.e.\ so that the set $\{i_j\mid j\in S\}$ consists of a single element of $\{0,...,d\}$).
\end{enumerate}

\begin{definition}\label{ordered bits}
	For each $n$, let $c:\Nc_n\to \{0,...,d\}^{n+1}$ be the colour map of Definition \ref{colour function} and define 
	$$
	\Nc_n^>:=c^{-1}\big(\{(i_0,...,i_n)\mid i_0<\cdots <i_n\}\big).
	$$
	Similarly, define 
	$$
	\Nc_n^\geq:=c^{-1}\big(\{(i_0,...,i_n)\mid i_0\leq \cdots \leq i_n\}\big).
	$$
\end{definition}
We note that each $\Nc_*^>$ and $\Nc_*^\geq$ is a semi-simplicial $G$-space with the restricted structures from $\Nc_*$: indeed, each $\Nc_n^>$ and $\Nc_n^\geq$ is a closed, open and $G$-invariant subset of $\Nc_n$ as $c$ is continuous and $G$-invariant, and the face maps of Definition \ref{face lh} clearly restrict to maps $\Nc_n^>\to \Nc_n^>$ and $\Nc_n^\geq \to \Nc_n^\geq$.

For each $a\in \{0,...,n\}$ define now $h_a:\mathcal{N}_n\to \mathcal{N}_{n+1}$ by the formula 
$$
h_a(U):=(U_{\sigma^{c(U)}_a(0)},...,U_{\sigma^{c(U)}_a(a-1)},U_{\sigma^{c(U)}(a)},U_{\sigma^{c(U)}_a(a)}, U_{\sigma^{c(U)}_a(a+1)},...,U_{\sigma^{c(U)}_a(n)})
$$
(in words, we use $\sigma^{c(U)}_a$ to rearrange the order of the components of $U$, but also insert $U_{\sigma^{c(U)}(a)}$ into the $a^\text{th}$ position).  

It is not too difficult to see that each $h_a$ is an equivariant \'{e}tale map, and so induces a pushforward map $(h_a)_*:\Z[\mathcal{N}_n]\to \Z[\mathcal{N}_{n+1}]$ of $\Z[G]$-modules.  We define 
$$
h:\Z[\mathcal{N}_n]\to\Z[\mathcal{N}_{n+1}]
$$
by stipulating that for each $x\in \{0,...,d\}^{n+1}$, its restriction to each subset $\Z[c^{-1}(x)]$ equals 
$$
\sum_{a=0}^n (-1)^a\text{sign}(\sigma^x_a)(h_a)_*.
$$
On the other hand, define 
$$
p:\Z[\mathcal{N}_n]\to\Z[\mathcal{N}_n^\geq ]
$$
by stipulating that for each $x\in \{0,...,d\}^{n+1}$, its restriction to each subset $\Z[c^{-1}(x)]$ equals $\text{sign}(\sigma^x)\sigma^x_*$.  Finally, let 
$$
i:\Z[\mathcal{N}_*^\geq]\to \Z[\mathcal{N}_*]
$$
be the canonical inclusion.

\begin{lemma}\label{h homotopy}
	Let $\partial$ be the boundary map on $\Z[\mathcal{N}_*]$.   Then 
	$$
	\partial h +h\partial = \mathrm{identity} - i\circ p.
	$$
\end{lemma}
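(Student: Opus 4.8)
The plan is to verify this operator identity by reducing it to a finite combinatorial statement and then carrying out the bookkeeping of signs and permutations, in the spirit of the homotopy formula of \cite[Tag 01FG]{stacks-project}. Each of $\partial$, $h$ and $i\circ p$ is a finite $\Z$-linear combination of pushforwards $f_*$ along equivariant \'etale maps $f$ that send a nerve tuple to a tuple whose entries are entries of the original, reordered, deleted or duplicated according to the colour data. Since $c$ is continuous and $G$-invariant, $\mathcal{N}_n=\bigsqcup_{x}c^{-1}(x)$ is a clopen decomposition on which the colour vector $x$, and hence the permutations $\sigma^x$ and $\sigma^x_a$, are constant; so it suffices to check the identity after restriction to each $\Z[c^{-1}(x)]$. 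There every composite $f$ occurring in $\partial h+h\partial$ and in $\mathrm{id}-i\circ p$ is a fixed coordinate operation, and evaluating on the characteristic function of a small compact open $W$ on which the finitely many composites are injective turns the claim into the assertion that, for every $U\in\mathcal{N}_n$ and every tuple $V$, the signed number of composites carrying $U$ to $V$ is the same on both sides. The key point is Lemma \ref{overlap}: two entries of $U$ coincide precisely when they have the same colour, so the pattern of coincidences among the components of $U$ depends only on $x=c(U)$ and is locally constant, which is exactly what legitimises the reduction to a pointwise count.

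\textbf{Explicit terms and surviving contributions.} Fix $U$ with colour vector $x$, write $\sigma:=\sigma^{x}$, $\sigma_a:=\sigma^{x}_a$, and let $[T]$ denote the generator of $\Z[\mathcal{N}_m]$ at a tuple $T$. Using the formula for $h_a$, one computes
\[ \partial h(U)=\sum_{a=0}^{n}\sum_{j=0}^{n+1}(-1)^{a+j}\,\mathrm{sign}(\sigma_a)\,[\partial^j h_a(U)] \]
and
\[ h\partial(U)=\sum_{j=0}^{n}\sum_{b=0}^{n-1}(-1)^{j+b}\,\mathrm{sign}(\sigma^{c(\partial^j U)}_b)\,[h_b(\partial^j U)], \]
while $(\mathrm{id}-i\circ p)(U)=[U]-\mathrm{sign}(\sigma)\,[(U_{\sigma(0)},\dots,U_{\sigma(n)})]$. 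The first task is to isolate the two surviving terms on the left. Since $h_0(U)=(U_{\sigma(0)},U_0,\dots,U_n)$, the term $a=j=0$ contributes $+[U]$; since $h_n(U)=(U_{\sigma(0)},\dots,U_{\sigma(n)},U_{\sigma(n)})$, the term $a=n$, $j=n+1$ contributes $(-1)^{2n+1}\mathrm{sign}(\sigma)[(U_{\sigma(0)},\dots,U_{\sigma(n)})]=-\mathrm{sign}(\sigma)[(U_{\sigma(0)},\dots,U_{\sigma(n)})]$. These are exactly the two terms on the right.

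\textbf{Cancellation of the remaining terms.} It remains to show that all other terms cancel, and I would split the faces $\partial^j h_a$ according to whether $j$ lies in $\{a,a+1\}$, the two adjacent slots carrying the duplicated component $U_{\sigma(a)}$ and $U_{\sigma_a(a)}$. When $j\notin\{a,a+1\}$ the face commutes past the insertion, and the resulting term matches, with opposite sign, a term $[h_b(\partial^{j'}U)]$ of $h\partial$; this is where the semi-simplicial identities of Lemma \ref{face lh} are used. When $j\in\{a,a+1\}$ the face deletes one of the two adjacent entries, and these terms cancel in pairs: if $U_{\sigma(a)}$ and $U_{\sigma_a(a)}$ have equal colour then $\partial^a h_a(U)=\partial^{a+1}h_a(U)$ by Lemma \ref{overlap}, so the two cancel directly through their opposite signs, while the unequal-colour faces telescope against neighbouring values of $a$.

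\textbf{Main obstacle.} The step I expect to be hardest is precisely this sign-and-permutation accounting. The homotopy $h$ at level $n-1$ is built from the sorting permutation $\sigma^{c(\partial^j U)}$ of a \emph{face} of $U$, and the cancellations only become visible once one expresses $\sigma^{c(\partial^j U)}$ and the auxiliary permutations $\sigma^{x}_a$ in terms of the single permutation $\sigma=\sigma^x$, and checks that the factors $\mathrm{sign}(\sigma^x_a)$ and $\mathrm{sign}(\sigma^{c(\partial^j U)}_b)$ combine to opposite signs on matched terms. Once this correspondence is pinned down, every term except the two identified above cancels, giving $\partial h+h\partial=\mathrm{id}-i\circ p$.
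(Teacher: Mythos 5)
Your framework is sound and matches the paper's: restricting to the clopen strata $\Z[c^{-1}(x)]$ is legitimate, your expansions of $\partial h$ and $h\partial$ are correct, and your identification of the two surviving terms is right, including the signs ($a=j=0$ gives $+[U]$, and $a=n$, $j=n+1$ gives $-\mathrm{sign}(\sigma)\sigma_*$, i.e.\ $-i\circ p$). The gap is in the cancellation scheme, and it is fatal as stated. You assume the duplicated components of $h_a(U)$ occupy the \emph{adjacent} slots $a$ and $a+1$. They do not: the inserted entry $U_{\sigma(a)}$ sits at slot $a$, but its duplicate is $U_{\sigma_a(i_a)}$ at slot $i_a+1$, where $i_a$ is defined by $\sigma_a(i_a)=\sigma(a)$, and $i_a\geq a$ can be strictly larger. (In fact $i_a=a$ precisely when slots $a$ and $a+1$ have equal colour, in which case they carry the \emph{same index} $\sigma(a)=\sigma_a(a)$, so your appeal to Lemma \ref{overlap} there is vacuous, though harmless.) The minimal counterexample is $n=1$ with colour vector $x=(1,0)$: here $\sigma$ is the transposition, $\sigma_0=\mathrm{id}$, $i_0=1$, and $h_0(U)=(U_1,U_0,U_1)$ — the duplicates sit at slots $0$ and $2$, and slot $1$ carries the unrelated entry $U_0$.

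The consequences break both halves of your matching. First, the face $\partial^{i_a+1}h_a$ with $i_a>a$ — which your split files under ``$j\notin\{a,a+1\}$, commutes past the insertion'' — does \emph{not} commute past the insertion: deleting the duplicate yields the $\sigma_{a+1}$-rearrangement of the full tuple $U$ (all $n+1$ entries, no repetition), which is not of the form $h_b(\partial^{j'}U)$ and so has no partner in $h\partial$; if you tried to cancel it there, some $h\partial$ term would be double-matched. Its true partner is the neighbouring diagonal term $\partial^{a+1}h_{a+1}$ (both equal the $\sigma_{a+1}$-rearrangement), and making the signs cancel requires the non-obvious identity $\mathrm{sign}(\sigma_a)=(-1)^{i_a-a}\,\mathrm{sign}(\sigma_{a+1})$, coming from the cycle carrying slot $i_a$ to slot $a$. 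Second, in the unequal-colour case the face $\partial^{a+1}h_a$, which you propose to ``telescope against neighbouring values of $a$'', is actually an $h\partial$-type term: it deletes the single occurrence of $U_{\sigma_a(a)}$ and equals $h_a\partial^{\sigma_a(a)}$, so it must cancel against $h\partial$, not against a neighbour. In the $n=1$, $x=(1,0)$ example your rules even cancel the surviving term: slots $1,2$ of $h_1(U)=(U_1,U_0,U_0)$ have equal colour, so your equal-colour rule would pair $\partial^1 h_1$ with $\partial^2 h_1$ — but $\partial^2 h_1$ is exactly the term producing $-i\circ p$, while $\partial^2 h_0=(U_1,U_0)$ and $\partial^1 h_0=(U_1,U_1)$ are left without the partners you assign them. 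The paper's proof fixes precisely this bookkeeping: it splits $\partial h$ into terms with $j<a$ (equal to $h_{a-1}\partial^{\sigma_a(j)}$), terms $\partial^{i+1}h_a$ with $i\geq a$, $\sigma_a(i)\neq\sigma(a)$ (equal to $h_a\partial^{\sigma_a(i)}$) — these two families exhaust $h\partial$ — and the diagonal terms $j\in\{a,\,i_a+1\}$, which telescope via $\partial^{i_a+1}h_a=\partial^{a+1}h_{a+1}$ and the sign identity above, leaving $\mathrm{identity}-i\circ p$.
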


\begin{proof}
	We look at the restriction of $\partial h$ to $\Z[c^{-1}(x)]$ for some $x$; it suffices to prove the given identity for such restrictions.  For notational simplicity, let $\sigma:=\sigma^x$.   We have then for this restriction that 
	$$
	\partial h= \sum_{i=0}^{n+1} \sum_{a=0}^n (-1)^{i+a} \mathrm{sign}(\sigma_a) \partial_*^i (h_a)_*
	$$  
	We split the terms into three types.
	\begin{enumerate}[(i)]
		\item Terms of the form $\partial^i_*(h_a)_*$ where $i<a$.
		\item Terms of the form $\partial^{i+1}_*(h_a)_*$ where $i\geq a$ and $\sigma_a(i)\neq \sigma(a)$.
		\item Terms of the form $\partial^{a}_*(h_a)_*$, or of the form $\partial^{i+1}_*(h_a)_*$ where $\sigma_a(i)=\sigma(a)$.
	\end{enumerate}
	We look at each type in turn.
	\begin{enumerate}[(i)]
		\item We leave it to the reader to compute that as maps on the spatial level, $\partial^i h_a = h_{a-1}\partial^{\sigma_a(i)}$.  Moreover, $(\partial^i h_a )_*$ occurs in the sum defining $\partial h$ with sign $(-1)^i(-1)^a\text{sign}(\sigma_a)$, and $(h_{a-1}\partial^{\sigma_a(i)})_*$ occurs in the sum defining $h\partial$ with the sign $(-1)^{a-1}(-1)^{\sigma_a(i)}\text{sign}(\sigma'_{a-1})$, where $\sigma'_{a-1}$ is the permutation defined as the composition 
		$$
		\xymatrix{ \{0,...,n-1\} \ar[r]^-f & \{0,...,\widehat{i},...,n\} \ar[r]^-{\sigma_a} &  \{0,...,\widehat{\sigma_a(i)},...,n\} \ar[d]^-g \\ & & \{0,...,n-1\}}
		$$
		with $f$ and $g$ the unique order preserving bijections.  One can compute that $\text{sign}(\sigma_a)=(-1)^{\sigma_a(i)-i}\text{sign}(\sigma'_{a-1})$, essentially as $\sigma_a$ can be built from the same transpostitions as used to construct $\sigma_a'$ (conjugated by $f$ and $g$) together with a cycle of length $|\sigma_a(i)-i|+1$, which has sign $(-1)^{\sigma_a(i)-i}$.  In conclusion, the term 
		$$
		(-1)^i(-1)^a\text{sign}(\sigma_a)(\partial^i h_a)_*
		$$  
		appearing in the sum defining $\partial h$ is matched by the term 
		\begin{align*}
		(-1)^{a-1}&(-1)^{\sigma_a(i)} \text{sign}(\sigma'_{a-1})(h_{a-1}\partial^{\sigma_a(i)})_* \\ & =(-1)^{a-1}(-1)^i \text{sign}(\sigma_a) (h_{a-1}\partial^{\sigma_a(i)})_*
		\end{align*}
		appearing in the sum defining $h\partial$; as these precisely match other than having opposing signs, they cancel.
		\item We compute that as maps on the spatial level $\partial^{i+1}h_a=h_a\partial^{\sigma_a(i)}$.  Moreover, $(\partial^{i+1} h_a )_*$ occurs in the sum defining $\partial h$ with sign $(-1)^{i+1}(-1)^a\text{sign}(\sigma_a)$, and $(h_a\partial^{\sigma_a(i)})_*$ occurs in the sum defining $h\partial$ with the sign $(-1)^{a}(-1)^{\sigma_a(i)}\text{sign}(\sigma'_{a})$, where $\sigma'_{a}$ is the permutation defined as the composition 
		$$
		\xymatrix{ \{0,...,n-1\} \ar[r]^-f & \{0,...,\widehat{i},...,n\} \ar[r]^-{\sigma_a} &  \{0,...,\widehat{\sigma_a(i)},...,n\} \ar[d]^g \\&&\{0,...,n-1\}}
		$$
		with $f$ and $g$ the unique order preserving bijections.  Much as in case (i), we have that $\text{sign}(\sigma_a)=(-1)^{\sigma_a(i)-i}\text{sign}(\sigma'_{a-1})$, and can conclude that the term 
		$$
		(-1)^{i+1}(-1)^a\text{sign}(\sigma_a)(\partial^{i+1} h_a )_*
		$$
		appearing in the sum defining $\partial h$ is matched by the term 
		$$
		(-1)^{a}(-1)^{\sigma_a(i)}\text{sign}(\sigma'_{a})(h_a\partial^{\sigma_a(i)})_*=(-1)^a(-1)^i\text{sign}(\sigma_a)h_a\partial^{\sigma_a(i)})_*
		$$
		appearing in the sum defining $h\partial$; as these precisely match other than having opposite signs, they cancel.
	\end{enumerate}
	At this point, one can check that we have canceled \emph{all} the terms appearing in the sum defining $h\partial$ using terms from $\partial h$ of types (i) and (ii).  It remains to consider terms of type (iii), and show that the sum of these equals 
	$$
	\text{identity} - i\circ p
	$$
	as claimed.  For each $a$, let $i_a$ be the index such that $\sigma_a(i_a)=\sigma(a)$.  The totality of terms of type (iii) looks like 
	\begin{align*}
	&(\partial^0 h_0)_* \\ & +\sum_{a=0}^{n-1} \big((-1)^{i_a+1}(-1)^a \text{sign}(\sigma_a)(\partial^{i_a+1}h_a)_*+(-1)^{a+1}(-1)^{a+1}\text{sign}(\sigma_{a+1}) (\partial^{a+1}h_{a+1})_*\big)  \\ & \quad \quad + (-1)^n(-1)^n\text{sign}(\sigma_n)(\partial^{i_n} h_n)_*.
	\end{align*}
	The first term is the identity, and the last term is $i\circ p$ (note that $\sigma_n=\sigma$). Hence it suffices to show that each term 
	$$
	(-1)^{i_a+1}(-1)^a \text{sign}(\sigma_a)(\partial^{i_a+1}h_a)_*+(-1)^{a+1}(-1)^{a+1}\text{sign}(\sigma_{a+1}) (\partial^{a+1}h_{a+1})_*
	$$
	in the sum in the middle is zero.  Now, one computes on the spatial level that $\partial^{a+1}h_{a+1}=\partial^{i_a+1}h_a$ (this uses Lemma \ref{overlap} to conclude that two coordinates with the same colour are actually the same), and so it suffices to prove that 
	$$
	(-1)^{i_a+1}(-1)^a \text{sign}(\sigma_a)+(-1)^{a+1}(-1)^{a+1}\text{sign}(\sigma_{a+1})=0,
	$$ 
	or having simplified slightly, that $(-1)^{i_a+a+1}\text{sign}(\sigma_a)+\text{sign}(\sigma_{a+1})=0$.  Indeed, one checks that $\sigma_{a+1}$ differs from $\sigma_a$ by a cycle moving the element in the $i_a^\text{th}$ position to the $a^\text{th}$ (and keeping all other elements in the same order), and such a cycle has sign $(-1)^{a-i_a}$.  Hence $\text{sign}(\sigma_a)=\text{sign}(\sigma_{a+1})(-1)^{i_a-a}$ and we are done. \qedhere
\end{proof}

\begin{corollary}\label{semi-ord}
	The natural inclusion $i:\Z[\mathcal{N}_*^\geq]\to \Z[\mathcal{N}_*]$ is a chain homotopy equivalence.
\end{corollary}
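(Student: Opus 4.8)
The plan is to read the result off almost immediately from Lemma \ref{h homotopy}, which exhibits $h$ as a chain homotopy between the identity on $\Z[\mathcal{N}_*]$ and the composite $i\circ p$. Since $i$ is by construction the inclusion of the subcomplex $\Z[\mathcal{N}_*^\geq]$, and is in particular an injective chain map, the only points that remain to be checked are that $p$ is a chain map taking values in $\Z[\mathcal{N}_*^\geq]$, and that $p\circ i$ is the identity on $\Z[\mathcal{N}_*^\geq]$. Granting these, Lemma \ref{h homotopy} gives $i\circ p\simeq \mathrm{id}_{\Z[\mathcal{N}_*]}$ while $p\circ i=\mathrm{id}_{\Z[\mathcal{N}_*^\geq]}$, which is exactly the assertion that $i$ is a chain homotopy equivalence with homotopy inverse $p$.

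First I would check the identity $p\circ i=\mathrm{id}$. Since $G^0$ is compact and the colouring is finite, the colour map partitions each $\mathcal{N}_n$ into the finitely many clopen pieces $c^{-1}(x)$ for $x\in\{0,\ldots,d\}^{n+1}$, so that $\Z[\mathcal{N}_n]=\bigoplus_x \Z[c^{-1}(x)]$ and it suffices to argue one piece at a time. A basis element of $\Z[\mathcal{N}_n^\geq]$ is supported in some $c^{-1}(x)$ with $x=(i_0,\ldots,i_n)$ non-decreasing. For such $x$ the permutation $\sigma^x$ of Definition \ref{ordered bits} must be the identity: the identity satisfies both defining conditions (the sequence $i_{\mathrm{id}(0)},\ldots,i_{\mathrm{id}(n)}$ is already non-decreasing and it is order preserving on every subset), and $\sigma^x$ is unique. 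Hence $\mathrm{sign}(\sigma^x)=1$ and $\sigma^x_*=\mathrm{id}$, so $p$ restricts to the identity on $\Z[\mathcal{N}_*^\geq]$, giving $p\circ i=\mathrm{id}$. The same observation on a general piece $\Z[c^{-1}(x)]$ shows that $\sigma^x_*$ rearranges the coordinates into non-decreasing colour order, so that $p$ indeed takes values in $\Z[\mathcal{N}_*^\geq]$.

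Finally I would verify that $p$ is a chain map, and this is the one place where I expect a mild subtlety. Rather than recompute signs, I would deduce it formally: rewriting the identity of Lemma \ref{h homotopy} as $i\circ p=\mathrm{id}-\partial h-h\partial$ and using $\partial^2=0$ shows that $i\circ p$ commutes with $\partial$, i.e. $i\circ p$ is a chain map on $\Z[\mathcal{N}_*]$. Writing $\partial'$ for the restricted boundary on the subcomplex $\Z[\mathcal{N}_*^\geq]$, the inclusion satisfies $\partial\circ i=i\circ \partial'$, so
\[
i\circ(\partial'\circ p)=\partial\circ(i\circ p)=(i\circ p)\circ\partial=i\circ(p\circ\partial),
\]
and since $i$ is injective we conclude $\partial'\circ p=p\circ\partial$, i.e. $p$ is a chain map. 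With all three facts in hand the corollary follows. The genuine content was the sign bookkeeping already carried out in Lemma \ref{h homotopy}; the work remaining here is purely formal homological algebra together with the observation that $\sigma^x$ is the identity on the order-preserving stratum.
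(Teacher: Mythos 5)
Your proposal is correct and follows essentially the same route as the paper: deduce from Lemma \ref{h homotopy} that $i\circ p$ is a chain map, use injectivity of $i$ to conclude $p$ is a chain map, note $p\circ i=\mathrm{id}$, and read off that $p$ is a chain homotopy inverse to $i$. The only difference is that you spell out the verification that $\sigma^x$ is the identity on the non-decreasing stratum (which the paper leaves implicit with ``$p\circ i$ just is the identity''), and this check is correct.
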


\begin{proof}
	Lemma \ref{h homotopy} implies in particular that the map $i\circ p$ is a chain map; as $i$ is an injective chain map, this implies that $p:\Z[\mathcal{N}_n]\to\Z[\mathcal{N}_n^\geq]$ is a chain map too.  Lemma \ref{h homotopy} implies that $i\circ p$ is chain-homotopic to the identity, while $p\circ i$ just is the identity.  Hence $p$ provides an inverse to $i$ on the level of chain homotopies.
\end{proof}

Our next goal is to show that the natural inclusion $j:\Z[\mathcal{N}_*^>]\to \Z[\mathcal{N}_*^\geq]$ is again a chain homotopy equivalence.    For each $a\in \{0,...,n-1\}$, let us write $D_a^n$ for the subset of $\{0,...,d\}^{n+1}$ consisting of those tuples $(i_0,...,i_n)$ such that $i_0<i_1<\cdots i_a=i_{a+1}\leq i_{a+2}\leq \cdots \leq i_n$.  For $x\in D_a^n$, define $k^x:\Z[c^{-1}(x)]\to \Z[\mathcal{N}_{n+1}]$ by 
$$
k^x:(U_0,...,U_n)\mapsto (U_0,U_1,...,U_a,U_a,U_{a+1},U_{a+2},...,U_n).
$$
Now define $k:\Z[\mathcal{N}_n^\geq ]\to \Z[\mathcal{N}_{n+1}^\geq]$ by stipulating that for each $x\in \{0,...,d\}^{n+1}$ the restriction of $k$ to $\Z[c^{-1}(x)]$ is given by $(-1)^ak_*^x$ if $x$ is in $D_a^n$ for some $a\in \{0,...,n-1\}$, and by zero otherwise.  On the other hand, let 
$$
q:\Z[\mathcal{N}^\geq_n]\to\Z[\mathcal{N}_n^>] 
$$
be the natural projection that acts as the identity on each $\Z[c^{-1}(x)]$ with $c^{-1}(x)\subseteq \Nc_*^>$, and as $0$ otherwise.

\begin{lemma}\label{k homotopy}
	Let $\partial$ be the boundary map on $\Z[\mathcal{N}_*^\geq]$ and let $j:\Z[\mathcal{N}_*^>]\to \Z[\mathcal{N}_*^\geq]$ be the canonical inclusion.  Then 
	$$
	\partial k +k\partial = \mathrm{identity} - j\circ q.
	$$
\end{lemma}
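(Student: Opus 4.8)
The plan is to show that $k$ is a contracting homotopy exhibiting $j$ as a chain homotopy equivalence, in exactly the spirit of the preceding Lemma \ref{h homotopy}. Since all four maps $\partial$, $k$, $j$, $q$ are defined one colour-type at a time, I would first reduce to verifying the identity after restricting the domain to each summand $\Z[c^{-1}(x)]$, where $x=(i_0,\dots,i_n)$ ranges over the non-decreasing tuples (we work inside $\Z[\Nc_*^\geq]$). There are two cases, according to whether $x$ is strictly increasing. If $x$ is strictly increasing then $c^{-1}(x)\subseteq \Nc_n^>$, so $j\circ q$ restricts to the identity and the right-hand side is $0$; meanwhile $k$ vanishes on $\Z[c^{-1}(x)]$ by definition, and every face $\partial^l U$ of a strictly increasing $U$ is again strictly increasing, so $k\partial$ vanishes there as well. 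Thus both sides are zero and this case is immediate.

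The substance is the case $x\in D_a^n$, where $j\circ q=0$ and I must show $\partial k+k\partial=\mathrm{identity}$ on $\Z[c^{-1}(x)]$. The essential input here is Lemma \ref{overlap}: writing $U_a=U_{a+1}=\cdots=U_b=:W$ for the maximal block of coordinates sharing the colour $i_a$, of length $L:=b-a+1\geq 2$, all of these coordinates are literally equal. Consequently, if I set $V:=k^x(U)$, then on the spatial level (i) the $L+1$ \emph{block} faces $\partial^i V$ with $a\leq i\leq b+1$ are all equal to $U$, and (ii) the $L$ block faces $\partial^j U$ with $a\leq j\leq b$ are all equal to a single tuple $U'$ carrying a block of length $L-1$. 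I would then split both $\partial k$ and $k\partial$ into block and non-block contributions.

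For the non-block faces (those with $i<a$ or $i>b+1$ in $\partial k$, and $j<a$ or $j>b$ in $k\partial$) I would use the standard degeneracy identities for the insertion $s_a$ realising $k^x$, namely $\partial^i s_a=s_{a-1}\partial^i$ for $i<a$ and $\partial^i s_a=s_a\partial^{i-1}$ for $i>a+1$, to match each non-block term of $\partial k$ with exactly one non-block term of $k\partial$ (pairing $i$ with $j=i$ in the low range and $i$ with $j=i-1$ in the high range). A short sign check — the global factor $(-1)^a$ carried by $k$ against the shift of the first-repeat index from $a$ to $a-1$ caused by deleting a coordinate before the block — shows that each such pair cancels. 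For the block faces, Lemma \ref{overlap} collapses them to multiples of a single element, so that the block part of $\partial k$ is $(-1)^a\big(\sum_{i=a}^{b+1}(-1)^i\big)U$ and the block part of $k\partial$ is $\big(\sum_{j=a}^{b}(-1)^j\big)k(U')$. Exactly one of these alternating sign-sums vanishes, according to the parity of $L$: when $L$ is even the $k\partial$ block sum is zero and the $\partial k$ block sum reduces to $U$; when $L$ is odd (hence $L\geq 3$) the $\partial k$ block sum is zero, and using $k(U')=(-1)^a U$ (valid because $U'$ still has its first repeat at $a$ for $L\geq 3$) the $k\partial$ block sum reduces to $U$. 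Either way the surviving contribution is a single copy of $U$, yielding the identity.

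I expect the main obstacle to be purely the sign bookkeeping in the non-block cancellation: one must simultaneously track the global sign $(-1)^a$ in the definition of $k$, the face signs $(-1)^i$, and the change in the first-repeat index when a coordinate before the block is deleted. This is entirely analogous to the type (i)/(ii) matching in the proof of Lemma \ref{h homotopy}, with the only genuinely new ingredient being the block of repeated colours; the parity-dependent telescoping of the block faces is the conceptual crux, but it becomes transparent once Lemma \ref{overlap} is invoked to identify all block faces with a single element.
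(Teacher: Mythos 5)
Your proof is correct, and while it shares the paper's skeleton --- reduce to the summands $\Z[c^{-1}(x)]$, dispose of the strictly increasing case exactly as the paper does, and in the case $x\in D_a^n$ cancel terms of $\partial k$ against terms of $k\partial$ using Lemma \ref{overlap} --- your organisation of the key cancellation is genuinely different. The paper never isolates the maximal block $U_a=\cdots=U_b$: it uses only that $\partial^i k^x=\mathrm{id}$ for the three indices $i\in\{a,a+1,a+2\}$ (the case $i=a+2$ being where Lemma \ref{overlap} enters) and that $\partial^a=\partial^{a+1}$ on $c^{-1}(x)$, so the middle of $\partial k$ contributes $(-1)^a\bigl((-1)^a+(-1)^{a+1}+(-1)^{a+2}\bigr)=+1$ times the identity, the terms $j=a,a+1$ of $k\partial$ cancel each other, and \emph{all} remaining terms cancel pairwise via $k^{\partial^i x}\partial^i=\partial^i k^x$ for $i\le a-1$ (first repeat dropping to $a-1$, sign $(-1)^{a-1}$) and $k^{\partial^i x}\partial^i=\partial^{i+1}k^x$ for $i\ge a+2$. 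In particular the paper's pairing is uniform in the block length $L$: the faces $\partial^i k^x$ with $a+3\le i\le b+1$, though they also happen to be identities, are cancelled formally against $k\partial^{i-1}$ without that fact ever being used. Your scheme instead singles out all $L+1$ (resp.\ $L$) block faces and runs a parity dichotomy on the alternating sums $\sum_{i=a}^{b+1}(-1)^i$ and $\sum_{j=a}^{b}(-1)^j$; this costs you the case split on the parity of $L$ and the extra identification $k(U')=(-1)^aU$ for odd $L\ge 3$ (which is valid, as you note, because $U'$ retains its first repeat at $a$; and for even $L$ the coefficient of $k(U')$ vanishes, so its value is irrelevant there), but in exchange it makes geometrically transparent where the surviving copy of the identity comes from, namely the collapse of all block faces to a single element. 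For $L=2$ the two schemes coincide. Your sign bookkeeping checks out --- in the low range the cancellation comes from $(-1)^a$ against $(-1)^{a-1}$, in the high range from the index shift $j=i-1$ --- but when writing this up you should record explicitly that for $j\ge b+1$ the first repeat of $\partial^j x$ is still at $a$, so that $k$ acts on that summand as $(-1)^a$ times duplication at position $a$; your sketch uses this implicitly in both the high-range pairing and the odd-$L$ block computation.
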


\begin{proof}
	It suffices to check the formula for each restriction to a submodule of the form $\Z[c^{-1}(x)]$.  
	
	Say first that $x=(i_0,...,i_n)$ satisfies $i_0<\cdots <i_n$.  Then $j\circ q$ acts as the identity on $\Z[c^{-1}(x)]$, whence the right hand side is zero.  Note that $k$ restricts to zero on $\Z[c^{-1}(x)]$, whence $\partial k$ is zero.  On the other hand, the image of $\Z[c^{-1}(x)]$ is contained in a direct sum of subgroups of the form $\Z[c^{-1}(y)]$ where $y=(j_0,...,j_{n-1})\in \{0,...,d\}^n$  satisfies $j_0<\cdots <j_{n-1}$.  Hence $k\partial=0$ too, and we are done with this case.
	
	Say then that $x=(i_0,...,i_n)$ does not satisfy $i_0<\cdots <i_n$.  Then $x$ is in $D_a^n$ for some $a$.  We then compute using the assumption that $x$ is in $D_a^n$ that $\partial^ik^x$ is the identity map for $i\in \{a,a+1,a+2\}$, and therefore that 
	\begin{align}\label{partial k}
	\partial k &=\sum_{i=0}^{n+1} (-1)^i \partial^i_*k \nonumber \\
	& = \sum_{i=0}^{a-1}(-1)^a(-1)^i \partial^i_* k^x_*+(-1)^a\big((-1)^a+(-1)^{a+1}+(-1)^{a+2})\text{identity} \nonumber\\ & \quad \quad \quad \quad \quad \quad \quad \quad \quad \quad \quad \quad \quad +\sum_{i=a+3}^{n+1}(-1)^a(-1)^{i}\partial^i_* k^x_* \nonumber \\ 
	& = \sum_{i=0}^{a-1}(-1)^a(-1)^i \partial^i_* k^x_*+\text{identity}+\sum_{i=a+3}^{n+1}(-1)^a(-1)^{i}\partial^i_* k^x_*.
	\end{align}
	Looking instead at $k\partial$, note first that as $x$ is in $D_a$, we have that $\partial^a=\partial^{a+1}$ when restricted to $c^{-1}(x)$, and so 
	\begin{equation}\label{k partial}
	k\partial=k\Big(\sum_{i=0}^n (-1)^i \partial^i_*\Big)=\sum_{i=0}^{a-1} (-1)^ik\partial^i_*+\sum_{i=a+2}^n (-1)^ik\partial^i_*.
	\end{equation}
	Now, for $y\in \{0,...,d\}^{n+1}$, let $\partial^i y$ denote $y$ with the $i^\text{th}$ component removed.  Then for $0\leq i\leq a-1$, $\partial^ix\in D_{a-1}^{n-1}$, and so $k\partial_i^*=(-1)^{a-1}k^y_*\partial^i_*$.  It is not difficult to prove that for such $y$, $k^y_*\partial^i_*=\partial_i^*k^x_*$.  On the other hand, for $a+2\leq i\leq n$, $\partial^i_*x$ is in $D^{n-1}_a$, and so $k\partial^i_*=(-1)^ak^y_*\partial^i_*$; one computes moreover that for such $i$, $k^y_*\partial^i_*=\partial^{i+1}_*k^x_*$.  Putting this discussion together with the formula in line \eqref{k partial} gives that
	\begin{align*}
	k\partial & =\sum_{i=0}^{a-1} (-1)^i(-1)^{a-1}\partial^i_*k^x_*+\sum_{i=a+2}^n (-1)^i(-1)^a\partial^{i+1}_*k^x_* \\ 
	&=\sum_{i=0}^{a-1} (-1)^i(-1)^{a-1}\partial^i_*k^x_*+\sum_{i=a+3}^{n+1} (-1)^{i-1}(-1)^a\partial^{i}_*k^x_*.
	\end{align*}
	Comparing this with the formula in line \eqref{partial k}, it follows that $k\partial +\partial k$ restricts to the identity on this summand $\Z[c^{-1}(x)]$.  On the other hand, the same is true for the right hand side `$\text{identity} - j\circ q$', so we are done.
\end{proof}

\begin{corollary}\label{final cor}
	The inclusion $j:\Z[\mathcal{N}_*^>]\to \Z[\mathcal{N}_*^\geq]$ is a chain homotopy equivalence.
\end{corollary}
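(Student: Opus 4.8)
The plan is to imitate verbatim the proof of Corollary \ref{semi-ord}, using Lemma \ref{k homotopy} in place of Lemma \ref{h homotopy}. All of the genuine work---constructing the homotopy operator $k$ and checking the combinatorial sign identities---has already been carried out in Lemma \ref{k homotopy}, whose conclusion $\partial k + k\partial = \mathrm{identity} - j\circ q$ supplies the chain homotopy we need. What remains is a short piece of formal homological algebra exhibiting $q$ as a two-sided chain homotopy inverse to $j$.

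First I would record that $j:\Z[\mathcal{N}_*^>]\to \Z[\mathcal{N}_*^\geq]$ is an injective chain map: it is the canonical inclusion, and the face maps on $\Nc_*^>$ are precisely the restrictions of those on $\Nc_*^\geq$, so $j$ commutes with $\partial$ on the nose. Next, Lemma \ref{k homotopy} shows that $j\circ q$ is chain-homotopic to the identity via $k$, and in particular $j\circ q$ is a chain map. Since $j$ is an injective chain map and $j\circ q$ is a chain map, it follows formally that $q:\Z[\mathcal{N}_*^\geq]\to \Z[\mathcal{N}_*^>]$ is itself a chain map---this is exactly the deduction made for $p$ in the proof of Corollary \ref{semi-ord}.

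To conclude, I would observe that $q\circ j$ is literally the identity on $\Z[\mathcal{N}_*^>]$: by definition $q$ acts as the identity on each summand $\Z[c^{-1}(x)]$ with $c^{-1}(x)\subseteq \Nc_*^>$, and these summands are precisely the image of $j$. Together with the chain homotopy $j\circ q\simeq \mathrm{identity}$ provided by Lemma \ref{k homotopy}, this shows that $q$ is a two-sided chain homotopy inverse to $j$, so $j$ is a chain homotopy equivalence.

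I do not expect any real obstacle here, as the difficult combinatorics were already resolved in Lemma \ref{k homotopy}. The only point requiring a moment's care is the verification that $q$ is a chain map: this is not immediate from its pointwise definition, but follows formally from the injectivity of $j$ once one knows that $j\circ q$ is a chain map.
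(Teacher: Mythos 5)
Your proposal is correct and follows essentially the same route as the paper: the paper's proof of Corollary \ref{final cor} is a one-line appeal to Lemma \ref{k homotopy}, with the formal bookkeeping (that $q$ is a chain map because $j$ is an injective chain map and $j\circ q$ is chain-homotopic to the identity, while $q\circ j$ is literally the identity) borrowed from the proof of Corollary \ref{semi-ord}, exactly as you do. Your only addition is to spell out these formal steps explicitly, which is fine.
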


\begin{proof}
	Lemma \ref{k homotopy} implies that the natural projection $q$ is an inverse on the level of chain homotopies.
\end{proof}

\begin{theorem}\label{vanish}
	Let $\mathcal{N}_*$ be a nerve complex built from a colouring $\mathcal{C}=(G_0,...,G_{d})$.  Then the canonical inclusion $\mathcal{N}_*^>\to \mathcal{N}_*$ induces an isomorphism $H_*(\mathcal{C})\cong H(\mathcal{N}^>_*)$.  In particular, $H_n(\mathcal{C})=0$ for $n>d$.
\end{theorem}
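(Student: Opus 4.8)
The plan is to combine the two chain homotopy equivalences already in hand, push them through the coinvariant functor, and then read off the vanishing from a triviality about colours.

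First I would recall that, by Definition \ref{bounded and lebesgue} together with Definition \ref{hom sgs}, both $H_*(\mathcal{C}) = H_*^G(\mathcal{N}_*)$ and $H(\mathcal{N}_*^>) = H_*^G(\mathcal{N}_*^>)$ are computed by applying the coinvariant functor to the respective complexes of $\Z[G]$-modules $(\Z[\mathcal{N}_*], \tilde\partial)$ and $(\Z[\mathcal{N}_*^>], \tilde\partial)$. Corollary \ref{semi-ord} provides a chain homotopy equivalence $i : \Z[\mathcal{N}_*^\geq] \to \Z[\mathcal{N}_*]$ of $\Z[G]$-module complexes with homotopy inverse $p$, and Corollary \ref{final cor} provides one $j : \Z[\mathcal{N}_*^>] \to \Z[\mathcal{N}_*^\geq]$ with homotopy inverse $q$. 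Composing, the canonical inclusion $i \circ j : \Z[\mathcal{N}_*^>] \to \Z[\mathcal{N}_*]$ is a chain homotopy equivalence of $\Z[G]$-module complexes with homotopy inverse $q \circ p$.

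Next I would apply the coinvariant functor. Via Lemma \ref{coinv tp} this functor is $\Z[G^0] \otimes_{\Z[G]} -$, hence additive, so it carries the two chain homotopy identities of Lemmas \ref{h homotopy} and \ref{k homotopy} to the corresponding identities among the induced maps on coinvariants. Therefore the induced inclusion $\Z[\mathcal{N}_*^>]_G \to \Z[\mathcal{N}_*]_G$ remains a chain homotopy equivalence, now of complexes of abelian groups, and so induces the asserted isomorphism $H(\mathcal{N}_*^>) \cong H_*(\mathcal{C})$ on homology.

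For the final assertion I would simply observe that the colours of the colouring lie in the set $\{0, \ldots, d\}$, so a strictly increasing tuple $i_0 < \cdots < i_n$ has at most $d+1$ entries. Thus for $n > d$ no such tuple exists, the space $\mathcal{N}_n^>$ is empty, and $\Z[\mathcal{N}_n^>] = 0$; consequently $H_n(\mathcal{N}_*^>) = 0$ for $n > d$, and the isomorphism just proved forces $H_n(\mathcal{C}) = 0$ for $n > d$. I anticipate no genuine obstacle: the heavy lifting (the sign bookkeeping in Lemmas \ref{h homotopy} and \ref{k homotopy}, and the \'etale-equivariance of the various maps) is already done, and the only point needing a word of justification is that the coinvariant functor preserves chain homotopies, which is immediate from its additivity.
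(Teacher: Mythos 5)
Your proposal is correct and is essentially identical to the paper's proof: the paper likewise composes Corollaries \ref{semi-ord} and \ref{final cor} to see that the inclusion $\Z[\mathcal{N}_*^>]\to\Z[\mathcal{N}_*]$ is a chain homotopy equivalence, applies functoriality of the coinvariant functor to transport this to $\Z[\mathcal{N}_*^>]_G\to\Z[\mathcal{N}_*]_G$, and concludes the vanishing from the emptiness of $\mathcal{N}_n^>$ for $n>d$. Your added remark that the homotopies of Lemmas \ref{h homotopy} and \ref{k homotopy} descend because they are $\Z[G]$-linear is exactly the justification implicit in the paper's appeal to functoriality.
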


\begin{proof}
	Corollaries \ref{semi-ord} and \ref{final cor} imply that the inclusion $\Z[\mathcal{N}^>_*]\to \Z[\mathcal{N}_*]$ is a chain homotopy equivalence.  Functoriality of taking coinvariants then implies that the naturally induced map $\Z[\mathcal{N}^>_*]_G\to \Z[\mathcal{N}_*]_G$ is a chain homotopy equivalence, so in particular induces an isomorphism on homology.  The remaining statement follows as $\mathcal{N}_n^>$ is empty for $n>d$.
\end{proof}

\subsection{Maps between nerves and $G$}

In this subsection, we build maps between our nerve spaces $\Nc_0$ and $G$ that induce maps between the higher nerve spaces $\mathcal{N}_n$ and the spaces $EG_n$ we used to define groupoid homology.  This will allow us to compare the homology of colourings of $G$ to the homology of $G$.

\begin{definition}\label{leb bound}
	Let $G_0,...,G_d$ be a colouring of $G$ as in Definition \ref{colouring}, and let $K$ be a subset of $G$.  We say that the colouring $G_0,...,G_d$ is:
	\begin{enumerate}[(i)]
		\item \emph{$K$-bounded} if every $G_i$ is contained in $K$;
		\item \emph{$K$-Lebesgue} if for every $x\in G^0$, there exists $G_i$ such that $G^x\cap K$ is contained in $G_i$.
	\end{enumerate}
\end{definition}

For $n\geq 0$, recall that $EG_{n}=\{(g_0,...,g_{n})\in G^{n+1}\mid r(g_0)=...=r(g_n)\}$, equipped with the subspace topology that it inherits from $G^{n+1}$.  For $n\geq 1$ and any subset $K$ of $G$, let $EG_n^K$ denote the subspace of $EG_n$ consisting of those tuples $(g_0,...,g_n)$ such that $g_i^{-1}g_j\in K$ for all $i,j$.  Let $EG_{0}^K$ be just $EG_{0}=G$, whatever $K$ is.

\begin{lemma}\label{points to sets}
	Say $K$ is a compact open subset of $G$, and that $G_0,...,G_d$ is a $K$-Lebesgue colouring of $G$ with associated nerve $\Nc_*$.  Then there exists an equivariant \'etale map $\Phi_0:G\to \Nc_0$ such that:
	\begin{enumerate}[(i)]
		\item $\Phi_0(g)\supseteq gK$ for all $g\in G$;
		\item for all $n$, the map 
		$$
		\Phi_n:EG_{n}^K\to \Nc_n,\quad (g_0,...,g_n)\mapsto (\Phi_0(g_0),...,\Phi_0(g_n))
		$$
		is a well-defined, equivariant local homeomorphism.
	\end{enumerate}
\end{lemma}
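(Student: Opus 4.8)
The overall strategy is to build $\Phi_0$ from a locally constant choice of colour on the unit space, define $\Phi_n$ by applying $\Phi_0$ in each coordinate, and then reduce the local homeomorphism statement in (ii) to the fact that $\Phi_0$ is étale together with one openness observation. First I would produce a locally constant function $c\colon G^0\to\{0,\dots,d\}$ such that for every $x\in G^0$ one has both $x\in G^0_{c(x)}$ and $G^x\cap K\subseteq G_{c(x)}$. To see this is possible, for each colour $i$ set
\[ O_i:=G_i^0\cap\big(G^0\setminus r(K\setminus G_i)\big). \]
Since $G_i$ is open and $K$ is compact, $K\setminus G_i$ is compact, so $r(K\setminus G_i)$ is compact, hence closed; thus each $O_i$ is open. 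One checks that $x\in O_i$ exactly when $x\in G_i^0$ and $G^x\cap K\subseteq G_i$, and the $K$-Lebesgue condition together with the covering property of the colouring shows that $\{O_i\}_{i=0}^d$ covers $G^0$. As $G^0$ is compact, Hausdorff and totally disconnected, this finite open cover refines to a finite clopen partition, which yields the desired locally constant $c$ with $x\in O_{c(x)}$. I then define
\[ \Phi_0(g):=\big(gG_{c(s(g))}^{s(g)},\,c(s(g))\big)\in\Nc_0, \]
which is a legitimate element since $s(g)\in G^0_{c(s(g))}$ forces $e_{s(g)}\in G_{c(s(g))}^{s(g)}\neq\varnothing$.

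Next I would record the elementary properties of $\Phi_0$. As $c(s(g))$ depends only on $s(g)$, equivariance $\Phi_0(hg)=h\Phi_0(g)$ is immediate. For (i), any element of $gK$ has the form $gk$ with $k\in K$ and $r(k)=s(g)$, so $k\in G^{s(g)}\cap K\subseteq G_{c(s(g))}$ and hence $gk\in gG_{c(s(g))}^{s(g)}=\Phi_0(g)$. Finally $\Phi_0$ is étale: near any $g$ I would choose a compact open bisection $V\ni g$ on which $c\circ s$ is constant, equal to some $i$ (so automatically $s(V)\subseteq G_i^0$); then $\Phi_0|_V$ is precisely the homeomorphism $V\to U_{V,i}$ furnished by Lemma \ref{nerve top 0}.

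For (ii), well-definedness and equivariance are quick. Given $(g_0,\dots,g_n)\in EG_n^K$, for each $i$ we have $g_i^{-1}g_0\in K$ with $r(g_i^{-1}g_0)=s(g_i)$, so $g_i^{-1}g_0\in G^{s(g_i)}\cap K\subseteq G_{c(s(g_i))}$ and therefore $g_0\in g_iG_{c(s(g_i))}^{s(g_i)}=\Phi_0(g_i)$. Thus $g_0\in\bigcap_i\Phi_0(g_i)$, so the image lies in $\Nc_n$; equivariance of $\Phi_n$ then follows from that of $\Phi_0$.

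The main obstacle is the local homeomorphism property of $\Phi_n$, and here the plan runs as follows. Write $F:=\Phi_0\times\cdots\times\Phi_0\colon G^{n+1}\to\Nc_0^{n+1}$, which is a local homeomorphism since $\Phi_0$ is étale. The key observation is that $EG_n^K$ sits as an \emph{open} subset of the preimage $F^{-1}(\Nc_n)=\{(h_0,\dots,h_n)\in G^{n+1}:\bigcap_i\Phi_0(h_i)\neq\varnothing\}$. Indeed, any tuple in $F^{-1}(\Nc_n)$ has a common range (each $\Phi_0(h_i)$ lies in the fibre $G^{r(h_i)}$ and the intersection is non-empty), so it already lies in $EG_n$, and consequently
\[ EG_n^K= F^{-1}(\Nc_n)\cap\{(h_0,\dots,h_n):h_i^{-1}h_j\in K\ \text{for all}\ i,j\}, \]
which is open in $F^{-1}(\Nc_n)$ because $K$ is open. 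Now fix $(g_0,\dots,g_n)\in EG_n^K$, choose bisections $B_i\ni g_i$ so that $F$ restricts to a homeomorphism of $B:=\prod_iB_i$ onto the open set $F(B)\subseteq\Nc_0^{n+1}$, and note that under $F|_B$ the set $F^{-1}(\Nc_n)\cap B$ corresponds to the open subset $\Nc_n\cap F(B)$ of $\Nc_n$. The displayed identity shows $EG_n^K\cap B$ is open in $F^{-1}(\Nc_n)\cap B$, so $F|_B$ carries it homeomorphically onto an open subset of $\Nc_n$; this exhibits $\Phi_n$ as a local homeomorphism. I expect this openness observation — that imposing the $K$-condition already cuts out a relatively open piece of $F^{-1}(\Nc_n)$, so that no shrinking beyond what étaleness of $\Phi_0$ provides is required — to be the crux of the argument.
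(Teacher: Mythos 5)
Your proof is correct and follows essentially the same route as the paper's: a locally constant colour choice $c$ on $G^0$ obtained from the open sets $\{x\in G^0\mid G^x\cap K\subseteq G_i\}$ (your $O_i$, the paper's $V_i$), the definition $\Phi_0(g)=gG_{c(s(g))}^{s(g)}$, and the same verifications of property (i), equivariance, and well-definedness of $\Phi_n$. Two refinements on your part are worth noting: your one-line openness proof via compactness of $r(K\setminus G_i)$ replaces the paper's longer bisection-decomposition argument (and intersecting with $G_i^0$ guarantees $G_{c(x)}^{x}\neq\varnothing$ even when $K$ misses the fibre $G^x$), and your observation that $EG_n^K$ is open in $F^{-1}(\Nc_n)$ supplies a complete argument for the local homeomorphism property of $\Phi_n$, a step the paper dismisses as straightforward.
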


\begin{proof}
	For each $i\in \{0,...,d\}$, define $V_i:=\{x\in G^{0}\mid G^x\cap K\subseteq G_i\}$.  We claim that each $V_i$ is open.  Indeed, let $x\in V_i$.  As the set $G^x\cap K$ is compact and discrete, it is finite.  Write $g_1,...,g_n$ for the elements of this set, which are all in $G_i$ by assumption that $x$ is in $V_i$.  For each $j\in \{1,...,n\}$, let $W_j\subseteq G_i$ be a compact open bisection containing $g_j$.  We may assume the $W_j$ are disjoint by shrinking them if necessary.  Define $W:=\bigcap_{j=1}^n r(W_j)$, which is a compact open neighbourhood of $x$.  We may write the compact open set $r^{-1}(W)\cap K$ as a finite disjoint union of compact open bisections of the form $W_1\cap r^{-1}(W),...,W_n\cap r^{-1}(W),B_1,...,B_m$.  Note that no $B_j$ can intersect $G^x\cap K$, whence none of the sets $r(B_j)$ can contain $x$.  Define 
$$
V:=W\setminus \Big(\bigcup_{j=1}^m B_j\Big).
$$
This is an open set containing $x$.  Moreover, $r^{-1}(V)$ is contained in $W_1\cup\cdots \cup W_n$, and therefore in $G_i$.  Hence $V$ is an open neighborhood of $x$ contained in $V_i$, so $V_i$ is open as claimed.
	
Note now that $V_0,...,V_d$ covers $G^{0}$ by the assumption that the underlying colouring is $K$-Lebesgue.  As $G^0$ has a basis of compact open sets and is compact, there is a finite cover, say $\mathcal{U}$ of $G^0$ consisting of disjoint compact open sets, and such that each $U\in \mathcal{U}$ is contained in some $V_i$.  Define $E_i$ to be the union of those $U\in \mathcal{U}$ such that $i$ is the smallest element of $\{0,...,d\}$ with $U$ contained in $V_i$.  Then the sets $E_0,...,E_d$ are a partition of $G^{0}$ by compact open subsets, and each $E_i$ is contained in $V_i$.   
	
	Now, for each $g\in G$, let $i(g)\in \{0,...,d\}$ be the unique $i$ such that $s(g)$ is in $E_i$; as the partition $G^{0}=\bigsqcup_{i=0}^n E_d$ is into clopen sets, the map $i:G\to \{0,...d\}$ this defines is continuous.  Define 
	$$
	\Phi_0:G\to \Nc_0,\quad g\mapsto gG_{i(g)}^{s(g)}.
	$$
	We claim this has the right properties.  First, note that for $g\in G$, $G^{s(g)}\cap K\subseteq G_{i(g)}$ by definition of $i(g)$ and the cover $V_0,...,V_d$.  Hence 
	$$
	gK=g(G^{s(g)}\cap K)\subseteq gG_{i(g)}=gG_{i(g)}^{s(g)} =\Phi_0(g).
	$$
	We now show that $\Phi_0$ is \'etale.  Continuity of the restriction of $\Phi_0$ to each set $s^{-1}(E_i)$ follows from the definition of the topology of $\Nc_0$, and continuity of $\Phi_0$ on all of $G$ follows from this as the sets $s^{-1}(E_0),...,s^{-1}(E_d)$ are a closed partition of $G$.  Let now $g\in G$, and let $B$ be a clopen bisection containing $g$ such that the map $i:G\to \{0,...,d\}$ is constant on $B$ (such exists as $i$ is continuous).  Let $C=\Phi_0(B)$.  Then the map $C\to B$ defined by sending $U$ to the unique element of $B\cap r(U)$ is a well-defined continuous inverse to the restriction $\Phi_0|_B$, completing the proof that $\Phi_0$ is \'etale.
	
	Equivariance of $\Phi_0$ follows as if $s(g)=r(h)$, then $s(gh)=s(h)$ and $i(gh)=i(h)$, whence
	$$
	\Phi_0(gh)=ghG_{i(gh)}^{s(gh)}=ghG_{i(h)}^{s(h)}=g\Phi_0(h).  
	$$
	
	To see that $\Phi_n$ is well-defined, note that if $(g_0,...,g_n)\in EG_{n}^K$, then $g_i^{-1}g_j$ is in $K$ for all $i,j$.  Hence in particular $g_0$ is in $g_iK\subseteq \Phi_0(g_i)$ for each $i$, and so $g_0$ is in $\Phi_0(g_0)\cap \cdots \cap \Phi_0(g_n)$, and so this set is non-empty.  Hence $(\Phi_0(g_0),...,\Phi_0(g_n))$ is a well-defined element of $\Nc_n$.  Equivariance of $\Phi_n$ and the fact that it is \'etale are straightforward from the corresponding properties for $\Phi_0$, so we are done.
\end{proof}

\begin{lemma}\label{sets to points}
	Assume that $G$ is principal, and $K$ is a compact open subset of $G$ that contains $G^{0}$.  Let $G_0,...,G_d$ be a $K$-bounded colouring with associated nerve $\Nc_*$.  Then there exists an equivariant \'etale map $\Psi_0:\Nc_0\to G$ with the following properties:
	\begin{enumerate}[(i)]
		\item  $\Psi_0(U)\in U$ for all $U\in \Nc_0$;
		\item for all $n$, the map 
		$$
		\Psi_n:\Nc_n\to EG_{n}^{KK^{-1}},\quad (U_0,...,U_n)\mapsto (\Psi_0(U_0),...,\Psi_0(U_n))
		$$
		is a well-defined, equivariant local homeomorphism.
	\end{enumerate}
\end{lemma}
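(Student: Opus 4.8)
The plan is to define $\Psi_0$ by a canonical point-selection inside each $U\in\Nc_0$ and then read off $\Psi_n$ coordinatewise. Recall that any $U\in\Nc_0$ of colour $i$ has the form $U=gG_i^{s(g)}$, lies in a single range fibre, and (since $G$ is principal) contains exactly one element with any prescribed source; its set of sources $s(U)$ is precisely the finite $G_i$-orbit of $s(g)$. The first task is therefore to choose, for every colour $i$, a clopen transversal $T_i\subseteq G_i^0$ meeting each $G_i$-orbit exactly once: since $G_i$ is a compact, principal, étale groupoid the fibre cardinalities $\lvert G_i^y\rvert$ are locally constant in $y$, and combining this with total disconnectedness of $G_i^0$ lets one select one point per orbit in a clopen fashion. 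Granting the $T_i$, I would define $\Psi_0(U)$ to be the unique element of $U$ whose source lies in $T_i$, where $i$ is the colour of $U$. This is well defined because $s(U)$ is a full $G_i$-orbit, so it meets $T_i$ in a single point.

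Property (i) then holds by construction. For equivariance, note that left translation by $h$ sends $U=gG_i^{s(g)}$ to $(hg)G_i^{s(g)}$, leaving both the colour $i$ and the source orbit $s(U)$ unchanged; hence $\Psi_0(hU)$ and $h\Psi_0(U)$ are each the element of $hU$ whose source is the $T_i$-representative of $s(U)$, so they coincide. To see that $\Psi_0$ is étale, I would fix $U$, set $g_0:=\Psi_0(U)$, and use $U=g_0G_i^{s(g_0)}$. Choosing a compact open bisection $B\ni g_0$ with $s(B)\subseteq T_i$, the set $U_{B,i}$ of Definition \ref{nerve top} is an open neighbourhood of $U$, the assignment $g\mapsto gG_i^{s(g)}$ is a homeomorphism $B\to U_{B,i}$, and since each $s(g)\in T_i$ represents its own orbit one checks that $\Psi_0(gG_i^{s(g)})=g$ for all $g\in B$. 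Thus $\Psi_0$ restricts to a homeomorphism $U_{B,i}\to B$, hence is étale.

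For (ii), write $g_j:=\Psi_0(U_j)$ and pick $w\in\bigcap_j U_j$ (non-empty as $(U_0,\dots,U_n)\in\Nc_n$). Each $k_j:=g_j^{-1}w$ lies in $G_{c_j}\subseteq K$ because the colouring is $K$-bounded, so $g_i^{-1}g_j=k_ik_j^{-1}\in KK^{-1}$; as all $g_j$ share the range $r(U_0)$, the tuple $(g_0,\dots,g_n)$ lies in $EG_n^{KK^{-1}}$, which gives well-definedness, and equivariance is inherited from $\Psi_0$. Finally, in the local charts $\prod_j U_{B_j,c_j}\cong\prod_j B_j$ from the previous paragraph, $\Psi_n$ is the restriction of the étale map $\Psi_0^{n+1}$, and its image on such a chart is exactly $(\prod_j B_j)\cap P$ with $P:=\{(g_0,\dots,g_n)\in EG_n:\bigcap_j g_jG_{c_j}^{s(g_j)}\neq\varnothing\}$. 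The crux is that $P$ is open in $EG_n$, which I would prove by realising $P$ as the image of the clopen set $\{(g_0,\dots,g_n,w):g_j^{-1}w\in G_{c_j}\text{ for all }j\}\subseteq EG_{n+1}$ under the coordinate-forgetting face map $EG_{n+1}\to EG_n$, which is étale (its fibres are the discrete sets $G^x$) and hence open. Since this image already lies inside $EG_n^{KK^{-1}}$, it is open there too, and $\Psi_n$ restricts to a homeomorphism onto it, giving the local homeomorphism property.

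The main obstacle is exactly this openness of $P$, i.e.\ that $\Psi_n$ lands on an \emph{open} piece of $EG_n^{KK^{-1}}$ rather than merely inside it: unlike the other verifications, which transfer routinely from $\Psi_0$, this genuinely uses that having a common point among the finitely many fibre-sets $g_jG_{c_j}^{s(g_j)}$ is an open condition, captured by the étale/open-map argument above. A secondary technical point is the construction of the clopen transversals $T_i$, where the representatives must be chosen simultaneously canonically (for equivariance) and continuously (for the étale property); this is where compactness, principality, and total disconnectedness of the $G_i$ are all needed.
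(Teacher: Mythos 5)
Your proposal is correct and is essentially the paper's own argument: the paper likewise defines $\Psi_0(gG_i^{s(g)})=g\tau_i(s(g))$, i.e.\ the unique element of $U$ whose source lies in a canonical clopen transversal for the $G_i$-orbits, the only difference being that it obtains this transversal (the maps $\sigma_i,\tau_i$ of Corollary \ref{split lem}) from the Giordano--Putnam--Skau structure theorem for compact principal ample groupoids (Lemma \ref{com pri structure}), whereas you re-derive it from local constancy of the fibre cardinalities plus total disconnectedness --- which is precisely the content of that cited lemma, so your ``secondary technical point'' is genuinely standard. Your explicit openness argument for the image of $\Psi_n$ (realising $P$ as the image of a clopen subset of $EG_{n+1}$ under the \'{e}tale, hence open, face map) is a correct and welcome filling-in of a detail the paper compresses into ``the facts that $\Psi_n$ is \'{e}tale and equivariant follow from the corresponding properties for $\Psi_0$.''
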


To prove this, we need an ancillary lemma, which is based on the following structural result from \cite[Lemma 3.4]{Giordano:2003aa}.

\begin{lemma}\label{com pri structure}
Let $H$ be a compact, ample, principal groupoid.  Then there are $m\in \N$ and
\begin{enumerate}[(i)]
\item disjoint clopen subgroupoids $H_1,...,H_m$ of $H$,
\item clopen subsets $X_1,...,X_m$ of $H^0$ (equipped with the induced, i.e.\ trivial, groupoid structure), and
\item finite pair groupoids $P_1,...,P_m$,
\end{enumerate}
such that $H$ identifies with the disjoint union $H=\bigsqcup_{i=k}^m H_k$ as a topological groupoid, and such that each $H_k$ is isomorphic as a topological groupoid to $X_k\times P_k$. \qed
\end{lemma}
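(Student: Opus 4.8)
The plan is to build the decomposition in three stages: first cut $H^0$ into clopen invariant pieces of constant orbit size, then produce a local product structure over each orbit, and finally pass from a finite cover to a partition.

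\emph{Reduction to constant orbit size.} First I would observe that every orbit is finite: for $x\in H^0$ the fibre $H^x=r^{-1}(x)$ is closed in the compact space $H$, hence compact, and discrete since $H$ is \'etale, so finite; principality makes $s|_{H^x}$ injective, so the orbit $s(H^x)$ has cardinality $|H^x|=:k(x)$. Next I would show $x\mapsto k(x)$ is locally constant. Lower semicontinuity follows by taking disjoint compact open bisections $B_1,\ldots,B_k$ through the elements of $H^x$ and noting that every $y\in\bigcap_i r(B_i)$ has at least $k$ arrows in its range fibre. Upper semicontinuity follows from compactness: $C:=H\setminus\bigcup_i B_i$ is compact with $x\notin r(C)$, so for $y$ in the open set $H^0\setminus r(C)$ one has $H^y\subseteq\bigcup_i B_i$, forcing $k(y)\leq k$. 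Since $H^0$ is compact and totally disconnected, it splits into finitely many clopen sets $Y_k:=\{x:k(x)=k\}$, each a union of orbits and hence $H$-invariant, so it suffices to treat each $Y_k$ separately; thus I may assume all orbits have a fixed size $k$.

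\emph{Local product structure.} This is the heart of the argument. Fix $x$ and write $H^x=\{b_1,\ldots,b_k\}$ with $b_1=e_x$. Choose disjoint compact open bisections $B_1,\ldots,B_k$ with $b_i\in B_i$ and $B_1\subseteq H^0$; shrinking them and using the upper semicontinuity estimate above, I obtain a compact open $U\ni x$ with $r(B_i)=U$ for all $i$ and $r^{-1}(U)=\bigsqcup_i B_i$. Writing $\sigma_i:=s\circ(r|_{B_i})^{-1}:U\to H^0$ for the resulting continuous sections, the points $\sigma_1(x),\ldots,\sigma_k(x)$ are exactly the orbit of $x$ and so are distinct; shrinking $U$ further I can arrange the clopen sets $V_i:=\sigma_i(U)$ to be pairwise disjoint. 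Then $V:=\bigsqcup_i V_i$ is clopen and $H$-invariant (a union of full orbits, containing $x$), and I claim the restriction $H_V:=r^{-1}(V)=s^{-1}(V)$ is isomorphic as a topological groupoid to $U\times P_k$, where $P_k$ is the pair groupoid on $k$ points. The isomorphism sends $(y,(i,j))\in U\times P_k$ to the unique arrow $\sigma_j(y)\to\sigma_i(y)$, which by principality equals $(r|_{B_i})^{-1}(y)^{-1}(r|_{B_j})^{-1}(y)$; this is a continuous groupoid homomorphism, restricts on each $U\times\{(i,j)\}$ to a homeomorphism onto a compact open bisection, and these bisections partition $H_V$, so it is a homeomorphism of topological groupoids.

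\emph{Globalisation and the main obstacle.} Finally I would run a compactness and disjointification argument. Covering the compact space $Y_k$ by finitely many invariant clopen sets $V^{(1)},\ldots,V^{(N)}$ as above, I set $W^{(j)}:=V^{(j)}\setminus(V^{(1)}\cup\cdots\cup V^{(j-1)})$. Each $W^{(j)}$ is clopen, $H$-invariant, and contained in some $V^{(l)}$; since an invariant subset of $U\times P_k$ has the form $W\times P_k^0$ for clopen $W\subseteq U$, passing to the clopen invariant subset $W^{(j)}$ of $V^{(l)}$ corresponds under $H_{V^{(l)}}\cong U^{(l)}\times P_k$ to restricting to a clopen $X\subseteq U^{(l)}$, so $H_{W^{(j)}}\cong X\times P_k$. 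The $W^{(j)}$ partition $Y_k$, so the subgroupoids $r^{-1}(W^{(j)})$ are clopen, disjoint, and cover $r^{-1}(Y_k)$. Ranging over the finitely many nonempty $Y_k$ and relabelling yields the desired finite family $H_1,\ldots,H_m$. I expect the construction of the simultaneous trivialising bisections in the second stage to be the main obstacle: the delicate points are arranging $r^{-1}(U)=\bigsqcup_i B_i$ \emph{exactly} (rather than merely an inclusion) and shrinking $U$ so that the sections $\sigma_i$ have pairwise disjoint images, both of which rely essentially on compactness of $H$ together with the \'etale and principal hypotheses.
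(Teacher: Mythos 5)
Your proof is correct, but note that the paper does not actually prove this lemma at all: it is stated with a \qed{} and deferred to the structural result \cite[Lemma 3.4]{Giordano:2003aa} for compact \'etale equivalence relations on zero-dimensional spaces (a principal \'etale groupoid embeds as such a relation via $g\mapsto(r(g),s(g))$, so the citation covers exactly this statement). What you have written is in effect a self-contained proof of the cited result in groupoid language, following the route one would expect: local constancy of the orbit-cardinality function $k$, with upper semicontinuity correctly obtained from compactness of $C=H\setminus\bigcup_i B_i$, so that $r(C)$ is closed and $y\notin r(C)$ forces $H^y\subseteq\bigsqcup_i B_i$; the exact trivialisation $r^{-1}(U)=\bigsqcup_i B_i$ with $r(B_i)=U$ by the same device; disjointness of the section images $V_i=s(B_i)$ after a further shrink; invariance of $V=\bigsqcup_i V_i$, which holds for the reason you indicate (for $z=\sigma_i(y)$ the orbit of $z$ equals that of $y$, namely $\{\sigma_1(y),\dots,\sigma_k(y)\}\subseteq V$, since $s|_{H^y}$ is injective and $|H^y|=k$); and the isomorphism $H_V\cong U\times P_k$ via $(y,(i,j))\mapsto \beta_i(y)^{-1}\beta_j(y)$, whose bijectivity uses precisely principality (uniqueness of arrows between two units) together with the fact that each orbit of a point of $U$ meets $V_1=U$ exactly once; finally, the disjointification works because an invariant clopen subset of the unit space of $U\times P_k$ has the form $X\times P_k^0$, so the restricted groupoid is $X\times P_k$. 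Two small presentational points: in the last step the phrase ``invariant subset of $U\times P_k$'' should read ``invariant subset of the unit space of $U\times P_k$''; and it is worth recording explicitly that arranging $B_1\subseteq H^0$ makes $\sigma_1=\mathrm{id}_U$, so the sets $X_k$ you produce are literally clopen subsets of $H^0$, as the statement requires, rather than merely homeomorphic to such.
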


\begin{corollary}\label{split lem}
Let $H$ be a compact, ample, principal groupoid, and let $H^{0}/H$ be the quotient space of $H^{0}$ by the equivalence relation induced by $H$: precisely $x\sim y$ if there is $h\in H$ with $s(h)=x$ and $r(h)=y$.  Then $H^{0}/H$ is Hausdorff and there are \'etale maps $\sigma:H^0/H\to H^0$ and $\tau:H^{0}\to H$ such that:
	\begin{enumerate}[(i)]
		\item $\sigma$ splits the quotient map $\pi:H^{0}\to H^{0}/H$ (so in particular, $\pi$ is \'etale);
		\item $r\circ \tau=\text{identity}$ and and $s\circ \tau=\sigma\circ\pi$.  
	\end{enumerate} 
\end{corollary}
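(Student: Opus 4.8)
The plan is to reduce everything to the explicit product decomposition provided by Lemma~\ref{com pri structure}. Applying that lemma, I would write $H=\bigsqcup_{k=1}^m H_k$ with each $H_k$ isomorphic as a topological groupoid to $X_k\times P_k$, where $X_k\subseteq H^0$ is clopen (carrying the trivial groupoid structure) and $P_k$ is a finite pair groupoid on a finite set $F_k$. Under this identification the unit space of $H_k$ is $X_k\times F_k$, so that $H^0=\bigsqcup_{k=1}^m (X_k\times F_k)$ as a topological space, a finite partition into clopen pieces.

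Next I would identify the quotient explicitly. Since the $H_k$ are disjoint subgroupoids, no arrow of $H$ joins units in different pieces, so the relation $\sim$ respects the decomposition. Inside $X_k\times P_k$ an arrow has the form $(x,p)$ with source $(x,s(p))$ and range $(x,r(p))$; as $P_k$ is a pair groupoid, any two elements of $F_k$ are joined by a (unique) arrow, so $(x,i)\sim(y,j)$ if and only if $x=y$. Hence the quotient of $X_k\times F_k$ is canonically $X_k$, realised by the first-coordinate projection, and $H^0/H\cong\bigsqcup_{k=1}^m X_k$. As a finite disjoint union of Hausdorff clopen sets this is Hausdorff, and the quotient map $\pi$ restricts on each clopen piece to the projection $X_k\times F_k\to X_k$, which is an open local homeomorphism; thus $\pi$ is étale.

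To build the sections I would fix a basepoint $b_k\in F_k$ for each $k$. Define $\sigma\colon H^0/H\to H^0$ on the piece $X_k$ by $x\mapsto(x,b_k)$; this is a homeomorphism of $X_k$ onto the clopen set $X_k\times\{b_k\}$, hence étale, and $\pi\circ\sigma=\id$ since $\pi(x,b_k)=x$, which is~(i). For each $i\in F_k$ let $\gamma_i^k\in P_k$ be the unique arrow with $s(\gamma_i^k)=b_k$ and $r(\gamma_i^k)=i$, and define $\tau\colon H^0\to H$ on the piece $X_k\times F_k$ by $\tau(x,i)=(x,\gamma_i^k)\in X_k\times P_k=H_k$. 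Then $r(\tau(x,i))=(x,i)$ and $s(\tau(x,i))=(x,b_k)=\sigma(\pi(x,i))$, giving~(ii); and $\tau$ carries each clopen piece homeomorphically onto the clopen subset $X_k\times\{\gamma_i^k\}$ of $H$, so $\tau$ is étale.

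The only genuinely delicate point is the bookkeeping around the quotient topology: one must confirm that the quotient topology on $H^0/H$ agrees with the disjoint-union topology on $\bigsqcup_k X_k$, so that the claimed étaleness of $\pi$ (and the resulting splitting $\sigma$) holds on the nose and not merely set-theoretically. This follows because $\pi$ is open---the saturation of an open set is open, being a union of sets of the form $(\text{open in }X_k)\times F_k$---and the pieces are clopen; but it is the step that warrants the most care, the remainder being a direct transcription of the product structure whose only genuine choices are the basepoints $b_k$.
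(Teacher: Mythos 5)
Your proposal is correct and takes essentially the same route as the paper: both invoke Lemma \ref{com pri structure} to reduce to the case $H\cong X_k\times P_k$ and then define $\sigma$ and $\tau$ piecewise via a choice of basepoint in each finite pair groupoid (the paper fixes the basepoint $0$, setting $\sigma(x)=(x,(0,0))$ and $\tau(x,(i,i))=(x,(i,0))$). Your extra verification that the quotient topology on $H^0/H$ agrees with the disjoint-union topology on $\bigsqcup_k X_k$ is a welcome explicit check of a point the paper leaves implicit.
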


\begin{proof}
Assume first that $H=X\times P$, where $X$ is a compact trivial groupoid and $P$ is the pair groupoid on some finite set $\{0,...,n\}$.  Then $H^0/H$ identifies homeomorphically with $X$ (so in particular is Hausdorff) via the map 
$$
H^0\to X, \quad (x,(i,i))\mapsto x.
$$
Making this identification, we may define $\sigma(x)=(x,(0,0))$ and $\tau(x,(i,i))=(x,(i,0))$.  These maps have the right properties when $H=X\times P$.  

In the general case, Lemma \ref{com pri structure} gives a decomposition of $H$ into groupoids of the form $X\times P$ as above, and we may build $\sigma$ and $\tau$ on each separately using the method above.
\end{proof}

\begin{proof}[Proof of Lemma \ref{sets to points}]
	Let $\pi_i$, $\sigma_i$ and $\tau_i$ be as in Corollary \ref{split lem} for $H=G_i$.  Define 
	$$
	\Psi_0:\Nc_0\to G,\quad gG_i^{s(g)}\mapsto g\tau_i(s(g)).
	$$
	We first check that this is well-defined.  Indeed, if $hG_i^{s(h)}=gG_i^{s(g)}$, then $r(h)=r(g)$ and $h^{-1}g\in G_i$.  Hence 
	$$
	\pi_i(s(g))=\pi_i(s(h^{-1}g))=\pi_i(r(h^{-1}g))=\pi_i(s(h)),
	$$
	and so $\sigma_i(\pi_i(s(g)))$ and $\sigma_i(\pi_i(s(h)))$ are the same.  As $\tau_i(x)$ has source $\sigma_i(\pi_i(x))$ for all $x\in G_i^{(0)}$, this implies that both $\tau_i(s(g))$ and $\tau_i(s(h))$ have the same source.  As moreover $g$ and $h$ have the same range, the elements $g\tau_i(s(g))$ and $h\tau_i(s(h))$ of $G$ have the same source and range and are therefore the same as $G$ is principal.   Having seen that $\Psi_0$ is well-defined, equivariance of $\Psi_0$ is straightforward.  The fact that $\Psi_0(U)\in U$ for all $U\in \Nc_0$ follows as if we write $U=gG_i^{s(g)}$, then $\Psi_0(U)=g\tau_i(s(g))$, and $g\tau_i(s(g))$ is in $G_i^{s(g)}$, as $\tau_i(s(g))$ is in $G_i^{s(g)}$.
	
	To see that $\Psi_0$ is \'etale, let $gG_i^{s(g)}$ be an element of $\Nc_0$, and  and let $B$ be a clopen bisection of $g$ in $G$ such that the set $\{hG_i^{s(h)}\mid h\in B\}$ is a clopen neighbourhood of $gG_i^{s(g)}$ in $\Nc_0$; in particular, this implies that $s(h)\in G_i^{0}$ for all $h\in B$.  Using that both $s$ and $\tau_i$ are \'etale, we have that $s(\tau_i(B))$ is open, and therefore that $\Psi_0(B)$ is open.  We claim that the map 
	$$
	\kappa:\Psi_0(B)\to \Nc_0,\quad h\mapsto hG_i^{s(h)}
	$$
	is a local inverse to $\Psi_0$; as it is continuous, this will suffice to complete the proof.  Indeed, for any $h\in B$, 
	$$
	\kappa(\Psi_0(hG_i^{s(h)}))=h\tau_i(s(h))G_i^{s(\tau_i(s(h)))}=hG_i^{s(h)}.
	$$
	On the other hand, for $h\in \Psi_0(B)$, as $h$ is in the image of $\Psi_0(B)$, we have that $s(h)$ is in the image of $\sigma_i$, and therefore that $\sigma_i(\pi_i(s(h)))=s(h)$, and so $\tau_i(s(h))=s(h)$.  Hence 
	$$
	\Psi_0(\kappa(h))=h\tau_i(s(h))=h
	$$
	and we are done with showing that $\Psi_0$ is \'etale.

	To see that $\Psi_n$ is well-defined, we need to check that if $(U_0,...,U_n)$ is in $\Nc_n$, then $(\Psi_0(U_0),...,\Psi_0(U_n))$ is in $EG_{n}^{KK^{-1}}$.  Write $g_j:=\Psi_0(U_j)$ for notational simplicity, so $g_j$ is in $U_j$ by the properties of $\Psi_0$.  Let $h$ be an element of $U_0\cap \cdots \cap U_n$.  Then for all $j$, the fact that the colouring is $K$-bounded implies that $g^{-1}_jh$ is in $K$ for each $j$.  Hence for any $i,j$, $g_i^{-1}g_j=g^{-1}hh^{-1}g_j\in KK^{-1}$, completing the proof that $\Psi_n$ is well-defined.  The facts that $\Psi_n$ is \'etale and equivariant follow from the corresponding properties for $\Psi_0$, so we are done.
\end{proof}

\subsection{Anti-\v{C}ech homology}

In this subsection, we show that the Crainic-Moerdijk-Matui homology groups $H_*(G)$ can be realised by a direct limit of homology groups of appropriate colourings. 

The key definition is as follows.

\begin{definition}\label{anticech}
	An \emph{anti-\v{C}ech sequence} for $G$ consists of the following data:
	\begin{enumerate}[(i)]
		\item a sequence $\mathcal{C}_m:=(G_{0}^{(m)},...,G_{d_m}^{(m)})_{m=0}^\infty$ of colourings of $G$ with associated sequence of nerves $\Nc_*^{(m)}$;
		\item for each $m$ a morphism $\iota^{(m)}:\Nc^{(m-1)}_*\to \Nc^{(m)}_*$ of semi-simplicial $G$-spaces such that for all $U\in \Nc^{(m-1)}_0$ we have that $\iota^{(m)}(U)\supseteq U$, and moreover so that for any compact open subset $K$ of $G$, there exists $m_K$ such that for all $m\geq m_K$ and all $U\in \Nc^{(m-1)}_0$, we have that $\iota^{(m)}(U)\supseteq UK$.
	\end{enumerate}
\end{definition}

\begin{definition}\label{ac hom}
	Let $\mathcal{A}=(\mathcal{C}_m)_{m=1}^\infty$ be an anti-\v{C}ech sequence for $G$, with associated sequence of morphisms $\iota^{(m)}:\Nc^{(m-1)}_*\to \Nc^{(m)}_*$.  We define the \emph{homology} of $\mathcal{A}$, denoted $H_*(\mathcal{A})$, to be the corresponding direct limit of the sequence of maps $(\iota^{(m)}_*:H_*(\mathcal{C}_{m-1})\to H_*(\mathcal{C}_m))_{m=1}^\infty$.
\end{definition}

Anti-\v{C}ech sequences always exist under the assumptions that $G$ is principal and $\sigma$-compact. This follows from the next three lemmas.

\begin{lemma}\label{sets to sets}
Assume that $G$ is principal, and that $K$ is a compact open subset of $G$ that contains $G^{0}$ and that satisfies $K=K^{-1}$.    Let $G_0,...,G_d$ and $H_0,...,H_e$ be colourings of $G$ with associated nerves $\Nc_*$ and $\M_*$ respectively.  Assume moreover that the colouring $G_0,...,G_d$ is $K$-bounded, and that the colouring $H_0,...,H_e$ is $K^3$-Lebesgue.
	
	Then there exists an equivariant \'etale map $\iota_0:\Nc_0\to \M_0$ with the following properties:
	\begin{enumerate}[(i)]
		\item  $\iota_0(U)\supseteq UK$ for all $U\in \Nc_0$;
		\item for all $n$, the map 
		$$
		\iota_n:\Nc_n\to \M_n,\quad (U_0,...,U_n)\mapsto (\iota_0(U_0),...,\iota_0(U_n))
		$$
		is, well-defined, equivariant, and \'etale.
	\end{enumerate}
\end{lemma}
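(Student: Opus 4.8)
The plan is to construct $\iota_0$ as the composition of the two maps already produced in Lemmas \ref{sets to points} and \ref{points to sets}, routing through $G$ itself. First I would apply Lemma \ref{sets to points} to the $K$-bounded colouring $G_0,\dots,G_d$ (legitimate since $K$ is compact open, contains $G^0$, and $G$ is principal) to obtain an equivariant \'etale map $\Psi_0\colon\Nc_0\to G$ with $\Psi_0(U)\in U$ and with higher maps $\Psi_n\colon\Nc_n\to EG_n^{KK^{-1}}$ that are well-defined, equivariant local homeomorphisms. Since $K=K^{-1}$ we have $KK^{-1}=K^2$, so $\Psi_n$ lands in $EG_n^{K^2}$. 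Next I would apply Lemma \ref{points to sets} with the compact open set $K^3$ in place of its ``$K$'': the colouring $H_0,\dots,H_e$ is $K^3$-Lebesgue by hypothesis, so this yields an equivariant \'etale map $\Phi_0\colon G\to\M_0$ with $\Phi_0(g)\supseteq gK^3$ for all $g$, and higher maps $\Phi_n\colon EG_n^{K^3}\to\M_n$ that are well-defined equivariant local homeomorphisms. I then set $\iota_0:=\Phi_0\circ\Psi_0$, which is equivariant and \'etale as a composition of such maps.

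For property (i), given $U\in\Nc_0$ write $g:=\Psi_0(U)\in U$. Since $g\in U$ and $U$ carries a well-defined colour $i$, Lemma \ref{overlap} forces $U=gG_i^{s(g)}$. As the colouring $G_0,\dots,G_d$ is $K$-bounded, $G_i^{s(g)}\subseteq K$, whence $UK=gG_i^{s(g)}K\subseteq gK^2$; and because $G^0\subseteq K$ we have $K^2\subseteq K^3$, so $UK\subseteq gK^3\subseteq\Phi_0(g)=\iota_0(U)$. This establishes (i) (and in particular $\iota_0(U)\supseteq U$).

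For property (ii), the key observation is that on the level of the higher spaces $\iota_n$ factors as $\Phi_n\circ\Psi_n$. Indeed $\Psi_n$ maps $\Nc_n$ into $EG_n^{K^2}$, and since $K^2\subseteq K^3$ we have $EG_n^{K^2}\subseteq EG_n^{K^3}$, which is exactly the domain of $\Phi_n$. Thus the composite $\Phi_n\circ\Psi_n\colon\Nc_n\to\M_n$ is defined and sends $(U_0,\dots,U_n)$ to $(\iota_0(U_0),\dots,\iota_0(U_n))$, so it coincides with $\iota_n$; in particular $\iota_n$ is well-defined. Being a composition of equivariant local homeomorphisms, it is automatically equivariant and \'etale.

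The one point requiring genuine care is the bookkeeping of the scales of $K$: one must verify $KK^{-1}=K^2\subseteq K^3$ (using $K=K^{-1}$ and $G^0\subseteq K$) so that the image $EG_n^{K^2}$ of $\Psi_n$ sits inside the domain $EG_n^{K^3}$ of $\Phi_n$, and so that the $K^3$-Lebesgue hypothesis upgrades the bound $\iota_0(U)\supseteq\Psi_0(U)K^3$ to the required containment $\iota_0(U)\supseteq UK$. I would also record that $K^3$ is compact open (products of compact open sets in an ample groupoid are compact open), which is needed to invoke Lemma \ref{points to sets}. Once these scales are matched, the remaining assertions are purely formal, inherited directly from the two cited lemmas.
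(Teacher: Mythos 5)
Your proposal is correct and takes essentially the same route as the paper's own proof: both construct $\iota_n=\Phi_n\circ\Psi_n$ by composing the maps from Lemmas \ref{sets to points} and \ref{points to sets}, using $KK^{-1}=K^2\subseteq K^3$ (from $K=K^{-1}$ and $G^0\subseteq K$) to match the domain of $\Phi_n$ with the image of $\Psi_n$. The only harmless variation is in verifying (i): the paper writes $\Psi_0(U)=gh$ with $h\in G_i^{s(g)}$ and inserts $hh^{-1}$ to get $U\subseteq\Psi_0(U)K^2$, whereas you invoke Lemma \ref{overlap} to re-centre the representation as $U=\Psi_0(U)G_i^{s(\Psi_0(U))}$; both give $UK\subseteq\Psi_0(U)K^3\subseteq\Phi_0(\Psi_0(U))=\iota_0(U)$.
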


\begin{proof}
	Using Lemma \ref{sets to points}, there is an equivariant \'etale map $\Psi_0:\Nc_0\to G$ such that $\Psi_0(U)\in U$ for all $U\in \Nc_0$, and such that for all $n$, the prescription
	$$
	\Psi_n:\Nc_n\to EG_{n+1}^{K^3},\quad (U_0,...,U_n)\mapsto (\Psi_0(U_0),...,\Psi_0(U_n))
	$$
	gives gives a well-defined equivariant \'etale map (Lemma \ref{sets to points} has $KK^{-1}$ in place of $K^3$, but note that our assumptions imply that $K^3$ contains $KK^{-1}$).  Using Lemma \ref{points to sets}, there is an equivariant local homeomorphism $\Phi_0:G\to \M_0$ such that $gK^3\subseteq \Phi_0(g)$ for all $g\in G$, and so that the prescription 
	$$
	\Phi_n:EG_{n}^{K^3}\to \Nc_n,\quad (g_0,...,g_n)\mapsto (\Phi_0(g_0),...,\Phi_0(g_n))
	$$
	is a well-defined equivariant local homeomorphism.  
	
	Define then $\iota_0:=\Phi_0\circ \Psi_0$, and note that $\iota_n:=\Phi_n\circ \Psi_n$ for all $n$.  Each $\iota_n$ is then a well-defined equivariant \'etale map.  Moreover, fix $U\in \Nc_0$ and write $U=gG_i^{s(g)}$.  As $\Psi_0(U)\in U$, we may write $\Psi_0(U)=gh$ for some $h\in G_i^{s(g)}$.  Then $h^{-1}$ is in $G_i$, and so in $K$ as each $G_i$ is a subset of $K$.  Hence for an arbitrary element $gk$ of $gG_i^{s(g)}$ with $k\in G_i\subseteq K^{-1}$, we have that $gk=ghh^{-1}k\in \Psi_0(U)K^2$.   As $gk$ was an arbitrary element of $U$, this gives that $U\subseteq \Psi_0(U)K^2$, and so $UK\subseteq \Psi_0(U)K^3$.  Using the properties of  $\Psi_0$ and $\Phi_0$, we thus get that
	$$
	UK\subseteq \Psi_0(U)K^3\subseteq \Phi_0(\Psi_0(U))=\iota_0(U)
	$$ 
	and are done.
\end{proof}

\begin{lemma}\label{comb lem}
Assume that $G$ is principal and that $K$ is a compact open subset of $G$ that contains $G^{0}$.  Then for any $x\in G^{0}$ there is a compact open subset $B_0$ of $G^{0}$ containing $x$ and open bisections $B_1,...,B_n$ such that:
	\begin{enumerate}[(i)]
		\item $r^{-1}(B_0)\cap K=\bigsqcup_{i=0}^n B_i$;
		\item for each $i$, $r|_{B_i}:B_i\to B_0$ is a homeomorphism;
		\item for each $i\neq j$, $s(B_i)\cap s(B_j)=\varnothing$.
	\end{enumerate}
\end{lemma}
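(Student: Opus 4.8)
The plan is to build the bisections $B_1,\dots,B_n$ by slicing $K$ into bisection-pieces lying over a suitable clopen neighbourhood of $x$, and then to shrink that neighbourhood twice: once to make each piece surject onto the base, and once to separate sources using principality. First I would record that the hypotheses force $G^0$ to be compact, since $G^0$ is closed in $G$ and contained in the compact set $K$; it is moreover totally disconnected as $G$ is ample, and clopen in $G$ (open since $G$ is \'etale, closed since $G$ is Hausdorff). Because $G^0$ is clopen and contained in $K$, the set $K\setminus G^0$ is again compact and open, so by ampleness it can be written as a finite disjoint union $K\setminus G^0=\bigsqcup_{k=1}^m W_k$ of compact open bisections, none of which meets $G^0$. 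For each $k$ the set $r(W_k)$ is clopen in $G^0$ (open because $r$ is a local homeomorphism, compact because $W_k$ is).

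Next I would pin down the base. Let $S:=\{k\mid x\in r(W_k)\}$ and choose a clopen neighbourhood $B_0\subseteq G^0$ of $x$ with $B_0\subseteq\bigcap_{k\in S}r(W_k)$ and $B_0\cap r(W_k)=\varnothing$ for $k\notin S$; this is possible as all the $r(W_k)$ are clopen. Writing $S=\{k_1,\dots,k_n\}$ and setting $B_i:=r^{-1}(B_0)\cap W_{k_i}$, one gets $r^{-1}(B_0)\cap K=B_0\sqcup B_1\sqcup\cdots\sqcup B_n$, where the term $B_0$ is $r^{-1}(B_0)\cap G^0$ (the identity bisection, giving the $i=0$ piece) and the pieces with $k\notin S$ are empty; this is (i). Since $B_0\subseteq r(W_{k_i})$, each $r|_{B_i}\colon B_i\to B_0$ is a homeomorphism, which is (ii).

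Condition (iii) is where principality enters, and this is the step I expect to be the crux. Over the single point $x$ the pieces $B_i$ contain exactly the finitely many elements of $G^x\cap K$, one in each $B_i$; these elements all have range $x$, so by principality they have pairwise distinct sources $z_0,\dots,z_n$. Each source varies continuously with the base point: $f_i:=s\circ(r|_{B_i})^{-1}\colon B_0\to G^0$ is continuous with $f_i(x)=z_i$. Choosing pairwise disjoint clopen sets $V_i\ni z_i$ in the totally disconnected space $G^0$ and replacing $B_0$ by the clopen neighbourhood $\bigcap_i f_i^{-1}(V_i)$ of $x$ (and each $B_i$ by its restriction over the new base) forces $s(B_i)\subseteq V_i$, hence $s(B_i)\cap s(B_j)=\varnothing$ for $i\neq j$, giving (iii).

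This second shrinking only restricts the common base $B_0$, so properties (i) and (ii) are preserved (each $B_i$ still contains its element over $x$, so stays nonempty, and still maps homeomorphically onto the smaller base). The only real subtlety, and the main obstacle, is that distinctness of sources is a pointwise statement at $x$ that must be propagated to a whole neighbourhood: it is exactly the continuity of the source-sections $f_i$, together with the separation of finitely many points of a totally disconnected space by disjoint clopen sets, that makes this propagation work. Principality is used precisely to guarantee the pointwise distinctness $z_i\neq z_j$ that this argument starts from.
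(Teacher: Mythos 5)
Your proof is correct and follows essentially the same route as the paper's: both arguments exploit the finiteness of the fibre $G^x\cap K$, decompose the part of $K$ over a clopen base neighbourhood into compact open bisections, excise the ranges of the pieces missing the fibre over $x$, and invoke principality to get distinct sources. The only difference is cosmetic ordering: the paper chooses bisections with disjoint sources around the fibre elements at the outset, whereas you separate sources at the end by a second shrinking via the continuous source-sections $f_i=s\circ(r|_{B_i})^{-1}$ --- both mechanisms are valid.
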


\begin{proof}
	Say the elements of $r^{-1}(x)\cap K$ are $g_0=x,g_1,...,g_n$.  As $G$ is principal, we have $s(g_i)\neq s(g_j)$ for all $i\neq j$.  Hence for each $g_i$, we may choose a clopen bisection $D_i$ containing $g_i$ such that $s(D_i)\cap s(D_j)=\varnothing$ for $i\neq j$, so that $r|_{D_i}$ is a homeomorphism, and so that $D_0$ is contained in $G^{0}$.  
	
	Set $C_0:=\bigcap_{i=0}^n r(B_i)$, which is a clopen set containing $x$, and for each $i$, set $C_i:=B_i\cap r^{-1}(C_0)$, which is a clopen set containing $g_i$.  The set $r^{-1}(C_0)\cap K$ is compact.  We may thus write it as 
	$$
	\bigsqcup_{i=0}^n C_i\sqcup \bigsqcup_{j=0}^m E_j,
	$$
	where each $E_j$ is a clopen bisection such that $r|_{E_j}$ is a homeomorphism, and so that each $E_j$ does not intersect $r^{-1}(x_0)$.  Set 
	$$
	B_0:=C_0\setminus \bigcup_{j=0}^m r(E_j),
	$$
	and set $B_i:=r^{-1}(B_0)\cap C_i$.  These sets have the right properties. 
\end{proof}

\begin{lemma}\label{cover lem}
	Let $G$ be principal, and assume that $K$ is a compact open subset of $G$ that contains $G^{0}$.  Then there exists a $K$-Lebesgue colouring $G_0,...,G_d$ for $G$.
\end{lemma}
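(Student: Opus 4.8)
The plan is to construct, for each point $x\in G^0$, a single compact open subgroupoid that contains the whole set $G^y\cap K$ for every $y$ in a neighbourhood of $x$, and then to assemble a finite colouring using compactness of $G^0$.

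First I would apply Lemma \ref{comb lem} at $x$ to obtain a compact open $B_0\subseteq G^0$ with $x\in B_0$, together with bisections $B_1,\dots,B_n$ satisfying $r^{-1}(B_0)\cap K=\bigsqcup_{i=0}^n B_i$, with each $r|_{B_i}\colon B_i\to B_0$ a homeomorphism and with the sets $s(B_i)$ pairwise disjoint. Since $B_0$ is clopen in $G^0$, the set $r^{-1}(B_0)\cap K$ is clopen in the compact set $K$, hence compact open, so each $B_i$ is in fact a \emph{compact open} bisection. For $y\in B_0$ I write $b_i(y)$ for the unique element of $B_i$ with $r(b_i(y))=y$, so that $G^y\cap K=\{b_0(y),\dots,b_n(y)\}$, and note that $b_0(y)=e_y$ because $B_0\subseteq G^0$.

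Then I would define
\[
H:=\bigcup_{i,j=0}^n B_i^{-1}B_j,
\]
a finite union of products of compact open bisections, hence a compact open subset of the étale groupoid $G$. The key computation is that, using that $r|_{B_i}$ is a bijection onto $B_0$, one has $B_i^{-1}B_j=\{b_i(y)^{-1}b_j(y)\mid y\in B_0\}$, an arrow of which runs from $s(b_j(y))$ to $s(b_i(y))$. I would then verify that $H$ is a subgroupoid: it is closed under inversion since $(B_i^{-1}B_j)^{-1}=B_j^{-1}B_i$, and for two composable arrows $b_i(y)^{-1}b_j(y)$ and $b_k(y')^{-1}b_l(y')$ one needs $s(b_j(y))=s(b_k(y'))$, which forces $j=k$ by the disjointness of the sets $s(B_\bullet)$ and then $y=y'$ by injectivity of $s|_{B_j}$; the product then collapses to $b_i(y)^{-1}b_l(y)\in B_i^{-1}B_l\subseteq H$, and the unit space is exactly $\bigsqcup_i s(B_i)$. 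The crucial observation is that $H$ already contains the range fibres we care about: because $b_0(y)=e_y$ we have $B_0^{-1}B_j=B_j$, so $b_j(y)=b_0(y)^{-1}b_j(y)\in H$ for every $j$, giving $G^y\cap K\subseteq H$ for all $y\in B_0$. In particular $H^0\supseteq B_0\ni x$.

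Finally, the sets $B_0$ produced this way form an open cover of the compact space $G^0$, so I would extract a finite subcover indexed by points $x_1,\dots,x_N$ with associated subgroupoids $H^{x_1},\dots,H^{x_N}$, and set $(G_0,\dots,G_d):=(H^{x_1},\dots,H^{x_N})$. This is a colouring, since each $H^{x_k}$ is a compact open subgroupoid and $\bigcup_k (H^{x_k})^{0}\supseteq\bigcup_k B_0^{x_k}=G^0$, and it is $K$-Lebesgue, since any $x\in G^0$ lies in some $B_0^{x_k}$, whence $G^x\cap K\subseteq H^{x_k}$. The main obstacle is the middle step, namely checking carefully that $H$ is genuinely a subgroupoid (identifying its unit space and confirming that composition stays inside $H$), where the source-disjointness from Lemma \ref{comb lem}(iii) and the bisection property of the $B_i$ do all the work; principality is used only through that lemma.
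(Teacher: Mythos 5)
Your proposal is correct and takes essentially the same route as the paper's proof: both apply Lemma \ref{comb lem} at each point $x$ and form the compact open subgroupoid matching up the fibres $B_0,\dots,B_n$ --- your $H=\bigcup_{i,j}B_i^{-1}B_j$ is precisely the image of $B_0\times P$ (with $P$ the pair groupoid on $\{0,\dots,n\}$) that the paper uses --- before concluding via compactness of $G^0$. The only cosmetic differences are that you verify the groupoid axioms for $H$ by hand (where, as you note, the source-disjointness from Lemma \ref{comb lem}(iii) does the work) and extract the finite subcover from the sets $B_0$ rather than the full unit spaces $H^0\supseteq B_0$, which if anything makes the $K$-Lebesgue verification slightly more transparent.
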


\begin{proof}
	Fix $x\in X$, and let $B_0,...,B_n$ be a collection of sets with the properties in Lemma \ref{comb lem}.  For each $i$, let $\rho_i:B_0\to B_i$ be the inverse of $r|_{B_i}$.  Let $P=\{0,...,n\}^2$ be the pair groupoid on the set $\{0,...,n\}$, and define 
	$$
	f:B_0\times P\to G,\quad (x,(i,j))\mapsto \rho_i(x)\rho_j(x)^{-1}.
	$$
	It is not difficult to check that $f$ is a homeomorphism onto its image, which is a compact open subgroupoid of $G$.  Write $G_x$ for the image of $f$.  Moreover, by construction we have that for every $y\in G_x^{0}$, the set $r^{-1}(y)\cap K$ is contained in $G_x$.  
	
	The collection $\{G_x^{0}\mid x\in G^{0}\}$ is an open cover of $G^{0}$, and thus has a finite subcover.  Let $G_0,...,G_d$ be the collection of compact open subgroupoids of $G$ whose base spaces appear in this subcover.  This collection has the right properties.
\end{proof}

\begin{corollary}\label{ac exist}
	For any $\sigma$-compact principal $G$ with compact base space, an anti-\v{C}ech sequence exists.
\end{corollary}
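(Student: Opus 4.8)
The plan is to manufacture the anti-\v{C}ech sequence by combining Lemma \ref{cover lem}, which produces Lebesgue colourings, with Lemma \ref{sets to sets}, which produces the connecting maps, the whole construction being driven by a carefully chosen exhaustion of $G$ by compact open symmetric sets whose successive powers nest correctly. The point is that a single colouring produced by Lemma \ref{cover lem} turns out to be both Lebesgue (with respect to one scale) and bounded (with respect to a slightly larger scale), so that it can simultaneously play the target r\^ole at one stage and the source r\^ole at the next in Lemma \ref{sets to sets}.

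First I would fix an exhaustion. Since $G$ is $\sigma$-compact and ample, it is a countable union of compact sets, each contained in a compact open set, so $G=\bigcup_m V_m$ for compact open $V_m$. Using that $G^0$ is compact open (the base space is compact and open in the \'etale groupoid) and that products of compact open sets stay compact open in an ample groupoid, I would define recursively a sequence of compact open symmetric subsets containing $G^0$ by $K_0:=G^0\cup V_0\cup V_0^{-1}$ and $K_{m+1}:=G^0\cup V_{m+1}\cup V_{m+1}^{-1}\cup K_m^6$. The salient features are: each $K_m$ is compact open, symmetric, and contains $G^0$, so that $K_m\subseteq K_m^2\subseteq K_m^6\subseteq K_{m+1}$ (using $g=g\,e_{s(g)}$ with $e_{s(g)}\in G^0\subseteq K_m$) and the sequence is increasing; moreover $K_m^6\subseteq K_{m+1}$; and $\bigcup_m K_m=G$ since $V_m\subseteq K_m$.

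For each $m$ I would then apply Lemma \ref{cover lem} to the compact open set $K_m^3\supseteq G^0$ to obtain a colouring $\mathcal{C}_m$ that is $K_m^3$-Lebesgue. Inspecting the construction there (via Lemma \ref{comb lem}), every subgroupoid of $\mathcal{C}_m$ is built from products $\rho_i(x)\rho_j(x)^{-1}$ with $\rho_i(x),\rho_j(x)\in K_m^3$, hence lies in $K_m^3(K_m^3)^{-1}=K_m^6$ by symmetry; so $\mathcal{C}_m$ is also $K_m^6$-bounded, and in particular $K_{m+1}$-bounded. Finally, for each $m\geq 1$ I would feed the pair $(\mathcal{C}_{m-1},\mathcal{C}_m)$ into Lemma \ref{sets to sets} with $K=K_m$: the source $\mathcal{C}_{m-1}$ is $K_{m-1}^6$-bounded, hence $K_m$-bounded, while the target $\mathcal{C}_m$ is $K_m^3$-Lebesgue, exactly matching the hypotheses. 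This produces equivariant \'etale maps $\iota^{(m)}_0:\Nc^{(m-1)}_0\to\Nc^{(m)}_0$ with $\iota^{(m)}_0(U)\supseteq UK_m$, extending coordinate-wise to the $\iota^{(m)}_n$. Since these act coordinate-wise and the face maps merely delete coordinates, the $\iota^{(m)}_n$ commute with the face maps, so $\iota^{(m)}$ is a morphism of semi-simplicial $G$-spaces. As $G^0\subseteq K_m$ we get $U\subseteq UK_m\subseteq \iota^{(m)}(U)$; and for any compact open $K$, compactness gives $K\subseteq K_{m_K}$ for some $m_K$, whence $\iota^{(m)}(U)\supseteq UK_m\supseteq UK$ for all $m\geq m_K$. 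These are precisely the requirements of Definition \ref{anticech}.

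The main obstacle is the bookkeeping of exponents: I must arrange the nesting $K_m^6\subseteq K_{m+1}$ so that the output of Lemma \ref{cover lem} — a $K_m^3$-Lebesgue colouring, automatically $K_m^6$-bounded — can serve both as the $K_m^3$-Lebesgue target at stage $m$ of Lemma \ref{sets to sets} and as a $K_{m+1}$-bounded source at stage $m+1$, all while keeping every set compact open and symmetric and ensuring that the $K_m$ exhaust $G$. Once the scales are reconciled in this way, the verification of the anti-\v{C}ech axioms (coordinate-wise compatibility with face maps, $\iota^{(m)}(U)\supseteq U$, and the eventual-absorption condition) is routine.
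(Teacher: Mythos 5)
Your proposal is correct and takes essentially the same route as the paper: both iterate Lemma \ref{cover lem} (to produce Lebesgue colourings) and Lemma \ref{sets to sets} (to produce the connecting morphisms) along an increasing exhaustion of $G$ by symmetric compact open sets containing $G^0$, and verify the absorption condition of Definition \ref{anticech} by eventual containment $K\subseteq K_{m_K}$. The only divergence is organisational: the paper secures boundedness of each colouring a posteriori, enclosing its finitely many compact subgroupoids in a symmetric compact open set $M_m$ and setting $K_{m+1}:=K_m\cup L_{m+1}\cup M_m$, whereas you pre-fix the scales via the recursion $K_m^6\subseteq K_{m+1}$ and extract the $K_m^6$-bound by inspecting the proofs of Lemmas \ref{comb lem} and \ref{cover lem} (legitimately, since every element of the subgroupoids constructed there has the form $\rho_i(y)\rho_j(y)^{-1}$ with $\rho_i(y),\rho_j(y)\in K_m^3$), at the mild cost of relying on the proof rather than just the statement of Lemma \ref{cover lem}.
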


\begin{proof}
	As $G$ is $\sigma$-compact, there is a sequence $L_0\subseteq L_1\subseteq \cdots $ of compact open subsets of $G$ such that each $L_n$ equals $L_n^{-1}$ and contains $G^{0}$, and such that any compact subset of $G$ is eventually contained in all of the $L_n$.  Set $K_0=L_0$.  Lemma \ref{cover lem} implies that there is a $K_0^3$-Lebesgue collection of compact open subgroupoids of $G$, say $G_0,...,G_d$, with associated nerve space $\Nc_*^{(0)}$.  As this collection is finite, there exists some compact open subset $M_0$ of $G$ that contains all of $G_0,...,G_d$, and that satisfies $M_0=M_0^{-1}$.  Set $K_1:=K_0\cup L_1\cup M_0$.   Lemma \ref{cover lem} gives a new colouring that is $K_1^3$-Lebesgue with associated nerve $\Nc_*^{(1)}$, and Lemma \ref{sets to sets} gives a morphism of semi-simplicial étale $G$-spaces $\iota^{(1)}:\Nc_*^{(0)}\to \Nc_*^{(1)}$ with the properties there.  Now let $M_1$ be a compact open subset of $G$ such that $M_1=M_1^{-1}$, and that contains all the groupoids from this new colouring.  Set $K_2:=K_1\cup L_2\cup M_1$ and use Lemma \ref{cover lem} to build a $K_2^3$-Lebesgue covering, and Lemma \ref{sets to sets} to build a map $\iota^{(2)}$ from $\Nc_*^{(1)}$ to the associated nerve $\Nc_*^{(2)}$ with the properties in that lemma.  Iterating this process builds an anti-\v{C}ech sequence as desired.
\end{proof}

Our main goal in this subsection is to prove the following theorem.

\begin{theorem}\label{main comparison}
	Let $G$ be principal and $\sigma$-compact, with compact base space.  Let $\mathcal{A}$ be an anti-\v{C}ech sequence for $G$.  Then the homology groups $H_*(\mathcal{A})$ and $H_*(G)$ are isomorphic.
\end{theorem}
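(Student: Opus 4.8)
The plan is to realise both $H_*(G)$ and $H_*(\mathcal{A})$ as directed colimits of equivariant homologies of semi-simplicial $G$-spaces, and then to interleave the two systems using the maps of Lemmas \ref{points to sets} and \ref{sets to points}. First I would rewrite the target: recall $H_*(G)=H_*^G(EG_*)$, and for a compact open $K\subseteq G$ the subspaces $EG_n^K$ are clopen, $G$-invariant and closed under the face maps, so $EG_*^K$ is a sub-semi-simplicial $G$-space with $EG_n=\bigcup_K EG_n^K$ a filtered union. Since the functor $\Z[?]$, the coinvariant functor (which is a tensor product by Lemma \ref{coinv tp}) and homology all commute with filtered colimits,
\[ H_*(G)=H_*^G(EG_*)\cong \varinjlim_k H_*^G(EG_*^{K_k}), \]
the colimit running over any increasing sequence $K_0\subseteq K_1\subseteq\cdots$ of symmetric compact open sets containing $G^0$ and exhausting $G$ (these exist by $\sigma$-compactness), with structure maps induced by inclusions. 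On the other side $H_*(\mathcal{A})=\varinjlim_m H_*(\mathcal{C}_m)=\varinjlim_m H_*^G(\Nc_*^{(m)})$ by Definition \ref{ac hom}, and this colimit is unchanged upon passing to a subsequence of colourings.

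The maps of Lemma \ref{sets to points} are available for every colouring: writing $M_m$ for a symmetric compact open set containing $G^0$ and all subgroupoids of $\mathcal{C}_m$ (so $\mathcal{C}_m$ is $M_m$-bounded), I obtain equivariant \'etale maps $\Psi^{(m)}:\Nc_*^{(m)}\to EG_*^{M_m^2}$. Lemma \ref{points to sets} instead requires a Lebesgue condition, which I would extract from the defining absorbing property of the anti-\v{C}ech sequence as a bridge lemma: \emph{for every compact open $K\supseteq G^0$ there is $m_K$ with $\mathcal{C}_m$ being $K$-Lebesgue for all $m\geq m_K$}. Indeed, given $x\in G^0$ pick $U=G_j^{(m-1),x}\in\Nc_0^{(m-1)}$ with $e_x\in U$; for $m\geq m_K$ the absorbing property gives $\iota^{(m)}(U)\supseteq UK\supseteq e_xK=G^x\cap K$, and since $\iota^{(m)}(U)$ is a single ball $hG_i^{(m),s(h)}$ containing the unit $e_x$ one finds $h\in G_i^{(m)}$, whence $G^x\cap K\subseteq hG_i^{(m),s(h)}\subseteq G_i^{(m)}$, exactly the $K$-Lebesgue condition at $x$. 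Using this I choose recursively indices $m_0<m_1<\cdots$ and the exhaustion above so that $\mathcal{C}_{m_k}$ is $K_k^3$-Lebesgue and $M_{m_k}\subseteq K_{k+1}$. Then Lemma \ref{points to sets} produces $\Phi^{(m_k)}:EG_*^{K_k^3}\to\Nc_*^{(m_k)}$ with $\Phi_0^{(m_k)}(g)\supseteq gK_k^3$, while $\Psi^{(m_k)}$ lands in $EG_*^{M_{m_k}^2}\subseteq EG_*^{K_{k+1}^3}$. Passing to the cofinal subsystem $(\Nc_*^{(m_k)})_k$ I obtain two interleaved systems with maps $\Phi_k:=\Phi^{(m_k)}_*$ and $\Psi_k:=\Psi^{(m_k)}_*$.

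The heart of the argument is to show that on homology $\Psi_k\Phi_k$ is the structure map of the $EG$-system and $\Phi_{k+1}\Psi_k$ is the structure map of the $\Nc$-system. In each case the two maps being compared are coordinatewise extensions of equivariant \'etale maps (respectively $\Psi_0\Phi_0$ versus the identity, and $\Phi_0^{(m_{k+1})}\Psi_0^{(m_k)}$ versus the composite $\iota^{(m_k\to m_{k+1})}_0$). For two such coordinatewise morphisms $\phi_*,\psi_*$ I would use the prism homotopy
\[ h_a(U_0,\dots,U_n)=(\phi(U_0),\dots,\phi(U_a),\psi(U_a),\psi(U_{a+1}),\dots,\psi(U_n)),\quad 0\leq a\leq n; \]
each $h_a$ is equivariant and \'etale (by the arguments already used for the face maps and for $\Phi,\Psi$), so induces $\Z[G]$-linear $(h_a)_*$, and $h:=\sum_a(-1)^a(h_a)_*$ is a $\Z[G]$-linear chain homotopy with $\partial h+h\partial=\phi_*-\psi_*$, exactly as in Lemma \ref{h homotopy}; applying the additive functor of coinvariants then forces $\phi_*$ and $\psi_*$ to agree on $H_*^G$, as in the proof of Theorem \ref{vanish}. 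What must be checked is that each mixed tuple $h_a(U)$ genuinely lies in the target. For $\Phi_{k+1}\Psi_k$ versus $\iota^{(m_k\to m_{k+1})}$ this is exact: if $h\in\bigcap_i U_i$ then $h\in\iota^{(m_k\to m_{k+1})}_0(U_i)$ since $\iota$ only enlarges, and $h\in\Phi_0^{(m_{k+1})}(\Psi_0^{(m_k)}(U_i))$ because $\Psi_0^{(m_k)}(U_i)^{-1}h$ lies in a subgroupoid of $\mathcal{C}_{m_k}$, hence in $M_{m_k}\subseteq K_{k+1}^3$, while $\Phi_0^{(m_{k+1})}(g)\supseteq gK_{k+1}^3$; so $h$ is a common point and $h_a(U)\in\Nc_{n+1}^{(m_{k+1})}$. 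For $\Psi_k\Phi_k$ versus the inclusion the mixed tuples land in $EG_*^{K_{k+1}^N}$ for a fixed power $N$ (using $\Psi_0\Phi_0(g)\in gM_{m_k}$), and since $K_{k+1}^N$ is compact open it sits inside some $K_{k'}^3$, so the two maps agree after a further structure map, which suffices in the colimit.

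Finally I would invoke the elementary interleaving principle: if $(E_k),(N_k)$ are directed systems with $\Phi_k:E_k\to N_k$, $\Psi_k:N_k\to E_{k+1}$ whose composites $\Psi_k\Phi_k$ and $\Phi_{k+1}\Psi_k$ agree in the respective colimits with the structure maps, then $\varinjlim\Phi_k$ and $\varinjlim\Psi_k$ are mutually inverse isomorphisms. With $E_k=H_*^G(EG_*^{K_k^3})$ and $N_k=H_*(\mathcal{C}_{m_k})$ this gives $H_*(G)\cong H_*(\mathcal{A})$. I expect the main obstacle to be the homotopy step of the previous paragraph: arranging the coordinatewise prism homotopy so that \emph{every} mixed tuple lies in the target semi-simplicial $G$-space — this is where the $K$-bounded/$K$-Lebesgue dichotomy and principality are really used — together with verifying that each $h_a$ is \'etale and controlling the powers of $K$ so that everything threads through a single cofinal exhaustion. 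The companion point, that the abstract absorbing condition forces the colourings to become eventually $K$-Lebesgue, is the key bridge making Lemma \ref{points to sets} applicable.
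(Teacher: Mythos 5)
Your proposal is correct and follows essentially the same route as the paper's own proof: you interleave the nerve system with the exhaustion $(EG_*^{K_k})$ using Lemmas \ref{points to sets} and \ref{sets to points}, identify $H_*(G)$ with $\varinjlim_k H_*^G(EG_*^{K_k})$, and show the interleaved composites agree with the structure maps via prism homotopies, which is exactly the content of the paper's closeness Lemmas \ref{close mor} and \ref{close mor 2} and the ladder diagram in the proof of Theorem \ref{main comparison}. Your explicit ``bridge lemma'' --- that the absorption property in Definition \ref{anticech} forces the colourings to become eventually $K$-Lebesgue --- is a nice touch, as the paper passes over this step with the bare phrase ``by definition of an anti-\v{C}ech sequence''.
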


The proof will proceed by some lemmas.  First, we need a definition.

\begin{definition}\label{close}
	Let $C_*$ be a semi-simplicial $G$-space and $D_*$ be either the nerve of a colouring $\Nc_*$, or one of the $EG_*^L$ for some compact open subset $L$ of $G$. Let $\alpha,\beta:C_*\to D_*$ be two morphisms of semi-simplicial $G$-spaces.  Then $\alpha,\beta$ are \emph{close} if there exists a compact open subset $K$ of $G$, either:
	\begin{enumerate}[(i)]
		\item $D_*$ is a nerve $\Nc_*$, and for all $x\in C_0$ there exists $g\in G$ such that $\alpha(x)$ and $\beta(x)$ are both subsets of $gK$;
		\item $D_*$ is of for the form $EG_*^L$, and for all $x\in C_0$, $\alpha(x)^{-1}\beta(x)$ is in $K$.
	\end{enumerate} 
\end{definition}

\begin{lemma}\label{close mor}
Let $(\Nc_*^{(m)}, \iota^{(m)})$ be an anti-\v{C}ech sequence, and let $\alpha,\beta:C_*\to \Nc_*^{(m)}$ be close morphisms for some $m$.  Let 
	$$
	\iota^{(\infty)}:H_*(\Nc^{(m)})\to  \varinjlim H_*(\Nc^{(l)})
	$$ 
	be the natural map to the direct limit, i.e.\ to the homology of the anti-\v{C}ech sequence.  Then the compositions 
	$$
	\iota^{(\infty)}\circ \alpha_*:H_*(C)\to \varinjlim H_*(\Nc^{(l)}) \quad \text{and}\quad \iota^{(\infty)}\circ \beta_*:H_*(C)\to \varinjlim H_*(\Nc^{(l)})
	$$
	are the same.
\end{lemma}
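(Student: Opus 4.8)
The plan is to show that after pushing $\alpha$ and $\beta$ forward far enough along the anti-\v{C}ech sequence they become chain homotopic, via a prism (simplicial homotopy) operator whose good definition is precisely where closeness and the anti-\v{C}ech enlargement condition enter. Write $\iota^{(l,m)}:=\iota^{(l)}\circ\cdots\circ\iota^{(m+1)}:\Nc^{(m)}_*\to\Nc^{(l)}_*$ for $l\ge m$. Since $\iota^{(\infty)}$ factors as $\iota^{(\infty)}\circ\iota^{(l,m)}_*$ for every $l\ge m$, it suffices to find one $l$ for which $\iota^{(l,m)}_*\circ\alpha_*=\iota^{(l,m)}_*\circ\beta_*$ as maps $H_*(C)\to H_*(\Nc^{(l)})$; and for this it is enough to produce a chain homotopy between the chain maps $\widetilde\alpha_*,\widetilde\beta_*:\Z[C_*]\to\Z[\Nc^{(l)}_*]$ induced by $\widetilde\alpha:=\iota^{(l,m)}\circ\alpha$ and $\widetilde\beta:=\iota^{(l,m)}\circ\beta$, because the additive coinvariants functor preserves chain homotopies.

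First I record the combinatorial input. For $c\in C_n$ and $j\in\{0,\dots,n\}$, let $v_j(c)\in C_0$ be the $j$-th vertex of $c$, obtained by applying the face operators dual to the inclusion $\{j\}\hookrightarrow[n]$. Naturality of the morphisms $\alpha,\beta$ identifies the $j$-th components of $\alpha(c),\beta(c)\in\Nc^{(m)}_n\subseteq(\Nc^{(m)}_0)^{n+1}$ with $\alpha_0(v_j(c))$ and $\beta_0(v_j(c))$ respectively. Write $A_j:=\alpha_0(v_j(c))$ and $B_j:=\beta_0(v_j(c))$, viewed as subsets of $G$ sitting inside a single range fibre $G^{x}$, where $x=p(c)$. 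For each $i\in\{0,\dots,n\}$ I define the candidate prism map $h_i:C_n\to\Nc^{(l)}_{n+1}$ by
$$
h_i(c)=\bigl(\iota_0 A_0,\dots,\iota_0 A_i,\,\iota_0 B_i,\dots,\iota_0 B_n\bigr),
$$
where $\iota_0=\iota^{(l,m)}_0$.

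The crucial point --- and the main obstacle --- is that this tuple actually lies in $\Nc^{(l)}_{n+1}$, i.e.\ that its $n+2$ entries have nonempty common intersection. Here closeness enters: for each $j$ there is $g_j\in G$ with $A_j,B_j\subseteq g_jK$. Choosing $h\in\bigcap_j A_j$ and $h'\in\bigcap_j B_j$ (both nonempty since $\alpha(c),\beta(c)$ are simplices), one computes $h^{-1}h'=(g_j^{-1}h)^{-1}(g_j^{-1}h')\in K^{-1}K$ for every $j$, so $h'=hw$ with $w\in K^{-1}K$. Fixing a compact open $K'\supseteq K^{-1}K$ and taking $l$ large enough that the anti-\v{C}ech condition gives $\iota_0(U)\supseteq UK'$ for all $U\in\Nc^{(m)}_0$ (a single enlargement at the last step suffices, using $\iota^{(j)}(U)\supseteq U$ throughout), we obtain $h'=hw\in A_jK'\subseteq\iota_0 A_j$ for all $j$, while trivially $h'\in B_j\subseteq\iota_0 B_j$. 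Thus $h'$ lies in every entry of $h_i(c)$, so $h_i$ is well-defined. That each $h_i$ is $G$-equivariant and \'etale then follows from the corresponding properties of $\alpha_0$, $\beta_0$, $\iota_0$ and the face operators, exactly as in the proofs of Lemmas \ref{points to sets} and \ref{sets to points}.

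Finally I assemble the homotopy. The spatial-level prism identities --- namely $\partial^0 h_0=\widetilde\beta$, $\partial^{n+1}h_n=\widetilde\alpha$, the collapse relations $\partial^{i+1}h_i=\partial^{i+1}h_{i+1}$, and the remaining commutation relations $\partial^{j}h_i=h_{i-1}\partial^{j}$ and $\partial^{j}h_i=h_{i}\partial^{j-1}$ in the appropriate index ranges --- are read off directly from the formula for $h_i$. Pushing forward and setting $P:=\sum_{i=0}^n(-1)^i(h_i)_*$ yields the standard identity $\partial P+P\partial=\widetilde\beta_*-\widetilde\alpha_*$ of $\Z[G]$-linear maps $\Z[C_*]\to\Z[\Nc^{(l)}_*]$. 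Applying the coinvariants functor, which is additive and hence preserves chain homotopies, shows $\iota^{(l,m)}_*\circ\alpha_*=\iota^{(l,m)}_*\circ\beta_*$ on $H_*(C)\to H_*(\Nc^{(l)})$, and composing with $\iota^{(\infty)}$ gives the claim. The only genuinely delicate step is the nonemptiness of the mixed intersections above, which is exactly the interaction of closeness with the anti-\v{C}ech growth condition; everything else is either formal or a routine topological verification.
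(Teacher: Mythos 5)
Your proof is correct and follows essentially the same route as the paper's: the same prism maps $h_i$ (the paper's $h^i$), with closeness plus the anti-\v{C}ech enlargement condition used in exactly the same way to show the mixed tuples have nonempty common intersection, followed by applying the coinvariants functor to the resulting chain homotopy. The only differences are cosmetic: you witness the intersection by a point of $\bigcap_j B_j$ and a set $K'\supseteq K^{-1}K$ where the paper uses a point of $\bigcap_j \alpha(x_j)$ and $K^2$ (implicitly taking $K$ symmetric), and you dispense with the paper's apparently unused assumption that the colouring underlying $\Nc^{(m)}$ is $K$-bounded.
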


\begin{proof}
	Let $K$ be as in the definition of closeness for $\alpha$ and $\beta$, and assume also that $K$ is so large that the colouring underlying $\Nc^{(m)}$ is $K$-bounded.  Let $l\geq m$ be large enough so that if $\iota:\Nc^{(m)}\to \Nc^{(l)}$ is the composition of the morphisms in the definition of the anti-\v{C}ech sequence, then for all $U\in \Nc^{(m)}$, we have that $\iota(U)\supseteq UK^2$ (such an $l$ exists by definition of an anti-\v{C}ech sequence). It will suffice to show that $\iota\circ\alpha$ and $\iota\circ \beta$ induce the same map $H_*(C)\to H_*(\Nc^{(l)})$.
	
	Let now $x$ be a point in $C_n$ for some $n$.  For each $j\in \{0,...,n\}$ let $\pi_j:C_n\to C_0$ be the map corresponding under the semi-simplicial structure to the map $\{0\}\to \{0,...,n\}$ that sends $0$ to $j$ (see Section \ref{Gsphom} for notation).  Define $x_j:=\pi_j(x)$.  We claim that the intersection 
	\begin{equation}\label{big int}
	\bigcap_{j=0}^n \iota(\alpha(x_j))\cap \bigcap_{j=0}^n \iota(\beta(x_j))
	\end{equation}
	is non-empty.  Indeed, $(\alpha(x_0),...,\alpha(x_n))$ is a point of $\Nc^{(l)}_n$, whence there is some $g_\alpha$ in the intersection $\bigcap_{j=0}^n \alpha(x_j)$, and similarly for $g_\beta$ with $\beta$ replacing $\alpha$.  As $\alpha$ and $\beta$ are close with respect to $K$, we have that $\alpha(x_0)$ and $\beta(x_0)$ are both subsets of $gK$ for some $g\in G$, whence there are $k_\alpha$ and $k_\beta$ in $K$ such that $g_\alpha=gk_\alpha$ and $g_\beta=gk_\beta$.  Hence $g_\alpha=g_\beta k_\beta k_\alpha^{-1}$, so in particular $g_\alpha$ is in $g_\beta K^2$.  Now, by choice of $\iota$, $\iota(\beta(x_j))\supseteq \beta(x_j)K^2$ for all $j$, whence $g_\alpha$ is in $\iota(\beta(x_j))$ for all $j$.  Moreover, $g_\alpha\in \alpha(x_j)\subseteq \iota(\alpha(x_j))$ for all $j$, so $g_\alpha$ is a point in the claimed intersection.
	
	For each $n$ and each $i\in \{0,...,n\}$, we define a map  
	$$
	h^i:C_n\to \Nc^{(l)}_{n+1}
	$$
	by the formula 
	$$
	x\mapsto (\iota\circ \alpha(\pi_0(x)),...,\iota\circ\alpha(\pi_i(x)),\iota\circ \beta(\pi_i(x)),...,\iota\circ \beta(\pi_n(x))),
	$$
	which is well-defined by the claim.  It is moreover an equivariant local homeomorphism as $\iota$, $\alpha$ and $\beta$ all have these properties.  Hence $h^i$ induces a map $h^i_*:\Z[C_n]\to \Z[\Nc^{(l)}_{n+1}]$ in the usual way.  We define 
	$$
	h:=\sum_{i=0}^n (-1)^i h^i_*:\Z[C_n]\to \Z[\Nc_{n+1}^{(l)}].
	$$
	Direct checks show that $h$ (and the map induced on coinvariants by $h$) is a chain homotopy between the maps induced by $\iota\circ \alpha$ and $\iota\circ \beta$.  Hence $\iota\circ \alpha$ and $\iota\circ \beta$ indeed induce the same map on homology as claimed. 
\end{proof}

For the next lemma, let $K_0\subseteq K_1\subseteq K_2...$ be a sequence of compact open subsets of $G$, all of which contain $G^{0}$, and whose union is $G$.  We then get a sequence $(EG_*^{K_m})_{m=0}^\infty$ of spaces.  Note moreover that the corresponding limit ${\displaystyle \lim_\to H_*(G^{K_n})}$ canonically identifies with $H_*(G)$: indeed, this follows directly from the observation that for each $n$, $EG_{n}$ is the increasing union of the $EG^{K_m}_{n}$, and the fact that taking homology groups commutes with direct limits.

\begin{lemma}\label{close mor 2}
Let $(G_*^{K_m})_{m=0}^\infty$ be a sequence of spaces associated to a nested sequence of compact open subsets of $G$ as above.  Let $\alpha,\beta:C_*\to G_*^{(K_m)}$ be close morphisms for some $m$.  Let 
	$$
	\kappa:H_*(G^{K_m})\to \varinjlim H_*(G^{K_m})
	$$
	be the natural map to the direct limit, i.e.\ to the homology $H_*(G)$ of $G$.  Then the compositions 
	$$
	\kappa\circ \alpha_*:H_*(C)\to H_*(G)\quad  \text{and}\quad \kappa\circ \beta_*:H_*(C)\to H_*(G)
	$$
	are the same.
\end{lemma}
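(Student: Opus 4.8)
The strategy is to run the proof of Lemma~\ref{close mor} essentially verbatim, with the enlarging maps $\iota$ between nerves replaced by the inclusions $\lambda:EG_*^{K_m}\hookrightarrow EG_*^{K_l}$ for a suitably large $l$. Since the structure map $\kappa$ factors as $\kappa=\kappa_l\circ\lambda_*$, where $\kappa_l:H_*(G^{K_l})\to H_*(G)$ is the canonical map into the limit, it suffices to build a prism-type chain homotopy witnessing that $\lambda\circ\alpha$ and $\lambda\circ\beta$ induce the same map $H_*(C)\to H_*(G^{K_l})$.

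First I would record the bookkeeping that makes everything run. For $x\in C_n$ put $x_j:=\pi_j(x)\in C_0$, with $\pi_j$ as in the proof of Lemma~\ref{close mor}. Naturality of the semi-simplicial morphism $\alpha$ along the generator $\{0\}\to\{0,\dots,n\}$ sending $0$ to $j$ identifies $\alpha(x_j)$ with the $j$-th coordinate of $\alpha_n(x)\in EG_n^{K_m}$, and similarly for $\beta$. Hence all of $\alpha(x_0),\dots,\alpha(x_n),\beta(x_0),\dots,\beta(x_n)$ share the common range $p(x)$; moreover $\alpha(x_p)^{-1}\alpha(x_q),\ \beta(x_p)^{-1}\beta(x_q)\in K_m$ for all $p,q$, while closeness gives $\alpha(x_q)^{-1}\beta(x_q)\in K$.

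For $0\le i\le n$ I then define
$$
h^i:C_n\to EG_{n+1}^{K_l},\qquad x\mapsto\bigl(\alpha(x_0),\dots,\alpha(x_i),\beta(x_i),\dots,\beta(x_n)\bigr),
$$
where $l$ is chosen so that the compact set $K_m\cup K_mK\cup K^{-1}K_m$ lies inside $K_l$ (possible since the $K_l$ form an increasing open cover of $G$). The one step that genuinely uses the hypotheses is checking that this lands in $EG_{n+1}^{K_l}$: the common-range condition is immediate, and any two entries $y_s,y_t$ satisfy $y_s^{-1}y_t\in K_m$ (same type) or $y_s^{-1}y_t\in K_mK\cup K^{-1}K_m\subseteq K_l$ (mixed type), the mixed case following from the factorisation $\alpha(x_p)^{-1}\beta(x_q)=\bigl(\alpha(x_p)^{-1}\alpha(x_q)\bigr)\bigl(\alpha(x_q)^{-1}\beta(x_q)\bigr)$. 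As in Lemma~\ref{close mor}, each $h^i$ is equivariant and étale because $\alpha$, $\beta$ and the $\pi_j$ are, so it induces a $\Z[G]$-linear map $h^i_*:\Z[C_n]\to\Z[EG_{n+1}^{K_l}]$, and I set $h:=\sum_{i=0}^n(-1)^i h^i_*$.

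The standard prism calculation then finishes the argument. Using the spatial identities $\partial^k h^i=h^{i-1}\partial^k$ for $k<i$ and $\partial^k h^i=h^i\partial^{k-1}$ for $k>i+1$, the cancellations $\partial^i h^i=\partial^i h^{i-1}$ and $\partial^{i+1}h^i=\partial^{i+1}h^{i+1}$, and the boundary identities $\partial^0 h^0=\lambda\circ\beta$ and $\partial^{n+1}h^n=\lambda\circ\alpha$, one computes $\partial h+h\partial=(\lambda\circ\beta)_*-(\lambda\circ\alpha)_*$. As each $h^i_*$ is $\Z[G]$-linear it descends to coinvariants, so $h$ is a chain homotopy between the chain maps induced by $\lambda\circ\alpha$ and $\lambda\circ\beta$ on $\Z[EG_*^{K_l}]_G$; these therefore agree on homology, and composing with $\kappa_l$ gives $\kappa\circ\alpha_*=\kappa\circ\beta_*$. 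The only real obstacle is the well-definedness and étale check for the $h^i$; everything else is formal or a routine sign bookkeeping identical to that in Lemma~\ref{close mor}.
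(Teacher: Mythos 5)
Your proposal is correct and is exactly the argument the paper intends: its proof of Lemma \ref{close mor 2} simply says it is ``very similar to that of Lemma \ref{close mor}'' and leaves the details to the reader, and your write-up carries out precisely that adaptation, replacing the enlarging morphisms $\iota^{(m)}$ by the inclusions $EG_*^{K_m}\hookrightarrow EG_*^{K_l}$ and running the same prism homotopy. In particular you correctly identify the one point needing care---choosing $l$ with $K_m\cup K_mK\cup K^{-1}K_m\subseteq K_l$ so that the maps $h^i$ land in $EG_{n+1}^{K_l}$---so nothing is missing.
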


\begin{proof}
	The proof is very similar to that of Lemma \ref{close mor}.  We leave the details to the reader.
\end{proof}

\begin{proof}[Proof of Theorem \ref{main comparison}]
	Let $\mathcal{A}$ be the given anti-\v{C}ech sequence with associated nerves and morphisms $\iota^{(m)}:\Nc^{(m-1)}_*\to \Nc^{(m)}$.  
	
	Let $m_1=1$.  Then the colouring underlying $\Nc^{(1)}_*$ is $K$-bounded for some compact open subset $K$ of $G$, which we may assume contains $G^{0}$, and that satisfies $K=K^{-1}$.  Set $K_1:=K^2$.  Then Lemma \ref{sets to points} gives a morphism $\Psi^{(1)}:\Nc^{(m_1)}_*\to G^{K_1}_*$.  On the other hand, by definition of an anti-\v{C}ech sequence there is $m_2>m_1$ such that $\Nc^{(m_2)}$ is $K_1$-Lebesgue, whence Lemma \ref{points to sets} gives a morphism $\Phi^{(1)}:G^{K_1}_*\to \Nc^{(m_2)}$.  Continuing, the colouring underlying $\Nc^{(m_2)}$ is $K$-bounded for some compact open subset $K$ of $G$ that we may assume contains $K_1$ and satisfies $K=K^{-1}$.  Set $K_2:=K^2$, so Lemma \ref{sets to points} gives a morphism $\Phi^{(2)}:\Nc^{(m_2)}_*\to G^{K_2}_*$.
	
	Continuing in this way, we get sequences $1=m_1<m_2<m_3<\cdots$ of natural numbers and $K_1\subseteq K_2\subseteq \cdots $ of compact open subsets of $G$ together with morphisms
	$$
	\xymatrix{ \Nc^{(m_1)}_* \ar[dr]^-{\Psi^{(1)}} & & \Nc^{(m_2)}_* \ar[dr]^-{\Psi^{(2)}} & & \Nc^{(m_3)}_* \ar[dr]^-{\Psi^{(3)}} & \cdots \\
		& EG^{K_1}_* \ar[ur]_-{\Phi^{(1)}} & & EG^{K_2}_* \ar[ur]_-{\Phi^{(2)}} & & & \cdots}.
	$$
	We may fill in horizontal arrows in the diagram: on the top row, these should be appropriate compositions of the morphisms $\iota^{(m)}$ coming from the definition of an anti-\v{C}ech sequence, while on the bottom row they should be induced by the canonical inclusions $EG_{n}^{K_k}\to EG_{n}^{K_{k+1}}$ coming from the fact that $K_k\subseteq K_{k+1}$ for all $k$.  We thus get a (non-commutative!) diagram
	\begin{equation}\label{ladder}
	\xymatrix{ \Nc^{(m_1)}_* \ar[dr]^-{\Psi^{(1)}} \ar[rr] & & \Nc^{(m_2)}_* \ar[dr]^-{\Psi^{(2)}} \ar[rr] & & \Nc^{(m_3)}_* \ar[dr]^-{\Psi^{(3)}} \ar[rr] & & \\
		& EG^{K_1}_* \ar[ur]_-{\Phi^{(1)}} \ar[rr] & & EG^{K_2}_* \ar[ur]_-{\Phi^{(2)}} \ar[rr] & & & \cdots}.
	\end{equation}
	Notice that the limit of the horizontal maps in the top row is $H_*(\mathcal{A})$.  Moreover, the definition of an anti-\v{C}ech sequence and the construction of the sequence $(K_k)$ forces $G=\bigcup K_k$, so the limit of the horizontal maps on the bottom row is $H_*(G)$.  
	
	Now, consider the compositions
	\begin{equation}\label{to limit}
	\xymatrix{ H_*(EG^{K_k}) \ar[rr]^-{\Phi^{(k)}_*} & & H_*(\Nc^{(m_k)}) \ar[r] & H_*(\mathcal{A})}.  
	\end{equation}
	where the second arrow is the canonical one that exists by definition of the direct limit.  Any two morphisms into any $\Nc^{(m_k)}$ are close using that all the colourings are bounded.  It follows therefore from Lemma \ref{close mor} that for any $k$, the diagram
	$$
	\xymatrix{ H_*(\mathcal{A}) \ar@{=}[r] & H_*(\mathcal{A}) \\
		H_*(EG^{K_k}) \ar[r] \ar[u] & H_*(EG^{K_{k+1}})\ar[u] }
	$$
	commutes; here the vertical maps are the ones in line \eqref{to limit}, and the bottom horizontal line is induced by the canonical inclusion $EG^{K_k}_*\to EG^{K_{k+1}}_*$.  Taking the limit in $k$ of the maps in line \eqref{to limit}, we thus get a well-defined homomorphism 
	$$
	\Phi:H_*(G)\to H_*(\mathcal{A}).
	$$
	Precisely analogously, using Lemma \ref{close mor 2} in place of Lemma \ref{close mor}, we get a homomorphism 
	$$
	\Psi:H_*(\mathcal{A})\to H_*(G).
	$$
	We claim that $\Phi$ and $\Psi$ are mutually inverse, which will complete the proof.  Indeed, for any $k$, the triangles  
	$$
	\xymatrix{  \Nc^{(m_k)}_* \ar[rr] \ar[dr]_-{\Psi^{(k)}} & &  \Nc^{(m_{k+1})}_*  \\
		& EG_*^{K_{k}} \ar[ur]_-{\Phi^{(k)}} & }
	$$
	and 
	$$
	\xymatrix{  & \Nc^{(m_{k+1})}_*  \ar[dr]^-{\Psi^{(k+1)}} &  \\
		EG_*^{K_{k}} \ar[ur]^-{\Phi^{(k)}} \ar[rr] & &  EG_*^{K_{k+1}} }
	$$
	appearing in line \eqref{ladder} commute up to closeness, whence the claim follows directly from Lemmas \ref{close mor} and \ref{close mor 2}, so we are done.
\end{proof}

\subsection{Dynamic asymptotic dimension}\label{asdim}

In this subsection, we show that the homology of an ample $\sigma$-compact principal groupoid vanishes above its dynamic asymptotic dimension, and also that the top-dimensional homology group is torsion free. 

The following definition is \cite[Definition 5.1]{Guentner:2014aa}.

\begin{definition}\label{dad}
Let $d\in \N$.  A (locally compact, Hausdorff, \'{e}tale) groupoid has \emph{dynamic asymptotic dimension at most $d$} if for any relatively compact open subset $K$ of $G$ there are open subsets $U_0,...,U_d$ of $G^{0}$ that cover $r(K)\cup s(K)$ and such that for each $i$, the set $\{g\in K\mid s(g),r(g)\in U_i\}$ is contained in a relatively compact subgroupoid of $G$.
\end{definition}

We record the some basic facts about products of subsets of a groupoid.  See for example \cite[Lemma 5.2]{Guentner:2014aa} for a proof.

\begin{lemma}\label{gp prod}
Let $G$ be an étale groupoid, and $H$ and $K$ are subsets of $G$.  Then if $H$ and $K$ are open (respectively, compact, or relatively compact), the product $HK$ is open (respectively, compact, or relatively compact).  

Moreover, if $K$ is open then the subgroupoid generated by $K$ is also open. \qed
\end{lemma}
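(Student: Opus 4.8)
The plan is to treat the three parenthetical claims and the generated-subgroupoid claim separately. Only the openness assertion requires genuine input from the étale hypothesis; the compactness statements are soft consequences of $G$ being Hausdorff, and the last claim is a formal consequence of the open case.

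For the open case I would first reduce to the statement that the multiplication map $m\colon G^{(2)}\to G$ is open, where $G^{(2)}:=\{(g,h)\in G\times G\mid s(g)=r(h)\}$ carries the subspace topology. Granting this, if $H$ and $K$ are open then $(H\times K)\cap G^{(2)}$ is open in $G^{(2)}$ and $m$ carries it onto $HK$, which is therefore open. To see that $m$ is open I would use that the open bisections form a basis for $G$: every point of $G^{(2)}$ has a neighbourhood of the form $(B\times C)\cap G^{(2)}$ for open bisections $B,C$, so it suffices to show each product $BC$ is open. That $BC$ is a bisection is elementary: if $g_1h_1=g_2h_2$ with $g_i\in B$ and $h_i\in C$, then equating ranges and using injectivity of $r|_B$ gives $g_1=g_2$, and then $h_1=h_2$, so $r$ and (symmetrically) $s$ are injective on $BC$. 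For openness I would invoke the general principle that the image of a continuous section of the local homeomorphism $r\colon G\to G^0$ over an open subset of $G^0$ is open in $G$; writing $g:=(r|_B)^{-1}(w)$, the map $w\mapsto g\cdot(r|_C)^{-1}(s(g))$, defined for $w$ in the open set $r(BC)$, is exactly such a section with image $BC$, and $r(BC)$ is open because $r$ is an open map. The principle itself holds because near any point a continuous section of $r$ must agree with the local inverse coming from an ambient open bisection, and hence has open image.

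The compact case is then immediate. Since $G^0$ is Hausdorff the diagonal of $G^0\times G^0$ is closed, so $G^{(2)}$, being the preimage of this diagonal under the continuous map $(g,h)\mapsto(s(g),r(h))$, is closed in $G\times G$; hence $(H\times K)\cap G^{(2)}$ is a closed subset of the compact set $H\times K$, so compact, and $HK=m((H\times K)\cap G^{(2)})$ is compact as a continuous image. The relatively compact case follows formally: $HK\subseteq\overline H\,\overline K$, the right-hand side is compact by the compact case applied to the compact sets $\overline H$ and $\overline K$, and a subset of a compact Hausdorff space is relatively compact.

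For the last assertion I would write the subgroupoid generated by $K$ as $\bigcup_{n\geq 1}(K\cup K^{-1})^n$ and check that this is the smallest subgroupoid containing $K$: it is closed under inversion as $(K\cup K^{-1})^{-1}=K\cup K^{-1}$, closed under the partial multiplication as $(K\cup K^{-1})^n(K\cup K^{-1})^m\subseteq(K\cup K^{-1})^{n+m}$, and contains the needed units since $gg^{-1}\in(K\cup K^{-1})^2$ for $g\in K$. As inversion is a homeomorphism, $K\cup K^{-1}$ is open, and by the open case each power $(K\cup K^{-1})^n$ is open; hence the union is open. The only real obstacle in the whole argument is the openness of $BC$, that is, the fact that $r$ is a local homeomorphism whose continuous sections have open image---this is precisely where the étale hypothesis enters, and everything else is routine point-set topology.
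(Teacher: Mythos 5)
Your proof is correct, and since the paper supplies no in-house argument for this lemma (it simply cites \cite[Lemma 5.2]{Guentner:2014aa} and ends with \qed), it is the natural comparison point: your route --- reducing openness of $HK$ to openness of products $BC$ of open bisections via openness of the multiplication map, getting compactness from continuity of multiplication on the closed set $G^{(2)}$, deducing relative compactness from $HK\subseteq \overline{H}\,\overline{K}$, and writing the generated subgroupoid as $\bigcup_{n\geq 1}(K\cup K^{-1})^n$ --- is essentially the standard argument of that reference. The one imprecision worth fixing: the clause ``$r(BC)$ is open because $r$ is an open map'' is circular as literally stated (you have not yet shown $BC$ open), but the repair is immediate, since $r(BC)=r\bigl(B\cap s^{-1}(r(C))\bigr)$ exhibits $r(BC)$ as the image of an open set under the open map $r$; with that substitution your section-of-$r$ argument for the openness of $BC$ is complete.
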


Here is our key use of dynamic asymptotic dimension.

\begin{lemma}\label{dad lem}
Let $G$ be an ample groupoid with compact base space which has dynamic asymptotic dimension at most $d$.  Then for any compact open subset $K\subseteq G$ there exists a $K$-Lebesgue colouring of $G$ with at most $d+1$ elements.
\end{lemma}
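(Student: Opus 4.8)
The plan is to feed the dynamic asymptotic dimension hypothesis into a thickening construction that converts control on arrows with \emph{both} endpoints in a piece $U_i$ (which is what Definition \ref{dad} provides) into the one-endpoint control demanded by the $K$-Lebesgue condition of Definition \ref{leb bound}. First I would make harmless reductions: since a $K'$-Lebesgue colouring is automatically $K$-Lebesgue whenever $K\subseteq K'$, I may enlarge $K$ and assume $K=K^{-1}$ and $G^0\subseteq K$; then $r(K)\cup s(K)=G^0$. Apply Definition \ref{dad} to the relatively compact open set $K^3$ (relatively compact by Lemma \ref{gp prod}) to obtain open sets $U_0,\dots,U_d$ covering $G^0$ such that each $\{g\in K^3\mid s(g),r(g)\in U_i\}$ lies in a relatively compact subgroupoid. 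Using that $G^0$ is compact and totally disconnected, I would refine the $U_i$ to compact open sets still covering $G^0$, and set $S_i:=\langle\{g\in K^3\mid s(g),r(g)\in U_i\}\rangle$, which is open by Lemma \ref{gp prod} and relatively compact; one checks that $S_i^0=U_i$.

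Next comes the thickening. Define the one-$K$-step neighbourhood $V_i:=U_i\cup s(\{g\in K\mid r(g)\in U_i\})$, a compact open subset of $G^0$ (the source image of a compact open set under the open map $s$), and put $G_i:=\langle\{g\in K\mid s(g),r(g)\in V_i\}\rangle$, again open and a subgroupoid with $G_i^0=V_i$. The key computation is that $G_i\subseteq K S_i K$, so that $G_i$ is relatively compact. To see this, write a typical generator $g\in K$ with $r(g),s(g)\in V_i$ as $g=a\,h\,b^{-1}$, where $a,b\in K$ move the endpoints of $g$ into $U_i$ and $h:=a^{-1}gb\in K^3$ then has both endpoints in $U_i$, whence $h\in S_i$. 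For a product of such generators the linking factors $b_j^{-1}a_{j+1}$ have both endpoints in $U_i$ and lie in $K^2\subseteq K^3$, hence belong to $S_i$; telescoping then collapses the whole product into $K\,S_i\,K$. This is exactly where the choice of the cube $K^3$ is used.

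I would then verify the two defining properties. The unit spaces satisfy $V_i\supseteq U_i$, so they cover $G^0$ and the $G_i$ form a colouring with at most $d+1$ elements. For the $K$-Lebesgue property, fix $x\in G^0$, choose $i$ with $x\in U_i$, and observe that any $g\in G^x\cap K$ has $r(g)=x\in U_i\subseteq V_i$ and $s(g)\in V_i$ by the very definition of $V_i$; thus $g$ has both endpoints in $V_i$ and lies in $G_i$, giving $G^x\cap K\subseteq G_i$ as required.

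The main obstacle is that Definition \ref{colouring} requires the $G_i$ to be compact open subgroupoids, whereas the construction only produces relatively compact \emph{open} subgroupoids: a relatively compact open subgroupoid need not be closed (already for the trivial groupoid on $\{0\}\cup\{1/n\mid n\geq 1\}$, the open subgroupoid $\{1/n\}$ is relatively compact but not compact). The remaining work, and the crux of the argument, is therefore to upgrade each $G_i$ to a genuine compact open subgroupoid while preserving the containment $G^x\cap K\subseteq G_i$ and the covering of $G^0$ by the unit spaces. I expect to do this by passing to the closure $\overline{G_i}$ and showing, using that $G$ is ample and étale and that $\overline{G_i}$ is compact with compact unit space, that $\overline{G_i}$ is again an open subgroupoid: one covers $\overline{G_i}$ by finitely many compact open bisections, uses that the source fibres of $\overline{G_i}$ are then finite and that $s$ and $r$ are local homeomorphisms (so that composable pairs can be approximated within these finite fibres), and concludes that $\overline{G_i}$ is clopen and closed under multiplication. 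Since $\overline{G_i}\supseteq G_i$, all the properties checked above are inherited, which finishes the proof.
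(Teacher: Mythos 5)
Up to the final step, your construction is essentially the paper's proof: the paper makes the same reductions ($K=K^{-1}$, $G^0\subseteq K$), applies Definition \ref{dad} to $K^3$, shrinks the resulting cover to compact open sets, thickens by one $K$-step (its $W_i=s(r^{-1}(V_i)\cap K)$ is your $V_i$), generates $G_i$ from $\{g\in K\mid s(g),r(g)\in W_i\}$, and proves relative compactness by exactly your conjugation/telescoping trick, written there as $k_1\cdots k_n=h_0^{-1}\big(\prod_{j}h_{j-1}k_jh_j^{-1}\big)h_n\in KH_iK$; your verification of the $K$-Lebesgue property is also the same. You have moreover correctly isolated the one remaining issue: Definition \ref{colouring} demands \emph{compact} open subgroupoids, while the construction so far only gives relatively compact open ones.

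However, your proposed resolution of that issue is a genuine gap: the closure of a relatively compact open subgroupoid of an ample Hausdorff groupoid need not be a subgroupoid, nor open. Concretely, let $X=\{0\}\cup\{1/n\mid n\geq 1\}$ and let $\tau$ generate a $\mathbb{Z}/4$-action cycling each block $\{1/(4n-3),1/(4n-2),1/(4n-1),1/(4n)\}$ and fixing $0$; set $G=\mathbb{Z}/4\ltimes X$ (compact, ample, Hausdorff), and write $(x,k)$ for the arrow with source $x$ and range $\tau^k x$. Take $H:=G^0\cup\bigcup_{n}\{(1/(4n-3),1),\,(1/(4n-2),3)\}$. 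This is an open subgroupoid (the only composable products of non-units are mutually inverse pairs, yielding units), and it is trivially relatively compact. Its closure adds $(0,1)$ and $(0,3)$ but not $(0,2)$, since $H$ contains no arrows of type $2$; yet $(0,1)\cdot(0,1)=(0,2)$, so $\overline{H}$ is not closed under multiplication. It is not open either: every neighbourhood of $(0,1)$ contains arrows $(1/(4n-2),1)\notin\overline{H}$. Your heuristic that ``composable pairs can be approximated within these finite fibres'' is precisely what breaks: $(0,1)$ is approximated only by arrows with source $1/(4n-3)$ and range $1/(4n-2)$, and no two of these are composable. The paper closes this gap with a quantitative input instead: by \cite[Lemma 8.10]{Guentner:2014aa}, relative compactness yields a uniform bound $N$ on the cardinalities of the fibres $G_i^x$ (cover the compact closure by finitely many bisections); a path-shortening argument on the partial products $k_1,k_1k_2,\dots,k_1\cdots k_n$ (delete segments between repetitions) then shows every element of $G_i$ is a product of at most $N$ generators from the \emph{compact} generating set, and a net argument shows $G_i$ is already closed, hence compact --- no closure is ever taken. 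Some such uniform word-length bound is indispensable here, and your argument as written does not supply it.
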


\begin{proof}
	Say $G$ has dynamic asymptotic dimension at most $d$, and let $K\subseteq G$ be compact and open.  As making $K$ larger only makes the problem more difficult, we may assume that $K=K^{-1}$, and that $K$ contains $G^{0}$.  Note that $K^3$ is still compact and open by Lemma \ref{gp prod}.  The definition of dynamic asymptotic dimension at most $d$ therefore gives an open cover $U_0,...,U_d$ of $G^{0}$ such that for each $i$, the subgroupoid generated by $\{g\in K^3\mid r(g),s(g)\in U_i\}$ is relatively compact.  As $G^{0}$ is compact and as its topology has a basis of compact open sets, there is a cover of $G^{0}$ consisting of compact open sets $V_0,...,V_d$ such that each $V_i$ is contained in $U_i$.  For each $i$, let $W_i=s(r^{-1}(V_i)\cap K)$.  Note that $r^{-1}(V_i)\cap K$ is compact and open, whence $W_i$ is also compact and open as $s$ is an open continuous map.  Moreover, $W_i$ contains $V_i$ as $K$ contains $G^{0}$.  Let now $G_i$ be the subgroupoid of $G$ generated by $\{g\in K\mid r(g),s(g)\in W_i\}$.  We claim that $G_0,...,G_d$ is a $K$-Lebesgue colouring of $G$. 
	
	Indeed, each $G_i$ is open by Lemma \ref{gp prod}, as it is generated by an open set.  For compactness, we first claim that $G_i$ has compact closure.  Indeed, say $k_1...k_n$ is an element of $G_i$, where each $k_j$ is in $\{g\in K\mid s(g),r(g)\in W_i\}$.  Then by definition of $W_i$, for each $j\in \{1,...,n\}$ there is an element $h_j$ of $K$ such that $h_{j-1}k_jh_{j}^{-1}$ has range and source in $U_i$.  Hence for each $j\in \{1,...,n\}$, $h_{j-1}k_jh_{j}^{-1}$ is in $\{g\in K^3\mid r(g),s(g)\in U_i\}$.  Let $H_i$ be the subgroupoid of $G$ generated by $\{g\in K^3\mid r(g),s(g)\in U_i\}$, so $H_i$ has compact closure by choice of the cover $U_0,...,U_d$.  Moreover, 
	$$
	k_1\cdots k_n = h_0^{-1}\Big(\prod_{j=1}^n h_{j-1}k_jh_{j}^{-1}\Big)h_n\in KH_iK.
	$$
We now have that $G_i$ is contained in $KH_iK$.  However, $KH_iK$ is relatively compact by Lemma \ref{gp prod}, so we see that $G_i$ is also relatively compact.  
	
	We next claim that there is $N\in \N$ such that any element of $G_i$ can be written as a product of at most $N$ elements of $\{g\in K\mid s(g),r(g)\in W_i\}$.  Indeed, from \cite[Lemma 8.10]{Guentner:2014aa} there exists $N\in \N$ such that each range fibre $G_i^x$ has at most $N$ elements.  Now, let $g=k_1\cdots k_n$ be an element of $G$, where each $k_j$ is in $K$.  Consider the `path' $k_1,k_1k_2,\dots,k_1\cdots k_n$.  If this path contains any repetitions, we may shorten it by just omitting all the elements in between.  Hence the length $n$ of this path can be assumed to be at most the number of elements of $G^{r(g)}$, which is $N$ as claimed.  
	
	To complete the proof that $G_i$ is compact, it now suffices to show that it is closed.  For this, let $(g_j)$ be a net of elements of $G_i$.  Using the previous claim, we may write $g_j=k_1^j...k_N^j$, where each $k_j^i$ is in $\{g\in K\mid r(g),s(g)\in W_i\}$ (possibly identity elements).  Note that the latter set is compact, whence up to passing to subnets, we may assume that each net $(k^j_i)$ converges to some $k_i\in \{g\in K\mid r(g),s(g)\in W_i\}$.  Hence $g=k_1...k_N$ is in $G_i$, so $G_i$ is indeed compact.
	
	Finally, we check that the colouring is $K$-Lebesgue.  Note that for any $x\in G^0$, $x$ is contained in $V_i$ for some $i$, whence $s(r^{-1}(x)\cap K)$ is contained in $W_i$ by definition of $W_i$.  Hence $G_i$ contains $r^{-1}(x)\cap K$, giving the $K$-Lebesgue condition.
\end{proof}

\begin{theorem}\label{main vanishing}
	Let $G$ be a $\sigma$-compact ample groupoid. If $G$ is principal and has dynamic asymptotic dimension at most $d$, then $H_n(G)=0$ for $n>d$, and $H_d(G)$ is torsion-free.
\end{theorem}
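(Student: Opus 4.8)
The plan is to prove the statement first under the additional assumption that the base space $G^0$ is compact, where the colouring and anti-\v{C}ech machinery of this section is available, and then to remove that assumption by a direct limit argument over compact open subsets of $G^0$.

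So assume first that $G^0$ is compact. For the vanishing statement the point is to run the construction of an anti-\v{C}ech sequence exactly as in Corollary \ref{ac exist}, but using Lemma \ref{dad lem} in place of Lemma \ref{cover lem} at each stage. Since $G$ has dynamic asymptotic dimension at most $d$, Lemma \ref{dad lem} produces, for each required compact open $K$, a $K$-Lebesgue colouring with at most $d+1$ elements; feeding these into the inductive construction (with the connecting morphisms still supplied by Lemma \ref{sets to sets}) yields an anti-\v{C}ech sequence $\mathcal{A}=(\mathcal{C}_m)$ in which every colouring $\mathcal{C}_m=(G_0^{(m)},\dots,G_{d_m}^{(m)})$ has $d_m\leq d$. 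By Theorem \ref{vanish} we then have $H_n(\mathcal{C}_m)=0$ for $n>d_m$, in particular for $n>d$. As $H_*(\mathcal{A})$ is by definition the direct limit of the $H_*(\mathcal{C}_m)$ and direct limits of abelian groups are exact, $H_n(\mathcal{A})=0$ for $n>d$, and Theorem \ref{main comparison} gives $H_n(G)\cong H_n(\mathcal{A})=0$ for $n>d$.

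For the torsion-freeness of $H_d(G)$, still with $G^0$ compact, I would show that each $H_d(\mathcal{C}_m)$ is torsion-free and that this passes to the limit. By Theorem \ref{vanish}, $H_d(\mathcal{C}_m)\cong H_d(\mathcal{N}^{(m),>}_*)$, and since $\mathcal{N}^{(m),>}_n$ consists of tuples with \emph{strictly} increasing colours in $\{0,\dots,d_m\}$ it is empty for $n>d_m$; in particular $\mathcal{N}^{(m),>}_{d+1}=\varnothing$, so the top homology is just the subgroup $\ker\big(\partial\colon \Z[\mathcal{N}^{(m),>}_d]_G\to \Z[\mathcal{N}^{(m),>}_{d-1}]_G\big)$. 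It thus suffices to see $\Z[\mathcal{N}^{(m),>}_d]_G$ is torsion-free, for then so is its subgroup, and a direct limit of torsion-free groups is torsion-free (if $n\cdot x=0$ in the limit, lift $x$ to a stage, observe $n\cdot x$ vanishes at a later stage, and invoke torsion-freeness there). To identify $\Z[\mathcal{N}^{(m),>}_d]_G$ I would apply Lemma \ref{coinv}: as $G$ is principal the action on each nerve space is free, and the action is also proper, so $\Z[\mathcal{N}^{(m),>}_d]_G\cong \Z[G\backslash \mathcal{N}^{(m),>}_d]$, which is a group of compactly supported integer-valued functions and hence a subgroup of a product of copies of $\Z$, so torsion-free. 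I expect \emph{properness of the nerve action} to be the one genuinely new technical point and the main obstacle; it should follow from the description of compact subsets of $\mathcal{N}_0$ in the proof of Lemma \ref{nerve top 0}, namely that a compact set of ``balls'' $gG_i^{s(g)}$ is the image of a compact subset $\tilde C$ of $G$, so that for the translation map $(h,U)\mapsto(hU,U)$ the element $h=(hk)k^{-1}$ (with $k\in U_0$) is forced into a compact set of the form $\tilde C_1\tilde C_2^{-1}$.

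Finally I would drop the compactness assumption on $G^0$. Writing the $\sigma$-compact space $G^0$ as an increasing union $G^0=\bigcup_m K_m$ of compact open subsets, put $G|_{K_m}:=\{g\in G\mid r(g),s(g)\in K_m\}$, a clopen subgroupoid of $G$ that is principal, $\sigma$-compact, and has compact base space $K_m$, and whose dynamic asymptotic dimension is at most that of $G$ (restrict the covers witnessing the dynamic asymptotic dimension of $G$ to $K_m$). Since every composable string in $BG_*$ involves only finitely many units, $BG_n=\bigcup_m B(G|_{K_m})_n$ is an increasing union of clopen subsets, so $\Z[BG_*]=\varinjlim_m \Z[B(G|_{K_m})_*]$ as chain complexes; as homology commutes with direct limits, $H_*(G)\cong\varinjlim_m H_*(G|_{K_m})$. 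Applying the compact base space case to each $G|_{K_m}$ gives $H_n(G|_{K_m})=0$ for $n>d$ and $H_d(G|_{K_m})$ torsion-free, and passing to the limit (using exactness of direct limits and preservation of torsion-freeness, exactly as above) completes the proof.
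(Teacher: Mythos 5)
Your proposal is correct and follows essentially the same route as the paper: an anti-\v{C}ech sequence built from Lemma \ref{dad lem} with connecting maps from Lemma \ref{sets to sets}, combined with Theorems \ref{vanish} and \ref{main comparison} for the vanishing, the identification $\Z[\mathcal{N}_d^>]_G\cong\Z[G\backslash \mathcal{N}_d^>]$ from Lemma \ref{coinv} for torsion-freeness, and restriction to the clopen pieces $G|_{K_m}$ for the non-compact case (where the paper simply cites \cite[Proposition~4.7]{Farsi2019} rather than reproving the direct-limit isomorphism via $\Z[BG_*]=\varinjlim_m\Z[B(G|_{K_m})_*]$ as you do). Your explicit check of freeness (from principality, since $gU=U$ forces $r(g)=s(g)$) and properness of the nerve action --- hypotheses of Lemma \ref{coinv} that the paper leaves implicit --- is sound, and taking homology before or after the direct limit when extracting the degree-$d$ kernel makes no difference.
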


\begin{proof}
	We first prove the result under the additional assumption that $G^0$ is compact. In that case Lemma \ref{dad lem} lets us build an anti-\v{C}ech sequence $\mathcal{A}$ for $G$ consisting of colourings $(\mathcal{C}^{(n)})$ each of which only has $d+1$ colours.  As ${\displaystyle H_*(\mathcal{A})=\lim_{n\to\infty}H_*(\mathcal{C}^{(n)})}$ and as Theorem \ref{vanish} implies that $H_n(\mathcal{C}^{(m)})=0$ for all $m$ and all $n>d$, we see that $H_n(\mathcal{A})=0$ for all $n>d$.  Theorem \ref{main comparison} now gives the vanishing result.
	
	For the claim that $H_d(G)$ is torsion-free result, keep notation as above, and let us write $\Nc_*^{(n)}$ for the nerve space associated to $\mathcal{C}^{(n)}$.  Then we have
	\begin{align*}
	H_d(\mathcal{A}) & =\lim_{n\to\infty}H_d(\mathcal{C}^{(n)})= \lim_{n\to\infty}H_d(\Z[\Nc_*^{(n)}]_G)=\lim_{n\to\infty}H_d(\Z[(\Nc_*^{(n)})^>]_G) \\ & =H_d(\lim_{n\to\infty}\Z[(\Nc_*^{(n)})^>]_G),
	\end{align*}
	where the first equality is just by definition of $H_d(\mathcal{A})$, the second is by definition of $H_d(\mathcal{C}^{(n)})$, the third is Theorem \ref{vanish}, and the fourth follows as taking homology commutes with direct limits.  Now, using Lemma \ref{coinv}, $\Z[(\Nc_d^{(n)})^>]_G$ naturally identifies with $\Z[G\setminus (\Nc_d^{(n)})^>]$.   Each $\Z[G\setminus (\Nc_d^{(n)})^>]$ is clearly torsion-free\footnote{In fact, it is even free: it can be regarded as a commutative ring (with pointwise multiplication) that is generated by idempotents, and hence its underlying group is free abelian by \cite[Theorem 1.1]{Bergman:1972aa}.}. Hence the limit ${\displaystyle \lim_{n\to\infty}\Z[G\setminus (\Nc_d^{(n)})^>]}$ is torsion-free.  On the other hand, ${\displaystyle \lim_{n\to\infty}\Z[(\Nc_m^{(n)})^>]=0}$ for $m>d$.  Hence $H_d(\mathcal{A})$ identifies with a subgroup of ${\displaystyle \lim_{n\to\infty}\Z[G\setminus (\Nc_d^{(n)})^>]}$, and so is itself torsion-free.
	
	Let us now assume that $G^0$ is only locally compact. In this case use $\sigma$-compactness to write $G^0$ as an increasing union $G^0=\bigcup_{m\in\mathbb{N}} K_m$ of compact open subsets. Let $G_m:=\{g\in G\mid s(g),r(g)\in K_m\}$ denote the restriction of $G$ to $K_m$. Note that each $G_m$ is a $\sigma$-compact principal ample groupoid in its own right and it is straightforward to check from the definition (see \cite[Definition 5.1]{Guentner:2014aa}) that the dynamic asymptotic dimension of each $G_m$ is dominated by the dynamic asymptotic dimension of the ambient groupoid $G$. Then $G=\bigcup_{m\in\mathbb{N}} G_m$ and the inclusion maps $G_m\hookrightarrow G$ induce isomorphisms $H_n(G)\cong \lim\limits_{m\to\infty} H_n(G_m)$ for each $n$ (see for example \cite[Proposition~4.7]{Farsi2019}). Applying the compact unit space case from above for each $G_m$ and passing to the limit, we obtain both the vanishing and the torsion-freeness results for $G$.
\end{proof}

\section{The one-dimensional comparison map and the HK-conjecture}\label{hk sec}
In this section our first goal is to construct a canonical comparison map 
$$
\mu_1:H_1(G)\to K_1(C^*_r(G))
$$
for an arbitrary ample groupoid $G$, extending earlier constructions of Matui under additional restrictions on the structure of $G$ (see \cite[Corollary~7.15]{Matui2012} and \cite[Theorem~5.2]{Matui2015}).  In fact we will provide two different constructions.  The first approach is based on relative homology and $K$-theory groups, and is quite explicit and elementary; this is carried out in Subsection \ref{one d}.  The second (suggested by a referee) has its origins in the  triangulated category perspective on groupoid equivariant Kasparov theory as recently exploited by Proietti and Yamashita \cite[Corollary 4.4]{Proietti:2021wz}; it is carried out in Subsection \ref{pyss sec}.  The two approaches turn out to be equivalent, but this requires some fairly lengthy computations; these are carried out in Subsection \ref{id comp sec}.  

In the remaining Subsection \ref{asdim}, we apply these results and our vanishing results from Theorem \ref{main vanishing} to deduce a general theorem on validity of the HK conjecture for low-dimensional groupoids.  The result is actually more precise than this: it shows not only that the HK conjecture is true, but identifies the isomorphisms with the comparison maps we have constructed.

\subsection{Comparison maps from relative $K$-theory}\label{one d}

In this subsection, we give an elementary construction of a map $\mu_1:H_1(G)\to K_1(C^*_r(G))$ that works for any ample groupoid.  This is based on two ingredients: a suitable description of the $K$-theory of the mapping cone of the inclusion $\iota:C_0(G^0)\hookrightarrow C_r^*(G)$, and a relative version of groupoid homology with respect to an open subgroupoid.

Let us start with our description of the $K_0$-group of a mapping cone, which is inspired by results of Putnam in \cite{Putnam1997}.  Recently, Haslehurst has independently developed a similar description to ours \cite{Haslehurst}.  Neither Putnam's nor Haslehurst's constructions are quite the same as ours, but the latter in particular has substantial overlap.  We will thus omit some details below that can be filled in using techniques from Haslehurst's paper \cite[Section 3]{Haslehurst}; full details can also be found in the first version of the current paper on the arXiv.

For a $C^*$-algebra (or Hilbert module) $B$, define $IB:=C([0,1],B)$, and recall that the \emph{mapping cone} of a $*$-homomorphism $\phi:A\to B$ is
\begin{equation}\label{c phi}
C(\phi):=\{(a_0,b)\in A\oplus IB\mid \phi(a_0)=b(0),~b(1)=0\}.
\end{equation}

\begin{definition}
Given a $\ast$-homomorphism of $C^\ast$-algebras $\phi \colon A \to B$, we define the \emph{relative $K$-theory groups} with respect to $\phi$ by
\begin{equation}
K_\ast (A, B; \phi) := K_\ast(C(\phi)).
\end{equation} 
\end{definition}

The first coordinate projection $e^0:A\oplus IB\to A$ restricts to a surjection $C(\phi) \to A$ with kernel $\Sigma B := C_0((0,1), B)$. This induces a six-term exact sequence in $K$-theory
\begin{equation}\label{eq:relsixterm}
\xymatrix{
	K_1(B) \ar[r] & K_0(A,B;\phi) \ar[r] & K_0(A) \ar[d]^{\phi_0} \\
	K_1(A) \ar[u]^{\phi_1} & K_1(A,B;\phi) \ar[l] & \ar[l] K_0(B).
}
\end{equation}

Our first aim is to give a different picture of the relative $K_0$-group.  
In what follows, when $\phi \colon A \to B$ is a $\ast$-homomorphism, we abuse notation by letting $\phi$ also denote the induced homomorphism $M_n(A) \to M_n(B)$ for $n\in \mathbb N$.

\begin{definition}
Let $\phi \colon A \to B$ be a $\ast$-homomorphism between $C^\ast$-algebras.  For $n\in \mathbb N$, let $V_n(A, B; \phi)$ denote the set of triples $(p, v, q)$ where $p, q\in M_n(A)$ are projections and $v\in M_n(B)$ satisfies $vv^\ast = \phi(p)$, and $v^\ast v = \phi(q)$. For each $n$, regard $V_n(A,B;\phi)$ as a subset of $V_{n+1}(A,B;\phi)$ via the usual top-left corner inclusion, and define
$$
V_\infty(A, B; \phi) := \bigcup_{n\in \mathbb N}V_n(A, B;\phi),
$$
equipped with the inductive limit topology.  Let $\sim_{\mathrm h}$ denote homotopy in $V_\infty(A, B; \phi)$,\footnote{A homotopy in $V_\infty(A, B; \phi)$ is automatically contained in $V_n(A, B; \phi)$ for some $n\in \mathbb N$, so one can also take this as the definition of homotopy.} and let $\approx$ be the equivalence relation on $V_\infty(A, B; \phi)$ defined as follows: $(p,v,q) \approx (p', v', q')$ exactly when there are projections $r, r' \in M_n(A)$ such that
\begin{equation}
(p, v , q) \oplus (r, \phi(r) , r) \sim_{\mathrm{h}} (p',v',q') \oplus (r', \phi(r') , r').
\end{equation}
In the special case where $\phi$ is an inclusion of a $C^\ast$-subalgebra, we simply write $V_\infty(A,B)$ instead of $V_\infty(A,B; \phi)$. Moreover, if $(p,v,q)\in V_\infty(A,B)$ then $p=vv^\ast$ and $q=v^\ast v$. Hence we will simply denote the elements of $V_\infty(A,B)$ by $v$.

We equip $V_\infty(A, B; \phi)/\!\!\approx$ with the block sum operation defined by
$$
[p_1,v_1,q_1]\oplus [p_2,v_2,q_2]:=\Bigg[\begin{pmatrix} p_1 & 0 \\ 0 & p_2 \end{pmatrix}~,~\begin{pmatrix} v_1 & 0 \\ 0 & v_2 \end{pmatrix}~,~\begin{pmatrix} q_1 & 0 \\ 0 & q_2 \end{pmatrix}\Bigg];
$$
it is straightforward to check that this makes $V_\infty(A, B; \phi)/\!\!\approx$ into a well-defined monoid with identity $[0,0,0]$.
\end{definition}

Our goal is to construct an isomorphism $\eta: K_*(A,B;\phi)\to V_\infty(A, B; \phi)/\!\!\approx$ when $\phi$ is non-degenerate, and $A$ has an approximate unit of projections; we will apply our construction to the canonical $*$-homomorphism $\iota:C_0(G^0)\to C^*_r(G)$ for an ample groupoid $G$, so these assumptions are satisfied.  For this, we need an ancillary construction.

Let $\phi \colon A \to B$ be a $\ast$-homomorphism and define the \emph{double} of $\phi$ to be 
\begin{equation}\label{d phi}
D(\phi):=\{(a_0,b,a_1)\in A\oplus IB\oplus A\mid \phi(a_0)=b(0)\text{ and }\phi(a_1)=b(1)\}.
\end{equation}
With $C(\phi)$ as in line \eqref{c phi}, we obtain a short exact sequence
\begin{equation}\label{c d ses}
\xymatrix{ 0 \ar[r] & C(\phi) \ar[r]^{\iota} & D(\phi) \ar[r]^-{e^1} & A \ar[r] & 0 }
\end{equation}
with morphisms defined by $\iota:(a_0,b)\mapsto (a_0,b,0)$ and $e^1:(a_0,b,a_1)\mapsto a_1$.  Let $\mathsf c \colon B \to IB$ denote the constant $\ast$-homomorphism.  Then the short exact sequence in line \eqref{c d ses} is split by the map $s:a\mapsto (a,\mathsf{c}(\phi(a)),a)$.   Hence the composition
\begin{equation}\label{eq:relisogen}
K_\ast(A, B;\phi) \xrightarrow{\iota_\ast} K_\ast(D(\phi)) \twoheadrightarrow K_\ast(D(\phi))/s_\ast(K_\ast(A))
\end{equation}
is an isomorphism. 

We let $U_n(A)$ denote the unitary group of $M_n(A)$ for a unital $C^\ast$-algebra $A$, and let $V(A)$ denote the the semigroup of equivalence classes of projections in $\bigcup_{n\in \mathbb N} M_n(A)$, so that $K_0(A)$ is the Grothendieck group of $V(A)$ whenever $A$ is unital, or more generally, when $A$ has an approximate unit consisting of projections.

Assume now that $A$ is a $C^*$-algebra with an approximate unit consisting of projections, and let $\phi:A\to B$ be a non-degenerate $*$-homomorphism; note then that $B$ and so $D(\phi)$ also have approximate units consisting of projections, so in particular $K_0(D(\phi))$ is the Grothendieck group of  $V(D(\phi))$.  Let $(p_0,\mathsf p, p_1)\in M_n(D(\phi))$ be a projection representing a $K$-theory class.  Using for example \cite[Corollary 4.1.8]{Higson:2000bs}, there is a continuous path of unitaries $(u_t)_{t\in [0,1]}$ in $M_n(B)$ such that 
\[
\mathsf p_t = u_t^\ast \mathsf p_0 u_t = u_t^\ast \phi(p_0) u_t, \quad \textrm{for } t\in [0,1]\quad \textrm{and} \quad u_0 = 1.
\]
We define $\eta:V(D(\phi))\to V_\infty(A,B;\phi)$ by the formula
\begin{equation}\label{etadef}
\eta(p_0,\mathsf p, p_1):=(p_0, \phi(p_0) u_1, p_1).
\end{equation}

Here then is our promised picture of relative $K$-theory.  Due to similarities to Haslehurst's methods \cite{Haslehurst}, we do not give a proof here: the interested reader can find a complete proof in the original arXiv version of this paper.

\begin{theorem}\label{p:relKthunital}
Let $\phi:A\to B$ be a non-degenerate $*$-homomorphism, where $A$ has an approximate unit consisting of projections.  

With notation as in line \eqref{eq:relisogen}, the map $\eta$ from line \eqref{etadef} above descends to a well-defined monoid isomorphism
$$
\eta:\frac{K_0(D(\phi))}{s_0(K_0(A))}\to \frac{V_\infty(A, B; \phi)}{\approx}.
$$
In particular, $V_\infty(A, B; \phi)/\!\!\approx$ is an abelian group that is isomorphic to $K_0(A,B;\phi)$.  

Moreover, the six term exact sequence from line \eqref{eq:relsixterm} identifies with an exact sequence
$$
K_1(A) \xrightarrow{\phi_1} K_1(B) \to \frac{V_\infty(A, B; \phi)}{\approx} \to K_0(A) \xrightarrow{\phi_0} K_0(B)
$$
where the map $K_1(B) \to \frac{V_\infty(A, B; \phi)}{\approx}$ is given by $[u]_1 \mapsto [p\otimes 1_{M_n} ,u,p \otimes 1_{M_n}]$ for $u\in U_n(\phi(p)B\phi(p))$ (where $p\in A$ is a projection); and $\frac{V_\infty(A, B; \phi)}{\approx} \to K_0(A)$ is given by $[p,v,q] \mapsto [p]_0- [q]_0$.  \qed
\end{theorem}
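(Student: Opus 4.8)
The plan is to exploit the split short exact sequence in line \eqref{c d ses}, which already yields the isomorphism \eqref{eq:relisogen}, so that it suffices to upgrade the formula \eqref{etadef} into a monoid isomorphism between $K_0(D(\phi))/s_0(K_0(A))$ and $V_\infty(A,B;\phi)/\!\!\approx$. First I would check that \eqref{etadef} gives a well-defined monoid homomorphism $\eta\colon V(D(\phi))\to V_\infty(A,B;\phi)/\!\!\approx$ which vanishes on $s(V(A))$. Vanishing is immediate: for a projection $a\in A$ the constant path $u_t=1$ implements $s(a)=(a,\mathsf c(\phi(a)),a)$, so $\eta(s(a))=(a,\phi(a),a)$ is one of the degenerate triples $(r,\phi(r),r)$ quotiented out by $\approx$. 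Additivity follows from using the block-diagonal unitary paths $u_t\oplus u_t'$.

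The first genuine check is independence of the chosen unitary path. If $(u_t)$ and $(u_t')$ both start at $1$ and satisfy $\mathsf p_t=u_t^\ast\phi(p_0)u_t=(u_t')^\ast\phi(p_0)u_t'$, then $z_t:=u_t'u_t^\ast$ is a path of unitaries with $z_0=1$ which, as one computes directly by multiplying the two conjugation identities on the left by $u_t'$ and on the right by $u_t^\ast$, commutes with $\phi(p_0)$ for every $t$. Consequently $v_t:=z_t\phi(p_0)u_1$ is a homotopy inside $V_n$ (one verifies $v_tv_t^\ast=\phi(p_0)$ and $v_t^\ast v_t=\phi(p_1)$ using that $z_t$ commutes with $\phi(p_0)$) connecting the two values of $\eta$. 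Homotopy invariance of $\eta$ on $V(D(\phi))$ is then the parametrized version of this: a homotopy of projections $(p_0^s,\mathsf p^s,p_1^s)$ admits, by a parametrized application of \cite[Corollary 4.1.8]{Higson:2000bs}, a jointly continuous family of unitary paths, and the resulting family of triples is a homotopy in $V_n$, hence an $\approx$-equivalence.

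To obtain the inverse, and simultaneously the group structure, surjectivity, and injectivity, I would construct a map $\theta\colon V_\infty/\!\!\approx\,\to K_0(D(\phi))/s_0(K_0(A))$ directly via a rotation homotopy. Given $(p,v,q)\in V_n$, the family
\[
\mathsf p_t=\begin{pmatrix}\cos^2(t)\,\phi(p) & \cos(t)\sin(t)\,v \\ \cos(t)\sin(t)\,v^\ast & \sin^2(t)\,\phi(q)\end{pmatrix},\qquad t\in[0,\tfrac{\pi}{2}],
\]
is a norm-continuous path of projections in $M_{2n}(B)$ from $\phi(\mathrm{diag}(p,0))$ to $\phi(\mathrm{diag}(0,q))$ (the projection identities reduce to $\phi(p)v=v=v\phi(q)$). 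Hence $(\mathrm{diag}(p,0),\mathsf p,\mathrm{diag}(0,q))$ is a projection in $M_{2n}(D(\phi))$, and I set $\theta(p,v,q)$ to be its class modulo $s_0(K_0(A))$. Verifying that $\theta$ respects $\approx$ and is additive is routine; the substantive computations are $\eta\circ\theta=\mathrm{id}$ and $\theta\circ\eta=\mathrm{id}$. For the former, $\mathsf p_t$ is implemented by the explicit rotation unitary (in the unitization of $M_{2n}(B)$) built from $v$, and feeding its endpoint into \eqref{etadef} returns $(p,v,q)$ after absorbing a corner permutation (and a sign) into $\approx$. Once $\theta$ is seen to be a two-sided inverse, $V_\infty/\!\!\approx$ is a group, $\eta$ extends over $K_0(D(\phi))$ by the universal property of the Grothendieck group, descends modulo $s_0(K_0(A))$, and the ``in particular'' statement follows from \eqref{eq:relisogen}.

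Finally, to identify the six-term sequence I would trace the two middle maps of \eqref{eq:relsixterm} through \eqref{eq:relisogen} and $\eta$. The map to $K_0(A)$ is induced by $e^0$ on $C(\phi)$; since $e^1\circ\iota=0$ and $e^0\circ\iota=e^0$, on the image of $\iota_\ast$ it agrees with $(e^0)_\ast-(e^1)_\ast$, which sends the class of $(p,v,q)$ to $[p]_0-[q]_0$ because $vv^\ast=\phi(p)$ and $v^\ast v=\phi(q)$ force $p_0=p$ and $p_1=q$. The map out of $K_1(B)\cong K_0(\Sigma B)$ is induced by the inclusion $\Sigma B\hookrightarrow C(\phi)$; unwinding the suspension isomorphism on a unitary $u\in U_n(\phi(p)B\phi(p))$ produces a loop of projections whose image under $\eta$ is exactly $[p\otimes 1_{M_n},u,p\otimes 1_{M_n}]$. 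I expect the main obstacle to be the inverse construction of the third paragraph: controlling simultaneously the compatibility of the rotation homotopy with $\approx$, the matrix amplifications and corner permutations, and the parametrized choice of implementing unitaries, so as to confirm that $\theta$ really is two-sided inverse to $\eta$ — precisely the package of technicalities handled in \cite[Section 3]{Haslehurst}.
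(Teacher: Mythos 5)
Your proposal is correct and takes essentially the same route as the paper's own (deliberately omitted) proof: the paper defers exactly these details to Putnam--Haslehurst-style arguments (\cite[Section 3]{Haslehurst} and the arXiv version of the paper), and those proceed just as you do, via the split sequence for the double $D(\phi)$, path-independence of $\eta$ through the commutation $z_t\phi(p_0)z_t^\ast=\phi(p_0)$, a parametrized lifting argument for homotopy invariance, and the rotation-homotopy inverse $\theta$, with the six-term maps traced through $e^0$ and the suspension. The one wrinkle is logical order: to extend $\eta$ over the Grothendieck group $K_0(D(\phi))$ you need $V_\infty(A,B;\phi)/\!\!\approx$ to be a group \emph{before} $\theta$ is available, which follows in one line from Remark \ref{vab rem}, part \eqref{sp}, since $[p,v,q]+[q,v^\ast,p]=[p,vv^\ast,p]=[p,\phi(p),p]=0$, rather than deriving groupness circularly from $\theta$ being a two-sided inverse.
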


\begin{rem}\label{vab rem}
We note the following basic properties of cycles $(p_1,v_1,q_1)$ and $(p_2,v_2,q_2)$ in $V_\infty(A,B;\phi)$; these can be justified using the same sort of rotation homotopies that establish the analogous properties for the classical $K_0$ and $K_1$ groups.
\begin{enumerate}[(i)]
\item \label{op} If $p_1p_2=q_1q_2=0$, then in $V_\infty(A, B; \phi)/\!\!\approx$ we have that 
$$
[p_1,v_1,q_1]+[p_2,v_2,q_2]=[p_1+q_1,v_1+v_2,p_2+q_2].
$$
\item \label{sp} If $p_2=q_1$, then 
$$
[p_1,v_1,q_1]+[p_2,v_2,q_2]=[p_1,v_1v_2,p_2].
$$
\end{enumerate}
\end{rem}

Let us now turn to the second ingredient in the construction of the comparison map $\mu_1:H_1(G)\rightarrow K_1(C_r^*(G))$: relative groupoid homology. To define it, we return to Matui's picture of groupoid homology as it allows for an elementary description.
For an open subgroupoid $H\subseteq G$ the canonical inclusion induces for each $n\geq 0$ a short exact sequence of abelian groups
\begin{equation}\label{SES relative homology}
0\rightarrow \Z[H^{(n)}]\rightarrow \Z[G^{(n)}] \rightarrow \Z[G^{(n)}]/\Z[H^{(n)}]\rightarrow 0.
\end{equation}

Let $\Z_n[G,H]$ denote the quotient group.  One easily checks that $\Z[H^{(n)}]$ is invariant under the boundary maps $\partial_n$ for $\Z[G^{(n)}]$, and hence we obtain induced maps $\partial_n'$ turning $(\Z[G,H],\partial'_n)$ into a chain complex such that the sequence (\ref{SES relative homology}) is an exact sequence of chain complexes.
\begin{defi} 
Let $G$ be an ample groupoid.
The \textit{relative homology} of $G$ with respect to an open subgroupoid $H$ is defined as the homology of the chain complex $(\Z[G,H],\partial'_n)$, i.e.\
$$H_n(G,H):=\ker (\partial_n')/\mathrm{im}(\partial_{n+1}').$$
\end{defi}

From the short exact sequence (\ref{SES relative homology}) we obtain a long exact sequence of homology groups
$$
\cdots\rightarrow H_1(H) \rightarrow  H_1(G)\rightarrow H_1(G,H)\rightarrow H_0(H)\rightarrow H_0(G)\rightarrow H_0(G,H)
$$

Important for us is the special case where $H=G^0$: there one easily checks that $H_0(G,G^0)=0$, and $H_1(G,G^0)\cong \ZZ[G]/\mathrm{im}(\partial_2)$.  We are now ready to put everything together and construct the map $\mu_1:H_1(G)\rightarrow K_1(C_r^*(G))$.

\begin{lemma}\label{Lemma:ConstructionH1}
Let $G$ be an ample groupoid. Then there exists a well-defined homomorphism 
$$
\tilde{\rho}: \ZZ[G] \rightarrow V_\infty(C_0(G^0),C_r^*(G))/\!\!\approx
$$
determined by $\tilde{\rho}(1_W)=[1_W^*]$ and $\tilde{\rho}(-1_W)=[1_W]$ for every compact open bisection $W\subseteq G$.
	
Moreover, $im(\partial_2)\subseteq \ker(\tilde{\rho})$ and hence $\tilde{\rho}$ factors through a well-defined homomorphism
$$
\rho:H_1(G,G^0)\cong \ZZ[G] /im(\partial_2)\rightarrow V_\infty(C(G^0),C_r^*(G))/\!\!\approx.
$$	
\end{lemma}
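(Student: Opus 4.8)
The plan is to build $\tilde\rho$ by first defining it on generators and then checking it descends. Since $\ZZ[G]=C_c(G,\Z)$ is generated as an abelian group by the characteristic functions $1_W$ of compact open bisections $W$, and the target $V_\infty(C_0(G^0),C_r^*(G))/\!\!\approx$ is abelian, there is a unique homomorphism from the free abelian group on symbols $[W]$ sending $[W]\mapsto[1_W^*]$. Here $1_W^*=1_{W^{-1}}$ is a partial isometry in $C_r^*(G)$ with $1_W^*(1_W^*)^*=1_{s(W)}$ and $(1_W^*)^*1_W^*=1_{r(W)}$, both projections in $C_0(G^0)$, so $[1_W^*]$ is a legitimate class in $V_\infty(C_0(G^0),C_r^*(G))/\!\!\approx$. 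Everything then reduces to verifying that this assignment annihilates the relations defining $\ZZ[G]$.

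The key technical input is that the relations among the $1_W$ are all consequences of additivity over partitions: if a compact open bisection $W$ is written as a finite disjoint union $W=\bigsqcup_j W_j$ of compact open bisections, then $1_W=\sum_j 1_{W_j}$. I would first record that $\tilde\rho$ is compatible with such partitions, i.e. that $[1_W^*]=\sum_j[1_{W_j}^*]$. This follows from Remark \ref{vab rem}(i): the partial isometries $1_{W_j}^*$ have pairwise orthogonal range projections $1_{s(W_j)}$ and pairwise orthogonal source projections $1_{r(W_j)}$ (because $s|_W$ and $r|_W$ are injective), and their sum is $1_{W^{-1}}=1_W^*$, so iterating Remark \ref{vab rem}(i) gives the identity.

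To conclude well-definedness, given any relation $\sum_i n_i 1_{W_i}=0$ in $\ZZ[G]$, I would pass to a common refinement: the finite Boolean algebra of subsets of $G$ generated by $W_1,\dots,W_k$ has atoms $V_1,\dots,V_m$ which are pairwise disjoint compact open bisections with each $W_i=\bigsqcup_{l\in S_i}V_l$ for some $S_i$. Writing $\sum_i n_i 1_{W_i}=\sum_l c_l 1_{V_l}$ with $c_l=\sum_{i:\,l\in S_i}n_i$, disjointness of the $V_l$ forces every $c_l=0$. Combining this with the partition compatibility from the previous step yields $\sum_i n_i[1_{W_i}^*]=\sum_l c_l[1_{V_l}^*]=0$, so $\tilde\rho$ is well defined. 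The consistency of the prescribed value $\tilde\rho(-1_W)=[1_W]$ is then automatic, since $[1_W^*]+[1_W]=[1_{s(W)}]=0$ by Remark \ref{vab rem}(ii).

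For the second assertion I would use that $\mathrm{im}(\partial_2)$ is generated by the elements $1_U+1_V-1_{UV}$ with $U,V$ composable compact open bisections satisfying $s(U)=r(V)$; these arise by applying $\partial_2=(\partial_0)_*-(\partial_1)_*+(\partial_2)_*$ to the characteristic function of the bisection $\{(u,v)\mid u\in U,\,v\in V,\,s(u)=r(v)\}\subseteq G^{(2)}$, whose face maps send $(g_1,g_2)$ to $g_2$, $g_1g_2$ and $g_1$, and the general composable set reduces to this case by refinement and additivity. It then suffices to check $[1_U^*]+[1_V^*]=[1_{UV}^*]$. Since $r(V)=s(U)$, the source projection of $1_V^*$ equals the range projection of $1_U^*$, so Remark \ref{vab rem}(ii) together with commutativity of the group gives $[1_U^*]+[1_V^*]=[1_V^*]+[1_U^*]=[1_V^*1_U^*]=[(1_U1_V)^*]=[1_{UV}^*]$, using $1_U1_V=1_{UV}$. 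Hence $\tilde\rho(1_U+1_V-1_{UV})=0$ and $\mathrm{im}(\partial_2)\subseteq\ker\tilde\rho$, so $\tilde\rho$ descends to $\rho$ on $H_1(G,G^0)\cong\ZZ[G]/\mathrm{im}(\partial_2)$. The step I expect to require the most care is the bookkeeping of source and range projections and the \emph{order} of composition in the applications of Remark \ref{vab rem}(ii): the relation is phrased with $s(U)=r(V)$, whereas the splicing formula needs the source of one factor to match the range of the next, which forces the swap via commutativity; a secondary point needing justification is that the partition relations really generate all relations in $\ZZ[G]$ and that the refinement atoms are genuinely compact open bisections.
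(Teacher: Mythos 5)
Your proposal is correct and follows essentially the same route as the paper: both arguments reduce a general element of $\ZZ[G]$ to a $\Z$-linear combination over pairwise disjoint compact open bisections, prove independence of the decomposition via a common refinement (your Boolean-algebra atoms are exactly the paper's refinement), use Remark \ref{vab rem}(i) for additivity over partitions, and kill $\mathrm{im}(\partial_2)$ on the generators $1_U+1_V-1_{UV}$ with $s(U)=r(V)$ via the splicing identity of Remark \ref{vab rem}(ii). Your packaging through the free abelian group on bisections, the explicit verification that $\tilde\rho(-1_W)=[1_W]$ is forced by $[1_W^*]+[1_W]=0$, and your attention to the order swap needed before applying Remark \ref{vab rem}(ii) are minor presentational refinements of the same proof.
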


\begin{proof}
As $G$ is an ample groupoid, every function $f\in \ZZ[G]$ can be written (non-uniquely) as a linear combination $f=\sum \lambda_i 1_{W_i}$ where $\lambda_1,\ldots,\lambda_n\in \ZZ$ and $W_1,\ldots, W_n$ are pairwise disjoint compact open bisections of $G$.
We have to show that the resulting class of the partial isometry 
$$
\bigoplus_{\lbrace i\mid \lambda_i\geq 0\rbrace} \lambda_i 1_{W_i}\oplus \bigoplus_{\lbrace i\mid \lambda_i < 0\rbrace} \abs{\lambda_i}1_{W_i^{-1}}
$$
 in $V_\infty(C_0(G^0),C_r^*(G))/\!\!\approx$ only depends on $f$ and not on the particular choice of the bisections $W_i$.  Let us first note the following: If $W$ is a compact open bisection of $G$ such that $W=U\sqcup V$ for compact open subsets $U,V\subseteq W$, then $1_W=1_U+1_V\approx 1_V\oplus 1_U$ by Remark \ref{vab rem}, part \eqref{op}.

With this in mind we can establish the lemma:
Let $f\in \ZZ[G]$. We may assume that $f\geq 0$. Suppose we have $f=\sum \lambda_i 1_{U_i}=\sum \mu_j 1_{V_j}$ for $\lambda_i,\mu_j\in \NN$ and two families $(U_i)_i$ and $(V_j)_j$  of pairwise disjoint compact open bisections of $G$. Since $G$ is ample we may choose a common refinement of these two families by compact open bisections $(W_k)_k$. Let $\eta_k$ be equal to $\lambda_i$ if $W_k\subseteq U_i$ and equal to $\mu_j$ if $W_k\subseteq V_j$.
Then we have 
$$
\bigoplus_k \eta_k 1_{W_k}\approx\bigoplus_j\bigoplus_{\lbrace k\mid W_k\subseteq V_j\rbrace}\mu_j 1_{W_k}\approx\bigoplus_j \mu_j\bigoplus_{\lbrace k\mid W_k\subseteq V_j\rbrace} 1_{W_k}\approx \bigoplus_j \mu_j 1_{V_j}.
$$
Similarly, we obtain $\bigoplus_k \eta_k 1_{W_k}\approx \bigoplus_i \lambda_i1_{U_i}$, from which we conclude that $\tilde{\rho}$ is indeed well-defined. It is a group homomorphism by construction.
	
For the second part let $U$ and $V$ be compact open bisections of $G$ such that $s(U)=r(V)$. Then we have $\tilde{\rho}(\partial_2(1_{(U\times V)\cap G^{(2)}}))=\tilde{\rho}(1_U-1_{UV}+1_V)=0$ since $1_U\oplus 1_V\approx 1_U1_V=1_{UV}$ by Remark \ref{vab rem}, part \eqref{sp}.
\end{proof}

\begin{theorem}\label{mu1 exist} 
Let $G$ be an ample groupoid. Then there exists a canonical homomorphism $$\mu_1:H_1(G)\rightarrow K_1(C_r^*(G)).$$
\end{theorem}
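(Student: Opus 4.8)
The plan is to exhibit $H_1(G)$ as the subgroup $\ker\partial$ of the relative homology $H_1(G,G^0)$ on which the map $\rho$ of Lemma \ref{Lemma:ConstructionH1} takes values in the copy of $K_1(C_r^*(G))$ sitting inside $V_\infty(C_0(G^0),C_r^*(G))/\!\!\approx$. I would begin by lining up the two exact sequences to be compared. On the homology side the pair $(G,G^0)$ yields
$$
H_1(G^0)\to H_1(G)\to H_1(G,G^0)\xrightarrow{\partial} H_0(G^0)\to H_0(G)\to H_0(G,G^0).
$$
Since $G^0$ carries only units one has $H_n(G^0)=0$ for $n\geq 1$ and $H_0(G^0)=\Z[G^0]$; combined with $H_0(G,G^0)=0$ this shows the natural map $H_1(G)\to H_1(G,G^0)$ is injective with image exactly $\ker\partial$. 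On the $K$-theory side, taking $A=C_0(G^0)$, $B=C_r^*(G)$ and $\phi=\iota$ the inclusion, Theorem \ref{p:relKthunital} identifies $V_\infty(A,B;\iota)/\!\!\approx$ with $K_0(A,B;\iota)$ and gives the exact sequence
$$
K_1(A)\xrightarrow{\iota_1}K_1(B)\to \frac{V_\infty(A,B;\iota)}{\approx}\xrightarrow{\partial_K}K_0(A),\qquad \partial_K[p,v,q]=[p]_0-[q]_0.
$$
Because $G^0$ is totally disconnected, $K_1(C_0(G^0))=0$, so $K_1(C_r^*(G))$ injects into $V_\infty/\!\!\approx$ with image precisely $\ker\partial_K$.

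The crux is to show that $\rho$ maps $\ker\partial\subseteq H_1(G,G^0)$ into $\ker\partial_K$; equivalently, that the composite $\partial_K\circ\rho$ and the connecting map $\partial$ agree up to an overall sign. For this I would fix the canonical identification $K_0(C_0(G^0))\cong \Z[G^0]=H_0(G^0)$ sending $[\chi_E]_0$ to $\chi_E$ for each compact open $E\subseteq G^0$, and then check equality on the generators $[1_W]$, $W$ a compact open bisection. On one side, $\rho([1_W])=[1_W^\ast]=[1_{s(W)},1_W^\ast,1_{r(W)}]$, using $1_W^\ast 1_W=1_{s(W)}$ and $1_W 1_W^\ast=1_{r(W)}$, so that $\partial_K(\rho([1_W]))=[1_{s(W)}]_0-[1_{r(W)}]_0$ corresponds to $\chi_{s(W)}-\chi_{r(W)}$. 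On the other side, $\partial([1_W])$ is computed by lifting to $\Z[G]$ and applying $\partial_1$, which produces $\pm(\chi_{s(W)}-\chi_{r(W)})$. Hence $\partial_K\circ\rho=\pm\partial$, and in particular the two homomorphisms have the same kernel.

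Granting this, the construction is formal: for $c\in H_1(G)=\ker\partial$ we get $\partial_K(\rho(c))=\pm\partial(c)=0$, so $\rho(c)\in\ker\partial_K$, which is the image of the injection $K_1(C_r^*(G))\hookrightarrow V_\infty/\!\!\approx$; I would then define $\mu_1(c)$ to be its unique preimage. This is visibly a group homomorphism, being a composite of homomorphisms. I expect the main obstacle to be the bookkeeping in the crux step: tracing $\rho$ through the isomorphism $\eta$ of Theorem \ref{p:relKthunital} to confirm that $[1_W^\ast]$ really is represented by the triple $[1_{s(W)},1_W^\ast,1_{r(W)}]$, and carefully pinning down the identification $K_0(C_0(G^0))\cong H_0(G^0)$ together with the sign convention for $\partial_1$, so that $\partial_K\circ\rho$ and $\partial$ are seen to differ by at most a global sign (which is harmless, as it does not affect kernels).
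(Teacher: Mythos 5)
Your proposal is correct and takes essentially the same route as the paper: the paper's proof assembles the long exact homology sequence of the pair $(G,G^0)$, the exact sequence from Theorem \ref{p:relKthunital} (using $K_1(C_0(G^0))=0$), and the map $\rho$ of Lemma \ref{Lemma:ConstructionH1} into the commutative diagram \eqref{Equation:CommutativeDiagram} and obtains $\mu_1$ as the unique dashed fill-in, which is precisely your kernel-to-kernel lifting. Your verification on generators $[1_W]$ that $\partial_K\circ\rho=\pm\partial$ is exactly the commutativity of the middle square that the paper asserts, so the two arguments coincide.
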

\begin{proof}
Let $\iota:C_0(G^0)\rightarrow C_r^*(G)$ denote the canonical inclusion, which is non-degenerate. As $G$ is ample, $C_0(G^0)$ contains an approximate identity of projections. By Theorem \ref{p:relKthunital} and Lemma \ref{Lemma:ConstructionH1} we obtain a commutative diagram with exact rows
\begin{equation}\label{Equation:CommutativeDiagram}
\begin{tikzcd}[column sep=small]
0 \arrow[r] & H_1(G) \arrow[r] \arrow[d, "\mu_1", dashed] & H_1(G,G^0) \arrow[r] \arrow[d, "\rho"] & H_0(G^0) \arrow[r] \arrow[d, "\cong"] & H_0(G) \arrow[d, "\mu_0"] \\
0 \arrow[r] & K_1(C_r^*(G)) \arrow[r]                     & V_\infty(C_0(G^0), C^\ast_r(G))/\!\!\approx \arrow[r]                                         & K_0(C_0(G^0)) \arrow[r, "\iota_0"]                            & K_0(C_r^*(G))            
\end{tikzcd},
\end{equation}
where the top row is part of the long exact sequence for the pair $(G,G^0)$.  By exactness, there exists a unique homomorphism $\mu_1:H_1(G)\rightarrow K_1(C_r^*(G))$ filling in the dashed arrow such that the diagram commutes.
\end{proof}

\subsection{Comparison maps from the ABC spectral sequence}\label{pyss sec}

In this subsection we recall the spectral sequence constructed by Proietti and Yamashita in \cite[Corollary 4.4]{Proietti:2021wz} (which is in turn based on the ABC spectral sequence of Meyer \cite{Meyer2008}), and show that this gives rise to canonical comparison maps $\mu_k:H_k(G)\to K_k(C^*_r(G))$ for $k\in \{0,1\}$.  This approach to the construction of comparison maps was suggested by the referee.

Throughout this section, $G$ is a second countable ample groupoid.  We will need to work extensively with $G$-$C^*$-algebras and the $G$-equivariant Kasparov category $KK^G$: see \cite{Le-Gall:1999aa} for background.  We will also need to treat $KK^G$ as a triangulated category as described in \cite[Section A.4]{Proietti:2021wz}.  For background on the material we will need about triangulated categories and homological ideals and functors, see \cite{Meyer:2010vt}.  

Let $\mathcal{I}$ be the homological ideal (see \cite[Definition 2.20 and Remark 2.21]{Meyer:2010vt}) in $KK^G$ defined as the kernel of the restriction functor $F:=\text{Res}_{G^0}^G:KK^G\to KK^{G^0}$.  If we define also $E:=\text{Ind}_{G^0}^G:KK^{G^0}\to KK^G$ (see \cite[Section 2.1]{BP22} for a detailed treatment of this), then $(E,F)$ form an adjoint pair as established in \cite[Section 6]{B20} (see also \cite[Theorem 2.3]{BP22}). Define $L:=E\circ F:KK^G\to KK^G$, and for an object $B$ of $KK^G$, let $\epsilon_B\in KK^G(L(B),B)$ be the counit of adjunction, which is computed explicitly in \cite[Theorem 2.3]{BP22}.

Now, as a consequence of \cite[Proposition 3.1]{Proietti:2021wz} we see there is an (even) $\mathcal{I}$-projective resolution in the sense of \cite[Definition 2.14]{Proietti:2021wz} of the object $A=C_0(G^0)$ in $KK^G$
\begin{equation}\label{pr}
\xymatrix{ A & \ar[l]^-{\delta_0} P_0 & \ar[l]^-{\delta_1} P_1 &  P_2 \ar[l]^-{\delta_2} &\ar[l]^-{\delta_3} \cdots},
\end{equation}
where for each $p\in \N$, $P_p:=L^{p+1}(A)$ and 
\begin{equation}\label{boundary}
\delta_p:=\sum_{i=0}^p (-1)^iL^i(\epsilon_{L^{n-i}(A)}).
\end{equation}
Moreover, one computes from the explicit description of $\text{Ind}_{G^0}^G$ in \cite[Section 2.1]{BP22} that $L^{p+1}(A)$ identifies canonically with $C_0(G^{(p+1)})$, where we recall that 
$$
G^{(p)}:=\{(g_1,...,g_p)\in G^p\mid s(g_i)=r(g_{i+1}) \text{ for all }i\in \{1,...,n-1\}\}
$$
and we write $G^{(0)}$ and $G^0$ interchangeably.  Here and throughout, we equip $G^{(p)}$ with the left $G$-action induced by 
\begin{equation}\label{gp act}
g:(h_1,h_2,...,h_p)\mapsto (gh_1,h_2,...,h_p)
\end{equation}
and $C_0(G^{(p)})$ with the corresponding $G$-action.  We will have need of the following definition and lemma multiple times.

\begin{definition}\label{l2yp}
Let $X$ and $Y$ be locally compact spaces and let $\mathrm p:Y\to X$ be an \'{e}tale map.  Equip $C_c(Y)$ with the $C_0(X)$-valued inner product defined\footnote{The function $\langle \xi,\eta\rangle$ is well-defined as the sum is finite; it is continuous as $p$ is \'{e}tale; and it is compactly supported as it is supported in $p(\text{supp}(\xi)\cap \text{supp}(\eta))$.} by
$$
\langle \xi,\eta\rangle (x):=\sum_{y\in \mathrm p^{-1}(\{x\})}\overline{\xi(y)}\eta(y)
$$
The corresponding completion is a Hilbert $C_0(X)$-module that we will denote $L^2(Y,p)$.
\end{definition}

Let now $A$ be a $G$-$C^*$-algebra, and let $r^*A$ be the pullback along $r:G\to G^0$ (see for example \cite[2.7(d)]{Khoshkam:2004uj}).  The reduced crossed product of $A$ by $G$, denoted $A\rtimes_r G$, is defined to be a certain completion of $C_c(G)\cdot r^*A$, where the latter is equipped with a natural $*$-algebra structure: we refer the reader to \cite[Section 3]{Khoshkam:2004uj} for more details.   For an element $f\in C_c(G)\cdot r^*A$ and $h\in G$, we write $f(h)$ for the corresponding element of $(r^*A)_h=A_{r(h)}$.  In the special case $A=C_0(G^{(p)})$ with the left $G$-action from line \eqref{gp act}, we see that $f(h)$ identifies with an element of $C_0(G^{r(h)}\!_s\times_r G^{(p-1)})$.   The next lemma is well-known and will be used by us several times below: it follows from direct checks based on the formulas we give that we leave to the reader.    

\begin{lemma}\label{mor1}
For each $p\geq 2$, let $\mathrm{pr}:G^{(p)}\to G^{(p-1)}$ be the map $(g_1,...,g_p)\mapsto (g_2,...,g_p)$, and let $\mathrm{pr}:G^{(1)}\to G^{(0)}=G^{0}$ be the map $g\mapsto s(g)$.  
For each $p\geq 1$, $f\in C_c(G)\cdot r^*C_0(G^{(p)})$, $\xi\in L^2(G^{(p)},\mathrm{pr})$, and $g\in G$ define 
$$
(\kappa_p(f)\xi)(g_1,...,g_p):=\sum_{h\in G^{r(g_1)}}f(h)(g_1,...,g_p)\xi(h^{-1}g_1,g_2,...,g_p).
$$
This extends to an injective representation
$$
\kappa_p:C_0(G^{(p)})\rtimes_r G\to \mathcal{L}(L^2(G^{(p)},\mathrm{pr}))
$$
whose image is exactly $\K(L^2(G^{(p)},\mathrm{pr}))$.  

In particular, the Kasparov class 
$$
[\kappa_p,L^2(G^{(p)},\mathrm{pr}),0]\in KK(C_0(G^{(p)})\rtimes_r G,C_0(G^{(p-1)}))
$$
is an isomorphism, induced by a canonical Morita equivalence $C_0(G^{(p)})\rtimes_r G \stackrel{M}{\sim} C_0(G^{(p-1)})$. \qed
\end{lemma}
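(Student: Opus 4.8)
The plan is to recognize Lemma \ref{mor1} as a groupoid incarnation of the standard imprimitivity theorem: for a free and proper $G$-space $Y$, the reduced crossed product $C_0(Y)\rtimes_r G$ is Morita equivalent to $C_0(G\backslash Y)$, the equivalence being implemented by a completion of $C_c(Y)$. The first and conceptually most important step is to identify the projection $\mathrm{pr}:G^{(p)}\to G^{(p-1)}$ with the quotient map for the $G$-action on $G^{(p)}$ from line \eqref{gp act}. Concretely, the action $g\cdot(h_1,\dots,h_p)=(gh_1,h_2,\dots,h_p)$ is free (if $gh_1=h_1$ then $g=e_{r(h_1)}$) and proper (it is left translation in the first coordinate of the fibre product of $G$ and $G^{(p-1)}$ over $G^0$), and two composable tuples lie in the same orbit exactly when they share the tail $(h_2,\dots,h_p)$. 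Hence $\mathrm{pr}$ induces a homeomorphism $G\backslash G^{(p)}\cong G^{(p-1)}$, and since $\mathrm{pr}$ is \'etale the Hilbert module $L^2(G^{(p)},\mathrm{pr})$ of Definition \ref{l2yp} is precisely the $C_0(G^{(p-1)})$-module completion of $C_c(G^{(p)})$.

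Next I would verify that $\kappa_p$ is a well-defined $*$-representation by direct computation with the displayed convolution formula: the defining sum is finite and $\kappa_p(f)\xi$ lands in $C_c(G^{(p)})$ (using that $G$ is \'etale and $f$ is compactly supported), the assignment $f\mapsto\kappa_p(f)$ is linear and multiplicative for the convolution product on $C_c(G)\cdot r^*C_0(G^{(p)})$, and $\kappa_p(f)$ is adjointable with $\kappa_p(f)^*=\kappa_p(f^*)$. These are exactly the ``direct checks'' referred to in the statement; they are routine but require care in tracking the ranges and sources of the various arguments.

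The remaining two points carry the real content. For \emph{injectivity}, I would argue that $\kappa_p$ is (unitarily equivalent to) the regular representation whose integrated form defines the \emph{reduced} crossed product $C_0(G^{(p)})\rtimes_r G$, and is therefore faithful by construction of the reduced norm; thus $\kappa_p$ descends to an injective representation of $C_0(G^{(p)})\rtimes_r G$. For the \emph{image}, I would show $\kappa_p\big(C_0(G^{(p)})\rtimes_r G\big)=\K(L^2(G^{(p)},\mathrm{pr}))$ in two halves. On one hand, for $f$ in the dense subalgebra $C_c(G)\cdot r^*C_0(G^{(p)})$ the operator $\kappa_p(f)$ is a compactly supported kernel operator, hence compact, so by continuity the whole reduced crossed product maps into $\K$. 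On the other hand, given $\xi,\eta\in C_c(G^{(p)})$ supported on sets where $\mathrm{pr}$ restricts to a homeomorphism, I would exhibit an explicit $f\in C_c(G)\cdot r^*C_0(G^{(p)})$ with $\kappa_p(f)=\theta_{\xi,\eta}$, using freeness of the action to invert the first coordinate; as such rank-one operators are total in $\K(L^2(G^{(p)},\mathrm{pr}))$ and both the image and $\K$ are closed, equality follows.

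The main obstacle I anticipate is the image computation, and specifically the choice of the compactly supported function $f$ implementing a prescribed rank-one operator: one must exploit freeness to solve for the ``first coordinate'' and the local homeomorphism property of $\mathrm{pr}$ to keep supports under control, and then confirm that the resulting family of rank-one operators spans a dense subset of $\K$. Once injectivity and surjectivity onto $\K$ are established, the conclusion is formal: an isomorphism $C_0(G^{(p)})\rtimes_r G\xrightarrow{\cong}\K(L^2(G^{(p)},\mathrm{pr}))$ exhibits $L^2(G^{(p)},\mathrm{pr})$ as an imprimitivity bimodule for a Morita equivalence $C_0(G^{(p)})\rtimes_r G\sim C_0(G^{(p-1)})$, so the associated class $[\kappa_p,L^2(G^{(p)},\mathrm{pr}),0]$ is a $KK$-equivalence.
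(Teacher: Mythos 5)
Your proposal is correct and matches the paper's approach: the paper asserts the lemma as well-known, explicitly leaving to the reader exactly the direct checks you carry out, namely that $\mathrm{pr}$ is the quotient map of the free and proper $G$-action from line \eqref{gp act} (so $G\backslash G^{(p)}\cong G^{(p-1)}$), that $\kappa_p$ is a well-defined $*$-representation which localizes over points of $G^{(p-1)}$ to the fibrewise regular representations and is hence isometric for the reduced norm, and that its image is all of $\K(L^2(G^{(p)},\mathrm{pr}))$ since the rank-one operators $\theta_{\xi,\eta}$ are implemented by explicit elements $f\in C_c(G)\cdot r^*C_0(G^{(p)})$. Your framing via the imprimitivity theorem for free and proper actions is the standard conceptual context the paper has in mind when calling the result well-known, and your verification plan contains no gaps.
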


We will need to work with a phantom tower in the sense of \cite[Definition 3.1]{Meyer2008} associated to the $\mathcal{I}$-projective resolution in line \eqref{pr} above.  This is an augmentation of that resolution to a diagram of the form 
\begin{equation}\label{pt diag}
\xymatrix{ A  \ar[rr]|\circ^-{\iota_0^1} & & N_1 \ar[rr]|\circ^-{\iota_1^2} \ar[dl]^-{\phi_1} & & N_2\ar[dl]^-{\phi_2} \ar[rr]|\circ^-{\iota_2^3} & &  \ar[dl]^-{\phi_3}  \cdots  &     \\
& \ar[ul]^-{\delta_0} P_0 &  & \ar[ll]^-{\delta_1}  P_1 \ar[ul]^-{\psi_1} & &  \ar[ll]^-{\delta_2} P_2 \ar[ul]^-{\psi_2} && \ar[ll]^-{\delta_3}   \cdots, }
\end{equation}
satisfying certain conditions: we refer the reader to \cite{Meyer2008} for more details.  Note that as in \cite{Meyer2008} we are using circled arrows for morphisms of degree one and plain arrows for morphisms of degree zero, but the circles are not in the same place as those of \cite[Definition 3.1]{Meyer2008}.  This is because (following \cite{Proietti:2021wz}), we are working with \emph{even} $\mathcal{I}$-projective resolutions, whereas in \cite[Definition 3.1]{Meyer2008}, Meyer works with \emph{odd} resolutions, where the maps $\delta_p$ in line \eqref{pr} have degree one (compare \cite[Definition 2.15]{Proietti:2021wz}).  Analogously to the explanation below \cite[Definition 2.15]{Proietti:2021wz}, it is straightforward to switch between these two degree conventions for phantom towers: if $\Sigma:KK^G\to KK^G$ is the suspension automorphism, and we are given a phantom tower associated to an odd $\mathcal{I}$-projective resolution with objects $N_p$ and $P_p$ as in \cite[Definition 3.1]{Meyer2008}, replacing these by $\Sigma^{-p}N_p$ and $\Sigma^{-p}P_p$ yields a phantom tower associated to the corresponding even resolution as in diagram \eqref{pt diag} above.

Now, following \cite[Lemma 3.2]{Meyer2008}, the $\mathcal{I}$-projective resolution in line \eqref{pr} can be augmented in an essentially unique way to a phantom tower as in line \eqref{pt diag}.  Let $J:KK^G\to \text{Ab}^{\Z/2}$ be the functor from $KK^G$ to the category of $\Z/2$-graded abelian groups defined by $J:=K_*\circ j_G$, where $j_G:KK^G\to KK$ is descent (see for example \cite[Section 1.3]{BP22}), and $K_*:KK\to\text{Ab}^{\Z/2}$ is $K$-theory; note that on objects, $J(A):= K_*(A\rtimes_r G)$.  This is a stable homological functor (see \cite[Definitions 2.12 and 2.14]{Meyer:2010vt}), so we may use it to build the ABC spectral sequence of \cite[Sections 4 and 5]{Meyer2008} based on the phantom tower from line \eqref{pt diag}.  If $G$ is amenable and has torsion-free isotropy groups, \cite[Corollary 4.4]{Proietti:2021wz} shows that the ABC spectral sequence for $A=C_0(G^0)$ is a convergent spectral sequence 
\begin{equation}\label{con ss}
E^r_{p,q}\Rightarrow K_{p+q}(C^*_r(G)).
\end{equation}
We call this the \emph{Proietti-Yamashita (or PY) spectral sequence}.
 
As we will need to do explicit computations, let us give more details about how the general construction of the ABC spectral sequence specialises to our case, following \cite[Sections 4 and 5]{Meyer2008}.  For each $p,q\in \N$ as in \cite[page 189]{Meyer2008}, define\footnote{Note that the conventions of \cite[Sections 4 and 5]{Meyer2008} would use ``$K_{p+q}$'' where we have ``$K_q$'' in the definitions of $E^1_{pq}$ and of $D^1_{pq}$; the difference comes from our choice of using even projective resolutions - which necessitates introducing $p$-fold desuspensions - as explained above}
$$
E^1_{p,q}:=K_q(P_p\rtimes_r G) \cong \left\{\begin{array}{ll} K_0(C_0(G^{(p)})) & q \text{ even} \\ 0 &q\text{ odd} \end{array}\right.
$$ 
where the isomorphism uses the canonical Morita equivalence $P_p\rtimes_r G=C_0(G^{(p+1)})\rtimes_r G \stackrel{M}{\sim} C_0(G^{(p)})$ from Lemma \ref{mor1}.  Define also
$$
D^1_{p,q}:=K_q(N_{p+1}\rtimes_r G).
$$
Continuing following \cite[page 189]{Meyer2008}, there is an exact couple (see for example \cite[Section 5.9]{Weibel1994} for background)
$$
\xymatrix{ D \ar[rr]^-i & & D \ar[dl]^-{j} \\ & E \ar[ul]^-k & }
$$
with morphisms $i_{pq}:D^1_{p,q}\to D^1_{p+1,q-1}$ defined by $i_{pq}:=J_q(\iota_{p+1}^{p+2})$, $j_{pq}:D^1_{p,q}\to E^1_{p,q}$ defined by $j_{pq}:=J_q(\phi_p)$, and $k_{pq}:E^1_{p,q}\to D^1_{p-1,q}$ defined by $k_{pq}:=J_q(\psi_p)$.  As in \cite[Corollary 4.4]{Proietti:2021wz}, one computes that the second page $E^2$ has entries given by 
$$
E^2_{p,q}\cong \left\{\begin{array}{ll} H_p(G) & q \text{ even} \\ 0 &q\text{ odd} \end{array}\right..
$$
With notation as in \cite[page 172]{Meyer2008}, \cite[Theorem 5.1]{Meyer2008} implies that the filtration on $K_{p+q}(C^*_r(G))$ that corresponds to the convergence in line \eqref{con ss} above is given by $(J:\mathcal{I}^k(A))_{k\in \N}$; moreover, using the discussion on \cite[pages 176-177]{Meyer2008}, we see that for each $k\geq 1$
\begin{equation}\label{j:i}
J:\mathcal{I}^{k}(A)=\text{Kernel}\big(J(\iota^k_{k-1}\circ \cdots \circ \iota^1_0):K_*(A\rtimes_r G)\to K_*(N_k\rtimes_rG)\big),
\end{equation}
while $J:\mathcal{I}^0(A)=0$ by definition.

Now, following the proof of \cite[Proposition 4.1]{Meyer2008} and using that in our case the spectral sequence converges, for each $r,p,q$ with $0\leq p\leq r$, the map\footnote{As mentioned by Meyer on \cite[page 193]{Meyer2008}, this map is functorially determined by $A$, and does not depend on any of the choices of projective resolution or phantom tower involved.}
\begin{equation}\label{maps}
k^{(r+1)}_{pq}=J_q(\psi_p):E^{r+1}_{p,q}\to D^{r+1}_{p-1,q}
\end{equation}
induced by $k$ on the $(r+1)^{th}$ page has image equal to 
\begin{equation}\label{einf}
J_q(\iota^p_{p-1}\circ \cdots \circ \iota^1_0)\big(J_q:\mathcal{I}^{p+1}(A)\big)\cong  \frac{J_{q}:\mathcal{I}^{p+1}(A)}{J_{q}:\mathcal{I}^{p}(A)}\cong E^\infty_{p,q}
\end{equation}
(for $p=0$, $\iota^p_{p-1}\circ \cdots \circ \iota^1_0$ should be interpreted as the identity map on $A$).  

Now, taking $p=0$, $q=0$, and $r=2$ we get a canonical map 
\begin{equation}\label{pre mu0}
\mu_0:H_0(G)=E^{2}_{0,0}\to \frac{J_0:\mathcal{I}^1(C_0(G^0))}{J_0:\mathcal{I}^0(C_0(G^0))},
\end{equation}
which is induced exactly by $J(\delta_0)$.  Note however that by definition $J_0:\mathcal{I}^0(C_0(G^0))=0$, and by definition of $J$, $J_0:\mathcal{I}^1(C_0(G^0))$ is a subgroup of $K_0(C^*_r(G))$.  Hence we may identify $\mu_0$ with the map
$$
\mu_0:H_0(G)\to K_0(C^*_r(G))
$$
induced by $\delta_0\in KK^G(C_0(G),C_0(G^0))$.  Let us record this more explicitly in a lemma.

\begin{lemma}\label{mu0 descr}
Let $\delta_0\in KK^G(C_0(G),C_0(G^0))=KK^G(L(C_0(G^0)),C_0(G^0))$ be the co-unit of adjunction.   Let $[X_G]\in KK(C_0(G^0),C_0(G)\rtimes_r G)$ be the $KK$ class arising from the canonical Morita equivalence $C_0(G^0)\stackrel{M}{\sim}C_0(G)\rtimes_r G$ of Lemma \ref{mor1}.  Then there is a canonical homomorphism
$$
\mu_0:H_0(G)\to K_0(C^*_r(G))
$$
defined by taking the composition
$$
J_0(\delta_0)\circ K_0([X_G]):K_0(C_0(G^0)) \to K_0(C^*_r(G)),
$$
then identifying $K_0(C_0(G^0))=\Z[G^0]$, and noting that this map descends to the quotient $H_0(G)$ of $\Z[G^0]$.  \qed
\end{lemma}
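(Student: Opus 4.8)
The plan is to assemble the identifications that were already set up in the discussion preceding the statement, since the lemma is in essence a repackaging of the edge map of the PY spectral sequence at the corner $(p,q)=(0,0)$. First I would isolate the three ingredients I need: that the bottom row $E^1_{*,0}$ of the first page, together with its $d^1$-differential, is exactly the groupoid chain complex $(\Z[G^{(*)}],\partial_*)$ after applying the Morita equivalences of Lemma \ref{mor1}, so that $E^2_{0,0}=\operatorname{coker}(d^1\colon E^1_{1,0}\to E^1_{0,0})\cong H_0(G)$; that the corner is stable, i.e.\ $E^2_{0,0}=E^\infty_{0,0}$; and that the resulting map $E^2_{0,0}\to K_0(C^*_r(G))$ is induced by $J(\delta_0)$, as recorded around line \eqref{pre mu0}.

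For stability of the corner I would simply invoke the standard edge effect: every differential $d^r$ with $r\ge 2$ either leaves $(0,0)$ into a group with negative first index, or enters it from $E^r_{r,1-r}$, which vanishes since $1-r<0$ places it outside the first quadrant. Hence no differentials touch $(0,0)$ beyond the first page and $E^2_{0,0}=E^\infty_{0,0}$. With this in hand, the description of $E^\infty_{p,q}$ in line \eqref{einf} specialises at $p=q=0$ to an isomorphism $E^\infty_{0,0}\cong (J_0:\mathcal{I}^1(A))/(J_0:\mathcal{I}^0(A))$, and since $J_0:\mathcal{I}^0(A)=0$ by definition this is just the inclusion of the subgroup $J_0:\mathcal{I}^1(A)\subseteq K_0(A\rtimes_r G)=K_0(C_0(G^0)\rtimes_r G)=K_0(C^*_r(G))$.

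Next I would make the domain explicit. By construction $E^1_{0,0}=K_0(P_0\rtimes_r G)=K_0(C_0(G)\rtimes_r G)$, and the Morita class $[X_G]$ of Lemma \ref{mor1} (the case $p=1$) furnishes the isomorphism $K_0([X_G])\colon K_0(C_0(G^0))\xrightarrow{\cong}K_0(C_0(G)\rtimes_r G)=E^1_{0,0}$. Composing the quotient $E^1_{0,0}\twoheadrightarrow E^2_{0,0}=E^\infty_{0,0}$ with the inclusion into $K_0(C^*_r(G))$ realises the edge map as $J_0(\delta_0)\colon K_0(C_0(G)\rtimes_r G)\to K_0(C^*_r(G))$ precomposed with $K_0([X_G])$, which is exactly the composite in the statement. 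To pass to $H_0(G)$ I would note that $E^2_{0,0}=\operatorname{coker}(d^1)$, so the edge map factors through the cokernel automatically; concretely this factorisation is well defined because $\delta_0\delta_1=0$ in $KK^G$ (the defining complex relation of the resolution in line \eqref{pr}), whence $J_0(\delta_0)$ annihilates $\mathrm{im}(J_0(\delta_1))=\mathrm{im}(\partial_1)$ under the Morita identifications. Finally, since $G^0$ is locally compact, Hausdorff and totally disconnected, $C_0(G^0)$ has an approximate unit of projections given by characteristic functions of compact open sets and $K_1(C_0(G^0))=0$, so $K_0(C_0(G^0))\cong C_c(G^0,\Z)=\Z[G^0]$ via $[\chi_U]\mapsto \chi_U$; the cokernel $H_0(G)=\Z[G^0]/\mathrm{im}(\partial_1)$ is the receptacle of the descended map.

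The one genuinely load-bearing point, and the step I expect to be the main obstacle, is verifying that the first-page differential $d^1\colon E^1_{1,0}\to E^1_{0,0}$ matches the homology boundary $\partial_1\colon \Z[G]\to\Z[G^0]$ under the Morita identifications of Lemma \ref{mor1}, and correspondingly that the corner edge map is literally $J(\delta_0)$ rather than merely agreeing up to sign or an automorphism of $K_0(C_0(G^0))$. This is precisely the computation underlying the isomorphism $E^2_{p,q}\cong H_p(G)$ of \cite[Corollary 4.4]{Proietti:2021wz}, so I would lean on that identification at $(p,q)=(0,0)$ and only check compatibility of the two quotient maps $\Z[G^0]\to H_0(G)$; everything else is bookkeeping with the exact couple and the filtration $(J:\mathcal{I}^k(A))_{k\in\N}$.
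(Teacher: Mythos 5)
Your overall route is the paper's own: the lemma is stated without a separate proof precisely because it is the specialisation $p=q=0$, $r=2$ of the edge-map discussion around lines \eqref{maps} and \eqref{einf}, combined with the observations that $J_0:\mathcal{I}^0(A)=0$ by definition and that $J_0:\mathcal{I}^1(A)$ is a subgroup of $K_0(C^*_r(G))$; your identification of $E^1_{0,0}$ with $K_0(C_0(G^0))$ via $[X_G]$ and of $d^1$ with $\partial_1$ via \cite[Corollary 4.4]{Proietti:2021wz} is exactly what the paper does. However, your corner-stability step is wrong: this is not a first-quadrant spectral sequence. Since $E^1_{p,q}=K_q(P_p\rtimes_r G)$ is $2$-periodic in $q$, the sequence lives in the half-plane $p\geq 0$, $q\in\Z$, with only the odd rows vanishing. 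The incoming differential at $(0,0)$ on page $r$ originates at $E^r_{r,1-r}$, and for odd $r\geq 3$ the index $1-r$ is even, with $E^2_{r,1-r}\cong H_r(G)$, which need not vanish. (All $d^2$'s are zero for parity reasons, but $d^3\colon E^3_{3,-2}\to E^3_{0,0}$ is exactly the kind of differential $d^3_{3,2l}\colon H_3^{\mathrm{uf}}(X)\to H_0^{\mathrm{uf}}(X)$ that the paper has to contend with in the proof of Corollary \ref{huf cor}(2); see also Remark \ref{no go higher}.) So $E^\infty_{0,0}$ is in general only a quotient of $E^2_{0,0}$, and $\mu_0$ need not be injective --- injectivity is obtained in Theorem \ref{Cor:HK conjecture up to dimension 2} only because dynamic asymptotic dimension at most $2$ forces $H_n(G)=0$ for $n\geq 3$.

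Fortunately the lemma does not need stability, and the rest of your own argument already closes the gap. The map of line \eqref{pre mu0} is well defined on $E^2_{0,0}$ because Meyer's $k$-map descends through every page, hence automatically annihilates the images of all higher incoming differentials; its \emph{image} is $E^\infty_{0,0}$, which is all that line \eqref{einf} claims. Equivalently, and as you note yourself, $\delta_1\otimes\delta_0=0$ in $KK^G$ together with functoriality of descent gives $J_0(\delta_0)\circ J_0(\delta_1)=0$, and the Proietti--Yamashita identification of $J_0(\delta_1)$ with $\partial_1$ under the Morita equivalences of Lemma \ref{mor1} then shows directly that $J_0(\delta_0)\circ K_0([X_G])$ annihilates $\mathrm{im}(\partial_1)\subseteq\Z[G^0]$ --- which is the only thing the statement asserts. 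Delete the sentence claiming $E^2_{0,0}=E^\infty_{0,0}$ (or replace it with the correct assertion that the edge map on $E^2_{0,0}$ has image $E^\infty_{0,0}$) and your proof is complete and coincides with the paper's.
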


On the other hand, taking $p=1$, $q=0$, and $r=2$ in lines \eqref{maps} and \eqref{einf}, we get a canonical map 
\begin{equation}\label{pre mu1}
\mu_1:H_1(G)=E^2_{1,0}\to \frac{J_1:\mathcal{I}^2(C_0(G^0))}{J_1:\mathcal{I}^1(C_0(G^0))}
\end{equation}
induced by $J(\psi_1)$ and taking image in $J_1(\iota_0^1)\big(J_1(A)\big)\subseteq J_0(N_1)$, which is isomorphic to the right hand side in line \eqref{pre mu1} above.  Note that by definition (compare line \eqref{j:i}), the group $J_1:\mathcal{I}^1(C^0(G^0))$ is the kernel of the map
$$
J_1(\iota^1_0):J_1(A)\to J_0(N_1).
$$
However, thanks to exactness of the first triangle
\begin{equation}\label{pt diag 2}
\xymatrix{ A  \ar[rr]|\circ^-{\iota_0^1} & & N_1  \ar[dl]^-{\phi_1} &     \\
& \ar[ul]^-{\delta_0} P_0 &   }
\end{equation}
from line \eqref{pt diag} and the fact that $J$ is homological, we have an exact sequence 
$$
\xymatrix{ J_1(P_0) \ar[r]^-{J(\psi_0)} & J_1(A) \ar[r]^-{J(\iota_0^1)} & J_0(N_1) }.
$$
The group $J_1(P_0)$ is by definition equal to $K_1(C_0(G)\rtimes_rG)$, which is isomorphic to $K_1(C_0(G^0))$ by the Morita equivalence from Lemma \ref{mor1}, so is zero as $G$ is ample.  Hence by exactness, $J_1(\iota^1_0)$ is injective, and so $J_1:\mathcal{I}^1(C_0(G^0))=0$.  Putting this information into line \eqref{pre mu1}, together with the fact that $J_1:\mathcal{I}^2(C^0(G^0))$ is naturally a subgroup of $K_1(C^*_r(G))$, we get a canonical map
$$
\mu_1:H_1(G)\to K_1(C^*_r(G))
$$
induced by $\psi_1\in KK^G(C_0(G^{(2)}),N_1)$.  Let us again record this more explicitly as a lemma.

\begin{lemma}\label{mu1 descr}
Let $\psi_1\in KK^G(C_0(G^{(2)}),N_1)=KK^G(P_1,N_1)$ be the morphism from the following part of a phantom tower for $A=C_0(G^0)$
\begin{equation}\label{pt diag 3}
\xymatrix{ A  \ar[rr]|\circ^-{\iota_0^1} & & N_1 \ar[dl]^-{\phi_1} \ar[rr]|\circ^-{\iota_1^2} & & N_2 \ar[dl]^-{\phi_2} &    \\
& \ar[ul]^-{\delta_0} P_0 &  & \ar[ll]^-{\delta_1}  P_1 \ar[ul]^-{\psi_1} & & P_2 \ar[ll]^-{\delta_2} \ar[ul]^-{\psi_2} }.
\end{equation}
Let $J_1(\iota_0^1)|^{\text{Im}(J_1(\iota_0^1))}$ be the corestriction of $J(\iota_0^1)$ to an isomorphism $J_1(A)\to  \text{Image}(J_1(\iota_0^1)) \subseteq J_0(N_1)$.  Let $[X_{G^{(2)}}]\in KK(C_0(G),C_0(G^{(2)})\rtimes_r G)$ be the $KK$-isomorphism arising from the canonical Morita equivalence $C_0(G)\stackrel{M}{\sim}C_0(G^{(2)})\rtimes_r G$ of Lemma \ref{mor1}.  

Then there is a canonical homomorphism
$$
\mu_1:H_1(G)\to K_1(C^*_r(G))
$$
defined by taking the composition\footnote{Note that the restriction to the kernel $\text{Ker}(J_0(\delta_1))$ of $J(\delta_1)$ is needed to ensure that the image of $J_0(\psi_1)\circ K_*([X_{G^{(2)}}])$ is contained in the image of $J_1(\iota_0^1)$; we mention this fact explicitly as it is slightly buried in the spectral sequence machinery.}
$$
(J_1(\iota_0^1)|^{\text{Im}(J_1(\iota_0^1))}\big)^{-1}\circ J_0(\psi_1)\circ K_0([X_{G^{(2)}}]):\text{Ker}(J_0(\delta_1))\to K_1(C^*_r(G)),
$$
and noting that this restriction passes to the quotient by the image of $J_0(\delta_2):K_0(C_0(G^{(2)})\rtimes_r G)\to K_0(C_0(G)\rtimes_r G)$\footnote{This last holds as commutativity of the rightmost triangle in diagram \eqref{pt diag 3} and exactness of the second triangle from the right imply that $J(\psi_1)\circ J(\delta_2)=J(\psi_1)\circ J(\phi_2)\circ J(\psi_2)=0\circ J(\psi_2)=0$.} .  \qed
\end{lemma}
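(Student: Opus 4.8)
The plan is to unwind the edge map of the Proietti--Yamashita spectral sequence, which was produced abstractly in the discussion preceding the statement, and to match it term by term with the composition in the lemma. Since $\mu_1$ has already been constructed as the edge map $E^2_{1,0}=H_1(G)\to E^\infty_{1,0}$, the only task is to rewrite this edge map using the phantom-tower morphisms of diagram \eqref{pt diag} and the Morita equivalences of Lemma \ref{mor1}; almost all of the real work has been carried out in the build-up to the statement, so the proof is essentially an identification of objects and maps.

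First I would fix the first-page identifications. By the Morita equivalences $P_p\rtimes_r G\stackrel{M}{\sim}C_0(G^{(p)})$ of Lemma \ref{mor1}, the entry $E^1_{p,0}=J_0(P_p)$ is identified with $K_0(C_0(G^{(p)}))\cong \Z[G^{(p)}]$, the identification for $p=1$ being exactly $K_0([X_{G^{(2)}}])$ since $P_1=C_0(G^{(2)})$. I would then read off the first differential from the exact couple: $d^1_{p,0}=J_0(\phi_p)\circ J_0(\psi_p)=J_0(\phi_p\circ\psi_p)$, and since the phantom-tower relations give $\delta_p=\phi_p\circ\psi_p$, this equals $J_0(\delta_p)$, which under the above identifications is the boundary $\partial_p$ of the complex $\Z[G^{(*)}]$. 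Hence $E^2_{1,0}=\ker(J_0(\delta_1))/\mathrm{im}(J_0(\delta_2))$ is exactly $\ker(\partial_1)/\mathrm{im}(\partial_2)=H_1(G)$, matching the domain in the statement and recovering the computation $E^2_{p,0}\cong H_p(G)$ already cited from \cite{Proietti:2021wz}.

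Next I would identify the edge map itself. Using the checkerboard vanishing $E^r_{p,q}=0$ for $q$ odd to see $E^2_{1,0}=E^3_{1,0}$, lines \eqref{maps} and \eqref{einf} with $p=1$, $q=0$, $r=2$ show that $\mu_1$ is induced by $k^{(3)}_{1,0}=J_0(\psi_1)$; applied to a lift in $E^1_{1,0}=J_0(P_1)$ this is precisely $J_0(\psi_1)\circ K_0([X_{G^{(2)}}])$. For the target, recall from the preceding discussion that $J_1(P_0)\cong K_1(C_0(G^0))=0$, so exactness of the triangle \eqref{pt diag 2} makes $J_1(\iota_0^1)\colon J_1(A)\to J_0(N_1)$ injective; moreover $J_1:\mathcal{I}^1(C_0(G^0))=0$ and $E^\infty_{1,0}\cong J_1:\mathcal{I}^2(C_0(G^0))\subseteq K_1(C^*_r(G))=J_1(A)$ by \eqref{pre mu1} and \eqref{j:i}. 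Composing with $(J_1(\iota_0^1)|^{\mathrm{Im}})^{-1}$ therefore transports the image back into $K_1(C^*_r(G))$, reproducing the edge map exactly as in the statement.

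Finally I would verify the two well-definedness claims flagged in the footnotes, both immediate once $\delta_p=\phi_p\circ\psi_p$ and triangle exactness are in hand. For $x\in\ker(J_0(\delta_1))$ one has $J_0(\phi_1)(J_0(\psi_1)(x))=J_0(\delta_1)(x)=0$, so by exactness of \eqref{pt diag 2}, which gives $\ker J_0(\phi_1)=\mathrm{im}\,J_1(\iota_0^1)$, the element $J_0(\psi_1)(x)$ lies in $\mathrm{Image}(J_1(\iota_0^1))$ and the inverse corestriction applies; this is exactly why the domain must be $\ker(J_0(\delta_1))$. For the passage to the quotient, $J_0(\psi_1)\circ J_0(\delta_2)=J_0(\psi_1\circ\phi_2\circ\psi_2)=0$, since $\psi_1\circ\phi_2=0$ as consecutive morphisms of an exact triangle, so the map annihilates $\mathrm{im}(J_0(\delta_2))$ and descends to $H_1(G)$. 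I expect the main obstacle to be purely bookkeeping: keeping straight the degree-one (circled) morphisms $\iota_p^{p+1}$ and the $p$-fold desuspensions forced by the even-resolution convention of \cite{Proietti:2021wz}, so that the abstract edge map coincides with the stated composition rather than a suspended or index-shifted variant. Once the degree conventions are pinned down, every remaining step reduces to the phantom-tower relations $\delta_p=\phi_p\circ\psi_p$ and the long exact sequences of the triangles in \eqref{pt diag}.
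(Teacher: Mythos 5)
Your proposal is correct and takes essentially the same route as the paper: the lemma there carries a \qed because its proof is exactly the spectral-sequence discussion preceding it, and your three verifications --- that $d^1$ identifies with $J_0(\delta_p)$ under the Morita equivalences of Lemma \ref{mor1}, that $\ker J_0(\delta_1)$ is carried into $\mathrm{Im}\,J_1(\iota_0^1)$ via $\delta_1=\phi_1\circ\psi_1$ together with exactness of the first triangle and the vanishing $J_1(P_0)=0$, and that $\psi_1\circ\phi_2=0$ annihilates $\mathrm{im}\,J_0(\delta_2)$ --- are precisely the arguments the paper records in its two footnotes and the surrounding text. The one item you flag without fully executing, the even-resolution degree bookkeeping, is likewise dispatched in the paper's setup (the desuspension convention around diagram \eqref{pt diag}) rather than in the lemma itself, so nothing is missing.
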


\begin{rem}\label{no go higher}
In the context of the HK conjecture, it is natural to ask if there are ``higher'' maps $\mu_k:H_k(G)\to K_k(C^*_r(G))$ arising from the Proietti-Yamashita spectral sequence in a canonical way.  This does not seem clear, even for $k=2$: here lines \eqref{maps} and \eqref{einf} would give a map
$$
\mu_2:H_2(G)=E^2_{2,0}\to \frac{J_0:\mathcal{I}^3(C_0(G^0))}{J_0:\mathcal{I}^2(C_0(G^0))}.
$$
For $k=0$ and $k=1$, the denominator on the right hand side turned out to be the zero group; for $k=2$, this is no longer clear, so the natural map arising from the spectral sequence could a priori take its image in a proper quotient of $K_0(C^*_r(G))$, not in $K_0(C^*_r(G))$ itself.  For $k=3$, the situation is similar: one has    
$$
\mu_3:H_3(G)=E^3_{3,0}\to \frac{J_1:\mathcal{I}^4(C_0(G^0))}{J_1:\mathcal{I}^3(C_0(G^0))}
$$
and this map a priori takes values in a quotient of $K_1(C^*_r(G))$ (note that we need to increase to $r=3$ so the condition ``$0\leq p\leq r$'' needed to apply line \eqref{maps} is satisfied; as the differentials on the $E^2$ page are all zero for degree reasons, we still have an identification $H_3(G)=E^3_{3,0}$ however).  For $k=4$, the situation seems worse: one has 
$$
\mu_4:E^4_{4,0}\to \frac{J_0:\mathcal{I}^5(C_0(G^0))}{J_0:\mathcal{I}^4(C_0(G^0))}
$$
but $E^4_{4,0}$ could in principle be a proper subquotient of $H_4(G)$, so a priori one only gets a map from a subquotient of $H_4(G)$ to a subquotient of $K_0(C^*_r(G))$; the situation is similar to this for all $k\geq 4$.  

The recent principal counterexamples to the HK conjecture of Deeley \cite{Deeley22} suggest that the a priori obstructions to the existence of the higher comparison maps discussed above really do pertain; however, we did not yet attempt the relevant computations.
\end{rem}

\subsection{Identification of the comparison maps}\label{id comp sec}

Our goal in this subsection is to identify the map $\mu_0:H_0(G)\to K_0(C^*_r(G))$ of Lemma \ref{mu0 descr} with the comparison map constructed by Matui, and to identify the map $\mu_1:H_1(G)\to K_1(C^*_r(G))$ of Lemma \ref{mu1 descr} with the explicitly constructed comparison map of Theorem \ref{mu1 exist}.  Much of what follows is essentially routine ``book-keeping'' computations; however, as some of it is of quite an involved nature, we thought it was worthwhile to record the details.

The special cases $L^2(G,r)$ and $L^2(G,s)$ of Definition \ref{l2yp} will be particularly important for us: we introduce the shorthand $\K_r:=\K(L^2(G,r))$ and $\K_s:=\K(L^2(G,s))$ for the compact operators on these modules.  These $C^*$-algebras are equipped with the canonical $G$-actions coming from the left action of $G$ on itself for $L^2(G,r)$, and the right action of $G$ on itself for $L^2(G,s)$.  We also write $M_r:C_0(G)\to \K_r$ and $M_s:C_0(G)\to \K_s$; the former is equivariant, while the latter is only equivariant if we consider $C_0(G)$ as a $G$-$C^*$-algebra via the right action of $G$ on itself (however, we will never do this: $C_0(G)$ \emph{always} either has the left $G$-action in what follows, or the trivial action if we have passed through descent).

With notation as in Lemma \ref{mu0 descr}, $\mu_0$ is the map induced on $H_0(G)$ by 
\begin{equation}\label{mu02}
J_0(\delta_0)\circ K_0([X_G])=K_0([X_G]\otimes j_G(\delta_0)),
\end{equation}
where $\delta_0\in KK^G(C_0(G),C_0(G^0))$ equals the counit of adjunction $\epsilon_{C_0(G^0)}$ for the adjoint pair $(E,F)$ discussed at the start of Subsection \ref{pyss sec}.  Using the description of this counit in \cite[Theorem 2.3]{BP22}, one computes that 
\begin{equation}\label{delta0 def}
\delta_0=[M_r,L^2(G,r),0].
\end{equation}

For the statement of the next lemma, let $\iota:C_0(G^0)\to C^*_r(G)$ denote the canonical inclusion.  Recall also (compare \cite[Section 1.3]{BP22}) that the descent of a Hilbert $G$-$A$-module $E$ is defined to be $E\rtimes_r G:=E\otimes_A A\rtimes_r G$.
To have concrete formulas to work with, let $\mathcal{E}$ denote the upper-semicontinuous field of Hilbert modules over $G^0$ associated with $E$. Then $E\rtimes_r G$ can alternatively be constructed as the completion of the vector space of compactly supported continuous sections $\Gamma_c(G;r^*\mathcal{E})$ with respect to the $A\rtimes_r G$-valued inner product
\[
\langle\xi,\xi'\rangle(g):=\sum_{h\in G_{r(g)}}\alpha_{h^{-1}}\big(\langle\xi(h),\xi'(hg)\rangle_{A_{r(h)}}\big),
\]
for $\xi,\xi'\in\Gamma_{c}(G;r^{*}\mathcal{E})$ and $g\in G$.
Moreover, if $\pi:B\rightarrow \mathcal{L}(E)$ is a $G$-equivariant representation of $B$, then for $f\in\Gamma_{c}(G;r^{*}\mathcal{B})$, $\xi\in\Gamma_{c}(G;r^{*}\mathcal{E})$, and $g\in G$, the formula
\[
(f\cdot\xi)(g):=\sum_{h\in G^{r(g)}}\pi_{r(h)}(f(h))W_{h}\big(\xi(h^{-1}g)\big)
\]
defines a representation $\pi\rtimes_r G :B\rtimes_r G\to\mathcal{L}(E\rtimes_r G)$. 

The next lemma follows from direct (if somewhat lengthy) computations that we leave to the reader.

\begin{lemma}\label{desc mor}
For $\xi\in L^2(G,s)$, $f\in C_c(G)$, $g\in G$, and $h\in G^{r(g)}$ define
$$
(\omega(\xi\otimes f)(g))(h):=\xi(h)f(h^{-1}g).
$$
Then $\omega$ extends to a unitary isomorphism of Hilbert $C^*_r(G)$-modules 
$$
\omega:L^2(G,s)\otimes_\iota C^*_r(G)\cong L^2(G,r)\rtimes_r G.
$$
Moreover, for $\kappa_1$ as in Lemma \ref{mor1}, $\omega$ satisfies 
$$
\omega(\kappa_1(a)\otimes 1_{C^*_r(G)})\omega^*=(M_r\rtimes_r G)(a)
$$
for all $a\in C_0(G)\rtimes_r G$.  In particular, there is a $*$-homomorphism $\beta:\K_s\to \K_r\rtimes_rG$ defined by
$$
\beta:\K_s\to \K_r\rtimes_rG,\quad k\mapsto \omega(k\otimes 1_{C^*_r(G)})\omega^*.
$$
Finally, $\omega^*(\K_r\rtimes_r G)\omega=\K(L^2(G,s)\otimes_\iota C^*_r(G))$, and so the element $[L^2(G,s)\otimes_\iota C^*_r(G),\text{ad}_{\omega^*}\circ M_r\rtimes G,0]\in KK(\K_r\rtimes_r G,C^*_r(G))$ is an isomorphism in $KK$, induced by a Morita equivalence bimodule.  \qed
\end{lemma}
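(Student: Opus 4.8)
The plan is to check the properties of $\omega$ by explicit computation on dense $*$-subalgebras and elementary tensors, and then to deduce the final Morita statement structurally, by transporting along $\omega$ the imprimitivity bimodule structure that $L^2(G,r)\rtimes_r G$ carries automatically. Throughout I would work with the dense subspaces $C_c(G)\subseteq L^2(G,s)$ and $C_c(G)\subseteq C^*_r(G)$, and with the dense subspace $\Gamma_c(G;r^*\mathcal{E})\subseteq L^2(G,r)\rtimes_r G$ described before the lemma.

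First I would show that $\omega$ is a well-defined isometry. For $\xi,f\in C_c(G)$ one checks, using that $r$ is \'etale and that $\xi,f$ have compact support, that the section $g\mapsto \big(h\mapsto \xi(h)f(h^{-1}g)\big)$ lies in $\Gamma_c(G;r^*\mathcal{E})$. The heart of this step is to expand the inner product $\langle \omega(\xi_1\otimes f_1),\omega(\xi_2\otimes f_2)\rangle$ using the descent inner product formula recalled before the lemma, and to verify that the resulting iterated sum collapses --- after the substitution $h\mapsto h^{-1}g$ in one fibre variable and using that the $G$-action on $C_0(G^0)$ is fibrewise trivial --- to $f_1^*\,\iota(\langle \xi_1,\xi_2\rangle)\,f_2$, where $\langle \xi_1,\xi_2\rangle\in C_0(G^0)$ is the $L^2(G,s)$-inner product. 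This matches the internal tensor product inner product and extends $\omega$ to an isometry. Unitarity then follows by verifying that sections of the above form span a dense subspace of $\Gamma_c(G;r^*\mathcal{E})$ (for instance by a partition-of-unity argument over compact open bisections), so that the closed range of the isometry $\omega$ is all of $L^2(G,r)\rtimes_r G$.

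Next I would establish the intertwining relation by comparing the two explicit formulas. Applying $\kappa_1(a)\otimes 1$ to an elementary tensor and then $\omega$, and separately applying the descent representation to $\omega(\xi\otimes f)$, one obtains in each case a section given by an iterated sum over range fibres; these agree after reindexing, using the formula for $\kappa_1$ from Lemma \ref{mor1} and the descent representation formula recalled before the lemma. Since $\kappa_1\colon C_0(G)\rtimes_r G\to \K_s$ is an isomorphism by Lemma \ref{mor1}, every $k\in \K_s$ has the form $\kappa_1(a)$, so the relation shows $\omega(k\otimes 1)\omega^* = (M_r\rtimes_r G)(\kappa_1^{-1}(k))$ lies in $\K_r\rtimes_r G$, which acts faithfully on $L^2(G,r)\rtimes_r G$ via its descent representation. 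This produces the $*$-homomorphism $\beta=(M_r\rtimes_r G)\circ\kappa_1^{-1}$.

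For the final assertion I would argue structurally rather than by a further direct computation. The module $L^2(G,r)$ is the tautological $G$-equivariant $(\K_r,C_0(G^0))$-imprimitivity bimodule, so its descent $L^2(G,r)\rtimes_r G$ is a $(\K_r\rtimes_r G,\,C_0(G^0)\rtimes_r G)$-imprimitivity bimodule; as $C_0(G^0)\rtimes_r G=C^*_r(G)$, the left algebra $\K_r\rtimes_r G$ acts via its descent representation as exactly the compacts on $L^2(G,r)\rtimes_r G$, and the module is full over $C^*_r(G)$. Conjugating by the unitary $\omega$ then gives $\omega^*(\K_r\rtimes_r G)\omega = \omega^*\,\K(L^2(G,r)\rtimes_r G)\,\omega = \K(L^2(G,s)\otimes_\iota C^*_r(G))$ and preserves fullness, so $L^2(G,s)\otimes_\iota C^*_r(G)$, equipped with the left action $\mathrm{ad}_{\omega^*}$ of the descent representation of $\K_r\rtimes_r G$, is a $(\K_r\rtimes_r G,C^*_r(G))$-imprimitivity bimodule, and its class in $KK$ is an isomorphism. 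The main obstacle is the bookkeeping in the isometry and intertwining computations: both require careful tracking of the $s$- and $r$-fibre sums and of the reindexing $h\mapsto h^{-1}g$, and this is where essentially all of the lengthy but routine content lies.
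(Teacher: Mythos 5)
Your proposal is correct. The paper offers no written proof here --- it states that the lemma ``follows from direct (if somewhat lengthy) computations that we leave to the reader'' --- and your treatment of the first two assertions is exactly that intended computation: the inner-product expansion collapsing to $f_1^*\,\iota(\langle\xi_1,\xi_2\rangle)\,f_2$ after the reindexing $h\mapsto h^{-1}g$ is the right calculation (the double sum over $h\in G_{r(g)}$ and $k\in G^{r(h)}$ reorganises over pairs $(k,l)$ with $h=kl$, and the inner sum over $k\in s^{-1}(r(l))$ produces the $L^2(G,s)$-inner product), the density of the sections $\omega(\xi\otimes f)$ via characteristic functions of compact open bisections is the standard argument in the ample setting, and writing $\beta=(M_r\rtimes_r G)\circ\kappa_1^{-1}$ using that $\kappa_1$ is an isomorphism onto $\K_s$ (Lemma \ref{mor1} with $p=1$, where $\mathrm{pr}=s$) is precisely how the lemma is meant to be read. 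Where you genuinely depart from the paper is the final claim: rather than verifying $\omega^*(\K_r\rtimes_r G)\omega=\K(L^2(G,s)\otimes_\iota C^*_r(G))$ by a third fibre-sum computation, you transport the imprimitivity bimodule structure of $L^2(G,r)\rtimes_r G$ along $\omega$, invoking that descent of an equivariant $(\K_r,C_0(G^0))$-imprimitivity bimodule is a $(\K_r\rtimes_r G,\,C_0(G^0)\rtimes_r G)$-imprimitivity bimodule, i.e.\ that $\K(E)\rtimes_r G\cong\K(E\rtimes_r G)$ canonically via the descent representation and that fullness descends. This is legitimate given the descent machinery the paper already cites (Le Gall, and \cite[Section 1.3]{BP22}), and it buys a conceptual explanation of the Morita statement at the cost of importing that standard fact; the paper's all-computational route is more self-contained but longer. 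One small point to make explicit in a write-up: a surjective inner-product-preserving map of Hilbert modules is automatically adjointable and unitary, which is what upgrades your isometry-with-dense-range to the unitary $\omega$.
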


We are now ready to show that the description of $\mu_0$ arising from the spectral sequence agrees with the classical map $H_0(G)\to K_0(C^*_r(G))$ introduced by Matui.

\begin{proposition}
The map $\mu_0:H_0(G)\to K_0(C^*_r(G))$ of Lemma \ref{mu0 descr} is the map induced by the canonical inclusion $\iota:C_0(G^0)\to C^*_r(G)$ on the level of $K_0$-groups.
\end{proposition}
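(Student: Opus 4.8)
The plan is to show directly that the $KK$-class $[X_G]\otimes j_G(\delta_0)\in KK(C_0(G^0),C^*_r(G))$ inducing $\mu_0$ (via Lemma \ref{mu0 descr} and equation \eqref{mu02}) coincides with the class $[\iota]$ of the canonical inclusion $\iota:C_0(G^0)\to C^*_r(G)$. The proposition then follows at once, since both maps in question are by definition the map induced on $H_0(G)$ by the corresponding $KK$-class. Throughout, I would write $[\kappa_1]:=[\kappa_1,L^2(G,s),0]\in KK(C_0(G)\rtimes_r G,C_0(G^0))$ for the Morita class of Lemma \ref{mor1} in degree $p=1$ (recall that $L^2(G^{(1)},\mathrm{pr})=L^2(G,s)$ there), so that $[X_G]$ is, by construction, its inverse as a $KK$-equivalence, i.e.\ $[X_G]\otimes[\kappa_1]=1_{C_0(G^0)}$.

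The heart of the argument is to compute $j_G(\delta_0)$. By equation \eqref{delta0 def} we have $\delta_0=[M_r,L^2(G,r),0]$, so its descent is $j_G(\delta_0)=[M_r\rtimes_r G,\,L^2(G,r)\rtimes_r G,\,0]\in KK(C_0(G)\rtimes_r G,C^*_r(G))$. I would then invoke the unitary isomorphism $\omega:L^2(G,s)\otimes_\iota C^*_r(G)\cong L^2(G,r)\rtimes_r G$ of Lemma \ref{desc mor}, together with the intertwining relation $\omega(\kappa_1(a)\otimes 1)\omega^*=(M_r\rtimes_r G)(a)$, to transport the module and representation defining $j_G(\delta_0)$ across $\omega$. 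This rewrites $j_G(\delta_0)$ as the class $[\kappa_1\otimes 1,\,L^2(G,s)\otimes_\iota C^*_r(G),\,0]$, which is exactly the Kasparov product $[\kappa_1]\otimes_{C_0(G^0)}[\iota]$: indeed $[\iota]$ is represented by $C^*_r(G)$ with the left $C_0(G^0)$-action $\iota$, and the corresponding product module is $L^2(G,s)\otimes_\iota C^*_r(G)$ carrying the left action $\kappa_1\otimes 1$ and the zero operator.

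With the identity $j_G(\delta_0)=[\kappa_1]\otimes[\iota]$ established, associativity of the Kasparov product and the Morita inversion $[X_G]\otimes[\kappa_1]=1_{C_0(G^0)}$ give
$$[X_G]\otimes j_G(\delta_0)=[X_G]\otimes[\kappa_1]\otimes[\iota]=[\iota],$$
so that $J_0(\delta_0)\circ K_0([X_G])=\iota_*$ on all of $K_0(C_0(G^0))$, and in particular after passing to the quotient $H_0(G)$. This is precisely the assertion of the proposition.

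The only genuine obstacle is the middle step: correctly matching the concrete descent module $L^2(G,r)\rtimes_r G$ with the Kasparov product module $L^2(G,s)\otimes_\iota C^*_r(G)$, and verifying that the two representations of $C_0(G)\rtimes_r G$ correspond under $\omega$. This is, however, exactly the content of Lemma \ref{desc mor}, so once that lemma is invoked the verification reduces to routine book-keeping; the remaining manipulations are formal properties of descent and of the composition product in $KK$.
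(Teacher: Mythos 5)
Your proposal is correct and follows essentially the same route as the paper: both arguments rest on Lemma \ref{desc mor} (transporting $j_G(\delta_0)$ across $\omega$ to the cycle $[\kappa_1\otimes 1_{C^*_r(G)},\,L^2(G,s)\otimes_\iota C^*_r(G),\,0]$) and on the Morita class of Lemma \ref{mor1}. The only cosmetic difference is that you factor $j_G(\delta_0)=[\kappa_1]\otimes[\iota]$ and cancel $[X_G]\otimes[\kappa_1]=1_{C_0(G^0)}$ by associativity, whereas the paper computes the Kasparov product $[X_G]\otimes j_G(\delta_0)$ directly as the cycle $(\iota,C^*_r(G),0)$ --- the same calculation packaged slightly differently.
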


\begin{proof}
Lemma \ref{desc mor} implies that $j_G(\delta_0)$ is represented by the class
$$
[\kappa_1\otimes 1_{C^*_r(G)},L^2(G,s)\otimes_\iota C^*_r(G),0]\in KK(C_0(G)\rtimes_r G,C^*_r(G)),
$$
(where we have used the isomorphism $\K_s\cong C_0(G)\rtimes_r G$, which is a special case of Lemma \ref{mor1}).  Lemma \ref{mor1} implies that $[X_G]$ is represented by the opposite of the Morita equivalence Kasparov cycle $(\kappa_1,L^2(G,s),0)$.  Hence $[X_G]\otimes j_G(\delta_0)$ is represented by the tensor product of these, which one computes directly is the cycle $(\iota,C^*_r(G),0)$ in $KK(C_0(G^0),C^*_r(G))$.  The image of this under the $K$-theory functor is indeed the map induced by $\iota$, so we are done.
\end{proof}

We now move on to $\mu_1$.  Let us start by more explicitly describing the exact triangle
\begin{equation}\label{pt diag 4}
\xymatrix{ C_0(G^0)  \ar[rr]|\circ^-{\iota_0^1} & & N_1  \ar[dl]^-{\phi_1} &     \\
& \ar[ul]^-{\delta_0} C_0(G) &   }
\end{equation}
from line \eqref{pt diag 2} above.  Note that the class $[\text{id},L^2(G,r),0]\in KK^G(\K_r,C_0(G^0))$ is an isomorphism (as it arises from an equivariant Morita equivalence) and from the formula in line \eqref{delta0 def} for $\delta_0$ we clearly have 
$$
[M_r,\K_r,0]\otimes [\text{id},L^2(G,r),0]=\delta_0,
$$
so up to replacing $C_0(G^0)$ by $\K_r$ using the isomorphism $[\text{id},L^2(G,r),0]\in KK^G(\K_r,C_0(G^0))$, we may replace the exact triangle in line \eqref{pt diag 4} above with  
\begin{equation}\label{pt diag 5}
\xymatrix{ \K_r  \ar[rr]|\circ^-{\iota_0^1} & & N_1  \ar[dl]^-{\phi_1} &     \\
& \ar[ul]^-{M_r} C_0(G) &   }
\end{equation}
where $M_r\in KK^G(C_0(G),\K_r)$ is the class corresponding to the $*$-homomorphism $M_r:C_0(G)\to \K_r$, and we have abused notation slightly by keeping the same label for the top horizontal map.   The remaining parts of the diagram can be described explicitly in terms of the mapping cone of $M_r$; we recall this next.  

Now, by definition of the exact triangles in $KK^G$ (see \cite[Appendix A.4]{Proietti:2021wz}), in diagram \eqref{pt diag 5} we may take $N_1=C(M_r)$, and $\iota_0^1$ and $\phi_1$ are then the $KK$-classes given by the left and right maps respectively in the canonical short exact sequence
$$
\xymatrix{ 0 \ar[r] & \Sigma \K_r \ar[r]^-\iota & C(M_r) \ar[r]^-{e^0} & C_0(G) \ar[r] & 0 }.
$$
Hence we may replace the first part of the phantom tower in line \eqref{pt diag 3} above with the diagram 
$$
\xymatrix{ \K_r  \ar[rr]|\circ^{\iota} & & C(M_r) \ar[dl]^{e^0} &    \\
& \ar[ul]^-{M_r} C_0(G) &  & \ar[ll]^-{\delta_1}  C_0(G^{(2)}). \ar[ul]^-{\psi_1} }
$$
According to the proof of \cite[Lemma 3.2]{Meyer2008}, the morphism $\psi_1$ appearing above is unique, subject to the condition that the right hand triangle commutes (and the various other conditions defining a phantom tower, which are satisfied by the diagram above).  Our next task towards computing $\mu_1$ is to give an explicit description of $\psi_1$; for this, it will be helpful to see why $M_r\circ \delta_1=0$ (note that this is indeed the case, by definition of an $\mathcal{I}$-projective resolution).

We first find an explicit representation for $\delta_1\in KK^G(C_0(G^{(2)}),C_0(G))$.  Let $\mathrm{pr}_1: G_s\!\times_r G\to G$ be the projection onto the first factor, and define
$$
F_0:=L^2(G_s\!\times_r G,\mathrm{pr}_1).
$$
Similarly, let $\mathrm{pr}_2: G_r\!\times_r G\to G$ be the projection on the second factor and define
$$
F_1:=L^2(G_r\!\times_r G,\mathrm{pr}_2).
$$
Equip $F_0$ with the $G$-action defined by the left translation action of $G$ on the first factor, and $F_1$ with the $G$-action defined by the diagonal left action of $G$; note that both $F_0$ and $F_1$ are then $G$-$C_0(G)$ Hilbert modules, where the action on $C_0(G)$ is (as usual) induced from the left action of $G$ on itself.  Define a representation 
$$
\pi_0:C_0(G^{(2)})\to \mathcal{L}(F_0)
$$
by pointwise multiplication, and a representation 
$$
\pi_1:C_0(G^{(2)})\to \mathcal{L}(F_1)
$$
by 
$$
(\pi_1(f)\xi)(g,h):=f(g,g^{-1}h)\xi(g,h);
$$
both of these representations are equivariant and take values in the compact operators on the corresponding modules.  Using the explicit description of the induction functor from \cite[Section 2.1]{BP22} and of the co-unit of adjunction from \cite[Theorem 2.3]{BP22}, one computes that the elements $L(\epsilon_{C_0(G^0)})$ and $\epsilon_{L(C_0(G^0))}$ of $KK^G(C_0(G^{(2)}),C_0(G))$ satisfy 
$$
\epsilon_{L(C_0(G^0))}=[\pi_0,F_0,0]
$$
and 
$$
L(\epsilon_{C_0(G^0)})=[\pi_1,F_1,0];
$$
from the formula in line \eqref{boundary}, we thus have that 
\begin{equation}\label{delta1 diff}
\delta_1=[\pi_0,F_0,0]-[\pi_1,F_1,0].
\end{equation}
On the other hand, from the fact that line \eqref{pr} is a projective resolution, we have that $\delta_1\otimes \delta_0=0$; as we are identifying $\delta_0$ with $[M_r,\K_r,0]\in KK^G(C_0(G),\K_r)$, we therefore see that 
$$
[\pi_0\otimes 1_{\K_r},F_0\otimes_{M_r} \K_r,0]=[\pi_1\otimes 1_{\K_r},F_1\otimes_{M_r}\K_r,0]
$$
in $KK^G(C_0(G^{(2)}),\K_r)$.  In order to construct $\psi_1$ and also its image under the descent morphism, we need to make this identity explicit; this is our next task.

We may identify $L^2(G,s)\otimes_\iota C^*_r(G)$ with the completion of $C_c(G_s\!\times_rG)$ for the inner product valued in the dense subset $C_c(G)$ of $C^*_r(G)$ defined by 
$$
\langle \xi,\eta\rangle_s(g):= \sum_{h\in G^{r(g)}}\sum_{k\in G_{s(h)}}\overline{\xi(k,h^{-1})}\eta(k,h^{-1}g).
$$ 
Similarly, $L^2(G,r)\otimes_\iota C^*_r(G)$ identifies with the completion of $C_c(G_r\!\times_rG)$ for the inner product defined by 
$$
\langle \xi,\eta\rangle_r(g):= \sum_{h\in G^{r(g)}}\sum_{k\in G^{s(h)}}\overline{\xi(k,h^{-1})}\eta(k,h^{-1}g).
$$
Given these descriptions, direct checks show that the map 
$
G_r\!\times_rG\to G_s\!\times_rG$, $(g,h)\mapsto (g,g^{-1}h)
$
induces a unitary $C^*_r(G)$-module isomorphism
$$
w:L^2(G,s)\otimes_\iota C^*_r(G)\to L^2(G,r)\otimes_\iota C^*_r(G).
$$

\begin{lemma}\label{unitary}
There is a unitary, equivariant, isomorphism 
$$
u:F_0\otimes_{M_r} \K_r\to F_1\otimes_{M_r}\K_r
$$
such that $u(\pi_0\otimes 1_{\K_r})u^*=\pi_1\otimes 1_{\K_r}$.

Moreover if $L^2(G^{(2)},\mathrm{pr})$ and $\kappa_2$ are as in Lemma \ref{mor1}, then there is a commutative diagram
$$\xymatrixcolsep{1pc}
\xymatrix{ \big(F_0\otimes_{M_r} \K_r\big)\rtimes_r G \otimes_{id} L^2(G,r)\rtimes_r G \ar[rr]^-{j_G(u)\otimes 1} \ar[d]^{\Theta_0} & &  \big(F_1\otimes_{M_r}\K_r\big)\rtimes_r G \otimes_{id} L^2(G,r)\rtimes_r G \ar[d]^{\Theta_1} \\
L^2(G^{(2)},\mathrm{pr})\otimes_{M_s} L^2(G,s)\otimes_\iota C^*_r(G) \ar[rr]^-{1\otimes w} & & L^2(G^{(2)},\mathrm{pr})\otimes_{M_r} L^2(G,r)\otimes_\iota C^*_r(G) }
$$
where all the maps are unitary isomorphisms.  The maps $\Theta_0$ and $\Theta_1$ satisfy 
$$
\Theta_0(\pi_0\rtimes_r G)\Theta_0^*=\kappa_2\otimes 1_{\K_s\otimes_\iota C^*_r(G)}
$$
and 
$$
\Theta_1(\pi_1\rtimes_r G)\Theta_1^*=\kappa_2\otimes 1_{\K_r\otimes_\iota C^*_r(G)}
$$
respectively.  The map $w:L^2(G,s)\otimes_\iota C^*_r(G)\to L^2(G,r)\otimes_\iota C^*_r(G)$ is that defined above, and satisfies
$$
w(M_s\otimes 1_{C^*_r(G)})w^*=M_r\otimes 1_{C^*_r(G)}.
$$
\end{lemma}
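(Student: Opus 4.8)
The plan is to derive both parts of the lemma from a single spatial homeomorphism, $\theta\colon G^{(2)}\to G_r\times_r G$, $(g,h)\mapsto(g,gh)$, and then propagate it through the interior-tensor-product and descent constructions one layer at a time. First I would realise the internal tensor products spatially. Writing $\K_r=\K(L^2(G,r))$, there is a canonical identification $F_i\otimes_{M_r}\K_r\cong\K\big(L^2(G,r),\,F_i\otimes_{M_r}L^2(G,r)\big)$, under which $\pi_i\otimes 1_{\K_r}$ acts by post-composition with $\pi_i\otimes 1_{L^2(G,r)}$. This reduces the first claim to producing an equivariant unitary $u_0\colon F_0\otimes_{M_r}L^2(G,r)\to F_1\otimes_{M_r}L^2(G,r)$ intertwining $\pi_0\otimes 1$ and $\pi_1\otimes 1$, and then setting $u=u_0\circ(\cdot)$. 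A direct computation of the balanced inner products gives fibre-preserving identifications $F_0\otimes_{M_r}L^2(G,r)\cong L^2(G^{(2)},r\circ\mathrm{pr}_1)$ and $F_1\otimes_{M_r}L^2(G,r)\cong L^2(G_r\times_r G,\,r\circ\mathrm{pr}_2)$, under which $\pi_0\otimes1$ and $\pi_1\otimes1$ become multiplication by $f(g,h)$ and by $f(g,g^{-1}h)$ respectively. The map $\theta$ preserves the common-range fibration and is $G$-equivariant for the left-translation action on $G^{(2)}$ and the diagonal action on $G_r\times_r G$, so I would take $u_0$ to be the unitary it induces. The intertwining identity then reduces to the pointwise check that $\theta$ conjugates multiplication by $f(g,h)$ into multiplication by $f(g,g^{-1}h)$, which is immediate, and equivariance of $u$ follows from that of $\theta$.

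For the vertical isomorphisms $\Theta_i$ I would compose three standard steps. First, compatibility of descent with interior tensor products collapses the upper modules, giving $(F_i\otimes_{M_r}\K_r)\rtimes_r G\otimes_{\K_r\rtimes_r G}(L^2(G,r)\rtimes_r G)\cong(F_i\otimes_{M_r}L^2(G,r))\rtimes_r G$, where one uses $\K_r\otimes_{\K_r}L^2(G,r)\cong L^2(G,r)$. Second, an $\omega$-type isomorphism as in Lemma \ref{desc mor}, now applied to the range-fibred modules $L^2(G^{(2)},r\circ\mathrm{pr}_1)$ and $L^2(G_r\times_r G,r\circ\mathrm{pr}_2)$, identifies these descents with the corresponding source-fibred modules tensored over $\iota$ with $C^*_r(G)$. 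Third, the spatial identifications $L^2(G^{(2)},\mathrm{pr})\otimes_{M_s}L^2(G,s)\cong L^2(G^{(2)},s\circ\mathrm{pr}_2)$ and $L^2(G^{(2)},\mathrm{pr})\otimes_{M_r}L^2(G,r)\cong L^2(G^{(2)},r\circ\mathrm{pr}_2)$ put the targets into the exact form of the bottom row. Composing these three identifications yields $\Theta_0$ and $\Theta_1$.

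It then remains to transport the data, and each required compatibility unwinds to a spatial statement. Under $\Theta_i$ the left action $\pi_i\rtimes_r G$ of $C_0(G^{(2)})\rtimes_r G$ is carried along unchanged at every stage, and the final spatial identification is precisely the one implementing $\kappa_2$ from Lemma \ref{mor1}; this gives the two displayed conjugation formulas $\Theta_i(\pi_i\rtimes_r G)\Theta_i^\ast=\kappa_2\otimes1$. The relation $w(M_s\otimes1)w^\ast=M_r\otimes1$ follows from the defining spatial formula for $w$, and it is exactly this relation that makes $1\otimes w$ well defined as a map from the $M_s$-balanced to the $M_r$-balanced tensor product on the bottom row. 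Finally, commutativity of the square reduces, after the identifications above, to the compatibility of $\theta$ with the map $(g,h)\mapsto(g,g^{-1}h)$ underlying $w$; since the latter is the inverse of $\theta$ on the relevant factor, the two routes around the square agree.

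The genuinely delicate part is the descent bookkeeping in the second and third steps: producing the explicit $\omega$-type isomorphisms for the iterated modules $L^2(G^{(2)},\,\cdot\,)\rtimes_r G$, and confirming that the left $C_0(G^{(2)})\rtimes_r G$-action is carried \emph{exactly} onto $\kappa_2\otimes 1$ rather than onto a merely unitarily equivalent, differently presented representation. Everything else is a sequence of routine, if lengthy, verifications at the level of compactly supported sections, which I would carry out fibrewise and leave largely to direct computation.
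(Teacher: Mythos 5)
Your proposal is correct and is essentially the paper's own proof: you reduce $u$ to a unitary on $F_i\otimes_{M_r}L^2(G,r)$ via a standard identification of the $\K_r$-amplification (the paper uses $\K_r\cong L^2(G,r)\otimes_{C_0(G^0)}L^2(G,r)^{\mathrm{op}}$, you use $\K\big(L^2(G,r),F_i\otimes_{M_r}L^2(G,r)\big)$ -- the same device), build it from the spatial homeomorphism $G^{(2)}\cong G_r\!\times_rG$, $(g,h)\mapsto(g,gh)$, after the identifications $F_0\otimes_{M_r}L^2(G,r)\cong L^2(G^{(2)},r\circ\mathrm{pr}_1)$ and $F_1\otimes_{M_r}L^2(G,r)\cong L^2(G_r\!\times_rG,r\circ\mathrm{pr}_2)$, and obtain $\Theta_0,\Theta_1$ by collapsing the descent tensor products to $(F_i\otimes_{M_r}L^2(G,r))\rtimes_rG$ and reducing the square and the intertwining claims to fibrewise checks on compactly supported sections. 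The ``delicate bookkeeping'' you flag is handled in the paper exactly as you propose, by writing out the dual commuting square of homeomorphisms of fibred products with the twisted maps $\theta_0(h_1,h_2,g)=(h_1,h_2,h_1h_2g)$ and $\theta_1(h_1,h_2,g)=(h_1,h_1^{-1}h_2,h_1g)$ -- these are precisely your $\omega$-type isomorphisms with the translation twists that make the left action land on $\kappa_2\otimes 1$ on the nose -- and verifying the intertwining by direct computation.
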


\begin{proof}
We begin by defining a unitary isomorphism of Hilbert $C_0(G^0)$-modules
$$U:F_0\otimes_{M_r} L^2(G,r)\rightarrow F_1\otimes_{M_r} L^2(G,r)$$
such that $U(\pi_0\otimes 1)U^*=\pi_1\otimes 1$ as follows: a straightforward computation gives identifications
\begin{equation}\label{eq:comp}
    F_0\otimes_{M_r} L^2(G,r)\cong L^2(G^{(2)},r\circ \mathrm{pr}_1),\quad  F_1\otimes_{M_r} L^2(G,r)\cong L^2(G_r\!\times_rG,r\circ \mathrm{pr}_2).
\end{equation}
It is then easy to see that the map $G_r\!\times_rG\rightarrow G^{(2)}$, $(g,h)\mapsto (g,g^{-1}h)$ gives rise to the desired equivalence.

Now recall, that $\K_r$ splits as a tensor product $L^2(G,r)\otimes_{C_0(G^0)} L^2(G,r)^{\mathrm{op}}$.
After making this identification we can define $u$ as 
$$F_0\otimes_{M_r} L^2(G,r)\otimes_{C_0(G^0)} L^2(G,r)^{\mathrm{op}}\stackrel{U\otimes 1}{\rightarrow} F_1\otimes_{M_r} L^2(G,r)\otimes_{C_0(G^0)} L^2(G,r)^{\mathrm{op}}.$$
We now move on to the commutative diagram. 
Notice that $(F_0\otimes_{M_r} \mathrm{K}_r)\rtimes_r G\otimes (L^2(G,r)\rtimes_r G)\cong (F_0\otimes_{M_r}L^2(G,r))\rtimes_r G$,
and similarly 
$(F_1\otimes_{M_r} \mathrm{K}_r)\rtimes_r G\otimes (L^2(G,r)\rtimes_r G)\cong (F_1\otimes_{M_r}L^2(G,r))\rtimes_r G$.
Applying these identifications to the top row of the diagram in the lemma and using identifications similar to the ones in line \eqref{eq:comp} in the bottom row (and slightly abusing notation by still denoting the maps $\Theta_0$ and $\Theta_1$) shows that it will be enough to exhibit a commutative diagram of the form
$$
\xymatrix{
\big(F_0\otimes_{M_r} L^2(G,r)\big)\rtimes_r G \ar[rr]^-{U\rtimes_r G} \ar[d]^{\Theta_0} & &  \big(F_1\otimes_{M_r}L^2(G,r)\big)\rtimes_r G \ar[d]^{\Theta_1} \\
L^2(G^{(2)},s\circ \mathrm{pr})\otimes_\iota C^*_r(G) \ar[rr]^-{W} & & L^2(G^{(2)},r\circ \mathrm{pr})\otimes_\iota C^*_r(G) }
$$
where $W$ is induced by the map 
$G^{(2)}\times_{s\circ \mathrm{pr},r}G\rightarrow G^{(2)}\times_{r\circ \mathrm{pr},r}G$, $(h_1,h_2,g)\mapsto (h_1,h_2,h_2^{-1}g)$.
Similar to the discussion just prior to the present lemma, the modules involved in the diagram all have a canonical dense subspace of compactly supported functions defined on a suitable fibred product of $G^{(2)}$ or $G_r\!\times_rG$ with $G$.
Hence it will be enough to describe a ``dual'' commutative diagram of homeomorphisms

\[\begin{tikzcd}
	{G^{(2)}_{r\circ \mathrm{pr}_1}\!\times_r G} & {(G_r\!\times_rG)_{r\circ \mathrm{pr}_2}\!\times_r G } \\
	{G^{(2)}_{s\circ \mathrm{pr}_2}\!\times_r G} & {G^{(2)}_{r\circ \mathrm{pr}_2}\!\times_r G}
	\arrow["{\theta_1}", from=2-2, to=1-2]
	\arrow["{\theta_0}", from=2-1, to=1-1]
	\arrow["\jupiter", from=1-2, to=1-1]
	\arrow["\omega", from=2-2, to=2-1]
\end{tikzcd}\]
The map $\jupiter$ inducing $U\rtimes_r G$ is given by
$$\jupiter(h_1,h_2,g)=(h_1,h_1^{-1}h_2,g)$$
and the map $\omega$ inducing $W$ is given by
$$\omega(h_1,h_2,g)=(h_1,h_2,h_2^{-1}g).$$
If we set $\theta_0(h_1,h_2,g):=(h_1,h_2,h_1h_2g)$ and
$\theta_1(h_1,h_2,g):=(h_1,h_1^{-1}h_2,h_1g)$, then the diagram commutes. It follows that the induced diagram of Hilbert modules commutes. Moreover, direct checks show that $\Theta_0$ intertwines the representations $\pi_0\rtimes_r G$ and $\kappa_2\otimes 1_{L^2(G,s)\otimes_\iota C_r^*(G)}$ and $\Theta_1$ intertwines $\pi_1\rtimes_r G$ and $\kappa_2\otimes 1_{L^2(G,r)\otimes_\iota C_r^*(G)}$.
\end{proof}

Using this lemma, we may finally compute an explicit formula for both $\psi_1$ and its descent $j_G(\psi_1)$.  Define  
$$
F_{\K}:= I \big((F_0\otimes_{M_r}\K_r)\oplus (F_1\otimes_{M_r}\K_r)\big)
$$
with the $I \K_r$-valued inner product given by adding the componentwise inner products pointwise for each $t\in[0,1]$.
Define $F$ to be the collection of all triples $(\xi_0,\eta,\xi_1)$ in the direct sum $F_0\oplus F_{\K}\oplus F_1$ such that\footnote{Note that even though $\K_r$ is typically non-unital, $\xi_i\otimes 1_{\K_r}$ still makes sense as an element of $F_i\otimes_{M_r} \K_r$ for $i\in \{0,1\}$ - for example, one can show that if $(u_j)_{j\in J}$ is an approximate unit for $\K_r$, then $(\xi_i\otimes u_j)_{j\in J}$ is Cauchy in $F_i\otimes_{M_r}\K_r$, and then define $\xi_i\otimes 1_{\K_r}$ to be its limit.}
$$
(\xi_0\otimes 1_{\K_r} ,0) =\eta(0) \quad\text{and} \quad \eta(1)=(0, \xi_1\otimes 1_{\K_r}).
$$
Let $D(M_r)$ be as in line \eqref{d phi}, and define a $D(M_r)$-valued inner product on $F$ by
$$
\langle (\xi_0^{(0)},\eta^{(0)},\xi_1^{(0)})~,~(\xi_0^{(1)},\eta^{(1)},\xi_1^{(1)})\rangle := \Big( \langle\xi_0^{(0)},\xi_0^{(1)}\rangle_{F_0}~,~ \langle \eta^{(0)},\eta^{(1)}\rangle_{F_\K}~,~\langle\xi_1^{(0)},\xi_1^{(1)}\rangle_{F_1} \Big)
$$  
Direct checks then show that $F$ is a well-defined Hilbert $G$-$D(M_r)$-module.

Now, with $u$ as in Lemma \ref{unitary} consider the unitary 
$$
V:=\begin{pmatrix} 0 & u^* \\ u & 0 \end{pmatrix}\in \mathcal{L}\big((F_0\otimes_{M_r}\K_r)\oplus (F_1\otimes_{M_r}\K_r)\big),
$$
which commutes with the direct sum action of $G$.  This is a self-adjoint unitary, so by the spectral theorem we can write $V=p-q$\footnote{There are also concrete formulas: $p=\frac{1}{2}\begin{pmatrix} 1& u^* \\ u & 1\end{pmatrix}$ and $q=\frac{1}{2}\begin{pmatrix} 1& -u^* \\ -u & 1\end{pmatrix}$.}, where $p$ and $q$ are orthogonal projections that commute with the $G$-action and satisfy $p+q=1$.  For $t\in [0,1]$, define $V_t=p+e^{i\pi t}q$, so $\{V_t\}_{t\in [0,1]}$ is a path of $G$-invariant unitaries connecting $V_0=1$ and $V_1=V$.  For $f\in C_0(G^{(2)})$, $\eta\in F_\K$, and $t\in [0,1]$, define 
$$
(M_\K(f)\eta)(t):=V_t\big(\pi_0(f)\otimes 1_{\K_r},0\big)V_t^*.
$$
This defines a $G$-equivariant representation $\pi_\K:C_0(G^{(2)})\to F_\K$.  Moreover, it is compatible with the representations $\pi_0$ and $\pi_1$ of $C_0(G^{(2)})$ on $F_0$ and $F_1$ respectively, in the sense that for $f\in C_0(G^{(2)})$ and $(\xi_0,\eta,\xi_1)\in F$, the formula
$$
\pi_F(f)(\xi_0,\eta,\xi_1):=(\pi_0(f)\xi_0,\pi_\K(f)\eta,\pi_1(f)\xi_1)
$$
defines a Hilbert $G$-$D(M)$-module representation $\pi_F:C_0(G^{(2)})\to \mathcal{L}(F)$.  This representation takes values in the compact operators on $F$, and thus we get a well-defined Kasparov element 
$$
\widetilde{\psi}:=[\pi_F,F,0]\in KK^G(C_0(G^{(2)}),D(M_r)).
$$
The short exact sequence of line \eqref{c d ses} and split exactness of $KK^G$-theory then gives a canonical isomorphism 
\begin{equation}\label{dm vs cm}
KK^G(C_0(G^{(2)}),D(M_r))\cong KK^G(C_0(G^{(2)}),C_0(G))\oplus KK^G(C_0(G^{(2)}),C(M_r)).
\end{equation}
Write $\psi$ for the image of $\widetilde{\psi}$ in $KK^G(C_0(G^{(2)}),C(M_r))$ under the canonical quotient map arising from the direct sum decomposition above.

\begin{lemma}\label{p tower}
The element $\psi$ fits into the canonical phantom tower in $KK^G$ as the map labeled $\psi_1$
\begin{equation}\label{ghost}
\xymatrix{ \K_r  \ar[rr]|\circ^{\iota} & & C(M_r) \ar[dl]^-{e^0} &    \\
& \ar[ul]^-{M_r} C_0(G) &  & \ar[ll]^-{\delta_1}  C_0(G^{(2)}). \ar[ul]^-{\psi_1} }
\end{equation}
\end{lemma}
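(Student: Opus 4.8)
The plan is to reduce the assertion to a single commutativity check and then compute two endpoint evaluations. As recalled just before the statement, the proof of \cite[Lemma 3.2]{Meyer2008} shows that the morphism $\psi_1$ in the phantom tower is uniquely determined once one requires the right-hand triangle of \eqref{ghost} to commute, together with the structural conditions defining a phantom tower. Since the diagram \eqref{ghost} is the standard realization of the mapping-cone exact triangle of $M_r\colon C_0(G)\to\K_r$ (with $N_1=C(M_r)$, and $\iota$, $e^0$ the canonical maps), those structural conditions hold automatically. Hence it suffices to verify that $e^0\circ\psi=\delta_1$ in $KK^G(C_0(G^{(2)}),C_0(G))$.

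First I would rewrite $e^0\circ\psi$ in terms of $\widetilde\psi$. By construction $\psi$ is the image of $\widetilde\psi$ under the projection onto the second summand in the splitting \eqref{dm vs cm}; concretely, writing $\iota\colon C(M_r)\to D(M_r)$ and $s\colon C_0(G)\to D(M_r)$, $s(a)=(a,\mathsf c(M_r(a)),a)$, for the inclusion and the splitting of \eqref{c d ses}, the class $\psi$ is characterized by $\iota_*\psi=\widetilde\psi-s_*e^1_*\widetilde\psi$, where $e^1\colon D(M_r)\to C_0(G)$ is the evaluation $(a_0,b,a_1)\mapsto a_1$. Introducing the other evaluation $\mathrm{ev}_0\colon D(M_r)\to C_0(G)$, $(a_0,b,a_1)\mapsto a_0$, one has $e^0=\mathrm{ev}_0\circ\iota$ and $\mathrm{ev}_0\circ s=\mathrm{id}_{C_0(G)}$, so that $e^0\circ\psi=(\mathrm{ev}_0)_*\widetilde\psi-e^1_*\widetilde\psi$.

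The heart of the matter is to compute these two endpoint push-forwards of $\widetilde\psi=[\pi_F,F,0]$. Pushing the Hilbert $D(M_r)$-module $F$ forward along $\mathrm{ev}_0$ completes $F$ in the inner product $\mathrm{ev}_0\circ\langle\cdot,\cdot\rangle$, which by the definition of the $D(M_r)$-valued inner product on $F$ is precisely the $F_0$-component $\langle\xi_0,\xi_0'\rangle_{F_0}$. Thus the first-coordinate map $(\xi_0,\eta,\xi_1)\mapsto\xi_0$ is isometric, and the boundary condition $\eta(0)=(\xi_0\otimes 1_{\K_r},0)$ together with the path $V_t$ shows it has dense range; this gives $F\otimes_{\mathrm{ev}_0}C_0(G)\cong F_0$ intertwining $\pi_F$ with $\pi_0$, so $(\mathrm{ev}_0)_*\widetilde\psi=[\pi_0,F_0,0]$. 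The same argument at the other endpoint yields $e^1_*\widetilde\psi=[\pi_1,F_1,0]$. Combining these with \eqref{delta1 diff} gives $e^0\circ\psi=[\pi_0,F_0,0]-[\pi_1,F_1,0]=\delta_1$, as required.

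I expect the main obstacle to be the endpoint identifications of the last paragraph: one must unwind the boundary conditions defining $F$, make rigorous sense of the symbols $\xi_i\otimes 1_{\K_r}$ despite $\K_r$ being non-unital (as in the footnote to the definition of $F$), and check that the coordinate projections have dense range onto $F_0$ and $F_1$. By comparison, the bookkeeping identity $e^0\circ\psi=(\mathrm{ev}_0)_*\widetilde\psi-e^1_*\widetilde\psi$ and the appeal to the uniqueness in \cite[Lemma 3.2]{Meyer2008} are routine once the conventions in \eqref{c d ses} and \eqref{dm vs cm} are fixed.
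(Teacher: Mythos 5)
Your proposal is correct and takes essentially the same route as the paper: both reduce the lemma, via the uniqueness statement in the proof of \cite[Lemma 3.2]{Meyer2008}, to the identity $\psi\otimes[e^0]=\delta_1$, and both verify it by passing through the double $D(M_r)$ with its splitting $s$ and the two evaluation maps $(a_0,b,a_1)\mapsto a_j$ (your $\mathrm{ev}_0$ and $e^1$ are the paper's $f^0$ and $f^1$, and your bookkeeping identity $e^0\circ\psi=(\mathrm{ev}_0)_*\widetilde{\psi}-e^1_*\widetilde{\psi}$ is exactly the paper's computation $\psi\otimes[e^0]=\widetilde{\psi}\otimes([f^0]-[f^1])$), before concluding with the formula $\delta_1=[\pi_0,F_0,0]-[\pi_1,F_1,0]$ of line \eqref{delta1 diff}. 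Your endpoint identifications $(\mathrm{ev}_0)_*\widetilde{\psi}=[\pi_0,F_0,0]$ and $e^1_*\widetilde{\psi}=[\pi_1,F_1,0]$ are precisely the step the paper asserts without detail, and your sketch is sound, with one small simplification available: dense range of the coordinate projections follows already from a scalar cut-off path $\eta(t)=f(t)\,(\xi_0\otimes 1_{\K_r},0)$ with $f(0)=1$, $f(1)=0$, so the path $V_t$ is not needed there.
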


\begin{proof}
As we already noted, the proof of \cite[Lemma 3.2]{Meyer2008} shows that $\psi_1$ always exists, and is uniquely determined by the property $\psi_1\otimes [e^0]=\delta_1$.  We thus need to show that $\psi\otimes [e^0]=\delta_1$.  

We have canonical quotient maps $f^j:D(M_r)\to C_0(G)$ defined by $f^j:(a_0,b,a_1)\mapsto a_j$ for $j\in \{0,1\}$.  Let $s:C_0(G)\to D(M_r)$, $a\mapsto (a,\mathsf{c}(M_r(a)),a)$ be the canonical splitting of the short exact sequence from line \eqref{c d ses}.  Clearly $f^0\circ s=f^1\circ s$, whence the map 
$$
\cdot\otimes ([f^0]-[f^1]) : KK^G(C_0(G^{(2)}),D(M))\to KK^G(C_0(G^{(2)}),C_0(G))
$$
vanishes on $s_*(KK^G(C_0(G^{(2)}),C_0(G)))$.  Let $i:C(M_r)\to D(M_r)$, $(a_0,b)\mapsto (a_0,b,0)$ be the canonical inclusion.  Then according to the isomorphism in line \eqref{dm vs cm} we have $\widetilde{\psi}=i_*(\psi)\oplus s_*(\alpha)$ for some $\alpha\in KK^G(C_0(G^{(2)}),C_0(G))$, so the above-discussed vanishing of $\cdot\otimes ([f^0]-[f^1])$ on the image of $s_*$ gives
$$
\widetilde{\psi}\otimes ([f^0]-[f^1])=i_*(\psi)\otimes ([f^0]-[f^1])=\psi\otimes ([f^0\circ i]-[f^1\circ i]).
$$
On the other hand, we clearly have $f^0\circ i=e^0$ and $f^1\circ i=0$, so the above implies that 
$$
\widetilde{\psi}\otimes ([f^0]-[f^1])=\psi\otimes [e^0]
$$
It thus suffices to show that $\widetilde{\psi}\otimes ([f^0]-[f^1])=\delta_1$.  For this, we note that $\widetilde{\psi}[f^j]=[F_j,\pi_j,0]$ for $j\in \{0,1\}$ so we need to show that 
$$
\delta_1=[\pi_0,F_0,0]-[\pi_1,F_1,0];
$$
this is exactly the formula in line \eqref{delta1 diff}, so we are done.
\end{proof}

Our next goal is to compute the image of the diagram in line \eqref{ghost} under descent.  Unfortunately, this necessitates more notation.  Let $\iota:C_0(G^0)\to C^*_r(G)$ denote the canonical inclusion, and let $C(\iota)$ and $D(\iota)$ be the corresponding $C^*$-algebras from lines \eqref{c phi} and \eqref{d phi}.  Define $X_D$ to consist of all triples $(\xi_0,\eta,\xi_1)$ in 
$$
L^2(G,s)\oplus I \big(L^2(G,s)\otimes_\iota C^*_r(G)\big)\oplus L^2(G,s)
$$
such that $\xi_i\otimes 1_{C^*_r(G)}=\eta(i)$ for $i\in \{0,1\}$.  Direct checks based on Lemma \ref{desc mor} shows that this is canonically a Morita equivalence $D(M_r)\rtimes_r G$-$D(\iota)$-bimodule.   Similarly, if $X_C$ consists of all pairs in $L^2(G,s)\oplus I\big(L^2(G,s)\otimes_\iota C^*_r(G)\big)$ such that $\xi_0\otimes 1_{C^*_r(G)}=\eta(0)$ and $\eta(1)=0$, we see that $X_C$ is a Morita equivalence $C(M_r)\rtimes_r G$-$C(\iota)$ bimodule.  Moreover, if $X_G$ is the Morita equivalence $C_0(G)\rtimes_rG$-$C_0(G^0)$ bimodule from Lemma \ref{mor1}, then the following diagram (built from the general short exact sequence of line \eqref{c d ses}) is easily seen to commute in $KK$
\begin{equation}\label{c d mor}
\xymatrix{ 0 \ar[r] \ar@{=}[d] & C(M_r)\rtimes_r G \ar[d]_{\cong}^-{X_C} \ar[r] & D(M_r)\rtimes_r G \ar[d]_{\cong}^-{X_D} \ar[r] & C_0(G)\rtimes_r G \ar[r] \ar[d]_{\cong}^-{X_G}  & 0 \ar@{=}[d] \\
0 \ar[r] & C(\iota) \ar[r] & D(\iota) \ar[r] & C_0(G^0) \ar[r] & 0. }
\end{equation}

For ease of notation let $E_s$ and $E_r$ denote the Hilbert $C_r^*(G)$-modules $L^2(G,s)\otimes_\iota C_r^*(G)$ and $L^2(G,r)\otimes_\iota C_r^*(G)$, respectively.
Let $E$ be the collection of triples $(\xi_0,\eta,\xi_1)$ in 
\begin{equation}\label{epsi}
L^2(G,s)\oplus I \big(E_s\oplus E_r\big)\oplus L^2(G,r)
\end{equation}
such that $(\xi_0\otimes 1_{C^*_r(G)},0)=\eta(0)$ and $(0,\xi_1\otimes 1_{C^*_r(G)})=\eta(1)$; this is a Hilbert $D(\iota)$-module in the natural way.  It is moreover equipped with a left $C_0(G)$-action defined as follows.   Let $w: E_s\to E_r$ be the unitary isomorphism from Lemma \ref{unitary}, and define
$$
v:=\begin{pmatrix} 0 & w^* \\ w & 0 \end{pmatrix} \in \mathcal{L}(E_s\oplus E_r).
$$
As in the discussion defining $\widetilde{\psi}$, we may write $v=p-q$ for complementary projections $p$ and $q$, and define $v_t:=p+e^{i\pi t}q$.  Then the formula 
\begin{equation}\label{pie}
\pi_E:=M_s\oplus v_t(M_s\otimes 1_{C^*_r(G)} ,0)v_t^* \oplus M_r
\end{equation}
defines the desired $C_0(G)$-action on $E$.  Define $\widetilde{\Psi}:=[\pi_E,E,0]\in KK(C_0(G),D(\iota))$, and define $\Psi$ to be the image of $\widetilde{\Psi}$ in $KK(C_0(G),C(\iota))$ under the canonical quotient map arising from the direct sum decomposition 
$$
KK(C_0(G),D(\iota))\cong KK(C_0(G),C(\iota))\oplus KK(C_0(G),C_0(G^0))
$$
that in turn arises from the split short exact sequence in line \eqref{c d ses}.  

The next lemma is the last main ingredient needed to compute $\mu_1$.  To state it, let $\iota:\Sigma C^*_r(G)\to C(\iota)$ and $e^0:C(\iota)\to C_0(G^0)$ be the canonical maps associated to the mapping cone.  Let also $[s]$ and $[r]$ respectively denote the elements $[M_s,L^2(G,s),0]$ and $[M_r,L^2(G,r),0]$ of $KK(C_0(G),C_0(G^0))$.

\begin{lemma}\label{p tower desc}
After applying the canonical Morita isomorphisms $C(M_r)\rtimes_r G \stackrel{X_C}{\sim} C(\iota)$ discussed above, the Morita isomorphism $\K_r\rtimes_r G\stackrel{X_{G,\K}}{\sim} C^*_r(G)$ of Lemma \ref{desc mor}, and $C_0(G)\rtimes_r G\stackrel{X_G}{\sim} C_0(G^0)$ and $C_0(G^{(2)})\rtimes_r G\stackrel{X_{G^{(2)}}}{\sim} C_0(G)$ of Lemma \ref{mor1}, the image of the commutative diagram \eqref{ghost} above under descent $j_G:KK^G\to KK$ identifies with 
\begin{equation}\label{ghost2}
\xymatrix{ C^*_r(G)  \ar[rr]|\circ^{\iota} & & C(\iota) \ar[dl]^-{e^0} &    \\
& \ar[ul]^-{\iota} C_0(G^0) &  & \ar[ll]^-{[s]-[r]}  C_0(G) \ar[ul]^-{\Psi} }.
\end{equation}
\end{lemma}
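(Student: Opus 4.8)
The plan is to descend the entire diagram \eqref{ghost} one morphism at a time, matching each arrow to its counterpart in \eqref{ghost2} under the stated Morita equivalences, and to dispose of the crucial map $\psi_1$ by descending the auxiliary class $\widetilde\psi$ and comparing it with $\widetilde\Psi$. First I would treat the three ``corner'' morphisms. The identification of $j_G(M_r)$ with the inclusion $\iota\colon C_0(G^0)\to C^*_r(G)$ under $C_0(G)\rtimes_r G\stackrel{X_G}{\sim}C_0(G^0)$ and $\K_r\rtimes_r G\stackrel{X_{G,\K}}{\sim}C^*_r(G)$ is exactly the computation already carried out in the proof identifying $\mu_0$, together with Lemma \ref{desc mor}. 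For the two mapping-cone structure maps $\iota\colon\K_r\to C(M_r)$ and $e^0\colon C(M_r)\to C_0(G)$, I would invoke naturality of descent against the commuting diagram \eqref{c d mor}: since $X_C$, $X_D$, $X_{G,\K}$ and $X_G$ intertwine its two horizontal short exact sequences, the descended structure maps of $C(M_r)\rtimes_r G$ are carried precisely to the structure maps $\iota$ and $e^0$ of $C(\iota)$.

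Next I would compute $j_G(\delta_1)$. Using $\delta_1=[\pi_0,F_0,0]-[\pi_1,F_1,0]$ from line \eqref{delta1 diff}, it suffices to identify the descents of the two summands. By the same explicit module identifications already used in Lemmas \ref{mor1}, \ref{desc mor} and \ref{unitary}, I would exhibit unitary isomorphisms carrying $(F_0\rtimes_r G)\otimes_{C_0(G)\rtimes_r G}X_G$ to $L^2(G,s)$ and intertwining $\pi_0\rtimes_r G$ with $M_s$, and $(F_1\rtimes_r G)\otimes_{C_0(G)\rtimes_r G}X_G$ to $L^2(G,r)$ intertwining $\pi_1\rtimes_r G$ with $M_r$. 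After also applying $C_0(G^{(2)})\rtimes_r G\stackrel{X_{G^{(2)}}}{\sim}C_0(G)$ on the source, these give $[\pi_0,F_0,0]\mapsto[s]$ and $[\pi_1,F_1,0]\mapsto[r]$, whence $j_G(\delta_1)$ is carried to $[s]-[r]$.

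The main work is the identification of $j_G(\psi_1)$ with $\Psi$, and here I would work at the level of the doubles, where the two constructions are manifestly parallel. The modules $F$ and $E$, and the representations $\pi_F$ and $\pi_E$, are assembled in the same way from the three pieces $F_0,F_\K,F_1$ and $L^2(G,s),\,I(E_s\oplus E_r),\,L^2(G,r)$ respectively. My plan is to produce a unitary $(F\rtimes_r G)\otimes_{D(M_r)\rtimes_r G}X_D\cong E$ intertwining $\pi_F\rtimes_r G$ with $\pi_E$. On the outer summands this is precisely the content of the previous paragraph; on the middle summand $F_\K=I\big((F_0\otimes_{M_r}\K_r)\oplus(F_1\otimes_{M_r}\K_r)\big)$ I would use the commuting square of Lemma \ref{unitary} to identify, after descent and the Morita equivalences, $(F_0\otimes_{M_r}\K_r)\rtimes_r G$ with $E_s$ and $(F_1\otimes_{M_r}\K_r)\rtimes_r G$ with $E_r$, so that $F_\K$ descends to $I(E_s\oplus E_r)$ and the gluing conditions defining $F$ match those defining $E$. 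Crucially, that same square asserts that $j_G(u)\otimes 1$ corresponds to $1\otimes w$, which is exactly what forces the self-adjoint unitary $V$ to be carried to $v$, hence $V_t$ to $v_t$, so that $\pi_\K$ descends to $v_t(M_s\otimes 1_{C^*_r(G)},0)v_t^*$ as in \eqref{pie}. This yields $j_G(\widetilde\psi)\cong\widetilde\Psi$ under $X_D$; passing to the quotient along the split sequence \eqref{c d ses}, compatibly via \eqref{c d mor}, then gives $j_G(\psi_1)=j_G(\psi)\cong\Psi$ under $X_C$, and Lemma \ref{p tower} identifies this $\psi$ with the phantom-tower map $\psi_1$.

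The hard part will be the middle summand of the double: correctly threading the descent of $F_\K$ and of the rotation path $V_t$ through the several Morita bimodules and the unitary $w$, and verifying that all boundary-matching and intertwining conditions survive. Since everything ultimately rests on the commuting square of Lemma \ref{unitary}, the genuine difficulty is disciplined bookkeeping of tensor factors rather than any new conceptual input.
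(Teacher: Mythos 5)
Your proposal is correct and takes essentially the same route as the paper: reduce everything to the doubles via the commuting diagram \eqref{c d mor}, then identify $(F\rtimes_r G)\otimes_{D(M_r)\rtimes_r G}X_D$ with $L^2(G^{(2)},\mathrm{pr})\otimes E$ componentwise using the unitaries of Lemmas \ref{desc mor} and \ref{unitary}, with the commuting square of Lemma \ref{unitary} forcing $V_t\mapsto v_t$ and hence the descent of $\pi_\K$ to the rotated representation in \eqref{pie}. The only cosmetic difference is that you compute $j_G(\delta_1)$ directly, whereas the paper reads off the label $[s]-[r]$ as a byproduct of the double-level identity via $\widetilde{\Psi}\otimes([f^0]-[f^1])=[s]-[r]$; both amount to the same module identifications.
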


\begin{proof}
That the left triangle in line \eqref{ghost} has image equal to the left triangle in line \eqref{ghost2} under descent (and modulo the given Morita equivalences) follows from the commutative diagram of short exact sequences in $KK$
\begin{equation}\label{two cones}
\xymatrix{ 0 \ar[r] \ar@{=}[d] &\Sigma \K_r\rtimes_r G  \ar[d]_{\cong}^-{X_{G,\K}} \ar[r] & C(M_r)\rtimes_r G \ar[d]_{\cong}^-{X_C} \ar[r] & C_0(G)\rtimes_r G \ar[r] \ar[d]_{\cong}^-{X_G}  & 0 \ar@{=}[d] \\
0 \ar[r] & \Sigma C^*_r(G) \ar[r] & C(\iota) \ar[r] & C_0(G^0) \ar[r] & 0. }
\end{equation}

We next claim that $j_G(\psi)\otimes_{C(M_r)\rtimes_rG} [X_C]= [X_{G^{(2)}}]\otimes_{C_0(G)}\Psi$.  Thanks to the commutative diagram in line \eqref{c d mor} above, it suffices to show that 
\begin{equation}\label{desid}
j_G(\widetilde{\psi})\otimes_{D(M_r)\rtimes_rG} [X_D]= [X_{G^{(2)}}]\otimes_{C_0(G)}\widetilde{\Psi}.
\end{equation}
This will moreover show that the bottom horizontal arrow in line \eqref{ghost2} is correctly labeled, as it is clear that if $f^0,f^1:D(\iota)\to C_0(G^0)$ are the canonical $*$-homomorphisms, then $\widetilde{\Psi}\otimes ([f^0]-[f^1])=[s]-[r]$; thus to complete the proof it suffices to establish the identity in line \eqref{desid}.

The Kasparov product $j_G(\widetilde{\psi})\otimes_{D(M_r)\rtimes_rG} [X_D]$ is represented by the triple
$$\big(\pi_F\rtimes_r G\otimes 1,F\rtimes_r G\otimes X_D, 0\big).$$
Our first goal is to identify this triple with the triple
$$
\big(\kappa_2\otimes  (1\oplus v_t(1,0)v_t^*\oplus 1),L^2(G^{(2)},\mathrm{pr})\otimes_{\Tilde{\pi}_E}E,0\big), 
$$
where we emphasise that we are using the representation $\Tilde{\pi}_E=(M_s,I(M_s\otimes 1,M_r\otimes 1),M_r)$ (as opposed to $\pi_E$ defined in line (\ref{pie})).

In fact we will deal with the ambient modules of $E$ and $F$ respectively, which allows us to treat each component separately to improve readability.
We first deal with the first and third components. For these we have isomorphisms
\begin{equation}\label{desc delta 01}
    (F_0\rtimes_r G)\otimes_{\kappa_1}L^2(G,s)\rightarrow L^2(G^{(2)},\mathrm{pr})\otimes_{M_s} L^2(G,s)
\end{equation}
\begin{equation}\label{desc delta 11}
    (F_1\rtimes_r G)\otimes_{\kappa_1}L^2(G,s)\rightarrow L^2(G^{(2)},\mathrm{pr})\otimes_{M_r} L^2(G,r)
\end{equation}
identifying the first and third components of $F\rtimes_r G\otimes X_D$ and $L^2(G^{(2)},\mathrm{pr})\otimes_{\Tilde{\pi}_E} E$, respectively.
These isomorphisms are produced in a similar fashion so will only explain the procedure for (\ref{desc delta 11}):
We apply the isomorphism $\omega$ from Lemma \ref{desc mor} to the transformation groupoid $G_r\!\times_{r} G$ of the left action of $G$ on itself (in place of $G$) to obtain an isomorphism
$$L^2(G_r\!\times_{r} G,S)\otimes_{C_0(G)} C_0(G)\rtimes_{\mathrm{lt}}G\stackrel{\cong}{\rightarrow} F_1\rtimes_r (G_r\!\times_{r} G)\cong F_1\rtimes_r G,$$
where $S:G_r\!\times_{r} G\rightarrow G$ denotes the source map of $G_r\!\times_{r} G$ given by $S(g,h)=g^{-1}h$.
Using this we get a chain of identifications
\begin{align*}
(F_1\rtimes_r G) \otimes_{\kappa_1} L^2(G,s)& \cong
L^2(G_r\!\times_{r} G, S)\otimes_{M_s} C_0(G)\rtimes_rG\otimes_{\kappa_1} L^2(G,s)\\
&\cong L^2(G_r\!\times_{r} G,S)\otimes_{M_s} L^2(G,s)\\
&\cong L^2(G^{(2)},\mathrm{pr}) \otimes_{M_s} L^2(G,s)
\end{align*}
A tedious but routine calculation shows that this isomorphism intertwines the actions $\pi_1\rtimes G \otimes 1$ and $\kappa_2\otimes 1$.

It remains to identify the middle components of $F\rtimes_r G\otimes X_D$ and $L^2(G^{(2)},\mathrm{pr})\otimes_{\Tilde{\pi}_E} E$, respectively.
For every $t\in[0,1]$ we have the following chain of isomorphisms, where we use
Lemma \ref{desc mor} in line 2 and the isomorphisms $\Theta_0$ and $\Theta_1$ from Lemma \ref{unitary} in line 4.
\begin{align*} &(F_0\otimes_{M_r}\K_r\oplus F_1\otimes_{M_r} \K_r)\rtimes_r G\otimes_{Ad\, \omega} E_s \\ \cong & (F_0\otimes_{M_r}\K_r\oplus F_1\otimes_{M_r} \K_r)\rtimes_r G\otimes_{id} (L^2(G,r)\rtimes_r G) \\
 \cong & \big((F_0\otimes_{M_r}\K_r)\rtimes_r G \otimes_{id}L^2(G,r)\rtimes_r G\big)\oplus\big((F_1\otimes_{M_r}\K_r)\rtimes_r G \otimes_{id}L^2(G,r)\rtimes_r G\big)\\
 \cong & L^2(G^{(2)},\mathrm{pr})\otimes_{M_s \otimes 1}E_s \oplus L^2(G^{(2)},\mathrm{pr})\otimes_{M_r \otimes 1} E_r\\
 \cong & L^2(G^{(2)},\mathrm{pr})\otimes_{(M_s \otimes 1,M_r \otimes 1)}(E_s\oplus E_r)
\end{align*}
The commutative diagram in Lemma \ref{unitary} together with the construction of the families of unitaries $(V_t)_t$ and $(v_t)_t$ imply that $(\Theta_0\oplus \Theta_1)((V_t\rtimes_r G\otimes 1_{L^2(G,r)\rtimes G})=(1_{L^2(G^{(2)},\mathrm{pr})}\otimes v_t)(\Theta_0\oplus \Theta_1)$ and hence the chain of isomorphisms above intertwines the representations $(V_t(\pi_0\otimes 1,0)V_t^*)\rtimes_r G\otimes 1$ and $\kappa_2\otimes v_t(1,0)v_t^*$ of $C_0(G^{(2)})\rtimes_r G$. This completes the identification of the Kasparov triples.

Finally, we apply a standard trick in $KK$-theory to replace the representing module by a non-degenerate one, i.e. we pass to the module
$$\overline{\big(\kappa_2\otimes  (1,v_t(1,0)v_t^*,1)\big)\big(L^2(G^{(2)},\mathrm{pr})\otimes_{\Tilde{\pi}_E} E\big)}.
$$
The latter module however is easily seen to be isomorphic to
$$L^2(G^{(2)},\mathrm{pr})\otimes_{\pi_E}E,
$$
in such a way that the isomorphism intertwines the representations $\kappa_2\otimes  (1,v_t(1,0)v_t^*,1)$ and $\kappa_2\otimes 1_E$.
\end{proof}

Finally, we are ready to give our concrete formula for the comparison map $\mu_1$. 
 
\begin{proposition}\label{mu1 final}
The map $\mu_1$ from Lemma \ref{mu1 descr} agrees with the canonical homomorphism from Theorem \ref{mu1 exist}.
\end{proposition}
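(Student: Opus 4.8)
The plan is to reduce the whole comparison to the concrete mapping-cone picture supplied by Lemma \ref{p tower desc}, and then to match the resulting map on $K$-theory with the homomorphism $\tilde\rho$ of Lemma \ref{Lemma:ConstructionH1}. First I would invoke Lemma \ref{p tower desc}: applying descent $j_G$ together with the Morita equivalences of Lemmas \ref{mor1} and \ref{desc mor} turns the phantom-tower data of Lemma \ref{mu1 descr} into the diagram \eqref{ghost2}. The key observation is that the left-hand triangle of \eqref{ghost2} is precisely the mapping-cone exact triangle of the inclusion $\iota\colon C_0(G^0)\to C^*_r(G)$; hence $N_1$ descends to $C(\iota)$ and, by definition, $J_0(N_1)\cong K_0(C(\iota))=K_0(C_0(G^0),C^*_r(G);\iota)$. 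By Theorem \ref{p:relKthunital} this group is canonically $V_\infty(C_0(G^0),C^*_r(G))/\!\!\approx$, and under this identification the inclusion $J_1(\iota_0^1)\colon K_1(C^*_r(G))\hookrightarrow J_0(N_1)$ becomes exactly the map $K_1(C^*_r(G))\to V_\infty(C_0(G^0),C^*_r(G))/\!\!\approx$ appearing in the exact sequence of Theorem \ref{p:relKthunital}. Likewise the bottom arrow $[s]-[r]$ of \eqref{ghost2} induces on $K_0$ the map $s_*-r_*$, which is the boundary $\partial_1$, so the subquotient on which $\mu_1$ from Lemma \ref{mu1 descr} is defined is literally $\ker(\partial_1)/\operatorname{im}(\partial_2)=H_1(G)$.

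With these identifications in place, the spectral-sequence map $\mu_1$ of Lemma \ref{mu1 descr} is nothing but the homomorphism $\Z[G]=K_0(C_0(G))\xrightarrow{K_0(\Psi)}K_0(C(\iota))=V_\infty(C_0(G^0),C^*_r(G))/\!\!\approx$ restricted to $\ker(\partial_1)$ and co-restricted along the inclusion of $K_1(C^*_r(G))$, where $\Psi$ is the descended class of $\psi_1$ computed in Lemma \ref{p tower desc}. Comparing this with the construction in Theorem \ref{mu1 exist}, which is extracted from the commutative ladder \eqref{Equation:CommutativeDiagram} whose middle vertical arrow is $\rho$ (the factorisation of $\tilde\rho$ through $H_1(G,G^0)$), I see that it suffices to prove the single identity $K_0(\Psi)=\tilde\rho$ as homomorphisms $\Z[G]\to V_\infty(C_0(G^0),C^*_r(G))/\!\!\approx$. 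Granting this, the co-restriction of $K_0(\Psi)$ is exactly $\rho|_{H_1(G)}$ followed by the inclusion of $K_1(C^*_r(G))$, which is precisely the dashed arrow of \eqref{Equation:CommutativeDiagram}; by exactness this arrow is unique, so the two maps $H_1(G)\to K_1(C^*_r(G))$ coincide.

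Third, I would prove $K_0(\Psi)=\tilde\rho$ by evaluating both sides on the generators $[1_W]$ of $\Z[G]=K_0(C_0(G))$, with $W$ a compact open bisection. The class $\Psi=[\pi_E,E,0]$ is represented by the $\ast$-homomorphism $\pi_E$ of \eqref{pie} into the compacts on the Hilbert $C(\iota)$-module $E$ of \eqref{epsi}, so $K_0(\Psi)([1_W])$ is the class of the finitely generated projective module $\pi_E(1_W)E$. Because $W$ is a bisection, $M_s(1_W)L^2(G,s)\cong 1_{s(W)}C_0(G^0)$ and $M_r(1_W)L^2(G,r)\cong 1_{r(W)}C_0(G^0)$, while the interpolating middle component is governed by the rotation path $v_t$ built from the isomorphism $w\colon E_s\to E_r$ of Lemma \ref{unitary} (the one induced by $(g,h)\mapsto(g,g^{-1}h)$). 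Pushing this projective module through the isomorphism $\eta$ of Theorem \ref{p:relKthunital}, i.e.\ equation \eqref{etadef}, converts the homotopy implemented by $v_t$ into the partial isometry $1_{W^{-1}}=1_W^*$, yielding the triple $(1_{s(W)},1_W^*,1_{r(W)})$; that is, $K_0(\Psi)([1_W])=[1_W^*]=\tilde\rho(1_W)$. Since both $K_0(\Psi)$ and $\tilde\rho$ are group homomorphisms determined by their values on such generators, the identity $K_0(\Psi)=\tilde\rho$ follows, and with it the proposition.

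I expect the genuine obstacle to be this last computation: carefully unwinding the definition of $\Psi$ through the representation $\pi_E$, the unitary $w$, and the rotation path $v_t$, and then tracking the resulting projective module under the isomorphism $\eta$ of Theorem \ref{p:relKthunital}, so that the twist $g\mapsto g^{-1}h$ is seen to produce exactly the adjoint bisection $W^{-1}$ and hence the class $[1_W^*]$. The bookkeeping here is of the same lengthy nature as in the proofs of Lemmas \ref{unitary} and \ref{p tower desc}; it is worth noting that the rotation homotopies underlying Remark \ref{vab rem} are exactly what make the outcome independent of the auxiliary choices. Once the generator computation is pinned down, the matching of the two constructions is purely formal, using only exactness in \eqref{Equation:CommutativeDiagram} and the uniqueness of the dashed arrow there.
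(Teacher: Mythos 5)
Your proposal is correct and follows essentially the same route as the paper: after the reductions via Lemma \ref{p tower desc} and the identification of relative $K$-theory in Theorem \ref{p:relKthunital}, the paper likewise reduces to checking on generators that $\eta([1_V]\otimes\Psi)=[1_V^*]$ for every compact open bisection $V$, and carries out exactly the computation you sketch (the projective module $\pi_E(1_V)E$, the rotation path $p_t=u_tp_0u_t^*$, and the evaluation of $\eta$ from line \eqref{etadef} giving $\bigl[\begin{smallmatrix}1_{s(V)}&0\\0&0\end{smallmatrix}\bigr]u_1=\bigl[\begin{smallmatrix}0&1_V^*\\0&0\end{smallmatrix}\bigr]$). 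The only difference is expository: you spell out the formal identifications (the exact-sequence matching, $[s]-[r]$ inducing $\partial_1$, and uniqueness of the dashed arrow in \eqref{Equation:CommutativeDiagram}) that the paper compresses into the opening sentence of its proof.
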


\begin{proof}
According to the description in Theorem \ref{mu1 exist}, it will suffice to show that for any compact open bisection $V$ in $G$, if $\eta$ is as in Theorem \ref{p:relKthunital} we have that $\eta([1_V]\otimes \Psi)=[1_V^*]$ in $V_\infty(C_0(G^0),C^*_r(G))/\approx$.  

Now, if we write $\lambda_V:\C\to \mathcal{L}(\pi_E(1_V)E)$ for the unital scalar representation, then 
$$ 
[1_V]\otimes \widetilde{\Psi} = [\lambda_V, \pi_E(1_V)E , 0].
$$ 
Recalling (see lines \eqref{epsi} and  \eqref{pie}) that $\pi_E=M_s\oplus v_t(M_s\otimes 1_{C^*_r(G)},0)v_t^*\oplus M_r$, we compute that $M_s(1_V)\cdot L^2(G,s)\cong 1_{s(V)}C_0(G^0)$ as a right $C_0(G^0)$-module, and that $M_r(1_V)\cdot L^2(G,r)\cong 1_{r(V)}C_0(G^0)$ as a right $C_0(G^0)$-module.  On the other hand $v_t(M_s(1_V)\otimes 1_{C^*_r(G)},0)v_t^*\cdot (L^2(G,s)\otimes_\iota C^*_r(G)\oplus L^2(G,r)\otimes_\iota C^*_r(G))$ is isomorphic to $p_t(C^*_r(G)\oplus C^*_r(G))$ as a right $C^*_r(G)$-module, where 
$$
p_t:=\frac{1}{2}\begin{pmatrix} (1+\cos(\pi t))1_{s(V)} & -i\sin(\pi t)1_V^* \\ i\sin(\pi t)1_V & (1-\cos(\pi t)) 1_{r(V)}\end{pmatrix}\in M_2(C^*_r(G)).  
$$
It follows that $[1_V]\otimes \widetilde{\Psi}\in KK(\C,D(\iota))$ is represented by the class of the projection 
$$
\begin{pmatrix} 1_{s(V)} & 0 \\ 0 & 0 \end{pmatrix} \oplus (p_t)_{t\in [0,1]} \oplus \begin{pmatrix} 0 & 0 \\ 0 & 1_{r(V)} \end{pmatrix}\in M_2(D(\iota))
$$
On the other hand, note that $p_t=u_tp_0u_t^*$, where
$$
u_t:=\frac{1}{2}\begin{pmatrix} 2-1_{s(V)}(1-e^{i \pi  t}) & 1_{V}^*(1-e^{i \pi t }) \\ 1_V(1-e^{i\pi t}) & 2 - 1_{r(V)}(1-e^{i \pi t}) \end{pmatrix},
$$
so $(u_t)_{t\in [0,1]}$ defines a continuous path of unitaries in $M_2(C^*_r(G))$ (or in the unitization of this $C^*$-algebra if it is not unital).  It follows from the definition of $\eta$ given in line (\ref{etadef}) that 
$$
\eta([1_V]\otimes \widetilde{\Psi} )=\Bigg[\begin{pmatrix} 1_{s(V)} & 0 \\ 0 & 0 \end{pmatrix} u_1\Bigg].
$$
Computing, 
$$
\begin{pmatrix} 1_{s(V)} & 0 \\ 0 & 0 \end{pmatrix} u_1=\begin{pmatrix} 0 & 1_{V}^* \\ 0 & 0 \end{pmatrix},
$$
so we see that $\eta([1_V]\otimes \widetilde{\Psi})=[1_{V}^*]$.
\end{proof}

\subsection{The HK-conjecture in low dimensions}\label{asdim}

In this subsection, we apply the previous result and Theorem \ref{main vanishing} to deduce consequences for the HK conjecture.

As discussed in Subsection \ref{pyss sec}, a recent result of Proietti and Yamashita in \cite{Proietti:2021wz} established the existence of a convergent spectral sequence
\begin{equation}\label{Equation:Spectral sequence}
E_{p,q}^2=H_p(G,K_q(A))\Rightarrow K_{p+q}(A\rtimes_r G)
\end{equation}
for any $G$-algebra $A$, provided that $G$ is a second countable ample groupoid with torsion free isotropy, which satisfies the strong Baum-Connes conjecture.

Combining this with our results from previous sections we obtain the following application to the HK-conjecture.  The reader should compare this to \cite[Remark 4.5]{Proietti:2021wz}, which establishes a similar result on the HK conjecture under a vanishing hypothesis on $H_k(G)$ for $k\geq 3$; the main difference between Theorem \ref{Cor:HK conjecture up to dimension 2} below and \cite[Remark 4.5]{Proietti:2021wz} is that the former gives a concrete criterion when vanishing holds.

\begin{theorem}\label{Cor:HK conjecture up to dimension 2}
	Let $G$ be a second countable principal ample groupoid with dynamic asymptotic dimension at most two.
	Then there is a short exact sequence
	$$0\rightarrow H_0(G)\stackrel{\mu_0}{\rightarrow} K_0(C_r^*(G))\rightarrow H_2(G)\rightarrow 0,$$
	and $\mu_1:H_1(G)\rightarrow K_1(C_r^*(G))$ is an isomorphism.
	If moreover $H_2(G)$ is free (e.g.\ if it is finitely generated), then the HK-conjecture holds for $G$, i.e.
	$$K_0(C_r^*(G))\cong H_0(G)\oplus H_2(G)\text{ and }K_1(C_r^*(G))\cong H_1(G).$$
\end{theorem}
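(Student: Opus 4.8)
The plan is to derive the theorem by specialising the Proietti--Yamashita spectral sequence \eqref{Equation:Spectral sequence} to the coefficient algebra $A=C_0(G^0)$, for which $A\rtimes_r G=C_r^*(G)$, and then collapsing it using the vanishing result of Theorem \ref{main vanishing}. The first step is to check that the hypotheses of \cite{Proietti:2021wz} hold for $G$. Second countability and ampleness are assumed. Since $G$ is principal, every isotropy group is trivial, hence torsion-free. The remaining requirement is that $G$ satisfy the strong Baum--Connes conjecture, and I would deduce this from the standing assumption that $G$ has finite dynamic asymptotic dimension. This last point is the only genuinely external input, and I expect it to be the main obstacle: finite dynamic asymptotic dimension does \emph{not} force amenability (see the non-amenable instances feeding Corollary \ref{Cor:H}), so one cannot appeal to amenability and must invoke the implication ``finite dynamic asymptotic dimension $\Rightarrow$ strong Baum--Connes'' directly.

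Granting the spectral sequence, I would next identify the $E^2$ page. As $G^0$ is totally disconnected we have $K_1(C_0(G^0))=0$ and $K_0(C_0(G^0))\cong \Z[G^0]$ as a $G$-module, so $E^2_{p,q}=H_p(G,K_q(C_0(G^0)))$ equals $H_p(G)$ when $q$ is even and $0$ when $q$ is odd. Since $G$ is second countable it is in particular $\sigma$-compact, so Theorem \ref{main vanishing} applies and gives $H_p(G)=0$ for $p>2$, together with the torsion-freeness of $H_2(G)$. Hence the only possibly nonzero entries of $E^2$ lie in the columns $p\in\{0,1,2\}$ along the even rows.

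Now I would argue that the spectral sequence degenerates at $E^2$. For the homological differential $d^r\colon E^r_{p,q}\to E^r_{p-r,q+r-1}$, the case $r=2$ sends an even-row source into the odd row $q+1$, which vanishes, while for $r\geq 3$ the target column $p-r$ is negative and hence vanishes; in every case $d^r=0$, so $E^2=E^\infty$. Reading off the associated filtration of the abutment $K_n(C_r^*(G))$: for $n=0$ the contributing pairs are $(p,q)=(0,0)$ and $(2,-2)$, giving the short exact sequence
$$0\rightarrow H_0(G)\rightarrow K_0(C_r^*(G))\rightarrow H_2(G)\rightarrow 0,$$
whose left-hand (edge) homomorphism is the map described in Lemma \ref{mu0 descr}, i.e.\ $\mu_0$; for $n=1$ the sole contributing pair is $(1,0)$, so the edge homomorphism $\mu_1\colon H_1(G)\to K_1(C_r^*(G))$ of Lemma \ref{mu1 descr} is an isomorphism. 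This establishes the short exact sequence and the fact that $\mu_1$ is an isomorphism.

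Finally, for the splitting statement, if $H_2(G)$ is free then it is projective as an abelian group, so the short exact sequence above splits and $K_0(C_r^*(G))\cong H_0(G)\oplus H_2(G)$, while $K_1(C_r^*(G))\cong H_1(G)$ is immediate. If instead $H_2(G)$ is only assumed finitely generated, then since it is torsion-free by Theorem \ref{main vanishing} it is automatically free, reducing this case to the previous one. The bulk of the work is thus the verification of the Baum--Connes hypothesis; the collapse of the spectral sequence and the extraction of the exact sequence are then formal consequences of the vanishing and torsion-freeness supplied by Theorem \ref{main vanishing}.
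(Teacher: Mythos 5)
Your argument follows the paper's proof almost verbatim---specialise the Proietti--Yamashita spectral sequence \eqref{Equation:Spectral sequence} to $A=C_0(G^0)$, collapse it at $E^2$ using Theorem \ref{main vanishing}, read off the short exact sequence and the isomorphism $K_1(C_r^*(G))\cong H_1(G)$, identify the maps with $\mu_0$ and $\mu_1$ of Lemmas \ref{mu0 descr} and \ref{mu1 descr}, and split using torsion-freeness when $H_2(G)$ is finitely generated---but it contains one genuine error, located exactly at the point you yourself flag as the main obstacle. You assert that finite dynamic asymptotic dimension does \emph{not} force amenability, pointing to the non-amenable spaces feeding Corollary \ref{Cor:H}, and you conclude that one must invoke the implication ``finite dynamic asymptotic dimension $\Rightarrow$ strong Baum--Connes'' directly, offering no proof or reference for it. This conflates two different notions of amenability. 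In Corollary \ref{Cor:H} it is the \emph{metric space} $X$ that is non-amenable (failure of the F{\o}lner condition, used there only to kill $H_0^{\mathrm{uf}}(X)$ via Block--Weinberger); the coarse groupoid $G(X)$ of a space of finite asymptotic dimension is nevertheless topologically amenable, since finite asymptotic dimension implies property A, and property A is equivalent to amenability of $G(X)$ \cite{Skandalis2002}. More generally, \emph{every} \'etale groupoid with finite dynamic asymptotic dimension is amenable---this follows from the proof of \cite[Theorem A.9]{Guentner:2014bh}---and this is precisely how the paper verifies the hypothesis: amenability yields the strong Baum--Connes conjecture by the main result of \cite{Tu:1999bq}. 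As written, your proof leaves the crucial hypothesis of the spectral sequence unverified, and the only available verification of the strong (triangulated-category) form needed in \cite{Proietti:2021wz} goes through Tu's theorem for amenable groupoids, i.e.\ through the very route you rejected.

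The repair is a one-line substitution: replace your claim with ``finite dynamic asymptotic dimension $\Rightarrow$ amenability $\Rightarrow$ strong Baum--Connes'' as above. With that change the remainder of your argument is correct and coincides with the paper's: the $E^2$-page identification (using that second countability gives $\sigma$-compactness, so Theorem \ref{main vanishing} applies), the degeneration at $E^2$ (your parity-and-column check that all differentials $d^r$ vanish is fine, and is in fact more explicit than the paper's one-sentence collapse claim), the filtration bookkeeping giving the short exact sequence and $K_1(C_r^*(G))\cong H_1(G)$, the identification of the resulting maps with $\mu_0$ and $\mu_1$ (which is built into their construction in Subsection \ref{pyss sec}), and the final splitting step, including the observation that a finitely generated $H_2(G)$ is automatically free because Theorem \ref{main vanishing} makes it torsion-free.
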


\begin{proof}
	First of all we can apply the spectral sequence (\ref{Equation:Spectral sequence}) since $G$ is principal and any groupoid with finite dynamic asymptotic dimension is in particular amenable (this follows from the proof of \cite[Theorem A.9]{Guentner:2014bh}), and hence satisfies the strong Baum-Connes conjecture by the main result of \cite{Tu:1999bq}.
	Since $H_n(G)=0$ for all $n\geq 3$ by Theorem \ref{main vanishing} the spectral sequence (\ref{Equation:Spectral sequence}) collapses on the second page and
	we conclude that $K_1(C_r^*(G))\cong H_1(G)$ and that $K_0(C_r^*(G))$ fits into a short exact sequence
	$$0\rightarrow H_0(G)\rightarrow K_0(C_r^*(G))\rightarrow H_2(G)\rightarrow 0.$$
	
If moreover $H_2(G)$ is free abelian (which holds if it is finitely generated, as it is torsion-free by Theorem \ref{main vanishing}), then the sequence above splits and we get a direct sum decomposition $K_0(C_r^*(G))\cong H_0(G)\oplus H_2(G)$.
\end{proof}

With a view towards the classification program for simple nuclear $C^*$-algebras, we obtain the following consequence.
\begin{corollary}
	Let $G$ be a second countable, principal, ample groupoid with compact base space and dynamic asymptotic dimension at most one. Then 
	$$\mathrm{Ell}(C_r^*(G))=(H_0(G),H_0(G)^+,[1_{G^0}],H_1(G),M(G),p),$$
	where $M(G)$ is the set of all $G$-invariant probability measures on $G^0$ and $p:M(G)\times H_0(G)\rightarrow \RR$ is the pairing given by
	$p(\mu,[f]_0)=\int f d\mu$.
\end{corollary}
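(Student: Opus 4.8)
The plan is to assemble the six entries of the Elliott invariant one at a time, using the $K$-theoretic computation already in hand together with standard facts about traces and the order on $K_0$ for principal ample groupoids. First I would record the $K$-theory. Since $G$ has dynamic asymptotic dimension at most one, Theorem \ref{main vanishing} gives $H_n(G)=0$ for all $n\geq 2$; in particular $H_2(G)=0$. Feeding this into Theorem \ref{Cor:HK conjecture up to dimension 2} (whose hypotheses hold, as dimension at most one is in particular at most two), the short exact sequence there degenerates into an isomorphism $\mu_0\colon H_0(G)\to K_0(C_r^*(G))$, while $\mu_1\colon H_1(G)\to K_1(C_r^*(G))$ is an isomorphism. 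This simultaneously identifies the underlying group of $K_0$ with $H_0(G)$ and the group $K_1$ with $H_1(G)$.

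The unit is immediate: as $G^0$ is compact, $1_{G^0}\in C_0(G^0)$ is the unit of $C_r^*(G)$, and since $\mu_0$ is the map induced by the inclusion $\iota\colon C_0(G^0)\hookrightarrow C_r^*(G)$ we have $\mu_0([1_{G^0}])=[1_{C_r^*(G)}]$. For the order, recall that $H_0(G)^+$ is the image of the positive cone $\Z[G^0]^+$ (functions valued in $\N$) under the quotient $\Z[G^0]\twoheadrightarrow H_0(G)$. As $\mu_0$ sends the class of a characteristic function $1_U$, for $U\subseteq G^0$ compact open, to the class of the projection $1_U\in C_r^*(G)$, we get $\mu_0(H_0(G)^+)\subseteq K_0(C_r^*(G))^+$ at once. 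The reverse inclusion is the crux: it amounts to showing that every projection in $M_\infty(C_r^*(G))$ is Murray--von Neumann equivalent to one in $M_\infty(C_0(G^0))$, for then any positive $K_0$-class equals $\iota_*[q]$ for a diagonal projection $q$ and hence lies in $\mu_0(H_0(G)^+)$ since $\mu_0$ is an isomorphism. I expect this \emph{diagonalisation of projections} to be the main obstacle; it is not homological in nature and should instead be extracted from the regularity forced by finite dynamic asymptotic dimension, namely the dynamical comparison and cancellation properties of $C_r^*(G)$, rather than from the machinery of the preceding sections.

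The tracial simplex and the pairing are handled by the canonical correspondence between tracial states and invariant measures. Let $E\colon C_r^*(G)\to C_0(G^0)$ be the conditional expectation given by restriction of functions to $G^0$. For $\mu\in M(G)$ the formula $\tau_\mu:=\int_{G^0} E(\cdot)\,d\mu$ defines a tracial state, giving an affine map $M(G)\to T(C_r^*(G))$; conversely a trace $\tau$ restricts on $C_0(G^0)$ to a probability measure $\mu$, and the identity $\tau(1_W 1_{W^{-1}})=\tau(1_{W^{-1}}1_W)$ for a compact open bisection $W$ reads $\mu(r(W))=\mu(s(W))$, i.e.\ $G$-invariance. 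Here I would use \emph{principality} to show every trace factors through $E$: for a compact open bisection $W$ disjoint from $G^0$, triviality of the isotropy lets one subdivide $W$ into finitely many compact open bisections $W_i$ with $s(W_i)\cap r(W_i)=\varnothing$, whence $1_{r(W_i)}1_{W_i}=1_{W_i}$ and $1_{W_i}1_{r(W_i)}=0$, so the trace property forces $\tau(1_{W_i})=0$ and thus $\tau=\tau\circ E$. This makes $\mu\mapsto\tau_\mu$ an affine homeomorphism $M(G)\cong T(C_r^*(G))$. Finally, for the pairing it is enough to evaluate on generators: for $U\subseteq G^0$ compact open one has $\tau_\mu(\mu_0([1_U]_0))=\tau_\mu(1_U)=\mu(U)=\int_{G^0}1_U\,d\mu$, and extending $\Z$-linearly recovers $p(\mu,[f]_0)=\int f\,d\mu$. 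Combining the five identifications yields the asserted description of $\mathrm{Ell}(C_r^*(G))$.
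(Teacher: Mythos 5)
Your proposal follows the paper's own proof nearly step for step: the paper likewise feeds Theorem \ref{main vanishing} into Theorem \ref{Cor:HK conjecture up to dimension 2} to get the isomorphisms $\mu_0$ and $\mu_1$ (the short exact sequence degenerating because $H_2(G)=0$), identifies the unit via the fact that $\mu_0$ is induced by the inclusion $\iota\colon C_0(G^0)\hookrightarrow C_r^*(G)$, and checks the pairing on generators exactly as you do. The one place you add real content is the tracial simplex: where the paper simply cites \cite[Section 4.1]{LiRenault} for the affine homeomorphism $M(G)\cong T(C_r^*(G))$, you reprove it, and your argument is correct --- subdividing a compact open bisection $W$ with $W\cap G^0=\varnothing$ into pieces $W_i$ with $s(W_i)\cap r(W_i)=\varnothing$ (which is exactly where principality enters), so that $1_{r(W_i)}1_{W_i}=1_{W_i}$ while $1_{W_i}1_{r(W_i)}=0$ and the trace property forces $\tau(1_{W_i})=0$, is the standard proof behind that citation, and the identity $\tau(1_W1_{W^{-1}})=\tau(1_{W^{-1}}1_W)$ correctly translates into invariance of the restricted measure.

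The only incomplete point is the one you flag yourself: the inclusion $K_0(C_r^*(G))^+\subseteq \mu_0(H_0(G)^+)$, which you rightly reduce to showing that every projection over $C_r^*(G)$ is, stably, Murray--von Neumann equivalent to one over $C_0(G^0)$, but then leave open with only a gesture at ``comparison and cancellation''. Be aware that the paper does not fill this in either: its entire treatment of the order is the sentence that $\mu_0$ ``clearly extends to an isomorphism of ordered groups respecting the position of the unit'', with no argument or citation. So your write-up is no less complete than the paper's at this step, and arguably more honest in isolating it as the genuinely non-homological ingredient; but as a freestanding proof both treatments leave the same hole, and closing it would require input from outside the paper's machinery --- for instance dynamical comparison/type-semigroup results in the spirit of \cite{Ara2020}, or Putnam-style arguments in the case of $\Z$-actions on the Cantor set (already nontrivial there). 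Your instinct that this is where the work lies, rather than in anything proved earlier in the paper, is accurate.
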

\begin{proof} We have canonical isomorphisms $\mu_i:H_i(G)\cong K_i(C_r^*(G))$. The isomorphism $\mu_0$ clearly extends to an isomorphism of ordered groups respecting the position of the unit.
	Since $G$ is principal, there is an affine homeomorphism between the set $M(G)$ of $G$-invariant probability measures on $G^0$ and the tracial state space $T(C_r^*(G))$ of the reduced groupoid $C^*$-algebra (see, for instance, \cite[Section 4.1]{LiRenault}).
	Finally, from the definition of $\mu_0$ it is clear that the pairings are compatible.
\end{proof}

\section{Examples and applications} \label{Section:Examples&Applications}
In this final section we discuss several applications of our results for specific classes of groupoids and exhibit some interesting examples.
\subsection{Free actions on totally disconnected spaces}
In \cite[Theorem 1.3]{Conley:2020ta}, Conley et.\ al.\ show that for a large class of countable groups $\Gamma$, any free action on a second countable, locally compact, zero-dimensional space has dynamic asymptotic dimension at most the asymptotic dimension of $\Gamma$, and that the latter is finite.  If $X$ is compact, for example a Cantor set, then the dynamic asymptotic dimension will therefore be exactly equal to the asymptotic dimension of $\Gamma$ by \cite[Theorem 6.5]{Guentner:2014aa}.  The class described by the authors of \cite{Conley:2020ta} is technical and we refer there for details; suffice to say that it includes many interesting examples such as all polycylic groups, all virtually nilpotent groups, the lamplighter group $(\Z/2\Z)\wr \Z$, and the Baumslag-Solitar group $BS(1,2)$.  

Hence for such actions, Theorem \ref{main vanishing} implies that $H_n(\Gamma,\ZZ[X])=0$ for all $n>\text{asdim}(\Gamma)$.

\subsection{Smale spaces with totally disconnected stable sets}
A Smale space consists of a self-homeomorphism $\varphi:X\rightarrow X$ of a compact metric space $X$, such that the space can be locally decomposed into the product of a coordinate whose points get closer together as $\varphi$ is iteratively applied, and a coordinate whose points get farther apart under the map $\varphi$. We refer to \cite{Putnam14} for basic definitions and details.
Given a Smale space $(X,\varphi)$ one can define two equivalence relations on $X$ as follows:
$$x\sim_s y\text{ if and only if } \lim_{n\rightarrow\infty}d(\varphi^n(x),\varphi^n(y))=0, \text{ and}$$
$$x\sim_u y\text{ if and only if } \lim_{n\rightarrow\infty}d(\varphi^{-n}(x),\varphi^{-n}(y))=0.$$
Let $X^s(x)$ and $X^u(x)$ denote the stable and unstable equivalence classes of a point $x\in X$ respectively. 
Upon choosing a finite set $P$ of $\varphi$-periodic points one can construct \'{e}tale principal groupoids $G^u(X,P)$ and $G^s(X,P)$ with unit space $X^s(P)=\bigcup_{x\in P}X^s(x)$ and $X^u(P)=\bigcup_{x\in P}X^u(P)$, respectively. In particular, if these unit spaces are totally disconnected, the groupoids are ample. Note further, that for an irreducible Smale space $(X,\varphi)$, the groupoids $G^u(X,P)$ and $G^s(X,P)$ only depend on $P$ up to equivalence. In particular, the choice of $P$ is irrelevant when computing their homology. Deeley and Strung prove in \cite{Deeley2018} that for an irreducible Smale space one has the estimate $$\dad(G^u(X,P))\leq \dim X.$$ Combining this with our Theorem \ref{main vanishing} and Corollary \ref{Cor:HK conjecture up to dimension 2}, and also \cite[Theorem~4.1]{Proietti2020} we can compute the K-theory of the resulting $C^*$-algebras from Putnam's homology for Smale spaces.
\begin{corollary}
	Let $(X,\varphi)$ be an irreducible Smale space with totally disconnected stable sets. Then $H_n^s(X,\varphi)=0$ for all $n>\dim(X)$; and if $\dim(X) \leq 2$ and $H_2^s(X,\varphi)$ is free abelian (e.g. when it is finitely generated), then
	$$K_0(C^*(G^u(X,P)))\cong H_0^s(X,\varphi)\oplus H_2^s(X,\varphi)\text{ and }K_1(C^*(G^u(X,P)))\cong H_1^s(X,\varphi).$$
\end{corollary}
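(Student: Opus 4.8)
The plan is to identify Putnam's stable homology $H_*^s(X,\varphi)$ with the groupoid homology of $G^u(X,P)$ and then feed the dimension estimate of Deeley and Strung into our vanishing theorem and our low-dimensional HK theorem. First I would verify that $G^u(X,P)$ lies in the class of groupoids to which those results apply. Since $(X,\varphi)$ is a Smale space, $X$ is a compact metric space, so $G^u(X,P)$ is second countable, and it is \'etale and principal by construction. The standing hypothesis that the stable sets are totally disconnected makes the unit space $X^s(P)$ totally disconnected, whence $G^u(X,P)$ is ample. Finally, by \cite{Deeley2018} we have $\dad(G^u(X,P)) \leq \dim X$, and since finite dynamic asymptotic dimension forces amenability (as recorded in the proof of Theorem \ref{Cor:HK conjecture up to dimension 2}), the full and reduced groupoid $C^*$-algebras coincide, so that $C^*(G^u(X,P)) = C_r^*(G^u(X,P))$.

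For the vanishing statement, I would apply Theorem \ref{main vanishing} to $G = G^u(X,P)$: it yields $H_n(G^u(X,P)) = 0$ for all $n > \dad(G^u(X,P))$, together with torsion-freeness of the top group $H_{\dad(G^u(X,P))}(G^u(X,P))$. Combining this with $\dad(G^u(X,P)) \leq \dim X$ gives $H_n(G^u(X,P)) = 0$ for $n > \dim X$. Transporting this through the canonical isomorphism $H_*(G^u(X,P)) \cong H_*^s(X,\varphi)$ of \cite[Theorem~4.1]{Proietti2020} delivers the asserted vanishing $H_n^s(X,\varphi) = 0$ for $n > \dim X$. The independence of the choice of periodic orbit $P$ up to equivalence of groupoids was already noted in the discussion preceding the statement, so this is unambiguous. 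The same transport shows that $H_2^s(X,\varphi)$ is torsion-free whenever $\dim X \leq 2$: in that case $\dad(G^u(X,P)) \leq 2$, and $H_2(G^u(X,P))$ is either zero (if the dimension is strictly smaller than two) or the torsion-free top group, so it is torsion-free either way.

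For the K-theory conclusion, assume $\dim X \leq 2$, so that $\dad(G^u(X,P)) \leq 2$, and apply Theorem \ref{Cor:HK conjecture up to dimension 2} to $G^u(X,P)$. When $H_2^s(X,\varphi) \cong H_2(G^u(X,P))$ is free abelian---which holds whenever it is finitely generated, since it is torsion-free by the previous paragraph---that theorem gives $K_0 \cong H_0 \oplus H_2$ and $K_1 \cong H_1$ for the reduced $C^*$-algebra; rewriting via the homology identification and using $C^* = C_r^*$ produces the stated formulas. I expect the only genuinely delicate point to be the bookkeeping around the homology identification: one must keep straight the standard cross-labeling whereby the stable homology $H_*^s$ is computed by the \emph{unstable} groupoid $G^u(X,P)$ (whose unit space is the stable set $X^s(P)$), and confirm that the hypotheses of \cite[Theorem~4.1]{Proietti2020}---second countability, ampleness, and the relevant structural assumptions on the Smale space---are precisely those in force here. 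Everything else is a direct assembly of the cited results.
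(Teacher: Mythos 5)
Your proposal is correct and takes essentially the same route as the paper, which likewise combines the Deeley--Strung estimate $\mathrm{dad}(G^u(X,P))\leq \dim X$ with Theorem \ref{main vanishing}, Theorem \ref{Cor:HK conjecture up to dimension 2}, and the identification $H_*(G^u(X,P))\cong H_*^s(X,\varphi)$ from \cite[Theorem~4.1]{Proietti2020}. The extra points you verify explicitly---ampleness of $G^u(X,P)$ from total disconnectedness of $X^s(P)$, second countability, the independence of $P$ up to equivalence, and amenability (via finite dynamic asymptotic dimension) giving $C^*(G^u(X,P))=C^*_r(G^u(X,P))$---are exactly the routine checks the paper leaves implicit.
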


This result includes most of the previously known examples (see e.g. \cite{Yi20}) that were based on separate computations of the K-theory and homology, and hence provides a more conceptual explanation. 
\subsection{Bounded geometry metric spaces}

A metric space $X$ has \emph{bounded geometry} if for each $r>0$ there is a uniform bound on the cardinalities of all $r$-balls in $X$; important examples come from groups with word metric, or from suitable discretisations of Riemannian manifolds.

Skandalis, Tu, and Yu \cite{Skandalis2002} construct an ample groupoid $G(X)$ which captures the coarse geometry of $X$. In particular, the reduced groupoid $C^*$-algebra $C_r^*(G(X))$ can be canonically identified with the uniform Roe algebra $C_u^*(X)$. Let us briefly recall the construction. Let $\beta X$ denote the Stone-\v{C}ech compactification of $X$, i.e.\ the maximal ideal space of $\ell^\infty(X)$.  For each $r>0$, let $E_r$ be the closure of $\{(x,y)\in X\times X\mid d(x,y)\leq r\}$ inside $\beta X\times \beta X$,\footnote{The authors of \cite{Skandalis2002} take the closure in $\beta(X\times X)$ instead of in $\beta X \times \beta X$, but by \cite[Proposition 10.15]{Roe2003} these closures are canonically homeomorphic, so it does not matter which of them one uses.} which is a compact open set.  Then the \emph{coarse groupoid} $G(X)$ of $X$ has as underlying set $\bigcup_{r=0}^\infty E_r$.  The operations are the restriction of the pair groupoid operations from $\beta X\times \beta X$, and the topology is the weak topology coming from the union $\bigcup_{r=0}^\infty E_r$, i.e.\ a subset $U$ of $G$ is open if and only if $U\cap E_r$ is open for each $r$.  Then $G(X)$ is a principal, ample, $\sigma$-compact groupoid with compact base space homeomorphic to $\beta X$, see \cite[Theorem 10.20]{Roe2003}.

Our first goal is to identify the groupoid homology $H_*(G(X))$ with the uniformly finite homology of $X$ introduced by Block and Weinberger in  \cite[Section 2]{Block1992}.
We begin by recalling the relevant definitions. Let $C_n(X)$ denote the collection of all bounded functions $c:X^{n+1}\to \Z$ such that there exists $r>0$ such that if $c(x_0,...,x_n)\neq 0$, then the diameter of the set $\{x_0,...,x_n\}$ is at most $r$.  For each $i\in \{0,...,n\}$, let $\partial^i:X^{n+1}\to X^n$ be defined by $\partial^i(x_0,...,x_n):=(x_0,...,\widehat{x_i},...,x_n)$.  Define $\partial^i_*:C_n(X)\to C_{n-1}(X)$ by 
$$
(\partial^i_*c)=\sum_{\partial^i y=x}c(x)
$$  
and define $\partial:C_n(X)\to C_{n-1}(X)$ by $\partial:=\sum_{i=0}^n (-1)^i \partial^i_*$.  Then we have $\partial\circ \partial=0$, so we get a chain complex.  The \emph{uniformly finite homology} of $X$, denoted $H^{\mathrm{uf}}_*(X)$, is by definition the associated homology of this complex.  

Having introduced all the main actors we can now prove the following theorem.

\begin{theorem}\label{Theorem:hmlgy of coarse grpd is uf hmlgy}
	Let $G(X)$ be the coarse groupoid associated to a bounded geometry metric space $X$.  Then there is a canonical isomorphism $H_*(G(X))\cong H_*^{\mathrm{uf}}(X)$.
\end{theorem}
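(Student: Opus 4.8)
The plan is to write down an explicit chain isomorphism between the complex computing $H_*(G(X))$ and the complex $(C_*(X),\partial)$ computing $H_*^{\mathrm{uf}}(X)$, where the map is simply restriction of functions from the groupoid to the ``finite'' points $X$. First I would invoke Proposition \ref{hom id} to compute $H_*(G(X))$ from Matui's complex $(\mathbb{Z}[BG(X)_*],\tilde\partial)$. Since $G(X)$ is the pair groupoid on $\beta X$ restricted to the coarse structure, a composable $n$-tuple of arrows is the same datum as a tuple of vertices $(\omega_0,\dots,\omega_n)\in(\beta X)^{n+1}$ whose consecutive pairs lie in $G(X)$, and under this identification the face maps $\epsilon^n_i$ become the vertex-deletion maps $(\omega_0,\dots,\omega_n)\mapsto(\omega_0,\dots,\widehat{\omega_i},\dots,\omega_n)$, which are \'etale. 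Thus $\mathbb{Z}[BG(X)_n]=C_c(BG(X)_n,\mathbb{Z})$ and $\tilde\partial=\sum_i(-1)^i(\epsilon^n_i)_*$ is the alternating sum of \'etale pushforwards along vertex deletions, which is formally identical to the boundary of $C_*(X)$.

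The candidate map is $\Phi_n\colon C_c(BG(X)_n,\mathbb{Z})\to C_n(X)$, $f\mapsto f|_{X^{n+1}}$. The decisive bounded-geometry input is the following elementary observation: if $x\in X$ and $\omega\in\beta X$ with $(x,\omega)\in G(X)$, then $\omega\in X$, because $(x,\omega)\in E_r$ forces $\omega$ into the closure of the finite (hence closed and discrete) ball $B_r(x)\subseteq X$. Consequently, over a tuple $y\in X^n$ every fibre of every vertex-deletion map consists of tuples in $X^{n+1}$ only. From this I would deduce both that $\Phi_n$ lands in $C_n(X)$ (a compactly supported locally constant function is bounded and has controlled support) and that $\Phi_\bullet$ is a chain map, since the \'etale pushforward $(\epsilon^n_i)_*$ restricted to $X$-points is exactly the simplicial pushforward $\partial^i_*$.

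To show that $\Phi_n$ is bijective I would argue filtration-by-filtration. For each $r$ let $E_r^{(n)}\subseteq(\beta X)^{n+1}$ be the compact open set of tuples with consecutive pairs in $E_r$, and set $\Delta_r^{(n)}=E_r^{(n)}\cap X^{n+1}$; then $C_c(BG(X)_n,\mathbb{Z})=\varinjlim_r C(E_r^{(n)},\mathbb{Z})$ while $C_n(X)=\varinjlim_r\{c\colon\operatorname{supp}(c)\subseteq\Delta_r^{(n)}\}$. The projection to the first vertex $p_0\colon E_r^{(n)}\to\beta X$ has discrete fibres (again by controlledness), and $E_r^{(n)}$ is compact and totally disconnected, so $p_0$ is \'etale and $E_r^{(n)}$ splits into finitely many compact open sections $S_1,\dots,S_M$, each mapped homeomorphically by $p_0$ onto a compact open $V_m\subseteq\beta X$. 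Using that $V_m=\beta(V_m\cap X)$ and that bounded integer-valued functions are finitely-valued, hence extend continuously, one gets $C(S_m,\mathbb{Z})\cong\ell^\infty(V_m\cap X,\mathbb{Z})$; and by the fibre observation $S_m\cap X^{n+1}=p_0^{-1}(V_m\cap X)\cong V_m\cap X$. Summing over the finite partition shows that restriction $\rho_r\colon C(E_r^{(n)},\mathbb{Z})\to\{\text{bounded functions on }\Delta_r^{(n)}\}$ is a bijection, and passing to the limit in $r$ shows that $\Phi_n$ is an isomorphism. Being a chain isomorphism, $\Phi_\bullet$ induces the desired isomorphism $H_*(G(X))\cong H_*^{\mathrm{uf}}(X)$, which is visibly canonical as it is implemented by restriction to $X$.

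The step I expect to be the main obstacle is the bijectivity of $\rho_r$, i.e.\ promoting the pointwise restriction to an honest isomorphism of chain groups. The cleanest route is via the \'etale projection $p_0$, which converts the problem into a finite disjoint union of copies of the degree-zero identity $C(\beta W,\mathbb{Z})=\ell^\infty(W,\mathbb{Z})$ and thereby sidesteps the overlapping-closures issue one meets if one instead decomposes $\Delta_r^{(n)}$ directly into graphs of partial translations. Verifying that $p_0$ is genuinely \'etale (openness, not merely discreteness of fibres) and that the section decompositions are compatible across the filtration are the points demanding care; everything else is bookkeeping built on the fibre observation above.
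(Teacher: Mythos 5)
Your argument is correct, and the underlying chain map is in fact the same one the paper uses: under the identification $BG_*\cong G\backslash EG_*$, the paper's map $\alpha$ evaluates the normalized coinvariant representative $\overline a$ (supported on tuples based at a unit) at the points $((x_0,x_0),(x_0,x_1),\dots,(x_0,x_n))$, which is precisely your restriction map $\Phi$ read in vertex coordinates. Where you genuinely diverge is in how bijectivity is established. The paper constructs an explicit inverse $\beta$: a bounded controlled chain $c\in C_n(X)$ is extended continuously to the closure of its support in $EG_n$, using bounded geometry to identify that closure with the Stone--\v{C}ech compactification of the support; it then verifies $\alpha\beta=\mathrm{id}$ and $\beta\alpha=\mathrm{id}$ directly. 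You instead prove the restriction map is bijective filtration-wise: decompose the compact open set $E_r^{(n)}$ into finitely many clopen sections of the \'etale first-vertex projection $p_0$, and on each section reduce to the degree-zero identity $C(\beta A,\Z)\cong\ell^\infty(A,\Z)$, with your fibre observation (an arrow of $G(X)$ with range in $X$ has source in $X$, since $B_r(x)$ is finite, hence closed in $\beta X$) guaranteeing that $X$-points of a section biject with $X$-points of its image, and density of $V_m\cap X$ in $V_m$ giving injectivity. Both proofs consume bounded geometry at the same conceptual point --- closures of controlled sets behave like Stone--\v{C}ech compactifications --- but your packaging buys something: you never need to construct the extension map or the normalization $\overline a$ (working in $\Z[BG_*]$ rather than $\Z[EG_*]_G$ eliminates the latter), and the section decomposition localizes the one nontrivial analytic fact to a single clopen piece at a time, avoiding the overlapping-closures bookkeeping. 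The two steps you flag as delicate are indeed fine: the vertex maps $BG_n\to G^0$ are \'etale for any \'etale groupoid (compose bisection charts, so openness is not an issue), and compatibility across the filtration is automatic because every $\rho_r$ is literally restriction of functions, the section decompositions being only a device to prove each $\rho_r$ bijective, not to define it. The paper's route, for its part, yields the inverse map explicitly, which is occasionally useful; but as a proof of the theorem your version is complete and arguably cleaner.
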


\begin{proof} For brevity let us denote the coarse groupoid by $G$ throughout the proof.
	Now, for $a\in \Z[EG_n]$, define $\overline{a}\in \Z[EG_n]$ by 
	$$
	\overline{a}(g_0,...,g_n):=\left\{\begin{array}{ll} \sum_{h\in G_x} a(h,hg_1,...,hg_n) & g_0=x\in G^{0} \\ 0 & g_0\not\in G^{0} \end{array}\right..
	$$ 
	One can check (we leave this to the reader) that the equivalence classes $[\overline{a}]$ and $[a]$ in $\Z[EG_n]_G$ of $\overline{a}$ and $a$ respectively are the same, and moreover that $\overline{a}$ is the unique element of $[a]$ that is supported on $\{(g_0,...,g_n)\in EG_n\mid g_0\in G^{0}\}$.  
	
	We define maps $\alpha:\Z[EG_n]_G\to C_n(X)$ and $\beta:C_n(X)\to \Z[EG_n]_G$ as follows.  First, 
	$$
	(\alpha [a])(x_0,...,x_n):=\overline{a}((x_0,x_0),...,(x_0,x_n)).
	$$  
	This makes sense using that $G$ contains the pair groupoid $X\times X$.  We note that $\alpha[a]$ is bounded as $\overline{a}$ is.  Moreover, the fact that $\overline{a}$ has compact support implies that it is supported in a set of the form $E_0\times E_{r_1}\times\cdots\times E_{r_n}\cap EG_n$ for compact open sets $E_{r_i}$ as in the definition of $G$.  It follows that $\alpha[a]$ is supported on the set of tuples with diameter at most $2\max\{r_1,...,r_n\}$ and is thus a well-defined element of $C_n(X)$.
	
	To define $\beta$, let first $((x,x_0),(x,x_1),...,(x,x_n))\in EG_n$ where each pair $(x,x_i)$ is in the pair groupoid.  For $c\in C_n(X)$, define 
	$$
	(\beta c)((x,x_0),(x,x_1),...,(x,x_n)):=\left\{\begin{array}{ll} c(x_0,...,x_n) & x=x_0 \\ 0 & x\neq x_0 \end{array}\right..
	$$
	Due to the support condition on elements of $C_n(X)$, there exists $r>0$ such that $c$ is supported in the set $\{(x_0,...,x_n)\in X^{n+1}\mid d(x_i,x_j)\leq r \text{ for all } i,j\}$.  One can check using the bounded geometry condition that this implies that the closure of the support $S$ of $\beta c$ in $EG_n\cap (X\times X)^{n+1}$ canonically identifies with the Stone-\v{C}ech compactification of $S$, and thus that $\beta c$ extends uniquely to a function on the compact open set $\overline{S}$ as it is bounded, and so a function on $EG_n$ by setting it to be zero outside $\overline{S}$.  We also denote $\beta c$ the corresponding class in $\Z[EG_n]_G$.
	
	Now, having built the maps $\alpha$ and $\beta$, note that both define maps of complexes as the face maps in both cases are given by omitting the $i^\text{th}$ element in a tuple.  To see that they are mutually inverse isomorphisms, one computes directly that $\alpha(\beta(c))=c$, and that $\beta(\alpha[a])=[\overline{a}]$; we leave this to the reader.  The result follows.
\end{proof}

Having identified the homology groups of the coarse groupoid with a more classical object, we would now like to apply our main results and draw some consequences for the computation of the K-theory groups of uniform Roe algebras $C_u^*(X)$ which can be canonically identified with $C_r^*(G(X))$.
Since the spectral sequence (\ref{Equation:Spectral sequence}) is only available in the case of second countable groupoids we need to do some additional work. To this end it is useful to consider a slightly different construction of the coarse groupoid.

Following \cite[Section 2.2]{Skandalis2002}, let $\Gamma_X$ denote the collection of all subsets $A\subseteq X\times X$ such that the first coordinate map $r:X\times X\to X$ and second coordinate maps $s:X\times X\to X$ are both injective when restricted to $A$, and such that $\sup_{(x,y)\in A}d(x,y)<\infty$.  As in \cite[Section 3.1]{Skandalis2002}, every $A\in \Gamma_X$ defines a bijection $t_A:s(A)\rightarrow r(A)$ with the property that $\sup_{x\in s(A)}d(x,t_A(x))<\infty$. Every such bijection extends to a homeomorphism $\phi_A:\overline{s(A)}\to \overline{r(A)}$ between the respective closures in $\beta X$.  As in \cite[Definition 3.1]{Skandalis2002}, we write $\mathscr{G}(X)$ for the collection $\{\phi_A\mid A\in \Gamma_X\}$, which is a pseudogroup, i.e.\ closed under compositions and inverses.  As in \cite[Section 3.2]{Skandalis2002}, the coarse groupoid $G(X)$ of $X$ can be realized as the groupoid of germs associated to this pseudogroup (see \cite[Section 2.6]{Skandalis2002} for the construction of the groupoid of germs associated to a pseudogroup and \cite[Proposition 3.2]{Skandalis2002} for the identification of the two constructions).

Now, as in \cite[Section 3.3]{Skandalis2002}, let us say that a sub-pseudogroup $\mathscr{A}$ of $\mathscr{G}(X)$ is \emph{admissible} if $\bigcup_{\phi_A\in \mathscr{A}}\overline{A}=G(X)$.  Define $G_{\mathscr{A}}$ to be the spectrum of the $C^*$-subalgebra of $C_0(G(X))$ generated by $\{\chi_{\overline{A}}\mid \phi_A\in \mathcal{A}\}$, and let $X_\mathscr{A}$ be the spectrum of the $C^*$-subalgebra of $C(\beta X)=\ell^\infty(X)$ generated $\{\chi_{s(\overline{A})}\mid \phi_A\in \mathcal{A}\}$.  The following comes from \cite[Lemma 3.3]{Skandalis2002} and its proof.

\begin{lemma}\label{adm}
	Let $\mathscr{A}$ be an admissible sub-pseudogroup of $\mathscr{G}(X)$.  Then the groupoid operations naturally factor through the canonical quotient maps $G(X)\to G_{\mathscr{A}}$ and $G(X)^{(0)}\to X_\mathscr{A}$, making $G_{\mathscr{A}}$ an \'{e}tale, locally compact, Hausdorff groupoid with base space $X_{\mathscr{A}}$, which is moreover second countable if $\mathscr{A}$ is countable. 
	
	Moreover, the quotient map $p:\beta X=G(X)^{(0)}\to X_\mathscr{A}$ gives rise to an action of $G_\mathscr{A}$ on $\beta X$, and there is a canonical isomorphism of topological groupoids $G(X)\cong \beta X\rtimes G_\mathscr{A}$.
\end{lemma}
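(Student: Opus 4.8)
The plan is to read off the first assertion from the construction of Skandalis, Tu and Yu and to establish the action groupoid description by hand. Throughout, write $B_{\mathscr A}\subseteq C_0(G(X))$ and $A_{\mathscr A}\subseteq C(\beta X)=\ell^\infty(X)$ for the commutative $C^*$-subalgebras generated by $\{\chi_{\overline A}\mid \phi_A\in\mathscr A\}$ and $\{\chi_{s(\overline A)}\mid \phi_A\in\mathscr A\}$ respectively, so that $G_{\mathscr A}$ and $X_{\mathscr A}$ are by definition their Gelfand spectra. The inclusions $B_{\mathscr A}\hookrightarrow C_0(G(X))$ and $A_{\mathscr A}\hookrightarrow C(\beta X)$ dualise to continuous surjections $q\colon G(X)\to G_{\mathscr A}$ and $p\colon \beta X\to X_{\mathscr A}$, and since the generators are projections both spectra are locally compact, Hausdorff and totally disconnected, with bases of compact open sets given by the images of the $\overline A$ and $s(\overline A)$. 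The content of \cite[Lemma 3.3]{Skandalis2002} and its proof is precisely that, because $\mathscr A$ is a pseudogroup (closed under composition and inversion) and each $\overline A$ is a compact open bisection of $G(X)$, the range, source, inversion and multiplication maps of $G(X)$ descend through $q$ and make $G_{\mathscr A}$ an étale, locally compact, Hausdorff groupoid with unit space $X_{\mathscr A}$; in particular $q$ is a groupoid homomorphism restricting to $p$ on units, and each $q(\overline A)$ is a compact open bisection. Admissibility, i.e.\ $\bigcup_{\phi_A\in\mathscr A}\overline A=G(X)$, guarantees that these bisections cover $G_{\mathscr A}$, which re-confirms the étale property, and if $\mathscr A$ is countable then $B_{\mathscr A}$ is separable, so $G_{\mathscr A}$ is second countable.

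For the isomorphism $G(X)\cong\beta X\rtimes G_{\mathscr A}$, I would first define the action of $G_{\mathscr A}$ on $\beta X$ with anchor $p$ and exhibit the comparison map simultaneously. Given $\gamma\in G_{\mathscr A}$ and $y\in\beta X$ with $p(y)=s(\gamma)$, the plan is to show that there is a \emph{unique} $g\in G(X)$ with $q(g)=\gamma$ and $s(g)=y$, and to set $\gamma\cdot y:=r(g)$; the comparison map is then $\Theta\colon G(X)\to\beta X\rtimes G_{\mathscr A}$, $g\mapsto (q(g),s(g))$, with inverse $(\gamma,y)\mapsto g$. That $\Theta$ respects the groupoid operations is a direct check once the action is in place, so the whole statement reduces to this existence-and-uniqueness claim together with the verification that $\Theta$ is a homeomorphism.

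The main obstacle is exactly this bijectivity, which is where the bisection structure does the work. For uniqueness, if $g,g'$ satisfy $q(g)=q(g')$ and $s(g)=s(g')$, then since every $\chi_{\overline A}$ lies in $B_{\mathscr A}$ we have $g\in\overline A\iff g'\in\overline A$ for all $\phi_A\in\mathscr A$; admissibility forces $g\in\overline A$ for some $A$, hence $g'\in\overline A$ as well, and injectivity of $s|_{\overline A}$ then gives $g=g'$. For existence, I would lift $\gamma$ to some $g_0\in G(X)$ using surjectivity of $q$, choose $\overline A\ni g_0$ with $\phi_A\in\mathscr A$, and observe that $\chi_{s(\overline A)}$ descends to $X_{\mathscr A}$, so that $p(y)=s(\gamma)=p(s(g_0))$ together with $s(g_0)\in s(\overline A)$ forces $y\in s(\overline A)$; then $g:=(s|_{\overline A})^{-1}(y)$ has source $y$, and since $q(\overline A)$ is a bisection with $q(g),\gamma\in q(\overline A)$ and $s(q(g))=p(y)=s(\gamma)$, one concludes $q(g)=\gamma$. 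Here the inverse-closure of $\mathscr A$ is used to ensure that the relevant range and source sets $s(\overline A)$, $r(\overline A)$ are themselves among the generators, so that the characteristic functions in play really do descend to $X_{\mathscr A}$.

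Finally, I would upgrade $\Theta$ from a bijection to a homeomorphism: it is continuous because $q$ and $s$ are; restricted to any compact open bisection $\overline A$ it is a continuous bijection from a compact space onto a Hausdorff space, hence a homeomorphism onto its (open) image; as the $\overline A$ cover $G(X)$ and $\Theta$ is a global bijection, it is a homeomorphism. Checking compatibility of $\Theta$ with the groupoid operations, and that the resulting structure on $\beta X\rtimes G_{\mathscr A}$ is the transformation groupoid of the action just defined, is then routine, and I would leave the remaining bookkeeping to the reader.
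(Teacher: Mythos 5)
Your proof is correct, but it does considerably more work than the paper's own proof, which consists in its entirety of citing \cite[Lemma 3.3]{Skandalis2002} and its proof for everything except second countability, the latter being handled exactly as you do: a countable $\mathscr{A}$ generates a separable subalgebra of $C_0(G(X))$. You cite \cite{Skandalis2002} only for the descent of the groupoid operations and the bisection structure of the sets $q(\overline{A})$, and then reconstruct the decomposition $G(X)\cong \beta X\rtimes G_{\mathscr{A}}$ by hand via your existence-and-uniqueness-of-lifts claim; this self-contained route is sound, and in substance it re-proves the part of \cite[Lemma 3.3]{Skandalis2002} that the paper outsources. The two key steps are correctly located: uniqueness combines admissibility (every $g$ lies in some $\overline{A}$) with the fact that $q(g)=q(g')$ forces $\chi_{\overline{A}}(g)=\chi_{\overline{A}}(g')$ for all generators, and then injectivity of $s|_{\overline{A}}$; existence correctly exploits that $\chi_{s(\overline{A})}$ is literally one of the generators of the subalgebra defining $X_{\mathscr{A}}$, so $p(y)=p(s(g_0))$ forces $y\in s(\overline{A})$ (and inverse-closure of $\mathscr{A}$ supplies the range sets when needed). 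What your route buys is transparency about exactly where admissibility and the pseudogroup/bisection structure enter; what the paper's citation buys is brevity. Two points you pass over quickly and should spell out in a written version: (a) well-definedness and surjectivity of the dual maps $q$ and $p$ themselves use admissibility---a character of the subalgebra satisfies $\chi(\chi_{\overline{A}})=1$ for some $A$ and hence factors through the restriction map to $C(\overline{A})$, so arises from a point of $\overline{A}$, and dually evaluation at any $g\in G(X)$ is a nonzero character because $g$ lies in some $\overline{A}$; (b) openness of $\Theta(\overline{A})$, which follows from your own existence argument, since it identifies $\Theta(\overline{A})$ with the open set $\{(\gamma,y)\mid \gamma\in q(\overline{A}),\ p(y)=s(\gamma)\}$ of the transformation groupoid.
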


\begin{proof}
	The only part not explicitly in \cite[Lemma 3.3]{Skandalis2002} or its proof is the second countability statement.  This follows as if $\mathscr{A}$ is countable, then the $C^*$-subalgebra of $C_0(G(X))$ generated by $\{\chi_{\overline{A}}\mid \phi_A\in \mathcal{A}\}$ is separable.
\end{proof}

\begin{lemma}\label{Lemma: principal pseudogroup}
	Let $X$ be a bounded geometry metric space. Then there exists a countable admissible sub-pseudogroup $\mathscr{A}$ of $\mathscr{G}(X)$ such that $G_\mathscr{A}$ is principal.
\end{lemma}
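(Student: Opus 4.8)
The plan is to reduce principality of $G_\mathscr{A}$ to a concrete separation property of the quotient map $p\colon\beta X\to X_\mathscr{A}$, and then to force this property by adjoining countably many ``colouring'' functions to the base algebra. Throughout I write $\overline{Y}$ for the closure in $\beta X$ of $Y\subseteq X$, and for $A\in\Gamma_X$ I write $t_A\colon s(A)\to r(A)$ for the associated bijection and $\phi_A\colon\overline{s(A)}\to\overline{r(A)}$ for its extension. Recall that every element of $G(X)$ lies on the graph $\overline{A}=\{(\phi_A(\omega),\omega):\omega\in\overline{s(A)}\}$ of some $\phi_A$, and that $G(X)$ is principal.

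First I would record the reduction. Suppose $\mathscr{A}$ is admissible and has the property that for every $\phi_A\in\mathscr{A}$ and every $\omega\in\overline{s(A)}$ one has $p(\phi_A(\omega))=p(\omega)\Rightarrow\phi_A(\omega)=\omega$. Then $G_\mathscr{A}$ is principal: if $g\in G_\mathscr{A}$ satisfies $s(g)=r(g)$, then by Lemma~\ref{adm} the quotient map $q\colon G(X)\to G_\mathscr{A}$ is surjective and carries source and range to $p$, so we may lift $g$ to some $\tilde g=(\phi_A(\omega),\omega)\in G(X)$ with $\phi_A\in\mathscr{A}$ (using that the graphs $\overline{A}$ cover $G(X)$); now $p(\phi_A(\omega))=r(g)=s(g)=p(\omega)$ forces $\phi_A(\omega)=\omega$, so $\tilde g$ is a unit and hence $g=q(\tilde g)$ is a unit. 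Thus it suffices to produce a countable admissible $\mathscr{A}$ satisfying the displayed condition, and it is enough to impose this condition only for a family of generators whose graphs already cover $G(X)$, since the lift of any such $g$ can be taken through a member of that family.

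The heart of the matter is to separate a single generator on the Stone--\v{C}ech boundary. Fix $A\in\Gamma_X$ and put $D:=\{x\in s(A):t_A(x)\ne x\}$. The graph on $D\cup t_A(D)$ with an edge joining $x$ to $t_A(x)$ for each $x\in D$ has all degrees at most two, hence is a disjoint union of paths and cycles and admits a proper $3$-colouring $c$ with $c(x)\ne c(t_A(x))$ for all $x\in D$. Let $S_i:=c^{-1}(i)$ and adjoin the diagonal bisections $\Delta_{S_i}=\{(x,x):x\in S_i\}\in\Gamma_X$ to the generators; since $\chi_{s(\overline{\Delta_{S_i}})}=\chi_{\overline{S_i}}$, these characteristic functions lie in the algebra defining $X_\mathscr{A}$. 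Writing $D=\bigsqcup_{i\ne j}D_{ij}$ with $D_{ij}=\{x\in D:c(x)=i,\ c(t_A(x))=j\}$, one has $\{\omega:\phi_A(\omega)\ne\omega\}=\overline{D}=\bigcup_{i\ne j}\overline{D_{ij}}$, using that $\overline{D}$ and $\overline{s(A)\setminus D}$ are disjoint clopen sets and that $G(X)$ is principal. For $\omega\in\overline{D_{ij}}$ the function $\chi_{\overline{S_i}}$ is identically $1$ on $D_{ij}$, so $\chi_{\overline{S_i}}(\omega)=1$, whereas $t_A$ maps $D_{ij}$ into $S_j$, so continuity of $\phi_A$ gives $\chi_{\overline{S_i}}(\phi_A(\omega))=0$. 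Hence $\chi_{\overline{S_i}}$ separates $\omega$ from $\phi_A(\omega)$, i.e.\ $p(\phi_A(\omega))\ne p(\omega)$ precisely when $\phi_A(\omega)\ne\omega$.

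Finally I would assemble the pseudogroup. Bounded geometry lets me decompose, for each $r\in\mathbb{N}$, the relation $\{(x,y):d(x,y)\le r\}$ into finitely many partial bijections in $\Gamma_X$; since $\bigcup_r E_r=G(X)$, the corresponding germs generate a countable admissible pseudogroup $\mathscr{A}_0$, and already these generating bisections cover $G(X)$. For each of the countably many generating bisections $A$ I adjoin the three diagonal separators $\Delta_{S_0^A},\Delta_{S_1^A},\Delta_{S_2^A}$ from the previous paragraph, and let $\mathscr{A}$ be the pseudogroup generated by $\mathscr{A}_0$ together with all these diagonals; it is countable and admissible. Passing to the finer quotient $p$ preserves every separation already achieved, and every $g\in G_\mathscr{A}$ with $s(g)=r(g)$ lifts through one of the generating bisections, for which separation was arranged, so the reduction yields principality of $G_\mathscr{A}$. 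The main obstacle is exactly the boundary-passage in the third paragraph: a naive finite-level separation need not survive taking closures in $\beta X$, and the decomposition $D=\bigsqcup_{i\ne j}D_{ij}$ into pieces of constant colour pattern is what makes the separation coherent enough to extend to all of $\overline{D}$.
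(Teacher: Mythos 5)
Your proof is correct, and its skeleton matches the paper's: both start from the countable admissible pseudogroup coming from a bounded-geometry decomposition of the controlled sets, both adjoin countably many diagonal (identity) bisections supported on a clopen partition of each domain into a fixed-point part and finitely many pieces moved off themselves, and both conclude principality by the same mechanism -- a germ with equal source and range sitting over a ``moving'' piece is impossible because the characteristic function of that piece lies in the base algebra defining $X_\mathscr{A}$. The genuine difference is where the partition comes from. The paper invokes \cite[Proposition~2.7]{Pitts17} as a black box on the Stone space, applied to \emph{every} element $\phi$ of the auxiliary pseudogroup $\mathscr{A}'$, so the final germ computation is one line ($\phi(\omega)=\omega$ forces $\omega\in A_{0,\phi}$, whence $[\phi,\omega]=[\id,\omega]$). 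You instead build the partition by hand on $X$ itself: the displacement graph of $t_A$ has degree at most $2$, hence is a disjoint union of paths and cycles and admits a proper $3$-colouring, and your decomposition $D=\bigsqcup_{i\neq j}D_{ij}$ into constant-colour-pattern pieces is exactly what makes the separation survive passage to closures in $\beta X$; this is a self-contained proof of the Katetov-type three-sets statement underlying Pitts' proposition. You also impose the condition only on the generating partial translations, compensating with your explicit lifting reduction through the covering graphs $\overline{A_i^{(n)}}$ and the fact (Lemma \ref{adm}) that the quotient intertwines range and source with $p$; the paper sidesteps this reduction by treating all of $\mathscr{A}'$ at once. Your route buys self-containedness and an argument entirely at the level of $X$; the paper's buys brevity. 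One cosmetic remark: your parenthetical appeal to principality of $G(X)$ in establishing $\{\omega:\phi_A(\omega)\neq\omega\}=\overline{D}$ is unnecessary (and would not by itself rule out fixed points in $\overline{D}$); it is the colouring decomposition in your next sentence that actually carries that step, so nothing is at stake.
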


\begin{proof}
	First, choose a countable admissible sub-pseudogroup $\mathscr{A}'$ of $\mathscr{G}(X)$ as follows.  For each $n\in \N$, a greedy algorithm based on bounded geometry (compare the discussion in  \cite[Section 2.2, part (a)]{Skandalis2002}) gives a finite decomposition of $\{(x,y)\in X\times X\mid d(x,y)\leq n\}$ such that 
	$$
	\{(x,y)\in X\times X\mid d(x,y)\leq n\}=\bigsqcup_{i=1}^{m_n} A_i^{(n)}
	$$
	and so that each $A_i^{(n)}$ is in $\Gamma_X$.  Let $\mathscr{A}'$ be the sub-pseudogroup of $\mathscr{G}(X)$ generated by all the $A_i^{(n)}$.  It is countable (as generated by a countable set) and it is admissible by construction. Given $\phi\in\mathscr{A}'$ we can apply \cite[Proposition~2.7]{Pitts17} to decompose its domain $dom(\phi)=A_{0,\phi}\sqcup A_{1,\phi}\sqcup A_{2,\phi}\sqcup A_{3,\phi}$ into disjoint clopen sets, where $A_{0,\phi}$ is the set of fixed points of $\phi$ and $\phi(A_{i,\phi})\cap A_{i,\phi}=\emptyset$ for $i=1,2,3$.
	Let $\mathscr{A}$ be the sub-pseudogroup generated by $\mathscr{A}'$ and $\{\id_{A_{i,\phi}}\mid \phi\in\mathscr{A}', 0\leq i\leq 4 \}$. Then $\mathscr{A}$ is still countable and admissible. We claim that $G_\mathscr{A}$ is principal. So let $[\phi,\omega]\in G_\mathscr{A}$ such that its source and range are are equal, i.e. $\phi(\omega)=\omega$. We may assume that $\phi\in\mathscr{A}'$ as there is nothing to show if $\phi$ was already the identity function on some clopen set. We then have $\phi|_{A_{0,\phi}}=\id_{A_{0,\phi}}$ and hence $[\phi,\omega]=[\id,\omega]$ as desired.
\end{proof}

Let us now assume that $X$ has asymptotic dimension at most $d$, or equivalently (see \cite[Theorem 6.4]{Guentner:2014aa}) that $G(X)$ has dynamic asymptotic dimension at most $d$.  For each $n\in \N$, let $E_n:=\overline{\{(x,y)\in X\times X\mid d(x,y)\leq n\}}$, where the closure is taken in $\beta X\times \beta X$.  Then $E_n$ is a compact open subset of $G(X)$.  Hence using the assumption that the dynamic asymptotic dimension of $G(X)$ is at most $d$, there exists a decomposition $\beta X=U_0^{(n)}\sqcup \cdots \sqcup U_d^{(n)}$ of $\beta X$ into compact open subsets such that for for each $i\in \{0,...,d\}$ and each $n\in \N$, the subgroupoid of $G(X)$ generated by $\{g\in E_n\mid r(g),s(g)\in U_i^{(n)}\}$ is compact and open.  Note that as each $U_i^{(n)}$ is clopen in $\beta X$, each $U_i^{(n)}$ is the closure of $V_i^{(n)}:=U_i^{(n)}\cap X$ (this follows as clopen sets in $\beta X$ are in one-one correspondence with arbitrary subsets of $X$ in this way).  Let $\mathscr{B}$ denote the sub-pseudogroup of $\mathscr{G}(X)$ generated by $\mathscr{A}$ as in Lemma \ref{Lemma: principal pseudogroup}, and by $\{\id_{V_i^{(n)}}\mid n\in \N,i\in \{0,...,d\}\}$.

Then we have the following result.

\begin{lemma}\label{sc dad}
	Let $X$ be a bounded geometry metric space with asymptotic dimension at most $d$.  Then there is a second countable, \'{e}tale, locally compact, Hausdorff principal groupoid $G$ with dynamic asymptotic dimension at most $d$, and such that $G$ acts on $\beta X$ giving rise to a canonical isomorphism $\beta X\rtimes G\cong G(X)$.
\end{lemma}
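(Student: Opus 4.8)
The plan is to set $G := G_{\mathscr{B}}$ for the countable admissible sub-pseudogroup $\mathscr{B}$ constructed just before the statement, and to verify the three non-formal properties in turn. Since $\mathscr{B}$ contains $\mathscr{A}$ (from Lemma \ref{Lemma: principal pseudogroup}), which is already admissible, $\mathscr{B}$ is admissible; and as $\mathscr{B}$ is generated by the countable pseudogroup $\mathscr{A}$ together with the countable family $\{\mathrm{id}_{V_i^{(n)}} \mid n \in \N,\ 0 \leq i \leq d\}$, it is itself countable. Lemma \ref{adm} then immediately provides everything except principality and the dimension bound: $G_{\mathscr{B}}$ is a second countable, étale, locally compact, Hausdorff groupoid acting on $\beta X$, with a canonical isomorphism $\beta X \rtimes G_{\mathscr{B}} \cong G(X)$.

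For principality I would re-run the argument of Lemma \ref{Lemma: principal pseudogroup}. The only new feature of $\mathscr{B}$ compared to $\mathscr{A}$ is the extra family of identity maps $\mathrm{id}_{V_i^{(n)}}$; since identity maps have trivial germs, any germ $[\phi, \omega]$ in $G_{\mathscr{B}}$ coincides with a germ $[\phi', \omega]$ where $\phi'$ is the composition of the $\mathscr{A}'$-generators occurring in $\phi$, and hence (as $\mathscr{A}'$ is a pseudogroup) with a germ of an element of $\mathscr{A}'$. Given $\phi'(\omega) = \omega$, the decomposition $\mathrm{dom}(\phi') = A_{0,\phi'} \sqcup A_{1,\phi'} \sqcup A_{2,\phi'} \sqcup A_{3,\phi'}$ descends to $X_{\mathscr{B}}$ because the relevant identity maps lie in $\mathscr{A} \subseteq \mathscr{B}$; the three fixed-point-free pieces cannot contain the fixed point $\omega$ by Hausdorffness, so $\omega$ lies in the descended fixed-point piece, on which $\phi'$ is the identity. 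Thus $[\phi, \omega] = [\mathrm{id}, \omega]$ and $G_{\mathscr{B}}$ is principal.

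The substantive point, and the one I expect to be the main obstacle, is transferring the bound $\dad(G(X)) \leq d$ down to $G_{\mathscr{B}}$. Let $q \colon G(X) \cong \beta X \rtimes G_{\mathscr{B}} \to G_{\mathscr{B}}$ be the canonical quotient homomorphism and let $p \colon \beta X \to X_{\mathscr{B}}$ be the anchor map on units. Given a relatively compact open $K \subseteq G_{\mathscr{B}}$, its preimage $q^{-1}(K)$ is a compact open subset of $G(X)$ (the fibres of $q$ sit inside the compact space $\beta X$), hence is contained in some $E_N$. I would then invoke the decomposition $\beta X = U_0^{(N)} \sqcup \cdots \sqcup U_d^{(N)}$ witnessing $\dad(G(X)) \leq d$ at level $N$, together with the compact open subgroupoids $H_i$ of $G(X)$ generated by $\{g \in E_N \mid r(g), s(g) \in U_i^{(N)}\}$.

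The role of the generators $\mathrm{id}_{V_i^{(n)}} \in \mathscr{B}$ is exactly to make each $U_i^{(N)} = \overline{V_i^{(N)}}$ descend: its characteristic function lies in the algebra defining $X_{\mathscr{B}}$, so $U_i^{(N)} = p^{-1}(\widetilde{U}_i^{(N)})$ for a clopen partition $X_{\mathscr{B}} = \widetilde{U}_0^{(N)} \sqcup \cdots \sqcup \widetilde{U}_d^{(N)}$, which is the required cover of $G_{\mathscr{B}}^0$. Finally, any $g \in K$ with $s(g), r(g) \in \widetilde{U}_i^{(N)}$ lifts to an element of $q^{-1}(K) \subseteq E_N$ with source and range in $U_i^{(N)}$, hence to an element of $H_i$; therefore $\{g \in K \mid s(g), r(g) \in \widetilde{U}_i^{(N)}\} \subseteq q(H_i)$, and $q(H_i)$ is a relatively compact subgroupoid of $G_{\mathscr{B}}$ since $q$ is a continuous homomorphism and $H_i$ is compact. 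This verifies Definition \ref{dad} for $G_{\mathscr{B}}$ with the bound $d$, completing the proof with $G = G_{\mathscr{B}}$.
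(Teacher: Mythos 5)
Your proposal is in outline identical to the paper's proof: you take $G=G_{\mathscr{B}}$, get second countability, the action on $\beta X$ and the isomorphism $\beta X\rtimes G_{\mathscr{B}}\cong G(X)$ from Lemma \ref{adm}, principality by rerunning Lemma \ref{Lemma: principal pseudogroup} (which works since only identity maps were added; note though that the disjointness of the descended fixed-point-free pieces from their images is not really ``Hausdorffness'' --- it holds because the indicator functions of $A_{i,\phi'}$ and $\phi'(A_{i,\phi'})$ are orthogonal elements of the defining algebra, so the corresponding clopen subsets of $X_{\mathscr{B}}$ are disjoint), and you transfer the dimension bound through the quotient $q\colon G(X)\to G_{\mathscr{B}}$ using the clopen sets $\widetilde{U}_i^{(N)}$, which descend precisely because the $\mathrm{id}_{V_i^{(n)}}$ were put into $\mathscr{B}$. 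The paper phrases the reduction slightly differently --- it notes $G_{\mathscr{B}}=\bigcup_n [E_n]$ with each $[E_n]$ compact open and verifies the condition for $K=[E_n]$ --- but this is the same reduction as your step ``$q^{-1}(K)\subseteq E_N$'' (which is fine: $q^{-1}(K)$ is relatively compact since the unit space $\beta X$ is compact, and the nested compact open sets $E_n$ cover $G(X)$, so any compact set lies in some $E_N$).

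The one step whose stated justification is inadequate is your final sentence: ``$q(H_i)$ is a relatively compact subgroupoid of $G_{\mathscr{B}}$ since $q$ is a continuous homomorphism and $H_i$ is compact.'' Compactness of $q(H_i)$ is fine, but the image of a subgroupoid under a groupoid homomorphism need \emph{not} be a subgroupoid: if $q(h_1)$ and $q(h_2)$ are composable in $G_{\mathscr{B}}$, the arrows $h_1,h_2$ themselves need not be composable in $\beta X\rtimes G_{\mathscr{B}}$ (their base points in $\beta X$ can differ while mapping to the same point of $X_{\mathscr{B}}$), so closure of $q(H_i)$ under multiplication is not automatic, and products $q(h_1)q(h_2)$ need not be hit at all. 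In the present situation the claim is true, but it requires an argument: the generating set $T_i:=\{h\in E_N\mid r(h),s(h)\in U_i^{(N)}\}$ is $q$-saturated (because $E_N=q^{-1}([E_N])$ and $U_i^{(N)}=p^{-1}(\widetilde{U}_i^{(N)})$), and in a transformation groupoid any composable chain $g_1,\dots,g_k$ in $G_{\mathscr{B}}$ lifts, once a point of $\beta X$ over the initial unit is chosen, to a composable chain of arrows of $\beta X\rtimes G_{\mathscr{B}}$; by saturation each lifted arrow lies in $T_i^{\pm 1}$ whenever each $g_j$ lies in $q(T_i)^{\pm 1}$. Hence $q(H_i)$ coincides with the subgroupoid of $G_{\mathscr{B}}$ generated by $q(T_i)$, which salvages your conclusion. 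Alternatively you can sidestep the issue exactly as the paper does: let $G_i$ be the subgroupoid of $G_{\mathscr{B}}$ generated by $\{g\in K\mid s(g),r(g)\in \widetilde{U}_i^{(N)}\}$; the same chain-lifting shows $G_i\subseteq q(H_i)$, and then one only needs $q(H_i)$ to be a compact \emph{set}, since $G_i$ itself is the relatively compact subgroupoid demanded by Definition \ref{dad}. With either repair your proof is complete and agrees with the paper's.
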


\begin{proof}
	We claim that $G=G_{\mathscr{B}}$ works.  Note that $\mathscr{B}$ is countable (as generated by a countable set) and admissible (as it contains $\mathscr{A}$, which is admissible).  Hence most of the statement follows from Lemma \ref{adm}. As we are only adding further identity functions in the passage from $\mathscr{A}$ to $\mathscr{B}$, we also retain principality by the same proof as in Lemma \ref{Lemma: principal pseudogroup}. We only need to show that the dynamic asymptotic dimension of $G_{\mathscr{B}}$ is at most $d$.
	
	For each $n\in \N$, let us write $[E_n]$ for the image of $E_n\subseteq G(X)$ under the quotient map $G(X)\to G_{\mathscr{B}}$.  Then $[E_n]$ is compact and open: indeed, with notation as in the construction of $\mathscr{A}$ its characteristic function is equal to
	$$
	\sum_{i=1}^{m_n} \chi_{\overline{A_i^{(n)}}},
	$$
	and so in  $C_0(G_{\mathscr{B}})\subseteq C_0(G(X))$.  Moreover, $G_{\mathscr{B}}$ is the union of the $[E_n]$ (as $G(X)$ is the union of the $E_n$).  Hence to show that $G_{\mathscr{B}}$ has dynamic asymptotic dimension at most $d$, it suffices to show that for each $n$ we can find an open cover $W_0,...,W_d$ of $X_{\mathscr{B}}$ such that for each $i$ the subgroupoid $G_i$ of $G_{\mathscr{B}}$ generated by  
	$$
	\{g\in [E_n]\mid s(g),r(g)\in W_i\}
	$$
	has compact closure.  For this, let us take $W_i=U_i^{(n)}$, noting that each $U_i^{(n)}$ makes sense as a clopen subset of $X_{\mathscr{B}}$ by construction of $\mathscr{B}$.  Then we have a decomposition 
	$$
	X_{\mathscr{B}}=\bigsqcup_{i=0}^d U_i^{(n)}
	$$
	coming from the corresponding decomposition of $\beta X$.  Finally, note that each $G_i$ is contained in the image of the subgroupoid $\widetilde{G_i}$ of $G(X)$ generated by 
	$$
	\{g\in E_n\mid s(g),r(g)\in U_i^{(n)}\}
	$$
	under the canonical quotient map $G(X)\to G_{\mathscr{B}}$.  As $\widetilde{G_i}$ is compact (by choice of the $U_i^{(n)}$), and as this quotient map is continuous, we are done.
\end{proof}
We can now use this observation to deduce the existence of a convergent spectral sequence also for the (non-second countable) coarse groupoid: 
\begin{prop}
	Let $X$ be a discrete metric space with bounded geometry and finite asymptotic dimension. Then there exists a convergent spectral sequence
	$$E^2_{p,q}=H_p^{\mathrm{uf}}(X)\otimes K_q(\CC)\Rightarrow K_{p+q}(C_u^*(X)).$$
\end{prop}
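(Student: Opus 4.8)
The plan is to transfer the Proietti--Yamashita spectral sequence (\ref{Equation:Spectral sequence}) from the second countable replacement of the coarse groupoid furnished by Lemma \ref{sc dad} onto the (non second countable) coarse groupoid itself, by running it with the $G$-algebra $C(\beta X)$ as coefficients. First I would apply Lemma \ref{sc dad}: writing $d:=\asdim(X)<\infty$, there is a second countable, étale, locally compact, Hausdorff, principal groupoid $G$ with $\dad(G)\le d$ carrying an action on $\beta X$ that gives a canonical isomorphism $\beta X\rtimes G\cong G(X)$. Its base space $X_{\mathscr B}$ is a Stone space, so $G$ is ample. I would then verify the hypotheses for (\ref{Equation:Spectral sequence}) applied to $G$ and $A:=C(\beta X)$: $G$ is principal, hence has torsion-free (indeed trivial) isotropy; and $G$ has finite dynamic asymptotic dimension, hence is amenable (by the proof of \cite[Theorem A.9]{Guentner:2014bh}) and therefore satisfies the strong Baum--Connes conjecture by \cite{Tu:1999bq}. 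This produces a convergent spectral sequence
$$E^2_{p,q}=H_p(G,K_q(C(\beta X)))\Rightarrow K_{p+q}(C(\beta X)\rtimes_r G).$$

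Next I would identify the abutment. For the transformation groupoid there is a canonical isomorphism $C(\beta X)\rtimes_r G\cong C^*_r(\beta X\rtimes G)=C^*_r(G(X))$, and $C^*_r(G(X))$ is canonically the uniform Roe algebra $C_u^*(X)$, as recalled above. Hence the abutment is $K_{p+q}(C_u^*(X))$, as required.

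The real work is the identification of the $E^2$-page with $H_p^{\mathrm{uf}}(X)\otimes K_q(\CC)$. Since $\beta X$ is compact and totally disconnected, and the anchor map $\beta X\to X_{\mathscr B}$ is étale (as $\beta X\rtimes G=G(X)$ is étale, so that $\beta X$ is an object of $\mathbf{Top}_G$), the commutative $G$-algebra $C(\beta X)$ satisfies $K_0(C(\beta X))=C(\beta X,\ZZ)=\Z[\beta X]$ and $K_1(C(\beta X))=0$; carrying this out fibrewise over $X_{\mathscr B}$ shows that the coefficient system $K_q(C(\beta X))$ appearing in the Proietti--Yamashita construction is the global-sections $G$-module $\Z[\beta X]\otimes K_q(\CC)$. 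As $K_q(\CC)$ is free, $H_p(G,\Z[\beta X]\otimes K_q(\CC))\cong H_p(G,\Z[\beta X])\otimes K_q(\CC)$. Combining a Shapiro-type identification $H_p(G,\Z[\beta X])\cong H_p(\beta X\rtimes G)$ for the transformation groupoid with the isomorphism $\beta X\rtimes G\cong G(X)$ and Theorem \ref{Theorem:hmlgy of coarse grpd is uf hmlgy} then yields $H_p(G,\Z[\beta X])\cong H_p(G(X))\cong H_p^{\mathrm{uf}}(X)$, which finishes the computation of the $E^2$-page.

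I expect the main obstacle to be this last step, and in particular two points within it: confirming that the coefficient system produced by the spectral sequence for $A=C(\beta X)$ is precisely $\Z[\beta X]\otimes K_q(\CC)$, and establishing the Shapiro isomorphism $H_p(G,\Z[\beta X])\cong H_p(\beta X\rtimes G)$. I would prove the latter through the $\mathrm{Tor}$-picture of Theorem \ref{tor the}, identifying $\Tor^{\Z[\beta X\rtimes G]}_*(\Z[\beta X],\Z[\beta X])$ with $\Tor^{\Z[G]}_*(\Z[G^0],\Z[\beta X])$ by a change-of-rings argument for the convolution ring of the transformation groupoid; the remaining verifications are routine book-keeping.
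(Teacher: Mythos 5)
There is a genuine gap at the very first step: you apply the Proietti--Yamashita spectral sequence (\ref{Equation:Spectral sequence}) to the second countable groupoid $G$ of Lemma \ref{sc dad} with coefficient algebra $A=C(\beta X)$, but $C(\beta X)=\ell^\infty(X)$ is \emph{not separable}, and the machinery behind (\ref{Equation:Spectral sequence}) is not available for such coefficients. The spectral sequence of \cite{Proietti:2021wz} is an instance of Meyer's ABC spectral sequence in the triangulated category $KK^G$, and Le Gall's equivariant Kasparov theory, descent, and the strong Baum--Connes formulation used there are all set up for \emph{separable} $G$-$C^*$-algebras over a second countable groupoid. So the one-shot application that your whole argument rests on is exactly what is not justified; this, rather than the identification of the $E^2$-page, is the main obstacle. (A smaller error: your parenthetical claim that the anchor map $\beta X\to X_{\mathscr B}$ is \'etale is false --- \'etaleness of the transformation groupoid does not require it, and if $p:\beta X\to X_{\mathscr B}$ were a local homeomorphism onto the metrizable space $X_{\mathscr B}$, the compact space $\beta X$ would be metrizable. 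Fortunately $\Z[\beta X]$ is a $\Z[G]$-module for any continuous action, so this is repairable.)

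The paper's proof is designed precisely to get around the separability problem: one writes $\beta X=\varprojlim Y_i$ as an inverse limit of $G$-invariant \emph{second countable} quotients, so that each $C(Y_i)$ is a separable $G$-algebra to which (\ref{Equation:Spectral sequence}) legitimately applies, and then passes to the direct limit of the resulting directed system of spectral sequences (using functoriality of the ABC construction in the coefficient variable). Convergence does not pass to such a limit for free: it is here that finite asymptotic dimension is used a second time, since Theorem \ref{main vanishing} forces $E^2_{p,q}(i)=0$ for $p>\asdim(X)$ uniformly in $i$, making the induced filtrations on $\lim_i K_{p+q}(C(Y_i)\rtimes_r G)=K_{p+q}(C^*_u(X))$ finite and hence the limit spectral sequence convergent. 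In your proposal the finite asymptotic dimension hypothesis only enters through amenability of $G$, which is a sign that the convergence issue has been bypassed rather than addressed. Your identification of the $E^2$-page (coefficients $\Z[\beta X]\otimes K_q(\CC)$ and the Shapiro-type isomorphism $H_p(G,\Z[\beta X])\cong H_p(\beta X\rtimes G)$, followed by Theorem \ref{Theorem:hmlgy of coarse grpd is uf hmlgy}) is sound in substance and is essentially what the paper does, but stage-wise: $\lim_i H_p(G,C(Y_i,\Z))\cong\lim_i H_p(G\ltimes Y_i)\cong H_p(G(X))\cong H_p^{\mathrm{uf}}(X)$.
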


\begin{proof}

By Lemma \ref{sc dad} we can write $G(X)=G\ltimes \beta X$ for a principal second countable ample groupoid $G$ with finite dynamic asymptotic dimension. Further, we can write $\beta X$ as an inverse limit $\beta X=\lim\limits_{\leftarrow} Y_i$ of $G$-invariant second countable spaces $Y_i$.
Since $G$ is in particular amenable, it satisfies the strong Baum-Connes conjecture.
Hence (\ref{Equation:Spectral sequence}), for each $i\in I$, provides a convergent spectral sequence $$E_{p,q}^2(i)=H_p(G,K_q(C(Y_i)))\Rightarrow K_{p+q}(C(Y_i)\rtimes_r G).$$ The spectral sequence (\ref{Equation:Spectral sequence}) is a special case of the ABC spectral sequence constructed by Meyer in \cite[Theorem~4.3]{Meyer2008}, and hence it is functorial in the coefficient variable. Consequently, the abelian groups $E_{p,q}^d(i)$ together with the differential maps form a directed system of spectral sequences. Hence we obtain a spectral sequence with $E^d_{p,q}:=\lim_i E^d_{p,q}(i)$ in the limit.
Each of the spectral sequences $(E_{p,q}^d (i))$ converges by \cite[Theorem~5.1]{Meyer2008} and as explained on page 172 of \cite{Meyer2008}, the associated filtrations are functorial in the appropriate sense. Hence taking limits again, we obtain an induced filtration of $\lim K_{p+q}(C(Y_i)\rtimes_r G)$.
Now since $E_{p,q}^2(i)=0$ for all $p>\asdim(X)$ and all $i\in I$ by Theorem \ref{main vanishing}, the induced filtration of $\lim K_{p+q}(C(Y_i)\rtimes_r G)$ is finite and hence we obtain a convergent spectral sequence in the limit:
\begin{equation}\label{Equation:Limit spectral sequence}
E^2_{p,q}=\lim_i H_p(G,K_q(C(Y_i)))\Rightarrow \lim_iK_{p+q}(C(Y_i)\rtimes G).
\end{equation}

There are canonical identifications  $\lim_i H_n(G,C(Y_i,\Z))\cong \lim_i H_n(G\ltimes Y_i)=H_n(G(X))$ and by Theorem \ref{Theorem:hmlgy of coarse grpd is uf hmlgy} we can identify the latter group with $H_n^{\mathrm{uf}}(X)$. On the right hand-side we have $\lim_i K_*(C(Y_i)\rtimes_r G)=K_*(C_r^*(G(X)))=K_*(C_u^*(X))$ and hence we are done.
\end{proof}

\begin{corollary}\label{huf cor}
	Let $X$ be a bounded geometry metric space with asymptotic dimension $d$. Then $H_n^{\mathrm{uf}}(X)=0$ for all $n>d$ and $H_d^{\mathrm{uf}}(X)$ is torsion-free. Moreover,
	\begin{enumerate}
		\item if $\mathrm{asdim}(X)\leq 2$ and $H_2^{\mathrm{uf}}(X)$ is free, or finitely generated, then
		$$K_0(C_u^*(X))\cong H_0^{\mathrm{uf}}(X)\oplus H_2^{\mathrm{uf}}(X),\text{ and } K_1(C_u^*(X))\cong H_1^{\mathrm{uf}}(X),$$ 
		\item if $\mathrm{asdim}(X)\leq 3$, $X$ is non-amenable, and $H_3^{\mathrm{uf}}(X)$ is free, or finitely generated, then
		$$K_0(C_u^*(X))\cong H_2^{\mathrm{uf}}(X),\text{ and } K_1(C_u^*(X))\cong H_1^{\mathrm{uf}}(X)\oplus H_3^{\mathrm{uf}}(X).$$
	\end{enumerate}
\end{corollary}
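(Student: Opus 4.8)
The plan is to derive everything from the comparison theorem $H_*^{\mathrm{uf}}(X)\cong H_*(G(X))$ of Theorem \ref{Theorem:hmlgy of coarse grpd is uf hmlgy}, combined with the spectral sequence of the preceding proposition, reading off the low-degree terms exactly as in the proof of Theorem \ref{Cor:HK conjecture up to dimension 2}. First I would dispose of the vanishing and torsion-freeness statement. The coarse groupoid $G(X)$ is principal, ample and $\sigma$-compact with compact base space $\beta X$, and its dynamic asymptotic dimension equals $\asdim(X)=d$ by \cite[Theorem 6.4]{Guentner:2014aa}. Hence Theorem \ref{main vanishing} (in its non-second-countable form) gives $H_n(G(X))=0$ for $n>d$ and that $H_d(G(X))$ is torsion-free; transporting this across the isomorphism of Theorem \ref{Theorem:hmlgy of coarse grpd is uf hmlgy} yields $H_n^{\mathrm{uf}}(X)=0$ for $n>d$ and $H_d^{\mathrm{uf}}(X)$ torsion-free.

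Next I would run the spectral sequence $E^2_{p,q}=H_p^{\mathrm{uf}}(X)\otimes K_q(\CC)\Rightarrow K_{p+q}(C_u^*(X))$. Since $K_q(\CC)=\Z$ for $q$ even and $0$ for $q$ odd, the $E^2$-page is concentrated in even rows, where $E^2_{p,q}\cong H_p^{\mathrm{uf}}(X)$. In case (1), the vanishing result forces every column with $p\geq 3$ to be zero, so the page is supported in $p\in\{0,1,2\}$; each differential $d^r$ with $r\geq 2$ either lands in an odd row or connects columns outside this range, so the sequence collapses at $E^2$. Reading off total degrees, $K_1$ receives only the term $(p,q)=(1,0)$, giving $K_1(C_u^*(X))\cong H_1^{\mathrm{uf}}(X)$, while $K_0$ carries a two-step filtration with graded pieces $H_0^{\mathrm{uf}}(X)$ (the subgroup, at $p=0$) and $H_2^{\mathrm{uf}}(X)$ (the quotient, at $p=2$), yielding
$$0\to H_0^{\mathrm{uf}}(X)\to K_0(C_u^*(X))\to H_2^{\mathrm{uf}}(X)\to 0.$$
If $H_2^{\mathrm{uf}}(X)$ is finitely generated then it is free, being torsion-free by the first part, so this sequence splits and $K_0(C_u^*(X))\cong H_0^{\mathrm{uf}}(X)\oplus H_2^{\mathrm{uf}}(X)$.

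For case (2) the only additional ingredient is non-amenability: by the Block--Weinberger dichotomy \cite{Block1992}, a bounded geometry space $X$ is non-amenable precisely when $H_0^{\mathrm{uf}}(X)=0$. With $H_0^{\mathrm{uf}}(X)=0$ and $H_p^{\mathrm{uf}}(X)=0$ for $p\geq 4$, the $E^2$-page is supported in $p\in\{1,2,3\}$ (even rows only), and the same parity and range considerations force all differentials to vanish, so the sequence collapses. Then $K_0$ receives only $(p,q)=(2,-2)$, giving $K_0(C_u^*(X))\cong H_2^{\mathrm{uf}}(X)$, whereas $K_1$ carries a two-step filtration with pieces $H_1^{\mathrm{uf}}(X)$ (sub) and $H_3^{\mathrm{uf}}(X)$ (quotient); the resulting extension splits once $H_3^{\mathrm{uf}}(X)$ is free, which holds when it is finitely generated since it is torsion-free by the first part.

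The routine collapse of the spectral sequence is forced purely by the vanishing range together with the concentration in even rows, so I do not expect it to be the hard part. The main points requiring genuine care are the orientation of the convergence filtration, so that the free group lands in the quotient position (as in Theorem \ref{Cor:HK conjecture up to dimension 2}) and the extension actually splits, and the precise invocation of the amenability characterisation of $H_0^{\mathrm{uf}}$. I would keep the bookkeeping parallel to the groupoid case to make the filtration direction unambiguous.
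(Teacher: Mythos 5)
Your proposal is correct and follows essentially the same route as the paper: vanishing and torsion-freeness via Theorem \ref{main vanishing} applied to the ($\sigma$-compact, non-second-countable) coarse groupoid together with the identification of Theorem \ref{Theorem:hmlgy of coarse grpd is uf hmlgy}, then the limit spectral sequence of the preceding proposition read off exactly as in Theorem \ref{Cor:HK conjecture up to dimension 2}. The only cosmetic difference is in case (2): the paper keeps the potentially nonzero differentials $d^3_{3,2l}\colon H_3^{\mathrm{uf}}(X)\to H_0^{\mathrm{uf}}(X)$ and removes them afterwards (as $\ker$ and $\mathrm{coker}$ in the resulting extensions) using the Block--Weinberger vanishing of $H_0^{\mathrm{uf}}(X)$ for non-amenable $X$, whereas you invoke that vanishing first so that all differentials die at once --- the same argument in a different order.
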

\begin{proof}
	The first case follows from Theorem \ref{main vanishing} and Corollary \ref{Cor:HK conjecture up to dimension 2} as in our earlier examples. For $(2)$ note that if $\mathrm{asdim} (X)\leq 3$ then the only possibly non-zero differentials on the $E^3$-page are the maps $d_{3,2l}^3:H_3^{\mathrm{uf}}(X)\rightarrow H_0^{\mathrm{uf}}(X)$ for $l\geq 0$. 
	The sequence converges on the $E^4$-page and hence there are short exact sequences
	$$0\rightarrow \mathrm{coker}(d_{3,0}^3)\rightarrow K_0(C_u^*(X))\rightarrow H_2^{\mathrm{uf}}(X)\rightarrow 0, \textrm{and }$$
	$$0\rightarrow H_1^{\mathrm{uf}}(X)\rightarrow K_1(C_u^*(X))\rightarrow \ker(d_{3,0}^3)\rightarrow 0$$
	For a non-amenable space $X$ the group $H_0^{\mathrm{uf}}(X)$ vanishes by \cite[Theorem 3.1]{Block1992}. Since $H_3^{\mathrm{uf}}(X)$ is free the result follows.
\end{proof}

\begin{examples}
Let $\Gamma$ be a countable group equipped with a left invariant bounded geometry metric\footnote{Such a metric exists and is essentially unique by \cite[Proposition 2.3.3]{Willett:2009rt}, for example.}.  Let $\Gamma$ act on the group $\ell^\infty(\Gamma,\Z)$ of bounded $\Z$-valued functions on $\Gamma$ via the action induced by the left translation action of $\Gamma$ on itself.   Then it is well-known that $H^{\mathrm{uf}}_*(\Gamma)$ identifies with the group homology $H_*(\Gamma,\ell^\infty(\Gamma,\ZZ))$ of $\Gamma$ with coefficients in $\ell^\infty(\Gamma,\ZZ)$: see for example \cite[last paragraph on page 1515]{Brodzki:2010fu} (this discusses the case of $H_*^{\mathrm{uf}}$ with real coefficients, but the same argument works for integer coefficients).

Now, assume that $\Gamma$ is a $d$-dimensional Poincar\'{e} duality group, for example if $\Gamma$ is the fundamental group of a closed $d$-manifold with contractible universal cover.  Then 
$$
H^{\mathrm{uf}}_d(\Gamma)\cong H_d(\Gamma,\ell^\infty(\Gamma,\ZZ))\cong H^0(\Gamma,\ell^\infty(\Gamma,\ZZ))\cong \ell^\infty(\Gamma,\ZZ)^\Gamma\cong \Z,
$$
where the first isomorphism is the general fact noted above, the second is Poincar\'{e} duality, the third is the definition of the zeroth cohomology group, and the fourth is straightforward.  In particular, $H^{\mathrm{uf}}_d(\Gamma)$ is free.  

This discussion applies in particular if $\Gamma$ is the fundamental group of a closed orientable surface.  In this case $\Gamma$ is quasi-isometric to either the hyperbolic plane or to the Euclidean plane, whence the asymptotic dimension of $\Gamma$ is two, and we may apply the first part of Corollary \ref{huf cor} to conclude that 
$$
K_0(C_u^*(\Gamma))\cong H_0^{\mathrm{uf}}(\Gamma)\oplus \Z,\text{ and } K_1(C_u^*(\Gamma))\cong H_1^{\mathrm{uf}}(\Gamma).
$$
If moreover the underlying surface has genus at least two, then $\Gamma$ is non-amenable, so $H_0^{\mathrm{uf}}(\Gamma)$ vanishes, and $K_0(C_u^*(\Gamma))\cong \Z$.  

The discussion also applies if $\Gamma$ is the fundamental group of a closed, orientable, hyperbolic $3$-manifold.  In this case $\Gamma$ is non-amenable, and $\Gamma$ is quasi-isometric to hyperbolic $3$-space, so of asymptotic dimension three.  We may thus apply the second part of Corollary \ref{huf cor} to conclude that 
$$
K_0(C_u^*(\Gamma))\cong H_2^{\mathrm{uf}}(\Gamma),\text{ and } K_1(C_u^*(\Gamma))\cong H_1^{\mathrm{uf}}(\Gamma)\oplus \Z.
$$
\end{examples}

\subsection{Examples with topological property (T)}\label{Subsection:Property T}
The HK-conjecture asserts that for a principal ample groupoid we have abstract isomorphisms $\bigoplus_{j\in\NN} H_{2j+i}(G)\cong K_i(C_r^*(G))$. If $G$ has homological dimension $1$ one might be tempted to strengthen this conjecture and ask for the canonical maps $\mu_0$ and $\mu_1$ to be isomorphisms. Here, we show that this strong version of the conjecture fails.

In order to exhibit the examples we need some preliminary facts about topological property (T). Topological property (T) for groupoids was introduced in \cite[Definition 3.6]{DellAiera2018}, and we refer the reader there for the definition.
Let $\mathcal{R}_G=\{\pi_x\mid x\in G^0\}$ denote the family of regular representations of $G$, i.e.
$$\pi_x:C_c(G)\rightarrow B(\ell^2(G_x)),\quad \pi_x(f)\delta_g=\sum_{h\in G_{r(g)}} f(h)\delta_{hg}.$$ Then we have the following generalization of \cite[Proposition~4.19]{DellAiera2018}.
\begin{lemma}\label{Lemma:(T) TransGrpd}
	Let $G$ be an ample groupoid with compact unit space acting on a compact space $X$.
	If $G$ has property (T) with respect to $\mathcal{R}_G$ then $G\ltimes X$ has property (T) with respect to $\mathcal{R}_{G\ltimes X}$.
\end{lemma}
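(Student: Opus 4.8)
The plan is to reduce property (T) of the transformation groupoid to a \emph{uniform spectral gap} that is already available for $G$, and then transport that gap fibrewise along the anchor map. Concretely, I would use the characterisation from \cite{DellAiera2018} that $G$ has property (T) with respect to $\mathcal{R}_G$ precisely when there is a Kazhdan pair $(K,\kappa)$, with $K\subseteq G$ a compact open symmetric set containing $G^0$, and an associated Laplacian-type element $\Delta_K\in C^*_r(G)$ such that the regular representations exhibit a gap uniformly: $\sigma\big(\pi_y(\Delta_K)\big)\cap(0,\kappa)=\varnothing$ for every $y\in G^0$. (Equivalently, in the almost-invariant-vector language of \cite[Definition~3.6]{DellAiera2018}, no unit vector in any $\pi_y$ is $(K,\kappa)$-almost invariant.) The goal is then to produce a Kazhdan pair for $G\ltimes X$ out of $(K,\kappa)$.

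The key structural input is the fibrewise comparison of regular representations. For $x\in X$ the source fibre $(G\ltimes X)_x=\{(g,x)\mid s(g)=p(x)\}$ is in canonical bijection with $G_{p(x)}=s^{-1}(p(x))$ via $(g,x)\mapsto g$, which induces a unitary $U_x:\ell^2\big((G\ltimes X)_x\big)\to\ell^2(G_{p(x)})$. Setting $\tilde{K}:=\{(g,x)\in G\ltimes X\mid g\in K\}$, which is compact open because $K$ is compact open and $X$ is compact, I would define $\tilde{f}(g,x):=f(g)$ for $f\in C_c(G)$ and check that $f\mapsto\tilde{f}$ is a $\ast$-homomorphism $C_c(G)\to C_c(G\ltimes X)$ (this uses only the product rule $(h,y)(g,x)=(hg,x)$ in the transformation groupoid). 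A direct computation with the formula for $\pi_x$ then gives the intertwining $U_x\,\pi^{G\ltimes X}_x(\tilde{f})\,U_x^*=\pi^{G}_{p(x)}(f)$; in particular $\|\pi^{G\ltimes X}_x(\tilde{f})\|=\|\pi^G_{p(x)}(f)\|$, so $f\mapsto\tilde f$ is contractive for the reduced norms and extends to a $\ast$-homomorphism $\Phi:C^*_r(G)\to C^*_r(G\ltimes X)$ sending the Laplacian $\Delta_K$ to the Laplacian $\Delta_{\tilde K}$ built from $\tilde K$.

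The conclusion then follows by comparing spectra. Since $U_x\,\pi^{G\ltimes X}_x(\Delta_{\tilde K})\,U_x^*=\pi^{G}_{p(x)}(\Delta_K)$, we have $\sigma\big(\pi^{G\ltimes X}_x(\Delta_{\tilde K})\big)=\sigma\big(\pi^{G}_{p(x)}(\Delta_K)\big)$ for every $x\in X$. As $p(x)$ ranges over $G^0$, the uniform gap $\sigma(\pi^G_y(\Delta_K))\cap(0,\kappa)=\varnothing$ passes verbatim to the family $\{\pi^{G\ltimes X}_x\}_{x\in X}=\mathcal{R}_{G\ltimes X}$, so $(\tilde{K},\kappa)$ is a Kazhdan pair for $G\ltimes X$ with respect to $\mathcal{R}_{G\ltimes X}$. (In the almost-invariant-vector formulation one argues contrapositively: a $(\tilde K,\kappa)$-almost-invariant unit vector for $\pi^{G\ltimes X}_x$ is carried by $U_x$ to a $(K,\kappa)$-almost-invariant unit vector for $\pi^G_{p(x)}$, contradicting the Kazhdan bound for $G$.)

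The main obstacle I anticipate is not the transfer argument itself, which is essentially the functoriality of reduced crossed products for the equivariant unital inclusion $p^*:C(G^0)\hookrightarrow C(X)$ together with the observation that $\ast$-homomorphisms do not enlarge spectra, but rather pinning down the precise formulation of topological property (T) with respect to $\mathcal{R}_G$ used in \cite{DellAiera2018} and verifying that it is equivalent to (or implied by) the uniform spectral-gap condition stated above. Establishing that equivalence, or else re-running the whole argument directly at the level of almost-invariant sections, is where the genuine care is needed; the intertwining identity $U_x\,\pi^{G\ltimes X}_x(\tilde f)\,U_x^*=\pi^G_{p(x)}(f)$ is a routine but slightly fiddly bookkeeping computation that I would carry out explicitly to make either route rigorous.
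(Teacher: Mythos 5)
Your proposal is correct in its essential mechanism, and it shares its skeleton with the paper's proof: both use the same compact open set (your $\tilde K$ is exactly the paper's $L=(G\ltimes X)\cap(K\times X)$) and the same identification of source fibres $(G\ltimes X)_x\cong G_{p(x)}$. Where you genuinely diverge is in the transfer step. The paper does \emph{not} lift a general witnessing function: given $f$ supported in $K$ with $\|f\|_I\le 1$ witnessing the failure of $(K,c)$-invariance at $p(x)$, it covers $K$ by finitely many compact open bisections $V_1,\dots,V_n$, splits $f=\sum_i f_i$, uses the bisection identities $\pi(f_i)=\pi(\Psi(f_i))\pi(1_{V_i})$ and $\Psi(f_i)=\Psi(f_i)1_{r(V_i)}$ to replace $f_i$ by the indicator $1_{V_i}$, and only then lifts $1_{V_i}$ to $1_{(V_i\times X)\cap(G\ltimes X)}$ --- at the cost of degrading the constant, so the paper obtains the Kazhdan pair $(L,c/n)$. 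Your pullback $\tilde f:=f\circ\mathrm{pr}_1$ avoids this detour entirely: $\tilde f$ is continuous with compact support because $X$ is compact and $G\ltimes X$ is closed in $G\times X$; one checks $\|\tilde f\|_I\le\|f\|_I$ and $\Psi_{G\ltimes X}(\tilde f)=\Psi_G(f)\circ p$ (using $p(g\cdot x)=r(g)$); and your intertwining $U_x\,\pi^{G\ltimes X}_x(\tilde f)\,U_x^*=\pi^G_{p(x)}(f)$ is a correct computation with the paper's formula for the regular representation. Consequently your argument transfers the Kazhdan pair with the \emph{same} constant, $(\tilde K,c)$, which is cleaner and quantitatively sharper than the published route. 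One adjustment is needed, which you flag yourself: do not route the argument through the ``Laplacian'' $\Delta_K$ and a uniform spectral gap, since that characterisation is not established in \cite{DellAiera2018}, and the definition actually invoked in the paper is phrased directly in terms of the pair $(K,c)$, functions supported in $K$ with $\|f\|_I\le 1$, and the quantity $\|\pi(f)\xi-\pi(\Psi(f))\xi\|$. Your parenthetical contrapositive --- a $(\tilde K,\kappa)$-almost-invariant unit vector for $\pi^{G\ltimes X}_x$ is carried by $U_x$ to a $(K,\kappa)$-almost-invariant unit vector for $\pi^G_{p(x)}$, because every test function $f$ for $G$ lifts to the test function $\tilde f$ for $G\ltimes X$ --- works verbatim in that language and should be promoted to the main line of the proof; with that substitution your argument is complete and self-contained.
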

\begin{proof}
	Let $p:X\rightarrow G^0$ denote the anchor map of the action.
	Let $(K,c)$ be a Kazhdan pair for $\mathcal{R}_G$. Now let $$L:=\lbrace (g,x)\in G\ltimes X\mid g\in K\rbrace=(G\ltimes X)\cap (K\times X),$$ which is compact. We claim that $L$ is a Kazhdan set for $\mathcal{R}_{G\ltimes X}$. Indeed consider the regular representation $\pi_x^{G\ltimes X}:C_c(G\ltimes X)\rightarrow B(\ell^2(G_{p(x)}))$ associated with an arbitrary point $x\in X$ and let $\xi\in \ell^2(G_{p(x)})$ be a unit vector.
	Since $(K,c)$ is Kazhdan for $\mathcal{R}_G$, there exists a function $f\in C_c(G)$ with support in $K$ such that $\norm{f}_I\leq 1$ and $$\norm{\pi_{p(x)}^G(f)\xi-\pi_{p(x)}^G(\Psi(f))\xi}\geq c,$$
	where $\Psi:C_c(G)\rightarrow C(G^0)$ is given by $\Psi(f)(x)=\sum_{g\in G^x} f(g)$.
	Since $K$ is compact we can cover it with finitely many compact open bisections $V_1,\ldots ,V_n$ and using a partition of unity argument, we can write $f=\sum f_i$ where $supp(f_i)\subseteq V_i$. Then there must be some $1\leq i\leq n$ such that
	$$\norm{\pi_{p(x)}^G(f_i)\xi-\pi_{p(x)}^G(\Psi(f_i))\xi}\geq \frac{c}{n}.$$
	Using that $f_i$ is supported in a bisection one directly verifies that
	$$\pi_{p(x)}^G(f_i)=\pi_{p(x)}^G(\Psi(f_i))\pi_{p(x)}^G(1_{V_i}) \text{ and }\Psi(f_i)=\Psi(f_i)1_{r(V_i)}$$
	and combining this with the previous observation, we conclude that there exists an $i$ such that 
	$$\norm{\pi^G_{p(x)}(1_{V_i})\xi-\pi^G_{p(x)}(1_{r(V_i)})\xi}\geq \frac{c}{n}.$$
	Now let $V_i\ltimes X$ denote the compact open set $(V_i\times X)\cap G\ltimes X$ and let $f':=1_{V_i\ltimes X}$. Then $f'$ is clearly supported in $L$ with $\norm{f'}_I\leq 1$ and since $\pi_x^{G\ltimes X}(f')=\pi_{p(x)}^G(1_{V_i})$ and $\pi_x^{G\ltimes X}(\Psi(f'))=\pi_{p(x)}^G (1_{r(V_i)})$ we conclude that
	$$\norm{\pi_x^{G\ltimes X}(f')\xi-\pi_x^{G\ltimes X}(\Psi(f'))\xi}\geq \frac{c}{n}.$$
	Hence $(L,\frac{c}{n}$) is a Kazhdan pair for $\mathcal{R}_{G\ltimes X}$.
\end{proof}



Now let $\Gamma$ be a residually finite group and $\mathcal{L}=(N_i)_i$ a sequence of finite index normal subgroups. Let $N_\infty$ denote the trivial subgroup of $\Gamma$ and let $\pi_i:\Gamma\rightarrow \Gamma/N_i$ be the quotient map. We denote by $G_\mathcal{L}$ the associated HLS groupoid, i.e. the group bundle $\bigsqcup_{i\in\NN\cup \lbrace\infty\rbrace} \Gamma/N_i$ equipped with the topology generated by the singleton sets $\lbrace (i,\gamma)\rbrace$ for $i\in\NN$ and $\gamma\in \Gamma$, and the tails $\lbrace (i,\pi_i(\gamma))\mid i>N\rbrace$ for each fixed $\gamma\in\Gamma$ and $N\in\NN$. It is well-known that this groupoid is Hausdorff if and only if for each $\gamma\in \Gamma\setminus \{e\}$ the set $\{ i\in\NN\mid \gamma\in N_i\}$ is finite. This is in particular the case if the sequence is nested and has trivial intersection. 

Following a construction of Alekseev and Finn-Sell in \cite{Alekseev2018} we associate a principal groupoid to this data as follows.
Let $X:=\bigsqcup_{i\in \NN\cup\{\infty\}} \Gamma/N_i$. Then $X$ carries a canonical action of the HLS groupoid $G_\mathcal{L}$ given by left multiplication. For $\gamma N_i\in \Gamma/N_i$ let
$$Sh(\gamma N_i)=\bigcup_{j\geq i} \pi_{i,j}^{-1}(\gamma N_i)$$ be the \textit{shadow} of $\gamma N_i$ in $X$. Now let $\widehat{X}$ be the spectrum of the smallest $G_{\mathcal{L}}$-invariant $C^*$-subalgebra $B\subseteq \ell^\infty (X)$ containing 
$$\{\delta_x\mid x\in X\}\cup \{1_{Sh(\gamma N_i)}\mid \gamma\in\Gamma, i\in\NN\}.$$
Since $B$ is $G_\mathcal{L}$-invariant, $\widehat{X}$ also carries an action of $G_\mathcal{L}$ and we can form the transformation groupoid $\mathcal{G}:=G_\mathcal{L}\ltimes \widehat{X}$.
As explained just after Remark 2.2 in \cite{Alekseev2018}, this groupoid is principal. Moreover, $X\subseteq \widehat{X}$ is a dense open $G_\mathcal{L}$-invariant subset with complement $\widehat{X}\setminus X\cong \widehat{\Gamma}_\mathcal{L}=\varprojlim \Gamma/N_i$.
Hence we obtain isomorphisms
$$\mathcal{G}|_{X}\cong \bigsqcup_{i\in\NN\cup\{\infty\}}\Gamma/N_i\ltimes \Gamma/N_i\quad \text{ and }\quad \mathcal{G}|_{\widehat{X}\setminus X}\cong \Gamma\ltimes \widehat{\Gamma}_\mathcal{L}.$$

The following result relies on property $(\tau)$ as defined in \cite[Definition~4.3.1]{Lubotzky94}.

\begin{prop}\label{Prop: Property (T) for principal HLS}
	Suppose $\Gamma$ is a finitely generated, residually finite group and $\mathcal{L}=(N_i)_i$ is a sequence of finite index normal subgroups with property $(\tau)$. Then the following hold:
	\begin{enumerate}
		\item $\mathcal{G}$ has topological property $(T)$ with respect to the family of regular representations in the sense of \cite{DellAiera2018}.
		\item The sequence
		$$K_0(C_r^*(\mathcal{G}|_X))\rightarrow K_0(C_r^*(\mathcal{G}))\rightarrow K_0(C_r^*(\mathcal{G}|_{\widehat{X}\setminus X}))$$
		is not exact in the middle.
	\end{enumerate}
\end{prop}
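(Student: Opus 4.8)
The plan is to treat the two parts separately, reducing (1) to the HLS groupoid and deducing (2) from the existence of a Kazhdan projection in the \emph{reduced} algebra.

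\textbf{Part (1).} First I would reduce to $G_\mathcal{L}$ and invoke Lemma \ref{Lemma:(T) TransGrpd}. Property $(\tau)$ for the sequence $\mathcal{L}$ is exactly a uniform spectral gap for the left regular representations of the finite quotients $\Gamma/N_i$, which by the results of \cite{DellAiera2018} is equivalent to the HLS groupoid $G_\mathcal{L}$ having topological property (T) with respect to its family of regular representations $\mathcal{R}_{G_\mathcal{L}}$. Since $G_\mathcal{L}$ is ample with compact unit space $\NN\cup\{\infty\}$ and acts on the compact space $\widehat{X}$, Lemma \ref{Lemma:(T) TransGrpd} applies directly and yields that $\mathcal{G}=G_\mathcal{L}\ltimes\widehat{X}$ has property (T) with respect to $\mathcal{R}_\mathcal{G}$.

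\textbf{Part (2), Kazhdan projection and the boundary.} The engine for (2) is the Kazhdan projection $p\in C_r^*(\mathcal{G})$ supplied by property (T) with respect to the regular representations: for every $\pi\in\mathcal{R}_\mathcal{G}$, the operator $\pi(p)$ is the orthogonal projection onto the $\mathcal{G}$-invariant vectors. I would first compute $\pi(p)$ at the two extreme types of points. At a boundary unit $\omega\in\widehat{X}\setminus X\cong\widehat{\Gamma}_\mathcal{L}$, principality of $\mathcal{G}$ together with the identification $\mathcal{G}|_{\widehat{X}\setminus X}\cong\Gamma\ltimes\widehat{\Gamma}_\mathcal{L}$ realizes the fiber as $\ell^2(\Gamma)$ carrying the left regular representation of the infinite group $\Gamma$, which has no nonzero invariant vectors; the uniform spectral gap then forces $\pi_\omega(p)=0$. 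Hence the restriction homomorphism $q\colon C_r^*(\mathcal{G})\to C_r^*(\mathcal{G}|_{\widehat{X}\setminus X})$ annihilates $p$, so $[p]\in\ker(q_*)$.

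\textbf{Part (2), the rank invariant.} It remains to show $[p]$ is \emph{not} in the image of $K_0(C_r^*(\mathcal{G}|_X))$. For each finite $i$, evaluation in the regular representation at a point $x_i$ of the fiber $\Gamma/N_i$ gives a $*$-homomorphism $\pi_{x_i}\colon C_r^*(\mathcal{G})\to M_{n_i}(\CC)$ (the fiber being the pair groupoid on the finite set $\Gamma/N_i$, with $n_i=|\Gamma/N_i|$), and hence a rank map $\phi_i:=(\pi_{x_i})_*\colon K_0(C_r^*(\mathcal{G}))\to K_0(M_{n_i}(\CC))=\ZZ$. Assembling these gives $\phi=(\phi_i)_i\colon K_0(C_r^*(\mathcal{G}))\to\prod_i\ZZ$. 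Since the invariant vectors at a finite fiber are the constants, $\pi_{x_i}(p)$ is a rank-one projection, so $\phi([p])=(1,1,1,\dots)$. On the other hand $C_r^*(\mathcal{G}|_X)$ is built as the $c_0$-direct sum of the matrix algebras $M_{n_i}(\CC)$ over the (clopen, isolated) finite fibers, so $K_0(C_r^*(\mathcal{G}|_X))=\bigoplus_i\ZZ$ consists of finitely supported rank sequences, and $\phi$ annihilates all but finitely many coordinates of any class pushed forward from the ideal. As $(1,1,1,\dots)\notin\bigoplus_i\ZZ$, we conclude $[p]\notin\operatorname{im}(K_0(C_r^*(\mathcal{G}|_X))\to K_0(C_r^*(\mathcal{G})))$, so the sequence fails to be exact in the middle.

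\textbf{Main obstacle.} The delicate points are the two structural claims underpinning the computation. Establishing $\pi_\omega(p)=0$ on the boundary requires that the uniform Kazhdan constant genuinely survives in the field of boundary regular representations, i.e.\ that this field really decomposes as left regular representations of $\Gamma$ with a uniform gap and no invariant vectors; this leans on principality and the explicit description $\mathcal{G}|_{\widehat{X}\setminus X}\cong\Gamma\ltimes\widehat{\Gamma}_\mathcal{L}$ from \cite{Alekseev2018}. Establishing the $c_0$-structure of $C_r^*(\mathcal{G}|_X)$ (so that $K_0$ is finitely supported in the fiber direction, making $\phi$ the right obstruction) requires that the finite fibers are clopen and accumulate only on the boundary, again from the Alekseev--Finn-Sell description of $\widehat{X}$. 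Both are essentially bookkeeping given the cited structure theory, but they are exactly where care is needed; the conceptual heart is simply that $p$ is a ``ghost-type'' projection of constant fiberwise rank one that vanishes on the boundary yet escapes the ideal.
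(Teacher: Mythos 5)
Your proof is correct, and for part (1) it coincides with the paper's argument: property $(\tau)$ yields property (T) for the HLS groupoid $G_\mathcal{L}$ with respect to its regular representations via \cite[Proposition~4.15]{DellAiera2018}, and Lemma \ref{Lemma:(T) TransGrpd} transfers this to $\mathcal{G}=G_\mathcal{L}\ltimes\widehat{X}$ (the paper's proof only adds the remark that the regular representations extend to $C^*_r(\mathcal{G})$, so that property (T) relative to $\mathcal{R}_\mathcal{G}$ is the relevant notion). For part (2) the paper is a one-liner: it simply cites \cite[Proposition~7.14]{DellAiera2018}, which packages exactly the mechanism you spell out. So your part (2) is not a genuinely different route but an unfolding of the cited black box: the Kazhdan projection $p\in C^*_r(\mathcal{G})$ (available in the \emph{reduced} algebra precisely because the family is $\mathcal{R}_\mathcal{G}$ and $\widehat{X}$ is compact, so $C^*_{\mathcal{R}}(\mathcal{G})=C^*_r(\mathcal{G})$ is unital); its vanishing under the boundary restriction because, by principality and the identification $\mathcal{G}|_{\widehat{X}\setminus X}\cong\Gamma\ltimes\widehat{\Gamma}_\mathcal{L}$ from \cite{Alekseev2018}, each boundary fibre is a free $\Gamma$-orbit with no nonzero invariant $\ell^2$-vectors; and the fibrewise-rank obstruction showing $[p]$ is not in the image of the ideal. (Note that once $p$ is the Kazhdan projection, $\pi_\omega(p)=0$ follows immediately from the vanishing of the invariant subspace; no separate appeal to the uniform gap is needed at that point.) What your version buys is self-containedness and an explicit ghost-type obstruction class; what the paper's buys is brevity, at the cost of outsourcing the whole of part (2).

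Two small inaccuracies should be repaired. First, $X=\bigsqcup_{i\in\NN\cup\{\infty\}}\Gamma/N_i$ includes the fibre over $\infty$, a copy of $\Gamma$ itself, so $\mathcal{G}|_X$ has the extra transitive component $\Gamma\ltimes\Gamma$ and $C^*_r(\mathcal{G}|_X)$ is the $c_0$-direct sum of the $M_{n_i}(\CC)$ \emph{together with} a summand $\mathcal{K}(\ell^2\Gamma)$; hence $K_0(C^*_r(\mathcal{G}|_X))\cong\bigoplus_{i\in\NN}\ZZ\oplus\ZZ$ rather than $\bigoplus_i\ZZ$. This is harmless: each rank map $\phi_i$ ($i$ finite) annihilates the $\mathcal{K}(\ell^2\Gamma)$ summand, since the clopen invariant sets $\Gamma/N_i$ and $\Gamma/N_\infty$ are disjoint, so the image of the ideal under $\phi$ is still finitely supported and your contradiction with $\phi([p])=(1,1,1,\dots)$ stands. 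Second, your boundary computation implicitly requires $\Gamma$ to be infinite (otherwise the boundary fibres admit invariant vectors and $\pi_\omega(p)\neq 0$); this is true in the paper's intended setting ($\Gamma=\FF_2$, and for finite $\Gamma$ statement (2) would in any case fail), but since the proposition as stated does not say so, the hypothesis deserves to be made explicit.
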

\begin{proof}
	Since the regular representations of $\mathcal{G}$ extend to $C_r^*(\mathcal{G})$ by definition of the reduced groupoid $C^*$-algebra, the result follows from \cite[Proposition~4.15]{DellAiera2018} and Lemma \ref{Lemma:(T) TransGrpd}. Part (2) follows from (1) and \cite[Proposition~7.14]{DellAiera2018}.
\end{proof}

We can now provide some concrete principal ample groupoids where $\mu_0$ is not surjective.

Let $\Gamma=\FF_2$ and choose a nested sequence $(N_i)_i$ of finite index normal subgroups in $\FF_2$ with property $(\tau)$ such that the associated HLS groupoid is Hausdorff.
\begin{ex} To have a concrete example of such a sequence in mind consider the nested family $(L_i)_i$ of finite index normal subgroups $L_i:=\ker (SL_2(\ZZ)\rightarrow SL_2(\ZZ/5^i))$ of $SL_2(\ZZ)$. Embed $\FF_2$ in $SL_2(\ZZ)$ as a finite index normal subgroup and let $N_i:=L_i\cap \FF_2$. Then $(N_i)_i$ is a nested family of finite index normal subgroups of $\FF_2$ with trivial intersection. Since $\FF_2$ has finite index in $SL_2(\ZZ)$ and $SL_2(\ZZ)$ has property $(\tau)$ with respect to the family $(L_i)_i$ we conclude that $\FF_2$ has $(\tau)$ with respect to the family $(N_i)_i$.
\end{ex}
Let us first compute the homology of the associated groupoid $\mathcal{G}$. Consider the long exact sequence in homology
$$\cdots\rightarrow H_n(\mathcal{G}|_{X})\rightarrow H_n(\mathcal{G})\rightarrow H_n(\mathcal{G}|_{\widehat{X}\setminus X})\rightarrow H_{n-1}(\mathcal{G}|_X)\rightarrow \cdots\rightarrow H_0(\mathcal{G}|_{\widehat{X}\setminus X})$$
corresponding to the decomposition $\widehat{X}=X\sqcup \widehat{X}\setminus X$. Since $\mathcal{G}|_X$ is a disjoint union of principal and proper groupoids, we have $$H_n(\mathcal{G}|_X)=\bigoplus_{i\in\NN\cup\{\infty\}} H_n(\Gamma/N_i\ltimes \Gamma/N_i)=0 \text{ for all }n\geq 1,\text{ and }$$ $$H_0(\mathcal{G}|_X)=\bigoplus_{i\in\NN\cup\{\infty\}} H_0(\Gamma/N_i\ltimes \Gamma/N_i)=\bigoplus_{i\in\NN\cup\{\infty\}} \ZZ$$
From the long exact sequence we conclude that for all $n\geq 2$ the restriction to the boundary induces isomorphisms
$$H_n(\mathcal{G})\cong H_n(\FF_2\ltimes \widehat{\FF_2}_\mathcal{L}).$$
It is a well-known fact that $H_n(\FF_2\ltimes \widehat{\FF_2}_\mathcal{L})\cong H_n(\FF_2,C(\widehat{\FF_2}_\mathcal{L},\ZZ))$
and the homology of the free group $\FF_2$ is well-known to be trivial for all $n\geq 2$.
Hence $H_n(\mathcal{G})=0$ for all $n\geq 2$.

Now by construction $H_0(\FF_2\ltimes \widehat{\FF_2}_\mathcal{L})=C(\widehat{\FF_2}_\mathcal{L},\ZZ)/\langle f-\gamma.f\mid \gamma\in\FF_2\rangle$ 
and from the Pimsner-Voiculescu exact sequence for actions of free groups from \cite[Theorem~3.5]{Pimsner1982} we obtain
$K_0(C(\widehat{\FF_2}_\mathcal{L})\rtimes_r\FF_2)\cong C(\widehat{\FF_2}_\mathcal{L},\ZZ)/im(\beta)$ where
$$\beta:C(\widehat{\FF_2}_\mathcal{L},\ZZ)^2\rightarrow C(\widehat{\FF_2}_\mathcal{L},\ZZ),\ (f_1,f_2)\mapsto f_1-a^{-1}.f_1+f_2-b^{-1}.f_2$$
We clearly have $im(\beta)=\langle f-\gamma.f\mid \gamma\in\FF_2 \rangle $ so that after the identification above, $\mu_0$ is the identity. 

Now consider the commutative diagram

\begin{center}
	\begin{tikzpicture}[description/.style={fill=white,inner sep=2pt}]
	\matrix (m) [matrix of math nodes, row sep=3em,
	column sep=1.5em, text height=1.5ex, text depth=0.25ex]
	{H_0(\mathcal{G}|_X) & H_0(\mathcal{G}) & H_0(\mathcal{G}|_{\widehat{X}\setminus X})\\
		K_0 (C_r^*(\mathcal{G}|_X))&	K_0(C_r^*(\mathcal{G}))& K_0(C_r^*(\mathcal{G}|_{\widehat{X}\setminus X})) \\
	};
	\path[->,font=\scriptsize]
	(m-1-1) edge node[auto] {$ i $} (m-1-2)
	(m-1-2) edge node[auto] {$ p $} (m-1-3)

	(m-2-1) edge node[auto] {$ i_0 $} (m-2-2)
	(m-2-2) edge node[auto] {$  $} (m-2-3)
	
	(m-1-1) edge node[auto] {$  \mu_0 $} (m-2-1)
	(m-1-2) edge node[auto] {$ \mu_0^\mathcal{G}  $} (m-2-2)
	(m-1-3) edge node[auto] {$ \mu_0  $} (m-2-3)
	
	;
	\end{tikzpicture}
\end{center}
The top row is exact in the middle, as it is part of the long exact sequence in homology corresponding to the open invariant subset $X\subseteq \widehat{X}$.
The bottom row however is not exact in the middle by Proposition \ref{Prop: Property (T) for principal HLS}. The map on the right hand side is an isomorphism by our reasoning above.

We claim that the map $\mu_0^{\mathcal{G}}$ is not surjective. Suppose for contradiction that it was. Let $x\in K_0(C_r^*(\mathcal{G}))$ be an element which maps to zero in $K_0(C_r^*(\mathcal{G}|_{\widehat{X}\setminus X})$ but is not in the image of $i_0$. Since $\mu_0^\mathcal{G}$ is surjective, we can find an element $y\in H_0(\mathcal{G})$ such that $\mu_0^\mathcal{G}(y)=x$.
But then by commutativity of the right hand square we have $\mu_0(p(y))=0$ and since $\mu_0$ is injective we conclude that $p(y)$ is zero and hence $y=i(z)$ for some $z\in H_0(\mathcal{G}|_X)$. Moreover, by commutativity of the left square we have $x=\mu_0^\mathcal{G}(y)=i_0(\mu_0(z))$, which contradicts our assumption that $x\not\in \mathrm{im}(i_0)$.

\begin{rem}
At this point, it seems there are three known reasons for failure of the HK conjecture.  The first, due to Scarparo \cite{Scarparo2020} is the presence of torsion in isotropy groups.  The second, due to Deeley \cite{Deeley22} is due to torsion phenomena in $K$-theory; however, Deeley's results do not contradict the ``rational'' HK conjecture one gets after tensoring with $\Q$, analogously to the classical fact that the Chern character is a rational isomorphism between $K$-theory and cohomology.  The third is exotic analytic phenomena connected to the failure of the Baum-Connes conjecture as discussed above (this is admittedly not exactly a failure of the HK conjecture, but it seems to us as evidence that the HK conjecture should sometimes fail when the Baum-Connes conjecture fails).  Based on these counterexamples, the following ``folk conjecture'' (arrived at independently by several people) seems reasonable: if $G$ is an ample, second countable groupoid with torsion free isotropy and satisfying the strong Baum-Connes conjecture, then there are isomorphisms
$$
K_0(C^*_r(G))\otimes \Q\cong \bigoplus_{k\text{ even}}H_k(G;\Q) \quad  \text{and} \quad K_1(C^*_r(G))\otimes \Q\cong \bigoplus_{k\text{ odd}}H_k(G;\Q).
$$
\end{rem}

	\ \newline
{\bf Acknowledgments}. The first author expresses his gratitude to the second and fourth authors, and the Department of Mathematics at the University of \Hawaii~at \Manoa~for their hospitality during a visit in November 2019 where part of this work was carried out.  The authors would also like to thank Robin Deeley for pointing out a mistake in an earlier version of this paper, and the anonymous referee for suggesting the approach to the comparison maps in Subsection \ref{pyss sec}.

	\ \newline
On behalf of all authors, the corresponding author states that there is no conflict of interest.

\end{document}